\newtheorem{thm}{Theorem}[section]
\newtheorem*{thm*}{Theorem}
\newtheorem{lem}[thm]{Lemma}
\theoremstyle{definition} \newtheorem{defn}[thm]{Definition}
\newtheorem*{defn*}{Definition}
\theoremstyle{definition} \newtheorem{ex}[thm]{Example}
\newtheorem*{lem*}{Lemma}
\newtheorem{cor}[thm]{Corollary}
\newtheorem*{cor*}{Corollary}
\theoremstyle{definition} \newtheorem{rem}[thm]{Remark}
\newtheorem*{conj*}{Conjecture}
\newtheorem{prop}[thm]{Proposition}
\newcommand{\CC}{\mathbb{C}}
\newcommand{\Cyl}{\mathcal{C}}
\newcommand{\NN}{\mathbb{N}}
\newcommand{\ZZ}{\mathbb{Z}}
\newcommand{\RR}{\mathbb{R}}
\newcommand{\Cor}{\mathcal{K}}
\newcommand{\Sym}{\operatorname{Sym}}
\newcommand{\sq}{{\scriptstyle \square}}
\DeclareMathOperator{\Des}{Des}
\DeclareMathOperator{\sgn}{sgn}
\DeclareMathOperator{\Bound}{Bound}
\DeclareMathOperator{\trunc}{trunc}
\DeclareMathOperator{\ch}{ch}
\DeclareMathOperator{\fch}{fch}
\DeclareMathOperator{\Res}{Res}
\DeclareMathOperator{\BCov}{BCov}
\DeclareMathOperator{\Gr}{Gr}
\DeclareMathOperator{\av}{av}
\DeclareMathOperator{\lex}{lex}
\DeclareMathOperator{\stb}{stb}
\DeclareMathOperator{\GL}{GL}
\DeclareMathOperator{\diag}{diag}
\DeclareMathOperator{\tr}{tr}
\DeclareMathOperator{\vspan}{span}
\DeclareMathOperator{\Hom}{Hom}
\DeclareMathOperator{\Fl}{Fl}
\DeclareMathOperator{\id}{id}
\renewcommand{\det}{\operatorname{det}}
\newcommand{\trncotimes}{\bar\otimes}
\DeclareMathOperator{\trch}{\overline{ch}}
\newcommand{\affS}{\widetilde{S}}
\newcommand{\affF}{\widetilde{F}}
\newcommand{\fS}{\mathfrak{S}}
\renewcommand{\SS}{\mathbb{S}}
\begin{document}

\author{Brendan Pawlowski}
\title{A representation-theoretic interpretation of positroid classes}

\begin{abstract}
A positroid is the matroid of a real matrix with nonnegative maximal minors, a positroid variety is the closure of the locus of points in a complex Grassmannian whose matroid is a fixed positroid, and a positroid class is the cohomology class Poincar\'e dual to a positroid variety. We define a family of representations of general linear groups whose characters are symmetric polynomials representing positroid classes. These representations are certain diagram Schur modules in the sense of James and Peel. This gives a new algebraic interpretation of the Schubert structure constants for the product of a Schubert polynomial and Schur polynomial, and of the 3-point Gromov-Witten invariants for Grassmannians, proving a conjecture of Postnikov. As a byproduct, we obtain an effective algorithm for decomposing positroid classes into Schubert classes.
\end{abstract}

\maketitle

\section{Introduction}

Suppose $A$ is a $k \times n$ matrix over a field $K$, and $I$ is a $k$-subset of $[n] := \{1, 2, \ldots, n\}$. The \emph{$I$th Pl\"ucker coordinate of $A$}, for which we write $p_I(A)$, is the $k \times k$-minor of $A$ in columns $I$. The \emph{matroid} of $A$ is then $\{I \in {[n] \choose k} : p_I(A) \neq 0\}$. The matroid of $A$ depends only on the row span of $A$, so in fact a $k$-dimensional subspace of $K^n$ has a well-defined matroid. Let $\Gr_K(k,n)$ denote the Grassmannian of $k$-planes in $K^n$, or simply $\Gr(k,n)$ in the case $K = \CC$.

\begin{defn}
The \emph{totally nonnegative Grassmannian} $\Gr_{\RR}(k,n)^+$ is the set of $k$-planes $V = \operatorname{rowspan}(A)$ where all the maximal minors of $A$ are nonnegative. A \emph{positroid} is the matroid of a member of $\Gr_{\RR}(k,n)^+$.
\end{defn}

Postnikov \cite{postnikov-positroids} gave several combinatorial objects which are in bijection with positroids, and used them to describe the locus of points in $\Gr_{\RR}(k,n)^+$ whose matroid is a fixed positroid. Knutson, Lam, and Speyer \cite{positroidjuggling} studied the following complex analogue of Postnikov's positroid cells.

\begin{defn}
The \emph{positroid variety} $\Pi_M \subseteq \Gr(k,n)$ of a positroid $M \subseteq {[n] \choose k}$ is the Zariski closure of the set of $k$-planes with matroid $M$.
\end{defn}

Let $\Lambda(k)$ denote the ring of symmetric polynomials $\ZZ[x_1, \ldots, x_k]^{S_k}$, and $\Lambda^{n-k}(k)$ the quotient by the ideal generated by the homogeneous symmetric polynomials
\begin{equation*}
h_d(X_k) = \sum_{1 \leq i_1 \leq \cdots \leq i_d \leq k} x_{i_1} \cdots x_{i_d}
\end{equation*}
for $d > n-k$. The ring $\Lambda^{n-k}(k)$ is isomorphic to the integral cohomology ring of the Grassmannian $\Gr(k, n)$ of $k$-planes in $\CC^n$, and under this isomorphism the cone of cohomology classes Poincar\'e dual to subvarieties of $\Gr(k, n)$ corresponds to the cone of Schur-positive elements in $\Lambda^{n-k}(k)$. 

\begin{defn}
For a positroid $M \subseteq {[n] \choose k}$, let $G_M \in \Lambda^{n-k}(k)$ represent the cohomology class of the positroid variety $\Pi_M$.
\end{defn}
The Schur-positive elements $G_M \in \Lambda^{n-k}(k)$ are the central objects of this paper. Via taking characters, the Grothendieck ring of finite-dimensional complex polynomial representations of $\GL(\CC^k)$ is isomorphic to  $\Lambda(k)$, and our main result writes $G_M$ as the character of a certain representation of $\GL(\CC^k)$ (rather, its image in $\Lambda^{n-k}(k)$).

\begin{defn}
A \emph{diagram} is a finite subset of $\ZZ^2$.
\end{defn}

Let $S_d$ be the symmetric group on $d$ letters. To a diagram $D$ of size $d$ one associates an element $y_D$ of the group algebra $\CC[S_d]$, the \emph{Young symmetrizer} of $c_D$. If $V$ is a complex vector space, $S_d$ acts on $V^{\otimes d}$ on the right by permuting tensor factors, while $\GL(V)$ acts diagonally on the left. The \emph{generalized Schur module} $V[D]$ associated to $D$ is then the left $\GL(V)$-module $V^{\otimes d} y_D$. In cases which are understood, such as \cite{magyarborelweil, percentavoiding, liuforests, liubranching}, the algebraic properties of $V[D]$ tend to reflect interesting combinatorics relating to the diagram $D$, but there is no general combinatorial description of the irreducible decomposition of $V[D]$ or its character. In Section~\ref{sec:applications}, we observe that our results give, as a byproduct, a combinatorial algorithm for decomposing $V[D]$ into irreducibles when $D$ has at most $3$ rows.

\begin{ex} \label{ex:schur-examples} \hfill
\begin{enumerate}[(a)]
\item If $D$ is the Young diagram of a partition $\lambda$, then $V[D]$ is irreducible with character the Schur polynomial $s_{\lambda}(x_1, \ldots, x_k)$, where $k = \dim V$.

\item More generally, if $D$ is a skew Young diagram $\lambda \setminus \mu$, the character of $V[D]$ is the skew Schur polynomial $s_{\lambda \setminus \mu}(x_1, \ldots, x_k)$, and $V[\lambda \setminus \mu] \simeq \bigoplus_{\nu} c_{\lambda \mu}^{\nu} V[\nu]$, where $c_{\lambda \mu}^{\nu}$ is a Littlewood-Richardson coefficient. The dimension of $V[\lambda \setminus \mu]$ is the number of semistandard tableaux of shape $\lambda \setminus \mu$ on $[k]$, while the dimension of the $1^d$-weight space is the number of standard tableaux (if $k \geq d = |\lambda\setminus \mu|$).

\item The \emph{Rothe diagram} of a permutation $w \in S_n$ is
\begin{equation*}
D(w) = \{(i, w(j)) \in [n] \times [n] : i < j, w(i) > w(j)\}.
\end{equation*}
It follows from \cite{kraskiewicz, plactification} that the character of $V[D(w)]$ is the \emph{Stanley symmetric function} $F_w(x_1, \ldots, x_k)$ (see Definition~\ref{defn:affine-stanley} below). The dimension of $V[D(w)]$ is then the number of column-strict balanced labellings of $D(w)$ on $[k]$ in the sense of \cite{balancedlabellings}. The dimension of its $1^{\ell(w)}$-weight space is the number of \emph{reduced words} for $w$: minimal sequences $i_1, \ldots, i_\ell$ such that $w = s_{i_1} \cdots s_{i_{\ell}}$ where $s_i = (i\,\,i+1) \in S_n$ (if $k \geq \ell(w)$). When $\ell(\lambda) < k$, the multiplicity of $V[\lambda]$ in $V[D(w)]$ is the number of semistandard tableaux of shape $\lambda$ whose column word is a reduced word for $w$, by \cite{edelman-greene}.
\end{enumerate}
\end{ex}

Our main result can be viewed as a generalization of Example~\ref{ex:schur-examples}(c) as follows. To each positroid $M \subseteq {[n] \choose k}$, Knutson-Lam-Speyer \cite{positroidjuggling} associate a certain \emph{affine permutation} $f_M$, i.e. a bijection $\ZZ \to \ZZ$ satisfying the quasi-periodicity property that $f_M(i+n) = f_M(i)+n$ for all $i$. They also prove that $G_M$ is the image in $\Lambda^{n-k}(k)$ of the \emph{affine Stanley symmetric function} $\affF_{f_M}$. Now define the Rothe diagram of $f_M$ as in Example~\ref{ex:schur-examples}(c), but viewed as a finite subset of the cylinder $\ZZ^2 / \ZZ(n,n)$ (this will cause no difficulties in defining the Schur module $V[D(f_M)]$).

\begin{thm} \label{thm:main}
For any positroid $M \subseteq {[n] \choose k}$, the image of the character of $V[D(f_M)]$ in $\Lambda^{n-k}(k)$ is $G_M$.
\end{thm}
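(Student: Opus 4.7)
My plan is to deduce Theorem~\ref{thm:main} from the Knutson-Lam-Speyer identity $G_M = \affF_{f_M}$ in $\Lambda^{n-k}(k)$ by establishing the corresponding representation-theoretic identity
\[
\ch V[D(f_M)] \;\equiv\; \affF_{f_M} \pmod{(h_d(X_k) : d > n-k)},
\]
which is the natural cylindric analogue of Kra\'skiewicz's theorem $\ch V[D(w)] = F_w$ from Example~\ref{ex:schur-examples}(c). Since $f_M$ is bounded in the sense that $i \leq f_M(i) \leq i+n$, its Rothe diagram $D(f_M) \subset \ZZ^2/\ZZ(n,n)$ is finite on the cylinder, so after choosing a fundamental domain one obtains a finite planar diagram $\tilde D$ and $V[\tilde D]$ is a well-defined finite-dimensional polynomial $\GL(V)$-module of total degree $\ell(f_M)$.

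First I would revisit the proof in the finite case. Kra\'skiewicz's theorem can be proved by identifying $V[D(w)]$ up to a canonical filtration with a direct sum of skew Schur modules indexed by column-strict balanced labellings, or equivalently by combining reduced-word enumeration with the Edelman-Greene correspondence to show that $\ch V[D(w)]$ and $F_w$ share the same Schur expansion. The reduced-word side is particularly robust: Lam's theory of affine Stanley symmetric functions provides an affine EG-type insertion governing the Schur expansion of $\affF_{f_M}$ modulo the ideal $I = (h_d(X_k) : d > n-k)$, and the plan is to adapt the plactification argument of Reiner-Shimozono to a notion of \emph{cylindric column-strict balanced labellings} of $\tilde D$, producing a Schur expansion of $\ch V[\tilde D]$ modulo $I$ that matches the one for $\affF_{f_M}$.

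The core of the argument is thus a combinatorial bijection between cylindric balanced labellings of $D(f_M)$ and pairs consisting of an affine EG tableau together with a recording tableau. This bijection must be engineered so that altering the fundamental domain $\tilde D$ modifies a labelling only by a ``cylindric wraparound'' move whose contribution to the character vanishes upon passage to $\Lambda^{n-k}(k)$. Concretely, any box of $\tilde D$ that crosses a horizontal period should give rise to a full column of length exceeding $n-k$, forcing the corresponding term to die in the quotient.

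The main obstacle I anticipate is precisely this descent to the quotient. A cylindric Rothe diagram typically has no planar lift which is itself a Rothe diagram of a finite permutation, so one cannot simply import Kra\'skiewicz's theorem; moreover the Schur module $V[\tilde D]$ genuinely depends on the lift $\tilde D$ as an element of $\Lambda(k)$, and only the image in $\Lambda^{n-k}(k)$ can be expected to be canonical. Verifying this invariance requires exploiting the specific structure of $D(f_M)$ coming from the positroid bound on $f_M$ rather than properties of arbitrary cylindric diagrams. If the direct bijective argument proves intractable, an alternative strategy is to exhibit a sequence of elementary moves on diagrams -- each preserving the character modulo $I$ -- that reduces $\tilde D$ either to a Rothe diagram $D(w)$ of a finite permutation or to a disjoint union of skew shapes, at which point the character is accessible via the finite Kra\'skiewicz theorem and a straightening in the Grassmannian cohomology ring.
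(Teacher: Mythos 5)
Your target identity $\ch V[D(f_M)] \equiv \affF_{f_M} \pmod{J(n-k)}$ is exactly the right reformulation (it is Theorem~\ref{thm:main2} combined with the Knutson--Lam--Speyer identity $G_M = \trunc_{k,n-k}\affF_{f_M}$), but the engine you propose to drive it does not exist, and constructing it would amount to solving a well-known open problem rather than to proving the theorem. Your plan rests on ``an affine EG-type insertion governing the Schur expansion of $\affF_{f_M}$ modulo the ideal,'' matched against ``cylindric column-strict balanced labellings'' of $D(f_M)$. No such insertion is available: $\affF_f$ itself is not Schur-positive (e.g.\ $\affF_{5274} = s_{22}+s_{211}-s_{1111}$), only its truncation $G_{f,k}$ is, and a positive combinatorial rule for the Schur coefficients of $G_{f,k}$ is equivalent to a combinatorial rule for the $3$-point Gromov--Witten invariants of $\Gr(k,n)$ and for the Schubert coefficients of $\fS_u \cdot s_\lambda(x_1,\ldots,x_k)$ --- precisely the open problems this theorem is meant to illuminate, not presuppose. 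The existing affine insertion of Lam et al.\ expands $\affF_f$ in affine Schur functions, not in Schur functions of the quotient ring, so it cannot be cited for the step you need. A secondary, fixable misconception: $V[D(f_M)]$ does not depend on a choice of fundamental domain, since the Schur module is determined by the row and column set partitions of the cylindric diagram (Lemmas~\ref{lem:nonzero-schur} and~\ref{lem:toric}); the ``wraparound invariance'' you worry about is not where the difficulty lies.

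The actual proof replaces positivity with a signed recursion. One first shows (Section~\ref{sec:recurrences}) that $G_f$ satisfies a bounded affine transition formula whose iteration --- the bounded affine L--S tree --- terminates in $0$-Grassmannian permutations, where $G_f$ is a single Schur class. On the module side, James--Peel moves organized into a system of corner configurations produce an injection $\bigoplus_{(i,j)\in\BCov_r(f)} V_{n-k}[D(ft_{ij})] \hookrightarrow V \trncotimes V_{n-k}[D(f)]$ (Corollary~\ref{cor:induction-injection}); this is a priori only an injection, and the key step is to force it to be an isomorphism by a degree count with the linear functional $\delta$ and a double induction anchored at the $<_r$-maximal and $<_r$-minimal elements (Lemma~\ref{lem:induction-isomorphism}), the latter being where the finite Kra\'skiewicz/Reiner--Shimozono theorem genuinely enters as a base case. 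Your fallback --- elementary diagram moves reducing to the finite case --- points in this direction, but a James--Peel move preserves the character only after one proves that the injection of Lemma~\ref{lem:jp} loses nothing, and that is the substance of the argument you would still need to supply.
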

In Section~\ref{sec:truncated-schur}, we discuss a slightly modified notion of Schur module whose characters can naturally be thought of as members of $\Lambda^{n-k}(k)$.

Besides being certain intersection numbers for $\Pi_M$, the Schur coefficients of $G_M$ have many other interpretations, and Theorem~\ref{thm:main} provides an algebraic proof of the nonnegativity of these integers. All of the following can be described as certain Schur coefficients of $G_M$:
\begin{enumerate}[(a)]
\item The 3-point Gromov-Witten invariants for $\Gr(k,n)$ \cite{postnikov-affine-quantum-schubert, lam-affine-stanley, positroidjuggling};
\item The Schubert coefficients in the product of a Schubert polynomial and a Schur polynomial (Proposition~\ref{prop:schubert-coefficients});
\item The Schur coefficients of the image of an affine Stanley symmetric function in $\Lambda^{n-k}(k)$ \cite{positroidjuggling};
\item The Schur coefficients of the symmetric function $\sum_c Q_{\Des(c)}$, where $Q_D$ is Gessel's fundamental quasisymmetric function and $c$ runs over maximal chains in an interval in Bergeron and Sottile's $k$-Bruhat order \cite{bergeron-sottile-skew-schubert, assaf-bergeron-sottile}.
\end{enumerate}
To elaborate on (a), Postnikov defined certain finite subsets $D$ of the cylinder $\ZZ^2/\ZZ(k,n-k)$ called \emph{toric skew shapes}, and associated to $D$ the \emph{toric Schur polynomial} $s_D(x_1, \ldots, x_k)$, the weight-generating function for semistandard fillings of $D$. He showed that the Schur coefficients of toric Schur polynomials are the 3-point Gromov-Witten invariants for $\Gr(k,n)$, and gave a conjectural representation-theoretic interpretation. 
\begin{conj*}[\cite{postnikov-affine-quantum-schubert}, Conjecture 10.1] For a toric skew shape $D \subseteq \ZZ^2/\ZZ(k,n-k)$, the toric Schur polynomial of $D$ is the character of $V[D]$, where $\dim V = k$. \end{conj*}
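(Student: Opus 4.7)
The approach is to deduce the conjecture from Theorem~\ref{thm:main} by identifying the Schur module of a toric skew shape with that of the Rothe diagram of a bounded affine permutation. To a toric skew shape $D \subseteq \ZZ^2/\ZZ(k,n-k)$ one associates a bounded affine permutation $f$ via Postnikov's correspondence \cite{postnikov-affine-quantum-schubert}; this $f$ is the affine permutation $f_M$ of a uniquely determined positroid $M$. Postnikov's identification of $s_D$ with the image of $\affF_f$ in $\Lambda^{n-k}(k)$, together with the Knutson-Lam-Speyer formula $G_M = \affF_{f_M}$, yields $G_M = s_D$ in $\Lambda^{n-k}(k)$.

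Next I would apply Theorem~\ref{thm:main}: the image of $\ch V[D(f_M)]$ in $\Lambda^{n-k}(k)$ also equals $s_D$. To promote this to the equality $\ch V[D] = s_D$ in $\Lambda(k)$ demanded by the conjecture, I would construct an explicit isomorphism $V[D] \simeq V[D(f_M)]$ of $\GL(V)$-modules. The tool is a \emph{diagram equivalence}: a bijection between finite representatives of the two diagrams implemented by a pair of permutations, one of rows and one of columns. Because the Young symmetrizer depends only on the row- and column-equivalence classes of cells, such an equivalence automatically produces the desired module isomorphism, and hence equality of characters in $\Lambda(k)$ itself.

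The main obstacle is constructing this diagram equivalence explicitly, since $D$ and $D(f_M)$ live on cylinders of \emph{different} periods---$\ZZ^2/\ZZ(k,n-k)$ versus $\ZZ^2/\ZZ(n,n)$---and arise from quite different combinatorial descriptions: cylindric pairs of partitions on one side, and the affine Rothe-diagram construction on the other. Unwinding both to fundamental domains in $\ZZ^2$ and matching them cell by cell would presumably mirror the translation Postnikov uses to pass between cylindric diagrams and the corresponding bounded affine permutations. As a consistency check, the sizes of $D$ and $D(f_M)$ must agree, and do, since both compute the common degree of $s_D = \affF_{f_M}$. Once the diagram equivalence is in hand, Theorem~\ref{thm:main} supplies the representation-theoretic content and the conjecture follows at once.
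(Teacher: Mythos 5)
Your overall route is the same as the paper's (apply Theorem~\ref{thm:main}, then identify the cylindric shape with the Rothe diagram of the associated bounded affine permutation), but there is a genuine gap at the end, and you have also mislocated the main difficulty.

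The diagram equivalence you want is not the obstacle: the paper's Lemma~\ref{lem:cylindric-rothe-diagram} produces it cleanly by labeling the lower and upper boundary edges of $\Theta$ by $\ZZ$ and sending a cell to the pair of edge labels of its row and column; this gives a bijection $\Theta \to D(f_\Theta)$ respecting rows and columns, hence $V[\Theta] \simeq V[D(f_\Theta)]$. What you \emph{do} need but have not supplied is the argument that promotes the truncated equality to an equality in $\Lambda(k)$. The diagram equivalence does not do this. After combining $s_\Theta = \affF_{f_\Theta}$, $G_{f_\Theta} = \trch V_{n-k}[D(f_\Theta)]$ (Theorem~\ref{thm:main2}), and $V[\Theta] \simeq V[D(f_\Theta)]$, all you obtain is
\[
\trunc_{k,n-k}\,\ch V[\Theta] \;=\; \trunc_{k,n-k}\, s_\Theta(X_k) \quad\text{in } \Lambda^{n-k}(k),
\]
and a priori either side could have Schur terms $s_\lambda(X_k)$ with $\lambda \not\subseteq (n-k)^k$ that are annihilated by $\trunc_{k,n-k}$. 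Your last sentence, ``Once the diagram equivalence is in hand, \ldots the conjecture follows at once,'' is therefore incorrect. The missing step is to bound the Schur support of both sides: because $\Theta$ is toric it fits inside a $k\times(n-k)$ rectangle, so Lemma~\ref{lem:shape-bounds} forces every $\lambda$ with $V[\lambda]\hookrightarrow V[\Theta]$ to satisfy $\lambda\subseteq(n-k)^k$, and Lemma~\ref{lem:affine-stanley-bounds} (via the dominance bound $\mu\leq\lambda_{\max}\subseteq(n-k)^k$) forces the same for the Schur terms of $s_\Theta$. Only then does the equality in $\Lambda^{n-k}(k)$ lift to $\ch V[\Theta]=s_\Theta(X_k)$ in $\Lambda(k)$. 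Without this support argument the proof is incomplete.
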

We will see (Theorem~\ref{thm:toric-schur-module}) that Postnikov's conjecture follows as a special case of Theorem~\ref{thm:main}.

There is an effective recursion for computing the Schur expansion of a Stanley symmetric function $F_w$ based on the so-called \emph{transition formula} of Lascoux and Sch\"utzenberger \cite{lascouxschutzenbergertree}. They construct a tree of permutations with root $w$ such that $F_v = \sum_{v'} F_{v'}$ for any node $v$ where $v'$ runs over children of $v$, and show that any sufficiently long path from the root leads to a node $v$ such that $F_v$ is a single, easily-described Schur function.

Lam and Shimozono \cite{affine-little-bump} prove analogous formulas for affine Stanley symmetric functions, but in the affine case it is unclear how to arrange these formulas into a recursion terminating in simple base cases. We show (Theorem~\ref{thm:LS-formula}) that these difficulties disappear upon passing to the quotient $\Lambda^{n-k}(k)$. That is, there is an effective (but no longer positive!) recursion for computing the Schur expansion of $G_M$ in the style of Lascoux and Sch\"utzenberger. The proof of Theorem~\ref{thm:main} proceeds by constructing a filtration of $V[D(f_M)]$ which, at the level of characters, matches this recursion for $G_M$ (similar arguments appear in \cite{billey-pawlowski-2013} and \cite{kraskiewicz-pragacz}).

In Section~\ref{sec:background}, we recall some background on affine permutations, symmetric functions, and the representation theory of $\GL(V)$, and prove some preparatory lemmas. In Section~\ref{sec:recurrences}, we prove an analogue of Lascoux-Sch\"utzenberger's transition formula for the symmetric functions $G_M$. Sections~\ref{sec:schur} and \ref{sec:truncated-schur} are devoted to Schur modules, and contain the main technical tools we use to relate the combinatorics of a diagram $D$ to $V[D]$. We then apply these tools to Schur modules of affine Rothe diagrams in Section~\ref{sec:main} and prove Theorem~\ref{thm:main}. Finally, Section~\ref{sec:applications} describes some applications of our results, including a proof of Postnikov's conjecture on toric Schur modules.

\subsection*{Acknowledgements}

I'm grateful to Thomas Lam and Vic Reiner for many helpful discussions, and for introducing me to several of the problems considered here. Ricky Liu and Alex Postnikov also deserve credit for useful conversations.

\section{Background} \label{sec:background}

\subsection{Affine permutations}

Let $n$ be a positive integer. An \emph{affine permutation of quasi-period $n$} is a bijection $f : \ZZ \to \ZZ$ satisfying $f(i+n) = f(i)+n$ for all $i$. We write $\affS_n$ for the group of all affine permutations. We will specify an affine permutation $f$ by the word $f(1), \ldots, f(n)$, since this uniquely determines $f$, writing $\overline{x}$ for $-x$. For instance, $f = 645\overline{1}$ is the affine permutation with $f(1) = 6$, $f(4) = -1$, $f(8) = 3$, and so on.

\begin{defn}
Given $k, m \in \ZZ$, let $\Cyl_{k,m}$ denote the cylinder $\ZZ^2 / \ZZ(k,m)$.
\end{defn}
It is natural to view the graph of $f \in \affS_n$ as a subset of $\Cyl_{n,n}$, namely $\{(i,f(i)) \in \Cyl_{n,n} : i \in \ZZ\}$. An \emph{inversion} of $f \in \affS_n$ is a point $(i,j) \in \Cyl_{n,n}$ such that $i < j$ and $f(i) > f(j)$. Let $\ell(f)$ denote the number of inversions of $f$. It is not hard to see that $\ell(f)$ is finite, for instance by verifying the specific formula
\begin{equation*}
\ell(f) = \sum_{\substack{1 \leq i < j \leq n \\ f(i) > f(j)}} \left \lceil \frac{f(j)-f(i)}{n} \right\rceil.
\end{equation*}

For $i \in \ZZ$, let $s_i \in \affS_n$ be the transposition interchanging $i+pn$ and $i+1+pn$ for all $p \in \ZZ$ and fixing all other integers. Let $\tau \in \affS_n$ be the shift map $\tau : i \mapsto i+1$. Starting from any $f \in \affS_n$, one can repeatedly multiply by adjacent transpositions $s_i$ and eventually obtain an affine permutation with no inversions, which is necessarily a power of $\tau$. Defining $\affS^0_n := \langle s_0, s_1, \ldots, s_{n-1} \rangle$, we see that $f$ may be written uniquely in the form $\tau^j g$ with $j \in \ZZ$ and $g \in \affS^0_n$. Let $\av(f) = j$. We state some basic properties of the map $\av$ without proof.

\begin{prop} \label{prop:av-properties} For $f \in \affS_n$,
\begin{enumerate}[(a)]
\item $\av(f) = \frac{1}{n}\sum_{i=1}^n (f(i)-i)$;
\item There is a unique expression $f = w + n\lambda$ where $w \in \langle s_1, \ldots, s_n \rangle \simeq S_n$ and $\lambda \in \ZZ^\ZZ$, and $\av(f) = \sum_{i=1}^n \lambda_i$;
\item $\av : \affS_n \to \ZZ$ is a group homomorphism, and $\affS_n$ is the semidirect product $\affS_n^0 \rtimes \langle \tau \rangle$.
\end{enumerate}
\end{prop}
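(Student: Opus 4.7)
The plan is to prove (a) by exhibiting $\av$ as the integer-valued homomorphism $\phi : \affS_n \to \ZZ$ defined by $\phi(f) := \frac{1}{n}\sum_{i=1}^n (f(i)-i)$, and then derive (b) and (c) as consequences. First I would check that $\phi$ is well-defined: since $f$ is a bijection with $f(i+n) = f(i)+n$, the values $f(1), \ldots, f(n)$ form a complete set of residues mod $n$, so $\sum_{i=1}^n f(i) \equiv \sum_{i=1}^n i \pmod{n}$ and $\phi(f) \in \ZZ$. A short direct computation, using this residue observation together with the $n$-periodicity of $x \mapsto f(x) - x$, shows
\[
\phi(fg) = \frac{1}{n}\sum_{i=1}^n (f(g(i)) - g(i)) + \phi(g) = \phi(f) + \phi(g),
\]
the key point being that the reindexing $g(i) = r_i + n q_i$ with $r_i \in [1,n]$ lets one rewrite $f(g(i)) - g(i) = f(r_i) - r_i$ and then note that $\{r_1,\ldots,r_n\} = \{1,\ldots,n\}$.

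Next I would evaluate $\phi$ on generators: $\phi(s_i) = 0$ for every $i$ (the only nontrivial case is $i \equiv 0 \pmod n$, where the $-1$ contribution at position $1$ cancels the $+1$ at position $n$), while $\phi(\tau) = 1$. The prose preceding the proposition establishes that every $f \in \affS_n$ admits at least one decomposition $f = \tau^j g$ with $g \in \affS_n^0$; applying $\phi$ to such a factorization gives $j = \phi(f)$ independently of the choice, which simultaneously proves uniqueness of the decomposition (so $\av$ is well-defined) and the identity $\av = \phi$ of part (a). Part (c) follows at once: $\av$ is a homomorphism because $\phi$ is; $\affS_n^0 \subseteq \ker \av$ since the generators $s_i$ map to $0$, and any $f \in \ker \av$ satisfies $f = \tau^0 g = g \in \affS_n^0$; and $\langle \tau \rangle \cap \affS_n^0 = 1$ because $\av(\tau^j) = j$, giving the semidirect product decomposition $\affS_n = \affS_n^0 \rtimes \langle\tau\rangle$.

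For (b), I would define $w(i) \in \{1, \ldots, n\}$ to be the unique representative of the residue class of $f(i)$ mod $n$ for $i \in [1,n]$, extended periodically to an element of $S_n \subset \affS_n$, and set $\lambda_i := (f(i) - w(i))/n$; the residue observation above makes $w$ a bijection, and the range constraint $w(i) \in [1,n]$ forces uniqueness. The formula $\av(f) = \sum_{i=1}^n \lambda_i$ then follows immediately from part (a), since $\sum_{i=1}^n (w(i) - i) = 0$ as $w$ permutes $[1,n]$. The main obstacle in this outline is justifying the reduction-to-power-of-$\tau$ step invoked in the preceding prose, on which both the existence of the decomposition $f = \tau^j g$ and the identification $\ker \av = \affS_n^0$ rest. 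One needs to show that whenever $\ell(f) > 0$ there exists $i$ with $\ell(s_i f) < \ell(f)$, so that iteration terminates at an affine permutation with no inversions, which a direct order-theoretic argument forces to be some $\tau^k$; this is a standard but genuinely Coxeter-theoretic input, best handled by exhibiting an explicit descent $i$ with $f(i) > f(i+1)$.
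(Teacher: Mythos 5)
The paper explicitly states this proposition ``without proof,'' so there is no official argument to compare against; what I can do is verify that your proposal is sound, and it is. Your strategy of introducing the explicit quantity $\phi(f) = \frac{1}{n}\sum_{i=1}^n(f(i)-i)$, checking directly that it is integer-valued and additive, evaluating it on the generators $s_0,\ldots,s_{n-1},\tau$, and then identifying $\phi$ with $\av$ via the factorization $f = \tau^j g$ is correct and efficient. Each of the individual computations checks out: the complete-residues observation makes $\phi(f)\in\ZZ$; the reindexing via $g(i)=r_i+nq_i$ (using quasi-periodicity of $f$) gives additivity; $\phi(s_i)=0$ for all $i$ including $i\equiv 0$; $\phi(\tau)=1$; and once $\phi(g)=0$ for $g\in\affS_n^0$ you get $j=\phi(f)$, simultaneously settling uniqueness of the decomposition and part~(a). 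Parts~(b) and~(c) then follow cleanly as you describe, with the minor unstated point that $\lambda$ extends $n$-periodically automatically (since $f(i+n)=f(i)+n$ and $w(i+n)=w(i)+n$ force $\lambda_{i+n}=\lambda_i$), matching $\lambda\in\ZZ^\ZZ$ as in the paper's statement.

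Your closing paragraph flags the reduction to a power of $\tau$ as the ``main obstacle,'' but this is over-cautious in context: the paper's prose immediately preceding the proposition already asserts that repeated multiplication by adjacent transpositions reduces any $f$ to an inversion-free affine permutation, which is then a power of $\tau$, so you are entitled to cite it as given. You also correctly identify how one would supply the argument if pressed (if $\ell(f)>0$ then $f$ has a descent $i$ with $f(i)>f(i+1)$, since an increasing bijection on $\ZZ$ is inversion-free; then $\ell(fs_i)<\ell(f)$, and an inversion-free $f\in\affS_n$ must satisfy $f(i+1)-f(i)=1$ for all $i$, hence $f=\tau^{f(1)-1}$). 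So this is a candid acknowledgment of what is being used rather than an actual gap.
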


The group $\affS^0_n$ is a Coxeter group (the affine Weyl group of type $\widetilde{A}_n$), and we will implicitly extend many notions of Coxeter theory from $g \in \affS^0_n$ to $\tau^j g$ for any $j \in \ZZ$: Coxeter length, descents, reduced words, Bruhat order, and so on. Note that $\ell(f)$ as defined above is in fact the Coxeter length of $f$. The subgroup $\langle s_1, \ldots, s_{n-1} \rangle$ of $\affS_n$ is isomorphic to the ordinary permutation group on $n$ letters $S_n$, and we will frequently identify the two.

\begin{defn} \label{defn:bounded}
An affine permutation $f \in \affS_n$ is \emph{bounded} if $i \leq f(i) \leq i+n$ for all $i \in \ZZ$. Let $\Bound(n) \subseteq \affS_n$ denote the subset of bounded permutations, and $\Bound(k,n)$ the set of $f \in \Bound(n)$ such that exactly $k$ of $f(1), \ldots, f(n)$ exceed $n$.
\end{defn}

\begin{lem} \label{lem:shifted-bound-k-n} $\Bound(k,n) = \Bound(n) \cap \tau^k \affS_n^0$. \end{lem}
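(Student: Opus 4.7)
The plan is to translate the lemma into a statement about the homomorphism $\av$ and then verify it by a direct residue-class computation. First I would reinterpret $\tau^k \affS_n^0$ as the fiber $\av^{-1}(k)$. This is immediate from Proposition~\ref{prop:av-properties}(c) once one checks that $\av(\tau) = 1$ (which is transparent from the defining formula in part~(a)) and that $\affS_n^0 \subseteq \ker\av$ (since each generator $s_i$ is an involution, so $2\av(s_i) = \av(s_i^2) = 0$ in $\ZZ$ forces $\av(s_i) = 0$). The semidirect decomposition then pins down $\affS_n^0$ as exactly $\ker\av$. With this in hand, the lemma reduces to the numerical claim: for $f \in \Bound(n)$,
\begin{equation*}
\av(f) = \#\{i \in [n] : f(i) > n\}.
\end{equation*}

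For this, I would expand each $f(i)$ with $i \in [n]$ as $f(i) = q_i n + r_i$ with $r_i \in \{1, \ldots, n\}$. The quasi-periodicity $f(i+n) = f(i)+n$ makes $i \mapsto f(i) \bmod n$ a bijection on $\ZZ/n\ZZ$, so $\{r_1, \ldots, r_n\} = \{1, \ldots, n\}$ and in particular $\sum_i r_i = n(n+1)/2 = \sum_i i$. Boundedness forces $f(i) \in \{i, i+1, \ldots, i+n\} \subseteq \{1, \ldots, 2n\}$, so $q_i \in \{0, 1\}$, and moreover $q_i = 1$ precisely when $f(i) > n$. Substituting into the formula $n\av(f) = \sum_{i=1}^n(f(i)-i)$ from Proposition~\ref{prop:av-properties}(a), the $r_i$ and $i$ contributions cancel, leaving $n \av(f) = n\sum_i q_i$. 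Dividing by $n$ gives the desired identity.

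I don't foresee a real obstacle here; the proof is essentially bookkeeping in residue classes. The one point worth being careful about is the convention distinguishing $f(i) > n$ from $f(i) \geq n$, which lines up correctly only because $f(i) = n$ for $i \in [n]$ forces $q_i = 0$ under our choice of residues in $\{1, \ldots, n\}$. Everything else — the identification of $\affS_n^0$ with $\ker\av$, and the fact that the residues of $f(1), \ldots, f(n)$ exhaust $[n]$ — is forced by Proposition~\ref{prop:av-properties} and the bijectivity of $f$.
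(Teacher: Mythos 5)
Your proof is correct and follows essentially the same route as the paper: both decompose $f(i) = q_i n + r_i$ (the paper phrases this as $f = w + n\lambda$ via Proposition~\ref{prop:av-properties}(b) with $\lambda_i = q_i$), use boundedness to force $q_i \in \{0,1\}$ with $q_i = 1$ exactly when $f(i) > n$, and conclude $\av(f) = \sum_i q_i = k$; you simply verify this via the explicit averaging formula in part (a) rather than citing part (b) directly.
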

\begin{proof}
Writing $f = w + n\lambda$ with $w \in S_n$, if $f$ is in $\Bound(n)$ then $\{\lambda_1, \ldots, \lambda_n\} \subseteq \{0,1\}$, and then $f \in \Bound(k,n)$ where $k = \sum_{i=1}^n \lambda_i$. Proposition~\ref{prop:av-properties}(b) now says $\av(f) = k$.
\end{proof}

By \cite[Theorem 3.1]{positroidjuggling}, $\Bound(k,n)$ is in bijection with the rank $k$ positroids on $[n]$. Another collection of objects in bijection with positroids which will be useful for us relies on the notion of $k$-Bruhat order. Given integers $i < j$ for which $i \not\equiv j \pmod{n}$, let $t_{ij} \in \affS^0_n$ be the transposition interchanging $i+pn$ and $j+pn$ for all $p \in \ZZ$. We use $<$ to denote the (strong) Bruhat order on $\affS_n$, i.e. the partial order with covering relations $f \lessdot ft_{ij}$ whenever $\ell(ft_{ij}) = \ell(f)+1$.
\begin{defn}[\cite{k-bruhat-order}]
For an integer $k$, the \emph{$k$-Bruhat order on $S_n$} is the partial order $\leq_k$ with covering relations $w \lessdot_k wt_{ij}$ if $i \leq k < j$ and $\ell(wt_{ij}) = \ell(w)+1$.
\end{defn}
For instance, $24315 \lessdot_3 24513$ but $24315 \not\hspace{-3pt}\lessdot_3 42315$. Let $[u,v]_k$ denote the $k$-Bruhat interval $\{u' \in S_n : u \leq_k u' \leq_k v\}$.
\begin{defn}
Define an equivalence relation on the set of $k$-Bruhat intervals in $S_n$ by declaring $[u,v]_k$ and $[ux, vx]_k$ to be equivalent if $\ell(ux)-\ell(u) = \ell(vx)-\ell(v) = \ell(x)$ where $x \in S_n$ stabilizes $[k]$. Let $Q(k,n)$ denote the set of equivalence classes of $k$-Bruhat intervals in $S_n$ under this equivalence relation, with $\langle u, v \rangle_k$ denoting the class of $[u,v]_k$. Partially order $Q(k,n)$ so that $I \leq I'$ if $I$ and $I'$ have representatives $[u, v]_k$ and $[u', v']_k$ such that $[u',v']_k \subseteq [u,v]_k$.
\end{defn}
For instance, $[24315, 24513]_3$ is equivalent to $[42315, 42513]_3$ via $x = s_1$, but not to $[24135, 24153]_3$ since $x = s_3$ does not stabilize $\{1,2,3\}$. Let $g_{k,n} \in \Bound(k,n)$ be the affine permutation $(n+1)\cdots (n+k)(k+1)\cdots n$. In the next theorem, we view $\affS_n$ and the subset $\Bound(k,n)$ as posets ordered by strong Bruhat order.

\begin{thm}[\cite{positroidjuggling}, Section 3] \label{thm:k-bruhat-bijection} The map $Q(k,n) \to \affS_n$ defined by $\langle u, v \rangle_k \mapsto f_{u,v} := ug_{k,n}v^{-1}$ is a well-defined injection of posets with image $\Bound(k,n)$, and $\ell(f_{u,v}) = k(n-k) - \ell(v) + \ell(u)$.
\end{thm}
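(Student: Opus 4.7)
My plan is to verify well-definedness, the image equal to $\Bound(k,n)$, the length formula, injectivity, and poset compatibility, guided by a direct formula for $f_{u,v}$. Since $g_{k,n}$ shifts positions in $[k]$ up by $n$ and fixes positions in $[k+1,n]$ on the cylinder, it commutes in $\affS_n$ with any $x \in S_n$ stabilizing $[k]$; hence $f_{ux,vx} = u g_{k,n} v^{-1} = f_{u,v}$, giving well-definedness. A direct computation yields, for $i \in [n]$,
$$f_{u,v}(i) = \begin{cases} uv^{-1}(i) + n & \text{if } i \in v([k]), \\ uv^{-1}(i) & \text{if } i \notin v([k]). \end{cases}$$
Exactly $k$ of the values $f_{u,v}(1),\ldots,f_{u,v}(n)$ exceed $n$, and substituting $j = v^{-1}(i)$ the bound $i \le f_{u,v}(i) \le i+n$ becomes $u(j) \le v(j)$ for $j \in [k]$ and $u(j) \ge v(j)$ for $j > k$---a standard consequence of $u \le_k v$ (part of Bergeron--Sottile's characterization), so $f_{u,v} \in \Bound(k,n)$. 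For surjectivity, given $f \in \Bound(k,n)$, let $I = \{i : f(i) > n\}$, pick a minimal-length $v \in S_n$ with $v([k]) = I$, and invert the formula to define $u$; boundedness of $f$ forces $u \in S_n$, and the remaining conditions of $u \le_k v$ beyond the pointwise inequalities then follow from the special form of $v$.

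The length formula is the main obstacle. I would prove it by induction on $\ell(v) - \ell(u)$. In the base case $u = v$, the formula above shows $f_{u,u}$ shifts positions in $u([k])$ up by $n$ and fixes the rest; a direct inversion count on the cylinder gives $k(n-k)$, as required. For the inductive step, a cover $v \lessdot_k vt_{ab}$ with $a \le k < b$ corresponds to right-multiplying $f_{u,v}$ by the affine reflection $vt_{ab}v^{-1}$, and using the explicit formula one checks this reduces $\ell$ by exactly one; a symmetric argument handles the $u$-direction, completing the induction.

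Injectivity is then nearly automatic: $v([k])$ and the quotient $uv^{-1}$ can both be read off $f_{u,v}$, so any two preimages $(u,v), (u',v')$ must satisfy $v' = vx$ and $u' = ux$ for some $x$ stabilizing $[k]$, and the length formula forces the length-additivity in the definition of the equivalence class $\langle u, v\rangle_k$. The poset statement is then verified on covers, translating covers in $Q(k,n)$ to Bruhat covers in $\affS_n$ via the same analysis of reflections $vt_{ab}v^{-1}$. The main difficulty I anticipate throughout is the bookkeeping with these affine reflections and confirming that a single $k$-Bruhat cover corresponds to exactly one Bruhat cover in $\affS_n$, not a longer chain.
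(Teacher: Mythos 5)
First, a point of comparison: the paper does not prove this statement at all --- it is quoted from Knutson--Lam--Speyer \cite{positroidjuggling} --- so there is no internal proof to measure your attempt against. Judged on its own, your outline assembles the right ingredients for most of the claim: the commutation $xg_{k,n}x^{-1}=g_{k,n}$ for $x$ stabilizing $[k]$ gives well-definedness; the explicit formula for $f_{u,v}(i)$ is correct; boundedness does reduce to the pointwise inequalities $u(j)\le v(j)$ for $j\le k$ and $u(j)\ge v(j)$ for $j>k$, which is part of Bergeron--Sottile's characterization of $\le_k$; the surjectivity argument with $v$ taken Grassmannian works (condition (ii) of that characterization is vacuous for such $v$); and the inversion count $\ell(f_{v,v})=k(n-k)$ is right.

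Two steps are not actually established. The inductive step of the length formula --- that a single cover $v\lessdot_k vt_{ab}$ turns $f_{u,v}$ into $f_{u,v}t_{v(a),v(b)}$ with length dropping by exactly one, rather than by any odd amount --- is the technical heart of the theorem, and you only assert it; note that the paper's own Lemma~\ref{lem:0-bruhat-bijection} derives the cover correspondence \emph{from} this length formula, so nothing there can be borrowed without circularity. More seriously, your injectivity argument has a genuine gap. From $f_{u,v}=f_{u',v'}$ you correctly extract $(u',v')=(ux,vx)$ with $x$ stabilizing $[k]$, and the length formula yields $\ell(ux)-\ell(u)=\ell(vx)-\ell(v)$; but the equivalence relation defining $Q(k,n)$ requires this common value to equal $\ell(x)$, and it need not. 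Already for $k=1$, $n=3$, $u=v=321$, $x=s_2$ one gets $\ell(ux)-\ell(u)=-1\neq\ell(s_2)$; the classes $[321,321]_1$ and $[312,312]_1$ are still equivalent, but only by applying the generating relation in the opposite direction. In general one must show the common difference is $\pm\ell(x)$, or exhibit a chain of length-additive moves through intermediate valid intervals, and neither follows from the length formula alone. That analysis is genuinely the content of the Bergeron--Sottile/KLS result and needs to be supplied.
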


For any integer $r$, we can consider $r$-Bruhat order on $\affS_n$ or $\Bound(k,n)$ with the same definition as for $S_n$. In the affine case, these partial orders are all isomorphic, because $\tau t_{ij} \tau^{-1} = t_{i+1,j+1}$, and so $f \leq_r g$ if and only if $\tau f \tau^{-1} \leq_{r+1} \tau g \tau^{-1}$. For this reason we focus on the case $r = 0$.
\begin{lem} \label{lem:0-bruhat-bijection} Under the bijection $\langle u, v \rangle_k \mapsto f_{u,v}$ of Theorem~\ref{thm:k-bruhat-bijection}, the $0$-Bruhat order on $\Bound(k,n)$ corresponds to the order on $Q(k,n)$ where $I \lessdot I'$ if $I$ and $I'$ have representatives $[u,v]_k$ and $[u',v]_k$ with $u \lessdot_k u'$. \end{lem}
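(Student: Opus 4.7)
The plan is to verify the claimed correspondence on covers in both directions by direct computation with the explicit form $f_{u,v} = u g_{k,n} v^{-1}$ and the length formula $\ell(f_{u,v}) = k(n-k) - \ell(v) + \ell(u)$ from Theorem~\ref{thm:k-bruhat-bijection}. For the forward direction, suppose $u \lessdot_k u'$ with $u' = u t_{ab}$, $a \leq k < b$. Using $g_{k,n}(p) = p + n$ for $p \in [1,k]$ and $g_{k,n}(p) = p$ for $p \in [k+1,n]$ (extended periodically), a direct calculation gives $g_{k,n}^{-1} t_{ab}\, g_{k,n} = t_{a-n,\, b}$. Conjugating once more by $v^{-1}$ and using $v\, t_{a-n,b}\, v^{-1} = t_{v(a)-n,\, v(b)}$ yields the identity
\[
f_{u',v} \;=\; u\, g_{k,n}\, t_{a-n,b}\, v^{-1} \;=\; f_{u,v} \cdot t_{v(a)-n,\, v(b)}.
\]
Since $v(a), v(b) \in [1,n]$, one has $v(a) - n \leq 0 < v(b)$, so $t_{v(a)-n,\, v(b)}$ is a $0$-Bruhat reflection; the length formula gives $\ell(f_{u',v}) = \ell(f_{u,v}) + 1$, so $f_{u,v} \lessdot_0 f_{u',v}$.

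For the converse, suppose $f_{u,v} \lessdot_0 h$ in $\Bound(k,n)$, so $h = f_{u,v}\, t_{ij}$ for some $0$-Bruhat reflection. Normalize $(i,j)$ so that $j \in [1,n]$ and $i \in [j-n, 0]$, and set $p_0 := v^{-1}(i+n)$, $q_0 := v^{-1}(j)$, both in $[1,n]$. Computing $f_{u,v}$ at $i$ and $j$ directly from the definition yields
\[
f_{u,v}(i) = \begin{cases} u(p_0) & \text{if } p_0 \leq k \\ u(p_0) - n & \text{if } p_0 > k \end{cases}, \qquad f_{u,v}(j) = \begin{cases} u(q_0) + n & \text{if } q_0 \leq k \\ u(q_0) & \text{if } q_0 > k. \end{cases}
\]
Since $h(i) = f_{u,v}(j)$ and $h(j) = f_{u,v}(i)$, the boundedness conditions $i \leq h(i) \leq i + n$ and $j \leq h(j) \leq j + n$ force $u(q_0) \leq 0$ in the case $q_0 \leq k$, and $u(p_0) \geq n+1$ in the case $p_0 > k$, both impossible because $u \in S_n$. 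Hence only the case $p_0 \in [1,k]$, $q_0 \in [k+1,n]$ is compatible with $h \in \Bound(k,n)$.

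In this remaining case $t_{p_0, q_0}$ is a $k$-Bruhat transposition in $S_n$, and the forward computation shows $f_{u t_{p_0, q_0},\, v} = f_{u,v} \cdot t_{v(p_0)-n,\, v(q_0)} = f_{u,v}\, t_{ij} = h$, using that $v(p_0) = i + n$ and $v(q_0) = j$. Applying the length formula to $\ell(h) - \ell(f_{u,v}) = 1$ then yields $\ell(u t_{p_0, q_0}) = \ell(u) + 1$, so $u \lessdot_k u t_{p_0, q_0}$, giving the required representative with second coordinate $v$. The principal obstacle is the case analysis in the reverse direction: one must choose a canonical normalization of $t_{ij}$, track how periodic shifts interact with $v^{-1}$, and verify that three of the four residual cases for $(p_0, q_0)$ relative to $k$ are genuinely ruled out by boundedness of $h$.
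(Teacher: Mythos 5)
Your forward direction is correct and is essentially the paper's own computation: $f_{u',v} = f_{u,v}\,t_{v(a)-n,\,v(b)}$ together with the length formula of Theorem~\ref{thm:k-bruhat-bijection}. Your converse is a genuinely different route: rather than invoking the poset-injection part of Theorem~\ref{thm:k-bruhat-bijection} to write $h = f_{u',v'}$ with $[u',v']_k \subseteq [u,v]_k$ and then ruling out $v' \neq v$, you pin down the reflection directly, showing via boundedness of $h$ that the only surviving case is $p_0 \leq k < q_0$, whence $h = u t_{p_0q_0} g_{k,n} v^{-1}$. That case analysis is correct (and implicitly uses $j - i < n$, i.e.\ Lemma~\ref{lem:bounded-cover}, to ensure $i+n \in [1,n]$ so that $p_0 = v^{-1}(i+n) \in [1,n]$ --- worth saying explicitly).

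However, the final step has a genuine gap. The formula $\ell(f_{u,v}) = k(n-k) - \ell(v) + \ell(u)$ in Theorem~\ref{thm:k-bruhat-bijection} is asserted only for $\langle u,v\rangle_k \in Q(k,n)$, i.e.\ only when $u \leq_k v$; it fails for general pairs. (E.g.\ $n=3$, $k=1$, $u = \id$, $v = s_2$: then $u g_{1,3} v^{-1} = 432$ has length $3$, not $k(n-k) - \ell(v) + \ell(u) = 1$.) You apply it to the pair $(u t_{p_0q_0}, v)$ before knowing $u t_{p_0q_0} \leq_k v$ --- but that relation, together with $\ell(ut_{p_0q_0}) = \ell(u)+1$, is precisely what remains to be proven; knowing only that $h = u t_{p_0q_0} g_{k,n} v^{-1}$ lies in $\Bound(k,n)$ tells you that $h = f_{\tilde u,\tilde v}$ for \emph{some} $\langle \tilde u,\tilde v\rangle_k \in Q(k,n)$, but a priori $(\tilde u,\tilde v)$ could differ from $(ut_{p_0q_0}, v)$ by right multiplication by an element stabilizing $[k]$, and the length formula applies only to $(\tilde u,\tilde v)$. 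Moreover, even granting $\ell(ut_{p_0q_0}) = \ell(u)+1$, you still owe the reader $u t_{p_0q_0} \leq_k v$, since otherwise $[ut_{p_0q_0}, v]_k$ is not a legitimate representative. The cleanest repair is the paper's: since the map of Theorem~\ref{thm:k-bruhat-bijection} is a poset isomorphism onto $\Bound(k,n)$, the cover $f_{u,v} \lessdot h$ gives $h = f_{u',v'}$ with $u \leq_k u' \leq_k v' \leq_k v$ and $\ell(u')-\ell(v') = \ell(u)-\ell(v)+1$, forcing $u'=u$ or $v'=v$; your computation (or the paper's observation that $u'=u$ would force $v' = t_{i-n,j}v \notin S_n$) then eliminates the first alternative.
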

\begin{proof}
Suppose $u \lessdot_k u' = ut_{ij} \leq_k v$, so $1 \leq i \leq k < j$. Then $g_{k,n}^{-1}(i) = i-n$ and $g_{k,n}^{-1}(j) = j$, so
\begin{equation} \label{eq:k-bruhat-cover}
f_{u',v} = u t_{ij} g_{k,n} v^{-1} = u g_{k,n} t_{i-n,j} v^{-1} = f_{u,v} t_{v(i)-n, v(j)}.
\end{equation}
Theorem~\ref{thm:k-bruhat-bijection} implies $f_{u,v} \lessdot f_{u',v}$, and since $v(i)-n \leq 0 < v(j)$ we see $f_{u,v} \lessdot_0 f_{u',v}$.

Conversely, suppose $f_{u,v} \lessdot f_{u,v}t_{i-n,j}$ where $i, j \in [n]$ and $f_{u,v}t_{i-n,j} \in \Bound(k,n)$. We can assume that $f_{u,v}t_{i-n,j} = f_{u',v'}$ where $[u',v']_k \subseteq [u,v]_k$, so either $u' = u$ or $v' = v$. If $u' = u$, then $f_{u,v}t_{i-n,j} = f_{u',v'}$ implies $v' = t_{i-n,j}v$, but then $v' \notin S_n$. Hence $u \lessdot_k u' \leq_k v$.
\end{proof}

\subsection{Symmetric functions and polynomials} \label{subsec:symm}
We use the following conventions for partitions. A partition $\lambda$ is a weakly decreasing sequence $\lambda = (\lambda_1 \geq \lambda_2 \geq \cdots \geq \lambda_{\ell(\lambda)} > 0)$ with $\lambda_i = 0$ for $i > \ell(\lambda)$. Alternatively, $(a_1^{k_1}, \ldots, a_m^{k_m})$ denotes the partition in which each part $a_i$ appears with multiplicity $k_i$. We draw Young diagrams in the French style, so $\lambda$ has Young diagram
\begin{equation*}
\{(-i,j) : \text{$1 \leq i \leq \ell(\lambda)$ and $1 \leq j \leq \lambda_{i}$}\},
\end{equation*}
which we view as a subset of $\ZZ^2$ or some cylinder $\Cyl_{k,m}$ depending on context. Given partitions $\lambda$ and $\mu$, say $\lambda \subseteq \mu$ if the Young diagram of $\mu$ contains the Young diagram of $\lambda$; equivalently, if $\lambda_i \leq \mu_i$ for all $i$.

Let $\Lambda$ be the ring of symmetric functions over $\ZZ$, and $\Lambda(k)$ the ring of symmetric polynomials $\ZZ[x_1, \ldots, x_k]^{S_k}$. Given $F \in \Lambda$ we write $F(X_k)$ for the polynomial $F(x_1, \ldots, x_k) \in \Lambda(k)$. For $m \in \ZZ$, let $J(m) \subseteq \Lambda(k)$ be the ideal $\langle h_d(X_k) : d > m \rangle$. The Jacobi-Trudi formula shows that $J(m) = \operatorname{span} \{s_{\lambda}(X_k) : \lambda \not\subseteq (m^k)\}$, where $s_{\lambda}$ is a Schur function.
\begin{defn}
Let $\Lambda^m(k) := \Lambda(k)/J(m)$, with $\trunc_{k,m}$ denoting the quotient map. We will also write $\trunc_{k,m}$ for the ring map $\Lambda \to \Lambda^m(k)$ sending $F$ to $\trunc_{k,m} F(X_k)$, and sometimes we will write simply $\bar F$ for $\trunc_{k,m} F$ when $k$ and $m$ are clear from context.
\end{defn}

\subsection{Polynomial representations of $\GL(V)$} \label{subsec:rep}
Fix a finite-dimensional complex vector space $V$, and let $T(V) = \bigoplus_{d=0}^{\infty} V^{\otimes d}$ denote the tensor algebra on $V$. Let $R(V)$ be the Grothendieck group of $\GL(V)$-submodules of $T(V)$: the free abelian group on isomorphism classes $[U]$ of submodules\footnote{For submodules $U, U' \subseteq T(V)$ it need not be the case that $U \oplus U' \hookrightarrow T(V)$, but we will still write $[U \oplus U']$ for $[U] + [U']$.} $U \subseteq T(V)$ modulo the relations $[U \oplus U'] = [U] + [U']$. The tensor product makes $R(V)$ into a ring.

Say $\dim V = k$. Recall that the \emph{character} of a complex representation $\rho : \GL(V) \to \GL(U)$ of $\GL(V)$ is the function $\ch(U) : (x_1, \ldots, x_k) \mapsto \tr \rho(\diag(x_1, \ldots, x_k))$, where $\diag(x_1, \ldots, x_k) \in \GL(V)$ is the diagonal matrix with diagonal entries $x_1, \ldots, x_k$, having chosen a basis for $V$. Suppose $U$ is a \emph{polynomial representation}, meaning that upon choosing bases, the entries of the matrices $\rho(g)$ for $g \in \GL(V)$ are polynomials in the entries of $g$. Then $\ch(U)$ is a polynomial, and in fact $\ch(U) \in \Lambda(k)$. The next theorem summarizes some basic facts about complex representations of $\GL(V)$ and their characters.
\begin{thm}[\cite{youngtableaux}] \label{thm:rep-theory-facts} \hfill
\begin{enumerate}[(a)]
\item The character map $\ch : R(V) \to  \Lambda(k)$ is a ring isomorphism.
\item The irreducible polynomial representations of $\GL(V)$ are $\ch^{-1} s_{\lambda}(X_k)$ for $\lambda$ such that $\ell(\lambda) \leq k$. In particular, they all occur as submodules of $T(V)$.
\end{enumerate}
\end{thm}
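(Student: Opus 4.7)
The plan is to establish (a) and (b) together, using the representation theory of the maximal torus $T \subseteq \GL(V)$ of diagonal matrices together with the Young symmetrizer construction on $V^{\otimes d}$. I will treat (b) as the main input: once we know that the irreducible polynomial $\GL(V)$-modules are parametrized by partitions $\lambda$ with $\ell(\lambda) \leq k$, with characters $s_\lambda(X_k)$, and that each appears inside $T(V)$, the isomorphism in (a) follows formally.

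First I would set up the character map. For any polynomial representation $U$ the character $\ch(U)$ is automatically symmetric, since permutation matrices lie in $\GL(V)$ and conjugation by them cycles the diagonal entries without changing the trace. Direct sums of representations add characters, and tensor products multiply them, so $\ch$ is a ring homomorphism $R(V) \to \Lambda(k)$ after one checks that polynomial representations are closed under these operations in $T(V)$.

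Next, for each partition $\lambda$ with $|\lambda| = d$ and $\ell(\lambda) \leq k$, I would consider the Young symmetrizer $y_\lambda \in \CC[S_d]$ and the submodule $V[\lambda] = V^{\otimes d} y_\lambda \subseteq T(V)$. Schur-Weyl duality gives the bimodule decomposition $V^{\otimes d} = \bigoplus_\lambda V[\lambda] \otimes S^\lambda$ where $\lambda \vdash d$ and $S^\lambda$ is the corresponding Specht module; $V[\lambda] \neq 0$ precisely when $\ell(\lambda) \leq k$, and each such $V[\lambda]$ is an irreducible $\GL(V)$-module with highest weight $\lambda$ relative to $T$. The heart of the argument is then the identification $\ch(V[\lambda]) = s_\lambda(X_k)$: this I would derive either from the Weyl character formula for the root system $A_{k-1}$, or more combinatorially by exhibiting a basis of $V[\lambda]$ indexed by semistandard Young tableaux of shape $\lambda$ on $[k]$ whose weight is $x^{\text{content}}$.

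Having these modules and characters in hand, both parts follow. The Jacobi-Trudi identity (or the well-known straightening argument) gives that $\{s_\lambda(X_k) : \ell(\lambda) \leq k\}$ is a $\ZZ$-basis of $\Lambda(k)$, so $\ch$ is surjective. For injectivity and for (b), I would use complete reducibility: every polynomial representation of $\GL(V)$ restricts to a unitary representation of the compact group $U(k)$, hence decomposes into irreducibles, and the irreducible constituents are distinguished by their characters (Schur orthogonality on $U(k)$, or equivalently the linear independence of the $s_\lambda(X_k)$ in $\Lambda(k)$). Thus $\ch$ is a bijection, the $V[\lambda]$ exhaust the irreducibles, and they all sit inside $T(V)$ by construction. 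The main obstacle is the character computation $\ch(V[\lambda]) = s_\lambda(X_k)$; everything else is formal once this is in place, which is why I would quote it from \cite{youngtableaux} rather than redo it.
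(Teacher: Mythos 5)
This theorem is stated in the paper as background and cited to \cite{youngtableaux} without proof, so there is no internal argument to compare against; your outline is essentially the standard textbook proof from that reference (Schur--Weyl duality, the semistandard-tableau basis or Weyl character formula for $\ch V[\lambda] = s_\lambda(X_k)$, the unitarian trick for complete reducibility, and the fact that $\{s_\lambda(X_k) : \ell(\lambda)\leq k\}$ is a $\ZZ$-basis of $\Lambda(k)$), and it is sound. The one step worth spelling out is why \emph{every} irreducible polynomial representation $W$ is some $V[\lambda]$ (and hence occurs in $T(V)$): your character argument does handle this, since $\ch(W)$ is a symmetric polynomial, hence an integral combination of the $s_\lambda(X_k)$, and orthonormality of irreducible $U(k)$-characters then forces $\ch(W) = s_\lambda(X_k)$ for a single $\lambda$ with $\ell(\lambda)\leq k$, whence $W \simeq V[\lambda]$ because characters determine representations of the compact group $U(k)$.
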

Theorem~\ref{thm:rep-theory-facts}(a) implies that $R(V)$ modulo the ideal spanned by all $[\ch^{-1} s_{\lambda}(X_k)]$ for $\lambda \not\subseteq (m^k)$ is isomorphic to the ring $\Lambda^{m}(k)$. In Section~\ref{sec:truncated-schur}, we construct this quotient of $R(V)$ in a more natural way and use it to define an appropriate notion of character which naturally maps into $\Lambda^m(k)$.

\subsection{Affine Stanley symmetric functions}

\begin{defn} \hfill
\begin{enumerate}[(a)]
\item A sequence $i_1, \ldots, i_k$ in $\ZZ/n\ZZ$ is \emph{cyclically decreasing} if
    \begin{enumerate}[(1)]
    \item all its entries are distinct, and;
    \item whenever $j$ and $j+1$ both appear in the sequence, $j+1$ appears before $j$.
    \end{enumerate}

\item An affine permutation $f$ is cyclically decreasing if $f = s_{i_1} \cdots s_{i_k}$ for some cyclically decreasing sequence $i_1, \ldots, i_k$.
\end{enumerate}
\end{defn}
\begin{ex}
$s_1 s_0 s_3 \in \tilde{S}_4$ is cyclically decreasing, but $s_1 s_3 s_0$ and $s_3 s_0 s_3$ are not.
\end{ex}

A factorization $f = f_1 \cdots f_p$ where $f, f_1, \ldots, f_p \in \affS_n^0$ is \emph{length-additive} if $\ell(f) = \sum_{i=1}^p \ell(f_i)$. 
\begin{defn}[\cite{lam-affine-stanley}] \label{defn:affine-stanley}
The \emph{affine Stanley symmetric function} of $f \in \affS_n$ is the power series
\begin{equation*}
\affF_f = \sum_{(f_1, \ldots, f_p)} x_1^{\ell(f_1)} \cdots x_p^{\ell(f_p)}
\end{equation*}
running over all length-additive factorizations $\tau^{-\av(f)} f = f_1 \cdots f_p$ such that each $f_i$ is cyclically decreasing.
\end{defn}

Definition~\ref{defn:affine-stanley} is due to Lam \cite{lam-affine-stanley}, who proved many basic properties about $\affF_f$ including the non-obvious fact that $\affF_f \in \Lambda$. When $f \in \langle s_1, \ldots, s_{n-1} \rangle \simeq S_n$, affine Stanley symmetric functions agree with the symmetric functions introduced by Stanley in \cite{stanleysymm} (except that Stanley's $G_w$ is our $\affF_{w^{-1}}$). Observe that the coefficient of a squarefree monomial in $\affF_f$ is the number of reduced words of $f$.

For $f \in S_n$, the results of \cite{edelman-greene} imply that $\affF_f$ is Schur-positive, but this need not hold for general affine $f$; for instance, $\affF_{5274} = s_{22} + s_{211} - s_{1111}$. However, it turns out that a predictable subset of the Schur coefficients of $\affF_f$ are nonnegative.
\begin{defn}
Given $f \in \affS_n$ and $0 \leq k \leq n$, define $G_{f,k} = \trunc_{k,n-k} \affF_f \in \Lambda^{n-k}(k)$. We usually suppress the dependence on $k$ and simply write $G_f$.
\end{defn}

\begin{thm}[\cite{positroidjuggling}, Theorem 7.1] \label{thm:schur-positive-truncation} If $f \in \Bound(k,n)$, then $G_{f,k} \in \Lambda^{n-k}(k)$ represents the cohomology class of a positroid variety. In particular, it is Schur-positive and nonzero. \end{thm}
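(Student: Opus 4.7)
The plan is to identify $G_{f,k}$ with the positroid class $G_{M_f} = [\Pi_{M_f}]$ by passing through the geometry of the affine Grassmannian $\Gr_{SL_n}$ (or the affine flag variety) and Lam's theory of affine Schubert calculus. The geometric input I would use is the Knutson--Lam--Speyer realization of positroid varieties: $\Pi_{M_f}$ arises as the image of an affine Schubert variety $X_f$ under a natural projection $\pi$ to $\Gr(k,n)$. The condition $f \in \Bound(k,n)$, equivalently $\av(f) = k$ by Lemma~\ref{lem:shifted-bound-k-n}, is what places $X_f$ in the correct component of the affine space for the projection to land nondegenerately in $\Gr(k,n)$.

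The key algebraic input is Lam's theorem~\cite{lam-affine-stanley}: after identifying a suitable subring of $\Lambda$ with the cohomology of the affine space, the class of $X_f$ is represented by the affine Stanley symmetric function $\affF_f$. It then remains to show that the map on cohomology induced by $\pi$ corresponds, under these identifications, to the truncation $\trunc_{k,n-k} : \Lambda \to \Lambda^{n-k}(k)$. Concretely, the Schur functions $s_\lambda$ with $\lambda \not\subseteq (n-k)^k$ should span exactly the kernel of restriction to $\Gr(k,n)$, which matches the defining ideal $J(n-k)$. Combining these identifications yields $[\Pi_{M_f}] = \trunc_{k,n-k} \affF_f = G_{f,k}$. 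Schur-positivity and nonvanishing are then automatic geometric consequences: $\Pi_{M_f}$ is a nonempty irreducible closed subvariety of the smooth projective variety $\Gr(k,n)$, so $[\Pi_{M_f}]$ is a nonzero effective class, and effective classes in a Grassmannian expand nonnegatively in the Schubert basis (by Kleiman's transversality theorem, applied to intersections with translates of opposite Schubert varieties).

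The main obstacle is the precise matching of the geometric projection $\pi$ with the algebraic truncation, which includes careful bookkeeping of the shift $\tau^{-\av(f)}$ appearing in Definition~\ref{defn:affine-stanley} (it is exactly this shift which selects the correct component of the affine space). The bounded hypothesis $f \in \Bound(k,n)$ is essential: without it, $\affF_f$ need not even be Schur-positive, as the example $\affF_{5274} = s_{22} + s_{211} - s_{1111}$ noted earlier demonstrates, and the corresponding affine Schubert variety need not project to $\Gr(k,n)$ at all. The subtle geometric content one must verify is that the truncation to $\Lambda^{n-k}(k)$ discards exactly the negative Schur contributions while preserving the geometrically meaningful (positive) Schubert coefficients that record intersection numbers with $\Pi_{M_f}$.
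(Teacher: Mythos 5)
This theorem is not proved in the paper: it is quoted verbatim from \cite{positroidjuggling}, Theorem 7.1, and the paper supplies no independent argument. So there is no in-paper proof for your proposal to match; the comparison must be against the proof in Knutson--Lam--Speyer.

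Your high-level strategy is in the right spirit (identify $G_{f,k}$ with a cohomology class, then deduce Schur-positivity and nonvanishing from effectivity of the class of a nonempty irreducible subvariety via Kleiman transversality), and that last step is correct and standard. However, the geometric input you invoke is not quite what Knutson--Lam--Speyer use. Positroid varieties are \emph{not} realized as images of affine Schubert varieties under a projection to $\Gr(k,n)$. They are realized as images under $\pi : \Fl(n) \to \Gr(k,n)$ of \emph{Richardson varieties} $X_u \cap X^v$ in the (finite) flag variety, with $u \leq_k v$; this is exactly the content of Theorem~\ref{thm:richardson-projection}, which this paper also quotes from \cite{positroidjuggling}. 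The birationality of $\pi$ onto its image then gives $[\Pi_{M}] = \pi_*[X_u \cap X^v]$ in $H^*(\Gr(k,n))$. The link to affine Stanley symmetric functions is a separate and more delicate step: it does not come from an affine Schubert variety mapping to $\Gr(k,n)$, but rather from the Knutson--Lam--Speyer analysis relating positroid classes to the homology Schubert basis of the affine Grassmannian (equivalently to quantum cohomology of $\Gr(k,n)$), where Lam's theorem identifies the affine homology Schubert classes with the $\affF_f$. Your statement that ``$\Pi_{M_f}$ arises as the image of an affine Schubert variety $X_f$'' would not hold up, and that is the crux of why the shift $\tau^{-\av(f)}$ and the truncation $\trunc_{k,n-k}$ need a more indirect justification than the one you sketch.
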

Let $T = \langle \tau \rangle$. Since $\affF_f = \affF_{\tau f}$, it holds more generally that $G_{f,k}$ is Schur-positive and nonzero whenever $f \in T \Bound(k,n)$. Our next goal is to show that this is a necessary condition: that if $f \notin T \Bound(k,n)$, then $G_{f,k} = 0$.

\begin{defn} \label{defn:rothe} The \emph{Rothe diagram} of $f \in \affS_n$ is the set
\begin{equation*}
D(f) = \{(i,f(j)) \in \Cyl_{n,n} : i < j, f(i) > f(j)\}.
\end{equation*}
\end{defn}

\begin{defn}
The \emph{code} of $f \in \affS_n$ is the sequence $c(f) = (\ldots, c_1(f), \ldots, c_n(f), \ldots) \in \ZZ^\ZZ$ where $c_i(f) = \#\{i < i' : f(i) > f(i')\}$. Note that $c(f)$ is periodic with period $n$.
\end{defn}

\begin{ex} \label{ex:rothe}
We will draw diagrams in matrix coordinates, using $\square$ for points in $D(f)$ and $\cdot$ for points not in $D(f)$. As a visual aid we also draw the graph of $f$, using $\times$ for its members: the points in $D(f)$ are then exactly those which are strictly left of and above an $\times$. With these conventions,
\begin{equation*}
D(5274) = \begin{array}{ccccccccccccc}
\ddots  &       &       &       &       &         &       &       &        &       &       &       &\\
& \cdot & \square & \cdot & \square & \times   & \cdot & \cdot & \cdot & \cdot & \cdot & \cdot &\\
& \cdot & \times & \cdot & \cdot & \cdot   & \cdot & \cdot & \cdot & \cdot & \cdot & \cdot &\\
& \cdot & \cdot & \cdot & \square & \cdot   & \square & \times & \cdot & \cdot & \cdot & \cdot &\\
& \cdot & \cdot & \cdot & \times & \cdot   & \cdot & \cdot & \cdot & \cdot & \cdot & \cdot\\
& \cdot & \cdot & \cdot & \cdot & \cdot   & \square & \cdot & \square & \times & \cdot & \cdot &\\
& \cdot & \cdot & \cdot & \cdot & \cdot   & \times & \cdot & \cdot & \cdot & \cdot & \cdot &\\
& \cdot & \cdot & \cdot & \cdot & \cdot   & \cdot & \cdot & \square & \cdot & \square & \times &\\
& \cdot & \cdot & \cdot & \cdot & \cdot   & \cdot & \cdot & \times & \cdot & \cdot & \cdot &\\
&       &       &       &       &         &       &       &        &       &       &       & \ddots
\end{array}
\end{equation*}
Also, $c(5274) = (\ldots, 2,0,2,0, \ldots)$.
\end{ex}

We note without proof some simple facts about these objects. Let $e_i$ denote the vector in $\ZZ^\ZZ$ with $1$ in position $i$ and $0$ elsewhere.
\begin{prop} \label{prop:rothe-diagram-facts} For $f \in \affS_n$ and $i \in \ZZ$, it holds that
\begin{enumerate}[(a)]
\item $\#D(f) = \ell(f)$;
\item $D(f^{-1})$ is the transpose $D(f)^t$ of $D(f)$;
\item The size of row $i$ of $D(f)$ is $c_i(f)$;
\item $\ell(fs_i) < \ell(f)$ if and only if $c_i(f) > c_{i+1}(f)$, in which case $c(fs_i) = c(f)s_i - e_{i+1}$.
\end{enumerate}
\end{prop}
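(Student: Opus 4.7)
The plan is to dispatch all four parts by direct unpacking of the definitions, with the only real care needed coming from arithmetic in the cylinder $\Cyl_{n,n}$ and a case analysis for (d).

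Parts (a) and (c) both flow from the single map $\Phi : (i,j) \mapsto (i, f(j))$, which sends an inversion class of $f$ (a pair $i<j$ with $f(i) > f(j)$, modulo simultaneous shift by $(n,n)$) to a point of $D(f)$. Well-definedness is immediate from $f(j+n) = f(j)+n$, and injectivity is the key check: if $(i, f(j)) \equiv (i', f(j')) \pmod{\ZZ(n,n)}$, I shift so that $i = i'$ and $f(j) - f(j') = pn$, and quasi-periodicity then gives $f(j'+pn) = f(j)$, so $j = j'+pn$ and the two classes coincide. This proves (a). Restricting $\Phi$ to inversions with first coordinate $\equiv i \pmod n$ yields (c), once I verify (by the same quasi-periodicity trick) that for distinct $j > i$ with $f(j) < f(i)$ the values $f(j)$ are distinct mod $n$.

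Part (b) is a substitution: setting $d = f^{-1}(a)$, the condition witnessing $(a,b) \in D(f^{-1})$---the existence of $c > a$ with $f^{-1}(c) = b$ and $f^{-1}(a) > b$---translates to $d > b$ and $f(b) > f(d) = a$, which is exactly the witness for $(b,a) \in D(f)$.

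Part (d) carries the actual content. I first want the descent criterion $\ell(fs_i) < \ell(f) \iff f(i) > f(i+1)$, which follows from the formula for $\ell(f)$ in the excerpt by tracking how the inversion count changes under the swap $f(i+pn) \leftrightarrow f(i+1+pn)$. Given this, I split
\[ c_i(f) = \mathbf{1}[f(i) > f(i+1)] + \#\{j > i+1 : f(j) < f(i)\}, \quad c_{i+1}(f) = \#\{j > i+1 : f(j) < f(i+1)\}. \]
When $f(i) > f(i+1)$ the second set sits inside the first, so $c_i(f) \geq c_{i+1}(f)+1$; when $f(i) < f(i+1)$ the reverse containment gives $c_i(f) \leq c_{i+1}(f)$. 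For the update formula $c(fs_i) = c(f)s_i - e_{i+1}$ (with $e_{i+1}$ interpreted consistently with the $n$-periodicity of $c$), I argue position by position: for $j \not\equiv i, i+1 \pmod n$ the pairs $\{i+pn, i+1+pn\}$ lie entirely inside or entirely outside $(j, \infty)$, so swapping the values of $f$ on these pairs preserves $c_j$, giving $c_j(fs_i) = c_{s_i(j)}(f)$; at the indices $i$ and $i+1$ a direct count shows the net effect of the swap is to remove exactly the single inversion between positions $i$ and $i+1$, which is the $-e_{i+1}$ correction. The main obstacle is simply keeping the cylinder and periodicity bookkeeping straight throughout; once that is in hand, every assertion is mechanical.
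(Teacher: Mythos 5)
The paper states Proposition~\ref{prop:rothe-diagram-facts} without proof (``We note without proof some simple facts about these objects''), so there is no argument to compare against; your direct unpacking of the definitions is the natural route, and parts (a), (b), and (d) are sound. The one misstep is the auxiliary claim in your treatment of (c), that ``for distinct $j > i$ with $f(j) < f(i)$ the values $f(j)$ are distinct mod $n$.'' This is false: with $n = 2$ and $f = 4\bar{1}$ (so $f(1)=4$, $f(2)=-1$), the indices $j = 2, 4, 6$ all satisfy $j > 1$ and $f(j) < 4$, yet $f(2)=-1$, $f(4)=1$, $f(6)=3$ are all $\equiv 1 \pmod 2$. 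Fortunately the claim is also unnecessary. Distinctness of the cells $(i, f(j))$ in $\Cyl_{n,n}$ does not reduce to distinctness of $f(j)$ mod $n$: the quotient is only by the diagonal $\ZZ(n,n)$, so two cells can share both a row residue and a column residue without coinciding. Having fixed the representative with first coordinate exactly $i$, the cells $(i, f(j))$ and $(i, f(j'))$ agree in $\Cyl_{n,n}$ if and only if $(0, f(j)-f(j'))$ is an integer multiple of $(n,n)$, i.e.\ $f(j) = f(j')$, which forces $j = j'$ by injectivity of $f$. In other words, (c) is immediate from the bijectivity of $\Phi$ you already established in (a); the extra verification you flagged is spurious and should be dropped. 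For (d), the split of $c_i$ and $c_{i+1}$ across $j = i+1$ versus $j > i+1$ and the position-by-position check of $c(fs_i) = c(f)s_i - e_{i+1}$ (with $e_{i+1}$ read $n$-periodically) are correct as outlined.
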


Let $\delta(f) \in \ZZ^\ZZ$ be the vector with $\delta_i(f) = f(i)-i$. Note that $f \in T\Bound(n)$ if and only if $\max \delta(f) - \min \delta(f) \leq n$.

\begin{lem} \label{lem:maxcode} If $f \in \affS_n^0$, then $\max \delta(f) = \max c(f)$ and $\min \delta(f) = -\max c(f^{-1})$. \end{lem}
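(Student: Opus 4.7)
The plan is to reduce both equalities to the pointwise identity
\[ \delta_i(f) = c_i(f) - c'_i(f), \]
valid for every $i \in \ZZ$ and every $f \in \affS_n^0$, where $c'_i(f) := \#\{j < i : f(j) > f(i)\}$ counts inversions of $f$ ending at $i$ (equivalently, the length of column $f(i)$ of the Rothe diagram, or $c_{f(i)}(f^{-1})$ by Proposition~\ref{prop:rothe-diagram-facts}(b)). This is the natural affine analogue of the familiar finite identity $w(i) - i = c_i(w) - c_{w(i)}(w^{-1})$, and its validity in $\affS_n^0$ relies on $\av(f) = 0$. I would prove it by induction on $\ell(f)$: the base case $f = e$ is trivial, and the inductive step tracks how each of $\delta_i, c_i, c'_i$ transforms under right multiplication by a simple reflection $s_r$ with $\ell(fs_r) = \ell(f)+1$. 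For $i \not\equiv r, r+1 \pmod n$ nothing changes, and for the two remaining residues a short computation (handling possible wraparound contributions from positions congruent to $r$ or $r+1$ but far from the window $[1,n]$) confirms that the change in $\delta$ matches the change in $c - c'$.

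Granted the identity, the inequality $\max \delta(f) \leq \max c(f)$ follows at once from $c'_i(f) \geq 0$. For the reverse, I would pick $i^* \in [1,n]$ achieving $\max c(f)$ and argue $c'_{i^*}(f) = 0$ by contradiction. If some $j < i^*$ satisfied $f(j) > f(i^*)$, the set $\{j' > i^* : f(j') < f(i^*)\}$ would inject into $\{j' > j : f(j') < f(j)\}$ (since $j' > i^* > j$ and $f(j') < f(i^*) < f(j)$), and $i^*$ itself lies in the second set but not the first, giving $c_j(f) \geq c_{i^*}(f) + 1$. Quasi-periodicity forces $j > i^* - n$ (otherwise $f(j) \leq f(i^*) - n$), so $j \not\equiv i^* \pmod n$, which contradicts the maximality of $c_{i^*}$ over a period. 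Thus $\delta_{i^*}(f) = c_{i^*}(f) = \max c(f)$, giving the equality. The second equality $\min \delta(f) = -\max c(f^{-1})$ follows by applying the first to $f^{-1} \in \affS_n^0$ and using the substitution $\delta_{f(j)}(f^{-1}) = j - f(j) = -\delta_j(f)$, which together with the bijectivity of $f$ and the $n$-periodicity of $\delta(f^{-1})$ gives $\max \delta(f^{-1}) = -\min \delta(f)$.

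The main obstacle is the pointwise identity itself. The case analysis in the induction is not individually deep, but the affine wraparound means that when computing $c_r(fs_r)$ one must allow for positions $j \equiv r$ or $r+1 \pmod n$ arbitrarily far from $[1,n]$; verifying that their potential contributions cancel to leave exactly the expected adjustment $c_{r+1}(f) + 1$ requires careful bookkeeping. Once the identity is secured, the remainder of the argument is a clean combinatorial manipulation inside the Rothe diagram.
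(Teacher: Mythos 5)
Your proposal follows essentially the same route as the paper: your pointwise identity $\delta_i(f) = c_i(f) - c'_i(f)$ is exactly the paper's vector identity $c(f) - \delta(f) = c(f^{-1})w$ (your $c'_i(f)$ equals $c_{f(i)}(f^{-1})$), proved there by the same induction on $\ell(f)$ via the transformation of codes under right multiplication by simple reflections, and your injection argument showing $c'$ vanishes at a maximum of $c$ is the paper's step (b). One small caveat: the claim that quasi-periodicity forces $j > i^* - n$ is false in general (only $j \not\equiv i^* \pmod n$ holds for an inversion), but this does not matter, since $c_j(f) \geq \max c(f) + 1$ is already a contradiction because the code is $n$-periodic and $\max c(f)$ is its global maximum.
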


\begin{proof}
The map $f \mapsto f^{-1}$ interchanges and negates the statistics $\max \delta(f)$ and $\min \delta(f)$, so it suffices to show that $\max \delta(f) = \max c(f)$. Write $f = w + n\lambda$ with $w \in \langle s_1, \ldots, s_n \rangle$ and $\lambda \in \ZZ^\ZZ$, as per Proposition~\ref{prop:av-properties}(b). We will prove
\begin{enumerate}[(a)]
\item $c(f) - \delta(f) = c(f^{-1})w$;
\item if $c_i(f) = \max c(f)$, then $c_{w(i)}(f^{-1}) = 0$.
\end{enumerate}
Indeed, suppose these hold. If $c_i(f) = \max c(f)$, then (b) and (a) imply $c_i(f) = \delta_i(f)$, so $\max \delta(f) \geq \max c(f)$. But by (a) the vector $c(f) - \delta(f)$ is nonnegative, so $\max \delta(f) \leq \max c(f)$.

For (a), induct on $\ell(f)$. If $\ell(f) = 0$ then both sides are the zero vector. Suppose $\ell(fs_i) < \ell(f)$. One checks $\delta(f) = \delta(fs_i)s_i + e_i - e_{i+1}$, which together with Proposition~\ref{prop:rothe-diagram-facts}(d) gives
\begin{equation} \label{eq:maxcode}
c(f) - \delta(f) = (c(fs_i) - \delta(fs_i))s_i + e_{i+1}.
\end{equation}
By induction, the right-hand side of \eqref{eq:maxcode} is $c(s_i f^{-1})w + e_{i+1}$, and the left-handed analogue of Proposition~\ref{prop:rothe-diagram-facts}(d) shows that this is equal to $c(f^{-1})w$.

For (b), suppose that $c_{w(i)}(f^{-1}) > 0$, so there exists $j > w(i)$ such that $f^{-1}(j) < f^{-1}(w(i))$; equivalently, such that $j > f(i - \lambda_i n)$ and $f^{-1}(j) < i - \lambda_i n$. Choose the $j$ with this property that maximizes $f^{-1}(j)$. Drawing a Rothe diagram makes clear the general fact that if $a < b$ and $f(a) > f(b)$, then $c_b(f) < c_a(f)$. For us, this implies that $c_i(f) = c_{i-\lambda_i n}(f) < c_{f^{-1}(j)}(f)$, so $c_i(f) \neq \max c(f)$.
\end{proof}

\begin{thm} \label{thm:bounded-diagram-characterization} Let $f \in \affS_n$. Then $f \in T \Bound(k,n)$ if and only if every row of $D(f)$ has at most $n-k$ cells and every column has at most $k$ cells. \end{thm}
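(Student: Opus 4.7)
The plan is to translate both sides of the biconditional into the same statement about the vector $\delta(f)$, making Lemma~\ref{lem:maxcode} the bridge between the combinatorics of $D(f)$ and the arithmetic of $\Bound(k,n)$.

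First I would rewrite $T\Bound(k,n)$ in terms of $\delta(f)$. By definition, $f \in T\Bound(k,n)$ means $f = \tau^j h$ for some $j \in \ZZ$ and $h \in \Bound(k,n)$. Since $\av$ is a group homomorphism with $\av(\tau) = 1$ and Lemma~\ref{lem:shifted-bound-k-n} gives $\av(h) = k$, the exponent $j = \av(f) - k$ is forced. Writing $\delta_i(h) = \delta_i(f) - (\av(f) - k)$, the remaining condition $h \in \Bound(n)$ becomes $0 \leq \delta_i(h) \leq n$ for all $i$, so
\begin{equation*}
f \in T\Bound(k,n) \iff \delta(f) \subseteq [\av(f) - k,\ \av(f) - k + n].
\end{equation*}

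Next I would translate the diagrammatic side. By Proposition~\ref{prop:rothe-diagram-facts}(b),(c), the row sizes of $D(f)$ are the entries of $c(f)$ and the column sizes are the entries of $c(f^{-1})$, so the hypothesis reads $\max c(f) \leq n-k$ and $\max c(f^{-1}) \leq k$. Using the semidirect product decomposition in Proposition~\ref{prop:av-properties}(c), write $f = \tau^{\av(f)} g$ with $g \in \affS_n^0$; since left multiplication by $\tau^{\av(f)}$ shifts all values by the same integer, it preserves relative order and inversion counts, giving $c(f) = c(g)$, $\max c(f^{-1}) = \max c(g^{-1})$, and $\delta(f) = \delta(g) + \av(f)$. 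Now applying Lemma~\ref{lem:maxcode} to $g \in \affS_n^0$ yields $\max c(g) = \max\delta(g)$ and $\max c(g^{-1}) = -\min\delta(g)$, so the two diagram bounds become $\max\delta(f) \leq \av(f) + n - k$ and $\min\delta(f) \geq \av(f) - k$, i.e.\ $\delta(f) \subseteq [\av(f) - k,\ \av(f) - k + n]$. This matches the characterization of $T\Bound(k,n)$ from the previous paragraph, so the two are equivalent.

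The only genuinely new input is Lemma~\ref{lem:maxcode}, which was established above; the remaining work is essentially bookkeeping, and the main thing I would need to watch is tracking sign conventions (in particular, the identity $\max c(g^{-1}) = -\min\delta(g)$ half of Lemma~\ref{lem:maxcode}) and applying the uniform shift by $\av(f)$ consistently under the reduction $f \mapsto g$.
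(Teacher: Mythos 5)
Your proof is correct and takes essentially the same route as the paper: both sides are reduced to the statement that $\delta(f)$ lies in the interval $[\av(f)-k,\ \av(f)-k+n]$, with Lemma~\ref{lem:maxcode} and the $\tau$-shift invariance of codes doing all the work. The only cosmetic difference is your normalization to $g = \tau^{-\av(f)}f \in \affS_n^0$ (so $\av(g)=0$, matching the hypothesis of Lemma~\ref{lem:maxcode} exactly), whereas the paper uses $g = \tau^{k-\av(f)}f$ with $\av(g) = k$ and absorbs the shift into the application of the lemma.
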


\begin{proof}
Set $g = \tau^{k-\av(f)}f$. Lemma~\ref{lem:shifted-bound-k-n} implies that $f \in T \Bound(k,n)$ if and only if $g \in \Bound(n)$. By definition, $g \in \Bound(n)$ if and only if $0 \leq \min \delta(g)$ and $\max \delta(g) \leq n$. Since $\av(g) = k$, Lemma~\ref{lem:maxcode} says $\min \delta(g) = k - \max c(g^{-1})$ and $\max \delta(g) = k + \max c(g)$. But $c(g) = c(f)$, so we conclude that $f \in T \Bound(k,n)$ if and only if $\max c(f^{-1}) \leq k$ and $\max c(f) \leq n-k$, which is equivalent to the theorem by Proposition~\ref{prop:rothe-diagram-facts}.
\end{proof}

The next two lemmas will allow us to deduce facts about $\affF_f$ from information about $D(f)$.

\begin{lem}[\cite{lam-affine-stanley}, Theorem 13] \label{lem:affine-stanley-bounds} Let $\lambda_{\max}$ be the partition whose columns are the sorted column lengths of $D(f)$. If $m_{\mu}$ appears in the monomial expansion of $\tilde{F}_f$ with nonzero coefficient, then $\mu \leq \lambda_{\max}$ in dominance, and $m_{\lambda_{\max}}$ appears in $\tilde{F}_f$ with coefficient $1$. \end{lem}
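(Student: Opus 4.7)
The plan is to bound the composition of factor lengths in a length-additive cyclically decreasing factorization of $\tau^{-\av(f)}f$ by tracking how each factor contributes cells to $D(f)$. Since $D(f)$ and $D(\tau^{-\av(f)}f)$ coincide in $\Cyl_{n,n}$ up to a translation that preserves column lengths, I may assume $\av(f) = 0$ and $f \in \affS_n^0$.

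The core computation is the single-step effect: if $\ell(gs_a) = \ell(g) + 1$, then $D(gs_a)$ is obtained from $D(g)$ by adjoining the single cell $(a, g(a)) \in \Cyl_{n,n}$ (other columns may have cells shuffled between rows $a$ and $a+1$ but retain their cell counts). This follows by a direct calculation from Definition~\ref{defn:rothe}. Building on this, the key structural lemma states: for a cyclically decreasing $h = s_{a_1} \cdots s_{a_r}$ with $\ell(gh) = \ell(g) + r$, the $r$ columns $(gs_{a_1} \cdots s_{a_{j-1}})(a_j) \bmod n$ (for $j = 1, \ldots, r$) of the added cells are pairwise distinct modulo $n$. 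I would prove this by induction on $r$, using the two defining features of a cyclically decreasing sequence: the residues $a_1, \ldots, a_r$ are distinct modulo $n$, and whenever $c$ and $c+1$ both appear modulo $n$, $c+1$ precedes $c$. A short case analysis based on whether consecutive $a_i$'s are adjacent modulo $n$ then rules out column collisions.

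Granting the structural lemma, in any length-additive cyclically decreasing factorization $f = f_1 \cdots f_p$, each $f_i$ contributes cells in $\ell(f_i)$ distinct columns of $D(f)$. Hence for any subset $S$ of $j$ factor indices,
\begin{equation*}
\sum_{i \in S} \ell(f_i) \;\leq\; \sum_{C} \min(|C|, j),
\end{equation*}
where $C$ ranges over the columns of $D(f)$. Choosing $S$ to be the $j$ factors of largest length makes the left side $\mu_1 + \cdots + \mu_j$ (with $\mu$ the sorted composition), while the right side equals $\lambda_{\max,1} + \cdots + \lambda_{\max,j}$ by the definition of $\lambda_{\max}$. This gives $\mu \leq \lambda_{\max}$ in dominance.

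For the coefficient of $m_{\lambda_{\max}}$ to be $1$, I would exhibit a canonical factorization realizing it: decompose $D(f)$ into horizontal strips, where the $i$-th strip consists of the topmost unused cell in every column of length $\geq i$. Each such strip corresponds to a cyclically decreasing permutation, and multiplying these strips in decreasing order of $i$ yields a length-additive factorization with factor-length composition exactly $\lambda_{\max}$. Uniqueness follows because equality in the dominance bound forces each $f_i$ (after sorting) to contribute a cell to every column of length $\geq i$, which then pins down the cells used. The principal obstacle is the structural lemma: because $\Cyl_{n,n}$ admits wrap-around identifications, ruling out column coincidences modulo $n$ is subtler than the analogous claim in the non-affine case, and this is where the cyclically decreasing hypothesis plays its essential role.
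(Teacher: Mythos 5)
This lemma is not proved in the paper at all: it is imported verbatim from Lam's work (the citation [\cite{lam-affine-stanley}, Theorem 13] \emph{is} the proof), so there is no in-paper argument to compare against and you are reconstructing the result from scratch. Your dominance half is correct and complete in outline. The single-step computation is right: if $\ell(gs_a)=\ell(g)+1$ then $D(gs_a)$ differs from $D(g)$ by the new cell $(a,g(a))$ plus shuffles of cells between rows $a$ and $a+1$ within their columns, so the column multiset gains one cell in column $g(a)$. Your structural lemma is also true, and the case analysis you gesture at does close: the value $v_j=g_{j-1}(a_j)$ selected at step $j$ is parked at position $\equiv a_j+1$ afterwards, and moving or re-selecting it would require some later letter $a_i$ with $a_i\equiv a_j$ (ruled out by distinctness) or $a_i\equiv a_j+1$ (ruled out because $a_j+1$ must precede $a_j$ in a cyclically decreasing word). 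Combined with the observation that a column receives at most one cell per factor and $|C|$ cells in total, this gives $\sum_{i\in S}\ell(f_i)\le\sum_C\min(|C|,j)=\lambda_{\max,1}+\cdots+\lambda_{\max,j}$, hence $\mu\le\lambda_{\max}$.

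The multiplicity-one half is where your sketch has genuine gaps. For existence, you assert that the ``horizontal strips'' of $D(f)$ correspond to cyclically decreasing elements and assemble into a length-additive factorization of $\tau^{-\av(f)}f$, but none of this is argued: ``topmost'' is not even well-defined on the cylinder $\Cyl_{n,n}$, and it is not clear why a transversal set of cells of $D(f)$ should be realizable as the cell set added by a single cyclically decreasing factor. For uniqueness, equality in your dominance inequalities pins down, for each $i$, only the \emph{set of columns} in which $f_i$ adds a cell (one cell in each column of length $\ge i$, none elsewhere); it does not pin down ``the cells used'' --- indeed the row of the added cell in a given column is not even an invariant, since later steps shuffle cells between rows --- and, more importantly, it does not determine $f_i$ as an element of $\affS_n^0$. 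A cyclically decreasing element is determined by its support $\{a_1,\ldots,a_r\}\subseteq\ZZ/n\ZZ$, and the passage from ``columns touched'' to ``support'' (something like $a_i\equiv (gf_i\cdots)^{-1}(v_i)-1$) is circular as stated and is precisely the missing content. So the first assertion of the lemma is proved by your argument, but the coefficient-one assertion is not; for that you would either need to supply these two steps or fall back on the citation, as the paper does.
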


Let $\omega : \Lambda \to \Lambda$ be the usual involutive ring homomorphism defined by $\omega(s_{\lambda}) = s_{\lambda^t}$, where $\lambda^t$ is the conjugate of $\lambda$. The map $\omega$ descends to a map $\Lambda^{n-k}(k) \to \Lambda^{k}(n-k)$.
\begin{lem} \label{lem:truncated-transpose} For any $0 \leq k \leq n$ and $f \in \affS_n$, we have $\omega G_{f,k} = G_{f^{-1},n-k}$. \end{lem}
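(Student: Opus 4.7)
The plan is to lift the identity to $\Lambda$ — that is, to establish $\omega \affF_f = \affF_{f^{-1}}$ as honest symmetric functions — and then descend through the truncation map. First I would reduce to the case $f \in \affS_n^0$. Since $\affF_g$ depends on $g$ only through $g_0 := \tau^{-\av(g)} g$, and since conjugation by a power of $\tau$ is an automorphism of $\affS_n^0$ preserving both Coxeter length and the cyclic-decreasing condition (it merely shifts simple reflection indices mod $n$), a short check gives $\affF_f = \affF_{f_0}$ and $\affF_{f^{-1}} = \affF_{f_0^{-1}}$; so it suffices to prove $\omega \affF_w = \affF_{w^{-1}}$ for $w \in \affS_n^0$.

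For this, I would combine two ingredients. First, the map $(w_1,\dots,w_p) \mapsto (w_p^{-1},\dots,w_1^{-1})$ is a bijection between length-additive cyclically-decreasing factorizations of $w$ and length-additive cyclically-increasing factorizations of $w^{-1}$ preserving the sequence $(\ell(w_i))$; this just uses that the inverse of a cyclically-decreasing element is cyclically-increasing with the same Coxeter length. Second, Lam's theorem in \cite{lam-affine-stanley} asserts that applying $\omega$ to the cyclically-decreasing formula defining $\affF_w$ yields the analogous cyclically-increasing formula. Stringing the two together gives $\omega \affF_w = \affF_{w^{-1}}$ — the affine analogue of the classical $\omega F_w = F_{w^{-1}}$, which reflects the Rothe diagram transpose $D(w^{-1}) = D(w)^t$ of Proposition~\ref{prop:rothe-diagram-facts}(b) through Example~\ref{ex:schur-examples}(c).

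The descent step is then formal: $\omega$ sends $s_\lambda$ with $\lambda \not\subseteq (n-k)^k$ to $s_{\lambda^t}$ with $\lambda^t \not\subseteq k^{n-k}$, so it carries $\ker(\trunc_{k,n-k})$ into $\ker(\trunc_{n-k,k})$ and induces exactly the involution $\omega : \Lambda^{n-k}(k) \to \Lambda^k(n-k)$ recalled before the lemma. Consequently $\omega \circ \trunc_{k,n-k} = \trunc_{n-k,k} \circ \omega$ on $\Lambda$, so applying this to $\affF_f$ together with the lifted identity yields
\[\omega G_{f,k} \;=\; \trunc_{n-k,k}(\omega \affF_f) \;=\; \trunc_{n-k,k}\affF_{f^{-1}} \;=\; G_{f^{-1},n-k}.\]
The main obstacle is the lifted identity $\omega \affF_w = \affF_{w^{-1}}$; if this is cited directly from \cite{lam-affine-stanley}, the proof is essentially a few lines, and otherwise one has to execute the cyclically-decreasing/cyclically-increasing bijection argument above in full.
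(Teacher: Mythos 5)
The plan to lift the identity to $\Lambda$ and then truncate does not work, because the lifted identity $\omega \affF_w = \affF_{w^{-1}}$ is \emph{false} for general affine $w$. This is precisely the subtlety that makes Lemma~\ref{lem:truncated-transpose} nontrivial. The affine Stanley symmetric functions for $\affS_n$ all lie in the proper subspace $\Lambda^{(n)} = \langle m_\lambda : \lambda_1 < n\rangle \subseteq \Lambda$, and the classical involution $\omega$ does not preserve $\Lambda^{(n)}$; hence $\omega \affF_w$ is generally not an affine Stanley symmetric function at all. You can see this concretely from the example in Section~2: $\affF_{5274} = s_{22} + s_{211} - s_{1111}$, so $\omega(\affF_{5274}) = s_{22} + s_{31} - s_{4}$, which has monomial term $-m_{4}$ and therefore does not lie in $\Lambda^{(4)}$; it cannot equal $\affF_g$ for any $g \in \affS_4$.

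Your second ``ingredient'' is where this goes astray. While cyclically-decreasing factorizations of $w$ are indeed in bijection (via reversing and inverting) with cyclically-increasing factorizations of $w^{-1}$, the generating function for cyclically-increasing factorizations of $w^{-1}$ is not $\omega$ applied to $\affF_{w^{-1}}$ and is not related to $\affF_{w^{-1}}$ by $\omega$. What Lam actually proves (and what the paper cites from \cite[Theorem 15 and Proposition 17]{lam-affine-stanley}) is $\affF_{f^{-1}} = \omega^+ \affF_f$, where $\omega^+$ is a \emph{different} involution of $\Lambda^{(n)}$, defined as the adjoint of $\omega$ with respect to the Hall pairing restricted to $\Lambda_{(n)} \times \Lambda^{(n)}$. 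The real work of the paper's proof --- and the part entirely missing from your argument --- is establishing the commutation $\omega \circ \trunc_{k,n-k} = \trunc_{n-k,k} \circ \omega^+$ on $\Lambda^{(n)}$, which is done by an explicit Schur-coefficient computation showing that both sides send $s_\lambda$ to $\bar s_{\lambda^t}$ when $\lambda \subseteq (n-k)^k$ and to $0$ otherwise. Your ``descent step'' is correct but vacuous as written, since it presupposes the false identity. To repair the argument you would need to replace $\omega$ by $\omega^+$ in the lift and then supply the omitted compatibility computation.
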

\begin{proof}
Define two subspaces of $\Lambda$:
\begin{align*}
&\Lambda^{(n)} = \langle m_{\lambda} : \lambda_1 < n \rangle = \langle s_{\lambda} : \lambda_1 < n \rangle\\
&\Lambda_{(n)} = \langle h_{\lambda} : \lambda_1 < n \rangle = \langle e_{\lambda} : \lambda_1 < n \rangle.
\end{align*}
Since $m_{\lambda}$ and $h_{\lambda}$ are dual under the usual Hall inner product $\langle\, ,\,\rangle$ on $\Lambda$, the restriction of this inner product to $\Lambda_{(n)} \times \Lambda^{(n)}$ remains a perfect pairing, and so there is a unique linear involution $\omega^+ : \Lambda^{(n)} \to \Lambda^{(n)}$ defined by the property $\langle \omega(f), \omega^+(g) \rangle = \langle f, g \rangle$ where $f \in \Lambda_{(n)}$ and $g \in \Lambda^{(n)}$. Evidently $\affF_f \in \Lambda^{(n)}$, and \cite[Theorem 15 and Proposition 17]{lam-affine-stanley} prove that $\affF_{f^{-1}} = \omega^+ \affF_f$.

It suffices to see that $\omega \trunc_{k,n-k} = \trunc_{n-k,k} \omega^+$. Take $\mu \subseteq (k^{n-k})$ and $\lambda$ with $\lambda_1 < n$. The Jacobi-Trudi formula implies $s_{\mu} \in \Lambda_{(\mu_1 + \ell(\mu))} \subseteq \Lambda_{(n)}$. Then $\langle s_{\mu}, \omega^+(s_{\lambda}) \rangle = \langle s_{\mu^t}, s_{\lambda} \rangle = \delta_{\lambda, \mu^t}$.
It follows that 
\begin{equation*}
\trunc_{n-k,k} \omega^+(s_{\lambda}) = \begin{cases}
s_{\lambda^t} & \text{if $\lambda \subseteq [k] \times [n-k]$}\\
0 & \text{otherwise}
\end{cases}
\end{equation*}
as desired.
\end{proof}

\begin{thm} \label{thm:nonzero-truncation} $G_{f,k} \neq 0$ if and only if $f \in T \Bound(k,n)$. \end{thm}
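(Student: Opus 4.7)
The ``if'' direction needs nothing new: if $f = \tau^j g$ with $g \in \Bound(k,n)$, then $\affF_f = \affF_g$, so $G_{f,k} = G_{g,k}$ is nonzero by Theorem~\ref{thm:schur-positive-truncation}. The content of the theorem is the converse, which I would handle by first isolating the following auxiliary claim: \emph{if some column of $D(f)$ has more than $k$ cells, then $\affF_f(x_1,\ldots,x_k) = 0$ already in $\Lambda(k)$} (and hence, a fortiori, $G_{f,k} = 0$).

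Granting the claim, Theorem~\ref{thm:bounded-diagram-characterization} splits the hypothesis $f \notin T\Bound(k,n)$ into two cases. If some column of $D(f)$ has more than $k$ cells, the claim applies directly. If instead some row of $D(f)$ has more than $n-k$ cells, then $D(f^{-1}) = D(f)^t$ by Proposition~\ref{prop:rothe-diagram-facts}(b), so $D(f^{-1})$ has a column with more than $n-k$ cells; applying the claim to $f^{-1}$ with $n-k$ in place of $k$ yields $G_{f^{-1},n-k} = 0$. Lemma~\ref{lem:truncated-transpose} then gives $\omega G_{f,k} = G_{f^{-1},n-k} = 0$, and since $\omega$ induces an isomorphism $\Lambda^{n-k}(k) \to \Lambda^{k}(n-k)$ (its square on $\Lambda$ is the identity), this forces $G_{f,k} = 0$.

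For the auxiliary claim I would invoke Lemma~\ref{lem:affine-stanley-bounds}. Let $\lambda_{\max}$ be the partition whose columns are the sorted column lengths of $D(f)$; by hypothesis $\ell(\lambda_{\max}) > k$. The lemma says every partition $\mu$ appearing in the monomial expansion of $\affF_f$ satisfies $\mu \leq \lambda_{\max}$ in dominance. All such $\mu$ share the common size $\ell(f) = |\lambda_{\max}|$, and conjugation reverses dominance on partitions of fixed size, so $\mu^t \geq \lambda_{\max}^t$ and in particular $\ell(\mu) = \mu_1^t \geq (\lambda_{\max}^t)_1 = \ell(\lambda_{\max}) > k$. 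Thus $m_\mu(x_1,\ldots,x_k) = 0$ for every $\mu$ appearing in $\affF_f$, giving $\affF_f(X_k) = 0$.

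I do not anticipate any serious obstacle. The only nonobvious ingredient is the order-reversing property of conjugation on same-size dominance, which is classical; everything else is packaging the already-established Lemma~\ref{lem:affine-stanley-bounds} and Lemma~\ref{lem:truncated-transpose} together with Proposition~\ref{prop:rothe-diagram-facts}(b) into a symmetric duality that reduces the row case to the column case.
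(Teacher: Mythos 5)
Your proposal is correct and matches the paper's own proof essentially step for step: the "if" direction via Theorem~\ref{thm:schur-positive-truncation}, the case split via Theorem~\ref{thm:bounded-diagram-characterization}, the column case via Lemma~\ref{lem:affine-stanley-bounds} together with the order-reversal of conjugation on dominance (which forces $\ell(\mu) \geq \ell(\lambda_{\max}) > k$), and the row case reduced to the column case for $f^{-1}$ via Proposition~\ref{prop:rothe-diagram-facts}(b) and Lemma~\ref{lem:truncated-transpose}. The only cosmetic difference is that you observe the stronger vanishing $\affF_f(X_k) = 0$ in $\Lambda(k)$, whereas the paper only records membership in $\ker \trunc_{k,n-k}$.
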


\begin{proof}
If $f \in T\Bound(k,n)$, then $G_f \neq 0$ by Theorem~\ref{thm:schur-positive-truncation}. Conversely, suppose $f \notin T\Bound(k,n)$. By Theorem~\ref{thm:bounded-diagram-characterization}, there is a column of $D(f)$ with more than $k$ cells, or a row with more than $n-k$ cells. Assume for the moment that the first case holds. Letting $\lambda_{\max}$ be as in Lemma~\ref{lem:affine-stanley-bounds}, we have $\ell(\lambda_{\max}) > k$, so Lemma~\ref{lem:affine-stanley-bounds} implies
\begin{equation*}
\affF_f \in \langle m_{\mu} : \mu \leq \lambda_{\max} \rangle = \langle s_{\mu} : \mu \leq \lambda_{\max} \rangle \subseteq \langle s_{\mu} : \lambda(\mu) > k \rangle \subseteq \ker \trunc_{k,n-k}.
\end{equation*}

Now suppose $D(f)$ has a row with more than $n-k$ cells, or equivalently that $D(f^{-1})$ has a column with more than $n-k$ cells. By the previous paragraph, $G_{f^{-1},n-k} = 0$, and Lemma~\ref{lem:truncated-transpose} then implies that $G_{f,k} = 0$ as well.
\end{proof}

\section{Recurrences for affine Stanley symmetric functions} \label{sec:recurrences}

Our proof of Theorem~\ref{thm:main} will be inductive, using a recursion which arises from the following affine Chevalley formula. Given integers $i < j$ and $r$, let $c_{ij}^r$ be the number of times that $r$ occurs in $[i,j)$ modulo $n$.

\begin{thm}[\cite{affine-insertion}] \label{thm:affine-chevalley} For any $f \in \affS_n$ and $r \in \ZZ$,
\begin{equation*}
s_1 \affF_f = \sum_{\substack{1 \leq i \leq n \\ f \lessdot ft_{ij}}} c_{ij}^r \affF_{ft_{ij}}.
\end{equation*}
\end{thm}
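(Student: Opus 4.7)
The approach is to work in the affine nilCoxeter algebra $\widehat{\mathcal{A}}$ of type $\widetilde{A}_{n-1}$, following Lam's framework. Let $\mathbf{A}_0, \ldots, \mathbf{A}_{n-1}$ be the generators, satisfying $\mathbf{A}_i^2 = 0$ and the affine braid relations, so that $\mathbf{A}_w$ is well-defined for each $w \in \affS_n^0$. Introduce the cyclically decreasing generating element
\[
\mathbf{k}(x) = \sum_{w \text{ cyc. dec.}} x^{\ell(w)} \mathbf{A}_w \in \widehat{\mathcal{A}}[[x]].
\]
The first step is to establish the master identity $\sum_{f \in \affS_n^0} \affF_f \mathbf{A}_f = \prod_{i \geq 1} \mathbf{k}(x_i)$, whose key input is the commutativity $\mathbf{k}(x)\mathbf{k}(y) = \mathbf{k}(y)\mathbf{k}(x)$. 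I would prove commutativity by a direct bijective argument on length-$2$ cyclically decreasing factorizations, matching pairs factor-by-factor; the master identity then follows from expanding the infinite product and comparing with the definition of $\affF_f$.

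Next, I would compute $h_1 \cdot \sum_f \affF_f \mathbf{A}_f$ in two ways. The degree-$k$ part $\mathbf{h}_k$ of $\mathbf{k}(x)/x^k$ lies in the commutative Fomin-Stanley subalgebra of $\widehat{\mathcal{A}}$, which is canonically isomorphic to the quotient $\Lambda_{(n)}/I_n$; under this isomorphism, $h_1$ lifts to a central element $\mathbf{p}_{1,r}$, where the parameter $r \in \ZZ$ reflects the ambiguity in choosing such a lift. Using the master identity, $h_1 \affF_f$ is extracted as the coefficient of $\mathbf{A}_f$ in $\mathbf{p}_{1,r} \cdot \sum_g \affF_g \mathbf{A}_g = \sum_g \affF_g \cdot \mathbf{p}_{1,r} \mathbf{A}_g$. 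Expanding $\mathbf{p}_{1,r} \mathbf{A}_g$ as $\sum_{(i,j)} c_{ij}^r \mathbf{A}_{g t_{ij}}$, summed over covers $g \lessdot g t_{ij}$ with $i \in [n]$, and comparing coefficients yields the claimed formula.

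The main obstacle is the structural identity $\mathbf{p}_{1,r} \mathbf{A}_g = \sum_{(i,j)} c_{ij}^r \mathbf{A}_{g t_{ij}}$: the genuine affine Monk's formula in the nilCoxeter algebra. This is not immediate because multiplication by the naive candidate $\mathbf{A}_r$ only produces adjacent transpositions $t_{j,j+1}$, whereas the stated formula requires all long affine transpositions. The correct lift $\mathbf{p}_{1,r}$ must encode the root-system decomposition of the affine positive root $\alpha_{ij} = \alpha_i + \alpha_{i+1} + \cdots + \alpha_{j-1}$ (indices taken modulo $n$), in which the simple root $\alpha_r$ appears exactly $c_{ij}^r$ times. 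Mirroring the classical Chevalley formula in the Kac-Moody flag variety, this decomposition supplies the correct structure constants, and the combinatorial content becomes the assertion that each affine Bruhat cover $g \lessdot g t_{ij}$ admits exactly $c_{ij}^r$ "cyclically decreasing insertions at residue $r$" producing it. Verifying this multiplicity count, either via an explicit construction of $\mathbf{p}_{1,r}$ or via Lam-Shimozono's strong strip combinatorics, is the technical heart of the proof.
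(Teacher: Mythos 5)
This theorem is not proved in the paper; it is cited from \cite{affine-insertion} (Lam--Lapointe--Morse--Shimozono), where it is established via the combinatorics of strong strips. So there is no ``paper proof'' to compare against; I can only assess your argument on its own terms, and it has a fundamental gap.

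The structural identity you rest everything on, $\mathbf{p}_{1,r}\,\mathbf{A}_g = \sum_{(i,j)} c_{ij}^r \mathbf{A}_{gt_{ij}}$ over covers $g \lessdot gt_{ij}$, cannot hold for any fixed element $\mathbf{p}_{1,r}$ of the affine nilCoxeter algebra (or its completion). Multiplication in $\widehat{\mathcal{A}}$ is length-additive by definition: $\mathbf{A}_v\mathbf{A}_g = \mathbf{A}_{vg}$ if $\ell(vg) = \ell(v)+\ell(g)$ and $0$ otherwise. A Bruhat cover $g \lessdot gt_{ij}$ with $t_{ij}$ a non-simple reflection has $\ell(gt_{ij}) = \ell(g)+1 < \ell(g)+\ell(t_{ij})$, so it is \emph{not} a length-additive product, and there is no $v$ with $\mathbf{A}_v\mathbf{A}_g = \mathbf{A}_{gt_{ij}}$. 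Left multiplication by a fixed element can only reach the left weak order cone above $g$; the Chevalley formula ranges over all strong Bruhat covers $ft_{ij}$ with $i \in [n]$, which is strictly larger. This is not a technicality you can patch; it is the reason the nilCoxeter algebra computes \emph{weak}-order and Pieri-type data, while Chevalley/Monk formulas live in the Kostant--Kumar nilHecke algebra (where one has operators $A_\alpha$ for every positive root $\alpha$ and length-additivity is not imposed) or are proved directly in the geometry of the affine flag variety. Your own remark that ``multiplication by $\mathbf{A}_r$ only produces adjacent transpositions'' is a symptom of this; the fix is not a cleverer lift of $h_1$ inside $\widehat{\mathcal{A}}$, because no element of $\widehat{\mathcal{A}}$ can do what you need.

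Two secondary issues. First, the parameter $r$ is misattributed: the map from $\Lambda$ to the Fomin--Stanley subalgebra sends $h_1$ canonically to $\mathbf{h}_1 = \sum_{i}\mathbf{A}_i$, with no $r$-indexed family of lifts. In the theorem, $r$ parametrizes the choice of a fundamental weight $\omega_r$ in the Chevalley pairing $\langle \omega_r, \alpha_{ij}^\vee\rangle = c_{ij}^r$; the fact that different choices of $r$ give the same product $s_1\affF_f$ is not automatic from any ``well-definedness of lifts'' argument, it is precisely the content of Corollary~\ref{cor:affine-transitions} (the transition formula), obtained by subtracting two instances of the theorem. Second, even granting a version of the structural identity, the bookkeeping in your proposal extracts a formula for $s_1\affF_f$ in terms of $\affF_g$ for $g \lessdot f$ (covers going \emph{down}), not for $\affF_{ft_{ij}}$ with $f \lessdot ft_{ij}$ as in the statement; the master identity $\sum_f \affF_f \mathbf{A}_f = \prod_i \mathbf{k}(x_i)$ relates left nilCoxeter multiplication to a coproduct/skewing operation on the symmetric function side, not to multiplication by $s_1$. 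If you want to pursue this, the honest route is to replicate the strong strip argument of \cite{affine-insertion} (or, for Corollary~\ref{cor:affine-transitions}, the bijective proof via affine Little bumps in \cite{affine-little-bump}) rather than to look for a central element of $\widehat{\mathcal{A}}$.
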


Define sets
\begin{align*}
&\Phi^+(f,r) = \{ft_{rj} : \text{$r < j$ and $f \lessdot ft_{rj}$}\}\\
&\Phi^-(f,r) = \{ft_{ir} : \text{$i < r$ and $f \lessdot ft_{ir}$}\}.
\end{align*}
Since $c_{ij}^r - c_{ij}^{r-1}$ equals $\pm 1$ if $ft_{ij} \in \Phi^{\pm}(f,r)$ and $0$ otherwise, subtracting two instances of Theorem~\ref{thm:affine-chevalley} gives the next corollary.
\begin{cor} \label{cor:affine-transitions} For any $f \in \affS_n$ and $r \in \ZZ$,
\begin{equation*}
\sum_{g \in \Phi^-(f,r)} \affF_g = \sum_{g \in \Phi^+(f,r)} \affF_g.
\end{equation*}
\end{cor}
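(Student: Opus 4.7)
The plan is to apply Theorem~\ref{thm:affine-chevalley} twice, with parameters $r$ and $r-1$, and subtract. Since the left-hand side $s_1 \affF_f$ does not depend on $r$, this immediately gives
\begin{equation*}
0 \;=\; \sum_{\substack{1 \leq i \leq n,\, j > i \\ f \lessdot ft_{ij}}} \bigl(c_{ij}^r - c_{ij}^{r-1}\bigr) \affF_{ft_{ij}},
\end{equation*}
reducing the problem to computing $c_{ij}^r - c_{ij}^{r-1}$ and identifying the resulting nonzero terms with $\Phi^{\pm}(f,r)$.

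The first step will be the combinatorial identity $c_{ij}^r - c_{ij}^{r-1} = +1$ if $r \equiv i \pmod n$, $-1$ if $r \equiv j \pmod n$, and $0$ otherwise. To see this I would write $j - i = qn + s$ with $0 \leq s < n$: the interval $[i,j)$ consists of $q$ full periods of length $n$ together with a remainder arc hitting each residue in $\{i, i+1, \ldots, i+s-1\} \pmod n$ exactly once. Thus $r \mapsto c_{ij}^r$ takes the value $q+1$ on this arc and $q$ elsewhere, and its discrete difference jumps by $+1$ at the entry residue $i$ and $-1$ at the exit residue $j$. The degenerate case $s = 0$ is excluded by the hypothesis $i \not\equiv j \pmod n$ required for $t_{ij}$ to be defined.

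The final step is to match the nonzero contributions with $\Phi^{\pm}(f, r)$. Both sides of the corollary depend only on $r \bmod n$, so I may reduce to $r \in [1, n]$. Index pairs with coefficient $+1$ satisfy $i = r$, $j > r$, and $f \lessdot ft_{rj}$, which is precisely the defining condition of $\Phi^+(f, r)$. Pairs with coefficient $-1$ have $j \in \{r, r+n, r+2n, \ldots\}$; writing $k = (j-r)/n$ and using $t_{ij} = t_{i-kn,\, j-kn}$, each such $(i,j)$ corresponds to an element $ft_{i',r} \in \Phi^-(f,r)$ with $i' = i - kn < r$. The unique decomposition of any integer $i' < r$ as $i - kn$ with $i \in [1,n]$ and $k \geq 0$ shows this matching is bijective, and the covering condition $f \lessdot ft_{ij}$ transports across. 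The trickiest point will be this indexing reconciliation, since $\Phi^-(f,r)$ allows $i' \leq 0$ while Chevalley's sum restricts to $i \in [1,n]$; periodicity of the affine transpositions bridges the gap.
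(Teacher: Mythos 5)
Your proposal is correct and takes essentially the same approach as the paper, which condenses the entire argument into one sentence: "Since $c_{ij}^r - c_{ij}^{r-1}$ equals $\pm 1$ if $ft_{ij} \in \Phi^{\pm}(f,r)$ and $0$ otherwise, subtracting two instances of Theorem~\ref{thm:affine-chevalley} gives the next corollary." Your computation of $c_{ij}^r - c_{ij}^{r-1}$ via residues and your reconciliation of the Chevalley sum's indexing ($i \in [n]$) with the periodicity of $t_{ij}$ simply spell out the details the paper leaves implicit.
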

We call Corollary~\ref{cor:affine-transitions} the \emph{(affine) transition formula}, since it is an analogue of the transition formula of Lascoux and Sch\"utzenberger \cite{lascouxschutzenbergertree} for Schubert polynomials and ordinary Stanley symmetric functions; for a combinatorial proof of Corollary~\ref{cor:affine-transitions}, see \cite{affine-little-bump}.


Next we record truncated versions of the preceding two identities. For $r \in \ZZ$, define sets
\begin{gather*}
B\Phi^{\pm}(f,r) = \Phi^{\pm}(f,r) \cap \Bound(k,n)\\
\BCov_r(f) = \{(i,j) \in [n] \times \ZZ : \text{$c_{ij}^r \neq 0$ and $f \lessdot ft_{ij} \in \Bound(n)$} \}.
\end{gather*}
\begin{lem} \label{lem:bounded-cover} If $f \in \Bound(n)$ and $(i,j) \in \BCov_r(f)$ for some $r$, then $j-i < n$. \end{lem}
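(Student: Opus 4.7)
The strategy here is a straightforward unpacking of the boundedness condition on both $f$ and $ft_{ij}$. The hypothesis $c_{ij}^r \neq 0$ turns out to be irrelevant to the claimed inequality; what carries the proof is that $ft_{ij}$ itself lies in $\Bound(n)$.

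First I would recall that by the definition of $t_{ij}$ (given just before the statement of $k$-Bruhat order), we have $i < j$ with $i \not\equiv j \pmod{n}$. In particular, $j - i \neq n$. It therefore suffices to prove the weak inequality $j - i \leq n$.

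Next I would compute the effect of $t_{ij}$ on $f$: by definition $ft_{ij}(j) = f(i)$. Since $ft_{ij} \in \Bound(n)$, Definition~\ref{defn:bounded} gives $j \leq ft_{ij}(j)$, that is, $j \leq f(i)$. Since $f \in \Bound(n)$, the same definition applied to $f$ yields $f(i) \leq i + n$. Chaining these,
\begin{equation*}
j \leq f(i) \leq i + n,
\end{equation*}
so $j - i \leq n$, and combined with $j - i \neq n$ we conclude $j - i < n$.

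There is no real obstacle here — the lemma is essentially a bookkeeping observation that the two-sided bound $i \leq f(i) \leq i + n$, imposed simultaneously on $f$ and on its swap at positions $i$ and $j$, already forces $|j - i| \leq n$. The mild subtlety is just remembering that $t_{ij}$ is only defined for $i \not\equiv j \pmod n$, which promotes the weak inequality to the strict one stated.
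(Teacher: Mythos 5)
Your proof is correct and uses the same ingredients as the paper's: the two-sided boundedness conditions on $f$ and $ft_{ij}$, plus the observation that $t_{ij}$ requires $i\not\equiv j\pmod n$. The only cosmetic difference is that you evaluate $ft_{ij}$ at position $j$ (getting $j\le ft_{ij}(j)=f(i)\le i+n$ directly), while the paper evaluates at position $i$ and argues by contradiction (if $j-i>n$ then $ft_{ij}(i)=f(j)\ge j>i+n$); these are mirror images of the same computation.
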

\begin{proof}
If $j-i > n$, then $(ft_{ij})(i) = f(j) \geq j > i+n$, so $ft_{ij}$ is not bounded. No inversion $(i,j)$ of an affine permutation can have $j \equiv i \pmod{n}$, so we conclude $j-i < n$.
\end{proof}

\begin{prop} \label{prop:recurrences} For $f \in \Bound(k,n)$ and any $r \in \ZZ$, the following two identities hold in $\Lambda^{n-k}(k)$:
\begin{gather}
s_1 G_f = \sum_{(i,j) \in \BCov_r(f)} G_{ft_{ij}} \tag{a}\\
\sum_{g \in B\Phi^-(f,r)} G_{g} = \sum_{g \in B\Phi^+(f,r)} G_{g} \tag{b}.
\end{gather}
\end{prop}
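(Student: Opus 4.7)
The plan is to derive both identities by applying the truncation map $\trunc_{k,n-k} \colon \Lambda \to \Lambda^{n-k}(k)$ to their unbounded counterparts (Theorem~\ref{thm:affine-chevalley} for (a), Corollary~\ref{cor:affine-transitions} for (b)) and using Theorem~\ref{thm:nonzero-truncation} to identify which terms survive. The key elementary observation is the following: if $f \in \Bound(k,n) \subseteq \tau^k \affS_n^0$ and $t_{ij} \in \affS_n^0$, then $ft_{ij} \in \tau^k \affS_n^0$ since $\affS_n^0$ is a subgroup. By the semidirect product decomposition $\affS_n = \affS_n^0 \rtimes T$, the $\tau$-exponent of $ft_{ij}$ is uniquely determined, so $ft_{ij} \in T\Bound(k,n)$ forces $ft_{ij} \in \Bound(k,n)$; by Lemma~\ref{lem:shifted-bound-k-n} this is equivalent to $ft_{ij} \in \Bound(n)$.

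For (a), apply $\trunc_{k,n-k}$ to Theorem~\ref{thm:affine-chevalley}. The left side becomes $s_1 G_f$ because truncation is a ring homomorphism. On the right side, Theorem~\ref{thm:nonzero-truncation} combined with the observation above shows that $G_{ft_{ij}}$ survives exactly when $ft_{ij} \in \Bound(n)$. The surviving index set is therefore the set of pairs $(i,j)$ with $1 \leq i \leq n$, $f \lessdot ft_{ij} \in \Bound(n)$, and $c_{ij}^r \neq 0$, which is precisely $\BCov_r(f)$. Moreover, Lemma~\ref{lem:bounded-cover} gives $j - i < n$ for every such $(i,j)$, so $r$ can occur at most once in $[i,j) \bmod n$, forcing $c_{ij}^r = 1$. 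This collapses the coefficients and produces identity (a).

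For (b), apply $\trunc_{k,n-k}$ to Corollary~\ref{cor:affine-transitions}. Each $g \in \Phi^{\pm}(f,r)$ is of the form $ft_{ij}$, so the same reasoning gives $G_g = 0$ unless $g \in \Bound(k,n)$, i.e.\ unless $g \in B\Phi^\pm(f,r) = \Phi^\pm(f,r) \cap \Bound(k,n)$. Identity (b) follows immediately.

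The only nontrivial point is the $c_{ij}^r$ coefficient in (a), which a priori could be any nonnegative integer but is pinned to $\{0,1\}$ by the boundedness constraint via Lemma~\ref{lem:bounded-cover}; the rest is bookkeeping about which terms are annihilated by the truncation, controlled by Theorem~\ref{thm:nonzero-truncation}.
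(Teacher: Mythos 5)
Your proposal is correct and follows essentially the same route as the paper: truncate the affine Chevalley formula, use Theorem~\ref{thm:nonzero-truncation} together with the observation that $ft_{ij}$ stays in $\tau^k\affS_n^0$ (so $T\Bound(k,n)$ membership collapses to $\Bound(k,n)$ membership via Lemma~\ref{lem:shifted-bound-k-n}), and pin $c_{ij}^r=1$ with Lemma~\ref{lem:bounded-cover}. The only cosmetic difference is that the paper obtains (b) by differencing two instances of (a), whereas you truncate the already-differenced Corollary~\ref{cor:affine-transitions}; these are equivalent.
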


\begin{proof}
Part (b) follows from part (a) just as Corollary~\ref{cor:affine-transitions} follows from Theorem~\ref{thm:affine-chevalley}. As for part (a), Theorems~\ref{thm:affine-chevalley} and \ref{thm:nonzero-truncation} show that
\begin{equation*}
s_1 G_f = \sum_{(i,j)} c^r_{ij} G_{ft_{ij}}
\end{equation*}
where $(i,j)$ runs over $i < j$ such that $f \lessdot ft_{ij}$ and $ft_{ij} \in T\Bound(k,n)$. Since $\av(f) = k$, Lemma~\ref{lem:shifted-bound-k-n} shows that in fact $ft_{ij} \in T\Bound(k,n)$ is equivalent to $ft_{ij} \in \Bound(k,n)$, so we can assume $(i,j) \in \BCov_r(f)$. Now Lemma~\ref{lem:bounded-cover} implies $c^{ij}_r = 1$.
\end{proof}

The \emph{maximal inversion} of $f \in \Bound(k,n)$ is the lexicographically maximal pair $(r,s)$ with $1 \leq r < s \leq n$ and $f(r) > f(s)$. If no such pair exists, we say $f$ is \emph{$0$-Grassmannian}. Equivalently, the descent set $\Des(f) = \{i \in \ZZ : f(i) > f(i+1)\}$ is contained in $0+n\ZZ$.

\begin{defn}
The \emph{bounded affine Lascoux-Sch\"utzenberger (L-S) tree} of $f$ is a rooted tree whose vertices are labeled by elements of $\Bound(k,n)$ and whose edges are labeled by $+$ or $-$, defined as follows:
\begin{itemize}
\item The root of the tree is $f$;
\item If a vertex $g$ is $0$-Grassmannian, then $g$ has no children;
\item If $g$ is not $0$-Grassmannian, then $g$ has children $B\Phi^{-}(ft_{rs}, r)$ (via edges labeled ${+}$) and $B\Phi^{+}(ft_{rs},r) \setminus \{f\}$ (via edges labeled ${-}$).
\end{itemize}
\end{defn}
The maximality of $(r,s)$ implies that $f \lessdot ft_{rs}$, and hence $f \in B\Phi^+(ft_{rs}, r)$. Applying Proposition~\ref{prop:recurrences} to $ft_{rs}$ and solving for $G_f$ on the right-hand side yields the next proposition.

\begin{prop} \label{prop:LS-recurrence}
If $g$ is any non-leaf vertex of a bounded affine L-S tree, then $G_g = \sum_{h^+} G_{h^+} - \sum_{h^-} G_{h^-}$ where $h^+$ and $h^-$ run over the children of $g$ connected by edges labeled ${+}$ and ${-}$ respectively.
\end{prop}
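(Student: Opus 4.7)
The plan is to derive this identity as an immediate consequence of the truncated transition formula Proposition~\ref{prop:recurrences}(b), applied to $f := gt_{rs}$ with parameter $r$, where $(r,s)$ is the maximal inversion of $g$. By the tree's construction, the $+$-children of $g$ are exactly $B\Phi^-(gt_{rs}, r)$ and the $-$-children are $B\Phi^+(gt_{rs}, r) \setminus \{g\}$, so the desired formula is
\begin{equation*}
G_g \;=\; \sum_{h \in B\Phi^-(gt_{rs}, r)} G_h \;-\; \sum_{h \in B\Phi^+(gt_{rs}, r) \setminus \{g\}} G_h.
\end{equation*}

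First I would verify the two prerequisites announced just above the proposition statement, namely that $gt_{rs} \in \Bound(k,n)$ and $g \in B\Phi^+(gt_{rs}, r)$. The lex-maximality of $(r,s)$ has sharp consequences for values between $r$ and $s$: for any $r < i < s$ one must have $g(i) < g(s)$, because $g(i) > g(r)$ would make $(i,s)$ an inversion strictly larger than $(r,s)$ in lex order (as $g(i) > g(r) > g(s)$), while $g(s) \leq g(i) < g(r)$ would force $(r,i)$ or $(i,s)$ to be a competing inversion contradicting maximality. This eliminates all "middle" indices in the standard covering criterion and gives $\ell(gt_{rs}) = \ell(g) - 1$; hence $gt_{rs} \lessdot g$ in Bruhat order, and so $g = (gt_{rs}) t_{rs}$ belongs to $\Phi^+(gt_{rs}, r)$. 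The same bound $g(i) < g(s) < g(r) \leq r+n$ together with boundedness of $g$ yields $r \leq g(s) \leq r+n$ and $s \leq g(r) \leq s+n$, so $gt_{rs} \in \Bound(n)$; and since $t_{rs} \in \affS_n^0$, the homomorphism $\av$ of Proposition~\ref{prop:av-properties}(c) gives $\av(gt_{rs}) = \av(g) = k$, so $gt_{rs} \in \Bound(k,n)$ by Lemma~\ref{lem:shifted-bound-k-n}.

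With these facts in place I would apply Proposition~\ref{prop:recurrences}(b) to $f = gt_{rs}$ and $r$ to obtain $\sum_{h \in B\Phi^-(gt_{rs}, r)} G_h = \sum_{h \in B\Phi^+(gt_{rs}, r)} G_h$, then separate the term $G_g$ on the right-hand side (permitted by the previous paragraph) and transpose it to the left. The resulting identity is exactly the one displayed above.

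No substantial obstacle is expected, since all the real work is packaged in Proposition~\ref{prop:recurrences}(b). The only point requiring attention is the short combinatorial verification, in the middle paragraph, that the lex-maximality of $(r,s)$ simultaneously places $gt_{rs}$ in $\Bound(k,n)$ and makes it covered by $g$ in Bruhat order.
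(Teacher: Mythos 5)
Your proposal is correct and follows the same route as the paper: invoke Proposition~\ref{prop:recurrences}(b) with $f = gt_{rs}$ and parameter $r$, observe that $g \in B\Phi^+(gt_{rs},r)$, and solve for $G_g$. The paper leaves implicit the verification that $gt_{rs} \in \Bound(k,n)$ and the cover relation $gt_{rs} \lessdot g$; you spell those out, and your reasoning is sound (one tiny slip: in the case $g(s) < g(i) < g(r)$ with $r < i < s$, the inversion contradicting lex-maximality of $(r,s)$ is $(i,s)$, not $(r,i)$, since $(r,i) <_{\lex} (r,s)$; you do also name $(i,s)$, so the argument goes through).
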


\begin{thm} \label{thm:finite-LS-tree}
The bounded affine L-S tree of any $f \in \Bound(k,n)$ is finite.
\end{thm}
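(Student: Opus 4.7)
The plan has two stages: first show that every vertex of the tree lies in a finite set, and then exhibit a strict monovariant along every tree edge.

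For the finite-vertex-set stage, every edge $g \to h$ in the tree preserves Coxeter length: writing $g' = gt_{rs}$ where $(r,s)$ is the maximal inversion of $g$, we have $g' \lessdot g$, and any child $h$ has the form $g't_{ab}$ with $g' \lessdot h$, so $\ell(h) = \ell(g') + 1 = \ell(g)$. Since $\Bound(k,n)$ is finite (each $f \in \Bound(k,n)$ is determined by $(f(1),\ldots,f(n))$, each lying in the window $[i,i+n]$), every tree vertex lies in the finite set $S = \{h \in \Bound(k,n) : \ell(h) = \ell(f)\}$.

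For the monovariant, the natural first candidate is the lex-maximum inversion $(r_g, s_g) \in [1,n]^2$, with $(0,0)$ assigned to $0$-Grassmannian vertices. For a $+$ child $h$, obtained by the $3$-cycle $(i\;s\;r)$ acting on $g$ with $i < r$ (so $h(i) = g(s)$, $h(r) = g(i)$, $h(s) = g(r)$), maximality of $(r,s)$ forces $g(a) < g(r)$ for all $a \in (r,s]$ (else $(a,s)$ would be a lex-larger inversion of $g$). A direct case analysis then shows that every inversion of $h$ in $[1,n]^2$ has either row $< r$, or row $r$ with column $< s$; hence $(r_h, s_h) <_{\lex} (r_g, s_g)$ strictly.

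The main obstacle is the $-$ children, which are absent in the classical setting. The $3$-cycle $(r\;j\;s)$ with $j > r$, $j \ne s$ can create new inversions at rows $> r$: for instance, if $j > s$ lies in $[1,n]$, then $(s,j)$ becomes an inversion of $h$ with row $s > r$, so $(r_g,s_g)$ alone does not strictly decrease. To address this, I would refine the monovariant, lexicographically combining $(r_g, s_g)$ with a secondary invariant derived from the full Rothe diagram $D(g) \subseteq \Cyl_{n,n}$ (which by Theorem~\ref{thm:bounded-diagram-characterization} fits in an $n \times (n-k)$ rectangle). The boundedness of $f$ is essential, as it caps the affine wraparound responsible for the row-increase phenomenon; the covering condition $g' \lessdot g't_{rj}$ together with the bound on $D(g)$ should constrain the possible $j$ enough that a suitable column profile of $D(g)$ forms a strictly decreasing auxiliary. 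The verification splits naturally into cases by the position of $j$ relative to $r$, $s$, and $n$; once this refined monovariant is checked to decrease strictly along both types of edges, finiteness of $S$ combined with well-foundedness of the monovariant forces every path to terminate at a $0$-Grassmannian leaf.
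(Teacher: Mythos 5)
Your proposal correctly sets up the two-stage strategy — the vertex set is finite, so it suffices to find a well-founded monovariant — and you correctly identify the $-$ children as the genuinely new difficulty relative to the classical Lascoux-Sch\"utzenberger tree. But that is exactly where the argument stops being a proof. You acknowledge that your candidate monovariant, the lex-maximum inversion $(r_g, s_g)$, \emph{fails} along $-$ edges (e.g.\ when $j > s$ lies in $[1,n]$), and the proposed fix is only a plan: ``I would refine the monovariant,'' ``should constrain the possible $j$,'' ``once this refined monovariant is checked.'' No secondary invariant is defined, and nothing is verified. Since the $-$ children are the crux of the theorem, this is a genuine gap rather than an omitted routine detail. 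There is also a smaller issue on the $+$ side: you assert ``a direct case analysis shows'' the max inversion strictly drops, but the case $i < 1$ is structurally different — the word $g(1)\cdots g(n)$ then changes at position $i+n > r$, not at $i$, which is exactly the wrap-around that the paper has to treat separately — and you do not visibly address it.

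For contrast, the paper uses a different and more robust monovariant: the number of inversions $\ell(w_g)$ of the finite word $w_g = g(1)\cdots g(n)$, refined by lexicographic order on $w_g$. Along every edge, either $\ell(w_g)$ strictly drops, or it is preserved and $w_g$ strictly increases lexicographically; the first alternative occurs precisely when the third index falls outside $[1,n]$ (the wrap-around case), the second when it stays inside. Since $\Bound(k,n)$ is finite and $\ell(w_g)=0$ characterizes the $0$-Grassmannian leaves, every path terminates. This handles $+$ and $-$ children uniformly and is exactly the kind of concrete, verifiable refinement your sketch gestures toward but does not supply. To repair your argument you would need to either construct and verify the proposed secondary invariant in full, or switch to a monovariant like the paper's.
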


\begin{proof}
Let $w_f$ denote the word $f(1)\cdots f(n)$, and write $\ell(w_f)$ for the number of inversions in $w_f$ (which may be less than $\ell(f)$). Write $<_{\lex}$ for lexicographic order on $\ZZ^n$. We claim that if $g$ is a child of $f$, then either $\ell(w_g) < \ell(w_f)$, or $\ell(w_g) = \ell(w_f)$ and $g >_{\lex} f$. Since $\Bound(k,n)$ is finite, this will imply that every sufficiently long path from the root encounters a vertex $h$ with $\ell(w_h) = 0$, which by definition is a leaf.

To prove the claim, first consider two cases:
\begin{enumerate}[(i)]
\item $g = ft_{rs} t_{jr} \in B\Phi^{-}(ft_{rs})$ and $1 \leq j < r$: Here $w_g$ is obtained from $w_f$ by replacing the subsequence $f(j), f(r), f(s)$ with $f(s), f(j), f(r)$. Since $j, r, s \in [n]$, the Bruhat cover relations $ft_{rs} \lessdot f, g$ imply that $\ell(w_g) = \ell(w_f)$. They also imply $f(j) < f(s) < f(r)$, so $w_g >_{\lex} w_f$.

\item $g = ft_{rs} t_{jr} \in B\Phi^{-}(ft_{rs})$ and $j < 1$: Let us see that $\ell(w_g) = \ell(w_f) - 1$. Suppose $p < q$, $f(p) > f(q)$ is an inversion of $f$; we may assume $q \in [n]$. Boundedness implies $1 \leq q \leq f(q) < f(p) \leq p+n$, hence $p \geq -n+1$. Thus, $\ell(f)$ counts the inversions in the word $f(-n+1)\cdots f(0)f(1)\cdots f(n)$ which end at a position $q \geq 1$, while $\ell(w_f)$ counts the subset starting at a position $p \geq 1$. In passing from $f$ to $g$, we lose the inversion in positions $1 \leq r < s \leq n$, gain an inversion in positions $j < 1 \leq s$, and preserve all other inversions, so we conclude that $\ell(w_g) = \ell(w_f)-1$. 
\end{enumerate}
The arguments for $g = ft_{rs}t_{rj} \in B\Phi^{-}(ft_{rs})$ are similar, with the cases $j < s$ and $j > s$ corresponding to (i) and (ii) respectively.
\end{proof}

If $f \in \Bound(k,n)$ is $0$-Grassmannian, then $f = a_1 \cdots a_{n-k} b_1 \cdots b_{k}$ where
\begin{equation*}
1 \leq a_1 < \cdots < a_{n-k} \leq n < b_1 < \cdots < b_k \leq 2n
\end{equation*}
and $\{b_1 - n, \ldots, b_k - n\} = [n] \setminus \{a_1, \ldots, a_k\}$. Thus, $f\tau^{-k}$ is in $S_n$ and has descent set $k + n\ZZ$; that is, it is a $k$-Grassmannian permutation. Recall that the \emph{shape} of a $k$-Grassmannian permutation $w$ is the partition obtained by sorting $c(w)$, or whose Young diagram is obtained by deleting empty rows and columns in $D(w)$. Define the shape $\lambda(f)$ of a $0$-Grassmannian $f \in \Bound(k,n)$ to be the shape of $f\tau^{-k}$.

\begin{lem} \label{lem:schur-leaves}
If $f \in \Bound(k,n)$ is $0$-Grassmannian, then $G_f = \bar s_{\lambda(f)}$.
\end{lem}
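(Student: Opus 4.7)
The plan is to leverage the cohomological interpretation of $G_f$ from Theorem~\ref{thm:schur-positive-truncation}: $G_f$ represents the class of the positroid variety $\Pi_M \subset \Gr(k,n)$. For a $0$-Grassmannian $f$, I expect $\Pi_M$ to be precisely a Schubert variety of shape $\lambda(f)$, whose cohomology class is the Schur polynomial $\bar s_{\lambda(f)}$.

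Concretely, with $f(1), \ldots, f(n) = a_1 \cdots a_{n-k} b_1 \cdots b_k$ as in the paragraph preceding the lemma, the positroid $M$ associated to $f$ under the Knutson--Lam--Speyer bijection is the Schubert matroid indexed by $I := \{b_1 - n < \cdots < b_k - n\} \in \binom{[n]}{k}$. Therefore $\Pi_M$ is the Schubert variety $X_I \subset \Gr(k,n)$, whose cohomology class in $\Lambda^{n-k}(k)$ is the Schur polynomial $s_{\lambda(I)}$ with $\lambda(I)_j = b_{k+1-j} - n - (k+1-j)$.

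To finish one checks $\lambda(I) = \lambda(f)$ by direct computation. The permutation $f\tau^{-k} \in S_n$ sends $i \mapsto b_i - n$ for $i \leq k$ and $i \mapsto a_{i-k}$ for $i > k$, and is $k$-Grassmannian. For $i \leq k$ its code is
\begin{equation*}
c_i(f\tau^{-k}) = \#\{\ell : a_\ell < b_i - n\} = (b_i - n) - i,
\end{equation*}
using the disjoint-union decomposition $[1, b_i - n - 1] = \{a_\ell < b_i - n\} \sqcup \{b_j - n : j < i\}$, and zero otherwise. Sorting decreasingly, $\lambda(f)_j = b_{k+1-j} - n - (k+1-j) = \lambda(I)_j$, so $\lambda(f) = \lambda(I)$ and the geometric route concludes $G_f = \bar s_{\lambda(f)}$.

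The main obstacle is the identification $\Pi_M = X_I$, which is standard in the theory of positroid varieties (originating in \cite{positroidjuggling}) but is an input beyond what Theorem~\ref{thm:schur-positive-truncation} literally provides. As an alternative purely combinatorial route, one could invoke Lam's theorem from \cite{lam-affine-stanley}: since $\tau^{-k} f \in \affS_n^0$ has $\Des(\tau^{-k}f) = \Des(f) \subseteq n\ZZ$ (shifting values by $-k$ preserves descent positions), it is an affine Grassmannian element, so $\affF_f = \affF_{\tau^{-k} f}$ is an affine Schur function indexed by $\lambda(f)$; then the standard projection identity $\trunc_{k,n-k} \widetilde s_{\lambda(f)} = s_{\lambda(f)}$, valid because $\lambda(f) \subseteq (n-k)^k$, finishes the proof.
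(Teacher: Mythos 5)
Your proposal is correct in substance but goes by a different (and heavier) route than the paper. The paper's proof is two lines: since $f \in \Bound(k,n)$ is $0$-Grassmannian, $w := f\tau^{-k}$ lies in $S_n$ and is $k$-Grassmannian of shape $\lambda(f)$; because $\affF$ is unchanged by multiplication and conjugation by $\tau$, $\affF_f = \affF_w = F_w$, and Stanley's classical theorem gives $F_w = s_{\lambda(f)}$, whence $G_f = \trunc_{k,n-k} s_{\lambda(f)} = \bar s_{\lambda(f)}$. Your geometric route A is genuinely different: it is valid, and your code computation $c_i(f\tau^{-k}) = b_i - n - i$ and the identification $\lambda(I) = \lambda(f)$ check out, but as you yourself flag, the identification $\Pi_M = X_I$ for the Schubert matroid $M$ is an input from \cite{positroidjuggling} beyond what Theorem~\ref{thm:schur-positive-truncation} provides (and one must also match the Schubert-variety indexing convention to KLS's); what this route buys is a geometric explanation of the lemma, at the cost of importing that identification. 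Your route B is nearly the paper's argument but inserts an unnecessary layer: the point is not merely that $\tau^{-k}f$ is affine Grassmannian, but that it is $\tau$-conjugate to the \emph{finite} Grassmannian permutation $w = f\tau^{-k}$, so one never needs general affine Schur functions or the asserted ``standard projection identity'' $\trunc_{k,n-k}\widetilde s_{\lambda} = s_{\lambda}$ for $\lambda \subseteq ((n-k)^k)$. That identity is true, but it is not elementary as stated (affine Schur functions are not Schur functions in general), and its cleanest proof in the relevant cases is exactly the observation above; leaning on it as a black box would leave route B circular-adjacent unless you supply a precise citation. If you replace the affine-Schur-function step by the direct appeal to $\affF_f = F_w = s_{\lambda(f)}$, route B collapses to the paper's proof.
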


\begin{proof}
It is well-known that if $w \in S_n$ is Grassmannian of shape $\lambda$, then the ordinary Stanley symmetric function $F_w = \affF_w$ is the Schur function $s_{\lambda}$ (by \cite[Theorem 4.1]{stanleysymm}, for instance). Hence $\affF_f = \affF_w = s_{\lambda}$.
\end{proof}

If $\pi$ is a path in a graph with edges labeled by $\pm$, let $\sgn(\pi)$ be the product of the edge labels.
\begin{thm} \label{thm:LS-formula}
Given a vertex $g$ of the bounded affine L-S tree of $f \in \Bound(k,n)$, let $\pi_g$ denote the unique path from the root $f$ to $g$. Then
\begin{equation*}
G_f = \sum_g \sgn(\pi_g) s_{\lambda(g)}
\end{equation*}
where $g$ runs over the leaves of the tree.
\end{thm}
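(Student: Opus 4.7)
The proof is a routine induction on the height of the bounded affine L--S tree of $f$, using the three preceding results essentially as a package. The plan is to induct downward from the root using Proposition~\ref{prop:LS-recurrence} and terminate at the leaves using Lemma~\ref{lem:schur-leaves}; Theorem~\ref{thm:finite-LS-tree} guarantees that the induction actually terminates.

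More precisely, I would argue by induction on the height $h(T_f)$ of the (finite) tree $T_f$ rooted at $f$. For the base case $h(T_f)=0$, the root $f$ is itself a leaf, so $f$ is $0$-Grassmannian. The only path $\pi_f$ is empty, giving $\sgn(\pi_f) = 1$, and Lemma~\ref{lem:schur-leaves} yields $G_f = \bar s_{\lambda(f)}$, which agrees with the claimed formula.

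For the inductive step, assume $f$ is not $0$-Grassmannian and let $(r,s)$ be its maximal inversion. Proposition~\ref{prop:LS-recurrence} gives
\begin{equation*}
G_f \;=\; \sum_{h^+} G_{h^+} \;-\; \sum_{h^-} G_{h^-},
\end{equation*}
where $h^{\pm}$ range over the children of $f$ in $T_f$ joined by edges labeled $\pm$. Each child $h$ is the root of a strictly shorter subtree $T_h$, so by induction
\begin{equation*}
G_h \;=\; \sum_{g \text{ leaf of } T_h} \sgn(\pi^h_g)\, \bar s_{\lambda(g)},
\end{equation*}
where $\pi^h_g$ denotes the path from $h$ to $g$ inside $T_h$. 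The leaves of $T_f$ partition as the disjoint union over children $h$ of the leaves of $T_h$, and for a leaf $g$ belonging to $T_h$ the full path from $f$ is $\pi_g = (f \to h) \cdot \pi^h_g$, so $\sgn(\pi_g) = \epsilon_h \sgn(\pi^h_g)$, where $\epsilon_h \in \{\pm 1\}$ is the label of the edge $f \to h$. Substituting and collecting terms yields the desired identity.

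There is no real obstacle: the finiteness of $T_f$ from Theorem~\ref{thm:finite-LS-tree} turns the formal expansion into a legitimate induction, Proposition~\ref{prop:LS-recurrence} provides the one-step recurrence with the correct signs by construction, and Lemma~\ref{lem:schur-leaves} handles the leaves. The only detail worth a sentence of care is the bookkeeping of signs along concatenated paths, which is immediate from the definition $\sgn(\pi) = \prod_{e \in \pi} \mathrm{label}(e)$.
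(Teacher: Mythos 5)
Your proof is correct and follows exactly the paper's approach: the paper's own (one-sentence) argument invokes the same three ingredients---finiteness of the tree (Theorem~\ref{thm:finite-LS-tree}), the one-step recurrence (Proposition~\ref{prop:LS-recurrence}), and the leaf computation (Lemma~\ref{lem:schur-leaves})---and you have simply spelled out the induction in full detail.
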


\begin{rem} We are abusing notation slightly by conflating vertices of the L-S tree with their labels: it is possible for different vertices to have the same label. \end{rem}

\begin{proof}
The sum makes sense by Theorem~\ref{thm:finite-LS-tree}, and applying induction to Proposition~\ref{prop:LS-recurrence} with Lemma~\ref{lem:schur-leaves} gives the result.
\end{proof}

When $f \in S_n$, the bounded affine L-S tree reduces to the transition tree of \cite{lascouxschutzenbergertree}. Indeed, in this case $\Phi^+(ft_{rs}, r) = \{f\}$, so the L-S tree has no edges labeled ${-}$, and Theorem~\ref{thm:LS-formula} exhibits $G_f$ (and in fact $\affF_f$) as Schur-positive. In the general affine case the Schur-positivity of $G_f$ is not clear from Theorem~\ref{thm:LS-formula}, but that recurrence is still a much more effective means of computing $G_f$ than the definition in terms of cyclically decreasing factorizations. The truncation is essential here: it is not clear if there is any analogue of maximal inversion which can be used to construct a usefully terminating recursion for $\affF_f$ from Corollary~\ref{cor:affine-transitions}.

\section{Generalized Schur modules} \label{sec:schur}
Let $S_D$ denote the group of permutations of a finite set $D$. Given a set partition $\pi$ of $D$, let $\stb \pi \subseteq S_D$ be the stabilizer of $\pi$ under the action of $S_D$ on all partitions of $D$. The \emph{Young symmetrizer} associated to a pair of partitions $\pi_R, \pi_C$ of $D$ is an element of the group algebra $\CC[S_D]$:
\begin{equation*}
y_{\pi_R, \pi_C} = \sum_{\substack{p \in \stb \pi_R \\ q \in \stb \pi_C}} \sgn(q)qp.
\end{equation*}

Fix a finite-dimensional complex vector space $V$. Say $D$ is a set with $d$ elements. Let $V^{\otimes D}$ denote the $d$-fold tensor product of $V$ with itself, where we think of the tensor factors as labeled by elements of $D$ rather than $[d]$ (alternatively, define $V^{\otimes D}$ to be the space of multilinear functions $(V^*)^D \to \CC$, where $(V^*)^D$ is the space of functions $D \to V^*$). The space $V^{\otimes D}$ is naturally a right $S_D$-module and a left $\GL(V)$-module, and these two actions commute. 
\begin{defn} The \emph{(generalized) Schur module} for $\GL(V)$ associated to a pair of partitions $\pi_R, \pi_C$ of a finite set $D$ is the left $\GL(V)$-module $V[\pi_R, \pi_C] := V^{\otimes D} y_{\pi_R, \pi_C}$. \end{defn}

A \emph{diagram} is a finite subset of $\ZZ^2$, or more generally of $\ZZ^2$ modulo some equivalence relation. It is common to label generalized Schur modules by diagrams, as follows. The $i$th row of a diagram $D$ is $\{(i,j) \in D : j \in \ZZ\}$, and the \emph{row partition} of $D$ is
\begin{equation*}
\pi_R = \{\{(i,j) \in D : j \in \ZZ\} : i \in \ZZ\}.
\end{equation*}
Define the columns and column partition $\pi_C$ of $D$ analogously. The Schur module of $D$ is now $V[D] := V[\pi_R, \pi_C]$. This definition is (essentially) due to James and Peel in \cite{jamespeel}. The next lemma shows that every nonzero Schur module arises from a diagram.
\begin{lem} \label{lem:nonzero-schur} Suppose $\pi_R$ and $\pi_C$ are partitions of $D$. Define a function $\iota : D \to \pi_R \times \pi_C$ by letting $\iota(x)$ be the unique pair of blocks $(b,b')$ such that $x \in b \cap b'$. Observe that $\iota$ is injective if and only if every pair $b \in \pi_R$, $b' \in \pi_C$ satisfies $\#(b \cap b') \leq 1$.
\begin{enumerate}[(a)]
\item If $\iota$ is not injective, then $V[\pi_R, \pi_C] = 0$.
\item Suppose $\iota$ is injective. Numbering the blocks of $\pi_R$ and $\pi_C$ identifies $\iota(D) \subseteq \pi_R \times \pi_C$ with a diagram in $\ZZ^2$, and $V[\pi_R, \pi_C] = V[\iota(D)]$.
\end{enumerate}
\end{lem}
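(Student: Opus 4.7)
\medskip

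\noindent\textbf{Proof plan.} The plan is to write the Young symmetrizer as $y_{\pi_R, \pi_C} = CR$ where $R = \sum_{p \in \stb \pi_R} p$ and $C = \sum_{q \in \stb \pi_C} \sgn(q)\, q$, and to prove both parts by exploiting the elementary identities
\begin{equation*}
R\sigma = \sigma R = R \text{ for } \sigma \in \stb \pi_R, \qquad C\sigma = \sigma C = \sgn(\sigma)\, C \text{ for } \sigma \in \stb \pi_C.
\end{equation*}

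\medskip

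\noindent For part (a), the observation is that non-injectivity of $\iota$ means exactly that there are distinct elements $x, x' \in D$ lying in a common block of $\pi_R$ and a common block of $\pi_C$. Then the transposition $\sigma = (x\,x')$ lies in $\stb \pi_R \cap \stb \pi_C$ and is odd. Inserting $\sigma \sigma^{-1}$ between $C$ and $R$ yields
\begin{equation*}
y_{\pi_R,\pi_C} = CR = (C\sigma)(\sigma^{-1}R) = \sgn(\sigma)\, CR = -y_{\pi_R,\pi_C},
\end{equation*}
so $y_{\pi_R,\pi_C} = 0$ in $\CC[S_D]$, and in particular $V[\pi_R,\pi_C] = V^{\otimes D} y_{\pi_R,\pi_C} = 0$.

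\medskip

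\noindent For part (b), I would first note that everything in the definition of $V[\pi_R,\pi_C]$ is natural with respect to bijections of $D$: if $\varphi : D \to D'$ is a bijection, it induces a $\GL(V)$-equivariant isomorphism $V^{\otimes D} \to V^{\otimes D'}$ intertwining the $S_D$ and $S_{D'}$ actions via the induced group isomorphism $S_D \cong S_{D'}$, and this sends $y_{\pi_R,\pi_C}$ to $y_{\varphi(\pi_R),\varphi(\pi_C)}$. When $\iota$ is injective, it provides a bijection from $D$ onto $\iota(D) \subseteq \pi_R \times \pi_C$. After numbering the blocks of $\pi_R$ and $\pi_C$ by consecutive positive integers, $\iota(D)$ becomes a diagram in $\ZZ^2$. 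By construction the row (resp.\ column) partition of this diagram as a subset of $\ZZ^2$ is exactly $\iota(\pi_R)$ (resp.\ $\iota(\pi_C)$), because the $i$th row of $\iota(D)$ is $\iota(b_i)$ where $b_i$ is the $i$th block of $\pi_R$ (and similarly for columns). Naturality then gives $V[\pi_R,\pi_C] \cong V[\iota(D)]$.

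\medskip

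\noindent There is no real obstacle here; the only subtle point is the algebraic cancellation in (a), which hinges on finding a single transposition lying in both stabilizers and using that $C$ absorbs elements of $\stb \pi_C$ with a sign while $R$ absorbs elements of $\stb \pi_R$ trivially. Part (b) is essentially a bookkeeping check that the Schur module construction depends only on the combinatorial data $(D, \pi_R, \pi_C)$ up to isomorphism of such triples.
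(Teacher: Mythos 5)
Your proof is correct and takes essentially the same approach as the paper: for (a), both arguments produce a transposition in $\stb\pi_R \cap \stb\pi_C$ and use the absorption identities for $C$ and $R$ to force $y_{\pi_R,\pi_C} = -y_{\pi_R,\pi_C}$; for (b), both use the bijection $\iota$ to transport the partition data and invoke the resulting natural isomorphism of Schur modules. Your write-up is actually slightly more careful than the paper's (which compresses the cancellation into a somewhat telegraphic chain of equalities), but the underlying argument is identical.
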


\begin{proof}
\begin{enumerate}[(a)]
\item Say $\iota(x) = \iota(y)$ and $x \neq y$. Then $\stb(\pi_R) \cap \stb(\pi_C)$ contains the transposition $t := (x\,y)$. The equations 
\begin{equation*}
\sum_{q \in \stb(\pi_C)} \sgn(q)tq = \sum_{q \in \stb(\pi_C)} \sgn(q)qt = -\sum_{q \in \stb(\pi_C)} \sgn(q)q \quad \text{and} \quad \sum_{p \in \stb(\pi_R)} tp = \sum_{p \in \stb(\pi_R)} p
\end{equation*}
imply that $y_{\pi_R, \pi_C} = ty_{\pi_R, \pi_C} = -y_{\pi_R, \pi_C}$, so $y_{\pi_R, \pi_C} = 0$.

\item If $\iota$ is injective, the pair of partitions associated to the diagram $\iota(D)$ is $\iota(\pi_R), \iota(\pi_C)$, so $V[\iota(D)] = V[\pi_R, \pi_C]$.
\end{enumerate}
\end{proof}

With Lemma~\ref{lem:nonzero-schur} in mind, we will now index Schur modules by diagrams. Write $R(D)$ and $C(D)$ for the subgroups of $S_D$ stabilizing the associated partitions $\pi_R$ and $\pi_C$, and $y_D$ for the corresponding Young symmetrizer. Although Lemma~\ref{lem:nonzero-schur} shows that one can always use ordinary diagrams in $\ZZ^2$ for this purpose, for us it will frequently be more natural to consider \emph{cylindric diagrams} instead, i.e. finite subsets of some cylinder $\Cyl_{k,m}$. In this language, Lemma~\ref{lem:nonzero-schur} takes the following form. Let $\rho : \Cyl_{k,m} \to \ZZ^2/(k\ZZ \times m\ZZ)$ denote the quotient map from the $(k,m)$-cylinder to the $(k,m)$-torus. Call a cylindric diagram $D \subseteq \Cyl_{k,m}$ \emph{toric} if $\rho$ is injective on $D$.

\begin{lem} \label{lem:toric} Let $D$ be a cylindric diagram.
\begin{enumerate}[(a)]
\item If $D$ is not toric, then $V[D] = 0$.
\item If $D$ is toric, then $V[D] \simeq V[\rho(D)]$.
\end{enumerate}
\end{lem}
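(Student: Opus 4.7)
The plan is to reduce this lemma to Lemma~\ref{lem:nonzero-schur} via the key observation that, for the cylinder $\Cyl_{k,m} = \ZZ^2/\ZZ(k,m)$, two distinct points collide under $\rho$ precisely when they lie in the same row \emph{and} the same column. To see this, I would first note that because the defining identification $(i,j) \sim (i+k, j+m)$ on $\Cyl_{k,m}$ shifts the first coordinate by $k$, the rows of $\Cyl_{k,m}$ are indexed by $\ZZ/k\ZZ$ (and similarly columns by $\ZZ/m\ZZ$). Thus representatives $(i_1,j_1)$ and $(i_2,j_2)$ determine points of $\Cyl_{k,m}$ lying in the same row iff $i_1 \equiv i_2 \pmod{k}$, and in the same column iff $j_1 \equiv j_2 \pmod{m}$. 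Together this says points $[x], [y] \in \Cyl_{k,m}$ share both a row and a column iff $x - y \in k\ZZ \times m\ZZ$, i.e.\ iff $\rho([x]) = \rho([y])$.

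For part (a), suppose $D$ is not toric and pick distinct $x, y \in D$ with $\rho(x) = \rho(y)$. By the observation above, $x$ and $y$ lie in a common row block of $\pi_R$ and a common column block of $\pi_C$, so the map $\iota : D \to \pi_R \times \pi_C$ of Lemma~\ref{lem:nonzero-schur} is not injective. Lemma~\ref{lem:nonzero-schur}(a) then forces $V[D] = 0$.

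For part (b), when $D$ is toric the restriction $\rho|_D : D \to \rho(D)$ is a bijection of finite sets which sends the row (resp.\ column) partition of $D$ to the row (resp.\ column) partition of $\rho(D)$, since row index modulo $k$ and column index modulo $m$ are preserved. Relabeling tensor factors along $\rho|_D$ gives a $\GL(V)$-equivariant linear isomorphism $V^{\otimes D} \xrightarrow{\sim} V^{\otimes \rho(D)}$ which carries the subgroups $R(D), C(D) \subseteq S_D$ to $R(\rho(D)), C(\rho(D)) \subseteq S_{\rho(D)}$, hence carries $y_D$ to $y_{\rho(D)}$. Restricting to images of the Young symmetrizers yields the asserted isomorphism $V[D] \simeq V[\rho(D)]$.

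There is no real obstacle here; the work is entirely a matter of tracking the row/column bookkeeping between the cylindric and toric models. The only point worth stating carefully is the equivalence of ``same row and same column on $\Cyl_{k,m}$'' with ``same image under $\rho$,'' which is what makes Lemma~\ref{lem:nonzero-schur} immediately applicable and makes the identification of row/column partitions in (b) functorial under $\rho$.
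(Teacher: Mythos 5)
Your proposal is correct and matches the paper's proof: the paper's entire argument is the observation that a row and a column of a cylindric diagram intersect in more than one cell exactly when two cells have the same image under $\rho$, followed by an application of Lemma~\ref{lem:nonzero-schur}. You have simply spelled out the row/column indexing by $\ZZ/k\ZZ$ and $\ZZ/m\ZZ$ and made the isomorphism in part (b) explicit, which is exactly what the paper leaves implicit.
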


\begin{proof}
Observe that $D$ is toric if and only if a row and column of $D$ never intersect in more than one cell, and apply Lemma~\ref{lem:nonzero-schur}.
\end{proof}

\begin{defn} \label{defn:meets}
Suppose $D$ is toric. If $(i + k\ZZ) \times (j + m\ZZ)$ intersects $D$, write $(i,j)_D$ for the unique point of intersection; if not, we will say $D$ does not contain $(i,j)_D$. More generally, for a subset $A \subseteq \Cyl_{m,n}$ we will say that $A$ \emph{meets} $D$ if $A + m\ZZ \times n\ZZ$ intersects $D$. For a single point $p$, we say $D$ meets $p$ to mean $D$ meets $\{p\}$.
\end{defn}

Classically, Schur modules for $\GL(V)$ are the modules $V[D]$ where $D$ runs over Young diagrams of partitions $\lambda$ with $\ell(\lambda) \leq \dim V$. As per Theorem~\ref{thm:rep-theory-facts}, it is a basic fact that (over $\CC$) these form a complete and irredundant set of irreducible polynomial representations of $\GL(V)$ and that $\ch V[\lambda]$ is the Schur polynomial $s_{\lambda}(x_1, \ldots, x_k)$. When one allows arbitrary diagrams $D$, the modules $V[D]$ need not be irreducible any longer, and it is an interesting open problem to find combinatorial descriptions of the multiplicities of irreducible submodules of $V[D]$ for general $D$.

Upon fixing a basis $e_1, \ldots, e_k$ of $V$, fillings of $D$ by $[k]$ naturally index a basis of $V^{\otimes D}$. That is, to each filling $T : D \to [k]$ we associate the pure tensor $e_T := \bigotimes_{x \in D} e_{T(x)}$, where the tensor product is taken in some fixed order.
\begin{ex} \label{ex:schur}
Let
\begin{equation*}
D = \begin{array}{ccc}
\square & \cdot & \square\\
\cdot   & \square & \cdot
\end{array} = \{(1,1), (2,2), (1,3)\}.
\end{equation*}
Let $\square = (1,1)$ and $\square' = (1,3)$. Then $C(D)$ is trivial, while $R(D) = \{1, (\square\,\, \square')\}$, so $y_D = 1 + (\square\,\,\square')$.

In this example, take $\dim V = 2$, and define
\begin{equation*}
\alpha = \begin{array}{ccc}
1 & \cdot & 1\\
\cdot & 1 & \cdot
\end{array}
\qquad
\beta = \begin{array}{ccc}
2 & \cdot & 2\\
\cdot & 2 & \cdot
\end{array} \qquad
\gamma = \begin{array}{ccc}
1 & \cdot & 1\\
\cdot & 2 & \cdot
\end{array} \qquad
\delta = \begin{array}{ccc}
2 & \cdot & 2\\
\cdot & 1 & \cdot
\end{array}
\end{equation*}
\begin{equation*}
\epsilon = \begin{array}{ccc}
1 & \cdot & 2\\
\cdot & 1 & \cdot
\end{array} + \begin{array}{ccc}
2 & \cdot & 1\\
\cdot & 1 & \cdot
\end{array} \qquad
\zeta = \begin{array}{ccc}
1 & \cdot & 2\\
\cdot & 2 & \cdot
\end{array} + \begin{array}{ccc}
2 & \cdot & 1\\
\cdot & 2 & \cdot
\end{array}
\end{equation*}
Here we are identifying fillings $T$ with the corresponding tensors $e_T \in V^{\otimes D}$. It is clear that $\{\alpha, \beta, \gamma, \delta, \epsilon, \zeta\}$ form a basis for $V[D]$, so $\ch V[D] = x_1^3 + x_2^3 + 2x_1^2 x_2 + 2x_1 x_2^2 = s_3 + s_{21}$, and we conclude $V[D] \simeq V[3] \oplus V[21]$. More explicitly, $\alpha, \beta, \gamma+\epsilon, \delta+\zeta$ span the submodule of $V[D]$ consisting of symmetric tensors, which is isomorphic to $V[3] \simeq \operatorname{Sym}^3(\CC^2)$. One can check that in the direct sum decomposition,
\begin{equation*}
V[D] = \langle \alpha, \beta, \gamma+\epsilon, \delta+\zeta \rangle \oplus \langle \epsilon - 2\delta, \zeta - 2\gamma \rangle,
\end{equation*}
the second summand is $\GL(V)$-invariant and isomorphic to the other simple factor $V[21]$.
\end{ex}

Take two ordinary diagrams $D, D' \subseteq \ZZ^2$. We say $D$ and $D'$ are \emph{equivalent} if they are conjugate under the left action of $S_\ZZ \times S_\ZZ$ on $\ZZ^2$. Define a new diagram $D \ast D' = D \cup \{(i+N, j+N) : (i,j) \in D'\}$, where $N$ is sufficiently large that this union is disjoint. Up to equivalence, $D \ast D'$ does not depend on the choice of $N$. One can define equivalence for toric diagrams by replacing $S_\ZZ \times S_\ZZ$ with $\affS_k \times \affS_m$. 

\begin{prop} \label{prop:diagram-facts}  \hfill
\begin{enumerate}[(a)]
\item If $D$ and $D'$ are equivalent, then $V[D] \simeq V[D']$.
\item $V[D \ast D'] \simeq V[D] \otimes V[D']$, and $\ch V[D \ast D'] = (\ch V[D])(\ch V[D'])$.
\end{enumerate}
\end{prop}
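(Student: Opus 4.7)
The plan is to observe that both statements reduce to straightforward bookkeeping about how row and column partitions behave under the operations of applying a diagram equivalence or taking a disjoint union.

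For part (a), suppose $D' = (\sigma \times \tau)(D)$ with $\sigma, \tau \in S_\ZZ$. The bijection $\phi : D \to D'$ defined by $\phi(i,j) = (\sigma(i),\tau(j))$ carries rows to rows and columns to columns of the respective diagrams, since the row of a cell is determined by its first coordinate and the column by its second. Hence $\phi$ conjugates the row partition (resp.\ column partition) of $D$ to the row partition (resp.\ column partition) of $D'$, so the induced group isomorphism $S_D \to S_{D'}$ sends $R(D) \to R(D')$, $C(D) \to C(D')$, and therefore $y_D \mapsto y_{D'}$. The linear isomorphism $V^{\otimes D} \to V^{\otimes D'}$ that permutes tensor factors according to $\phi$ is $\GL(V)$-equivariant (since the $\GL(V)$-action is diagonal in the tensor factors) and intertwines the right actions of $S_D$ and $S_{D'}$, so restricting it to the images of the Young symmetrizers gives the desired $\GL(V)$-isomorphism $V[D] \simeq V[D']$.

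For part (b), note that each row (resp.\ column) of $D \ast D'$ lies entirely inside the copy of $D$ or entirely inside the shifted copy of $D'$, because the two copies have been placed in disjoint ranges of rows and columns. Thus the row partition of $D \ast D'$ is the disjoint union of the row partition of $D$ and that of $D'$, and similarly for columns. Under the decomposition $S_{D \ast D'} \supseteq S_D \times S_{D'}$ we therefore have $R(D \ast D') = R(D) \times R(D')$ and $C(D \ast D') = C(D) \times C(D')$, and the Young symmetrizer factors as $y_{D \ast D'} = y_D \cdot y_{D'}$, where each factor acts on the tensor factors indexed by its own diagram and trivially on the other. Using the canonical $\GL(V)$-equivariant identification $V^{\otimes(D \ast D')} = V^{\otimes D} \otimes V^{\otimes D'}$, this gives
\begin{equation*}
V[D \ast D'] \;=\; \bigl(V^{\otimes D} \otimes V^{\otimes D'}\bigr)\bigl(y_D \otimes y_{D'}\bigr) \;=\; (V^{\otimes D} y_D) \otimes (V^{\otimes D'} y_{D'}) \;=\; V[D] \otimes V[D'],
\end{equation*}
and the character identity $\ch V[D \ast D'] = (\ch V[D])(\ch V[D'])$ follows from the standard fact that characters are multiplicative on tensor products of $\GL(V)$-modules.

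There is no real obstacle here; the only subtlety worth underlining is verifying that the row/column partitions genuinely decompose for $D \ast D'$, which requires $N$ in the definition of $D \ast D'$ to be chosen large enough that the row indices and the column indices of the two copies are disjoint. This is guaranteed by the definition of $\ast$, and independence from the specific choice of $N$ is exactly what part (a) provides.
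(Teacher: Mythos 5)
Your proof is correct and follows essentially the same route as the paper's: for (a), observing that the bijection induced by the $S_\ZZ \times S_\ZZ$ action respects rows and columns, hence conjugates $y_D$ to $y_{D'}$; for (b), identifying the row/column subgroups of $D \ast D'$ as products and factoring $y_{D \ast D'} = y_D y_{D'}$, then computing the chain of isomorphisms through $V^{\otimes D} \otimes V^{\otimes D'}$. You spell out more of the bookkeeping (e.g.\ the $\GL(V)$-equivariance of the tensor-factor permutation, and the multiplicativity of characters), but the argument is the one the paper gives.
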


\begin{proof}\hfill
\begin{enumerate}[(a)]
\item The cells of equivalent diagrams are in bijection, and this bijection respects the relevant partitions of cells into rows and columns.
\item Identifying $D'$ with the corresponding subdiagram of $D \ast D'$, we have $y_{D \ast D'} = y_D y_{D'}$, and then
\begin{align*}
V[D \ast D'] &= V^{\otimes D \ast D'}y_{D \ast D'} = V^{\otimes D \ast D'}y_{D} y_{D'} \simeq (V^{\otimes D} \otimes V^{\otimes D'}) y_D y_{D'}\\
&= V^{\otimes D}y_D \otimes V^{\otimes D'}y_{D'} = V[D] \otimes V[D'].
\end{align*}
\end{enumerate}
\end{proof}

The next lemma is a key tool for decomposing Schur modules of diagrams. Given two ordered pairs $x = (i,j)$ and $x' = (i',j')$, write $x|x' = (i,j')$. Given a cylindric diagram $D$ and ordered pairs $x, x'$, define a function $R_x^{x'} : D \to \Cyl_{m,n}$ as follows:
\begin{itemize}
\item If $p \not\equiv i \pmod{m}$ then $R_x^{x'}$ fixes $(p,q)_D$;
\item If $D$ contains $(i,q)_D$ and $(i',q)_D$, then $R_x^{x'}$ fixes $(i,q)_D$;
\item If $(i,q)_D = (i,q+pn)$ and $D$ does not contain $(i',q)_D$, then $R_x^{x'}(i,q)_D = (i',q+pn)$.
\end{itemize}
Now let $R_x^{x'} D$ be the image of $D$ under $R_x^{x'}$. Define $C_x^{x'} D$ by modifying columns $j$ and $j'$ analogously. We call the operators $R_x^{x'}$ and $C_x^{x'}$ \emph{James-Peel moves}.  We make the same definitions for ordinary diagrams, taking $(i,j)_D$ to mean $(i,j)$. The next proposition is clear from the definitions.

\begin{prop} \label{prop:jp-toric}
If $D$ is toric, then so are $R_{x}^{x'}D$ and $C_{x}^{x'}D$, and these James-Peel moves commute with the quotient map $\rho$.
\end{prop}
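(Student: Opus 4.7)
The plan is to verify both claims directly from the definitions, focusing on $R_x^{x'}$; the argument for $C_x^{x'}$ is identical after exchanging the roles of rows and columns. First I would dispatch the degenerate case $i \equiv i' \pmod{m}$: then for any cell $(i,q)_D \in D$, the condition that $D$ contains $(i',q)_D$ reduces to the condition that $D$ contains $(i,q)_D$, which holds by hypothesis, so every such cell is fixed and $R_x^{x'} D = D$. Both conclusions are then automatic, and I may assume $i \not\equiv i' \pmod{m}$.

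To show that $R_x^{x'} D$ is toric, I would take two distinct cells $c_1, c_2$ of $R_x^{x'} D$ and argue that they project to distinct points of the torus. Each is either fixed (and hence lies in $D$) or moved from row-class $i$ to row-class $i'$. Two fixed cells are separated on the torus by the toric hypothesis on $D$. Two moved cells both have row-class $i'$; if their column-classes agreed modulo $n$, their preimages in $D$ would both have row-class $i$ and a common column-class, forcing them to coincide by toric-ness of $D$, a contradiction. For a fixed cell $c_1$ and a moved cell $c_2$ in row-class $i'$ with column-class $q$: if $c_1$ is not in row-class $i'$ there is nothing to check; if it is, then $c_1 = (i',q')_D \in D$, but a collision on the torus requires $q \equiv q' \pmod{n}$, which forces $(i',q)_D = (i',q')_D \in D$ and contradicts the third clause of the definition, which allows the move producing $c_2$ only when $(i',q)_D \notin D$.

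For commutativity with $\rho$, I would observe that the fixed-or-moved classification of a cell $(i,q)_D$ depends only on row and column classes, since the statement that $D$ contains $(i',q)_D$ is equivalent to the statement that $\rho(D)$ contains the torus point with row-class $i'$ and column-class $q$. Hence this classification agrees between $R_x^{x'} D$ on the cylinder and $R_x^{x'} \rho(D)$ on the torus. Moreover, the target $(i',q+pn)$ of a moved cell on the cylinder projects to the torus point with row-class $i'$ and column-class $q$---exactly where the analogous torus-level James-Peel move sends the corresponding cell of $\rho(D)$. So $\rho(R_x^{x'} D) = R_x^{x'} \rho(D)$.

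I do not anticipate any real obstacle; the proof is essentially bookkeeping, and the only subtlety lies in interpreting the notation $(i,j)_D$ uniformly across the cylindric and toric settings.
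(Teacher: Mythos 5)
Your proof is correct, and since the paper offers no argument at all (it simply introduces the proposition with the remark that it is ``clear from the definitions''), your careful case analysis is exactly the bookkeeping the paper leaves to the reader: you verify injectivity of $\rho$ on $R_x^{x'}D$ by exhausting the fixed/moved possibilities, and you observe that the fixed-or-moved classification and the image of a moved cell depend only on row and column residues, which is what makes the diagram with $\rho$ commute.
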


\begin{rem} \label{rem:toric-james-peel}
For simplicity, the results involving James-Peel moves that follow have been phrased for ordinary diagrams. However, Lemma~\ref{lem:toric} and Proposition~\ref{prop:jp-toric} show that one could just as well state them for toric diagrams by replacing ``intersects'' with ``meets'' (see Definition~\ref{defn:meets}), and we will take this point of view in Section~\ref{sec:main}.
\end{rem}

\begin{lem} \label{lem:jp}
Let $D$ be a diagram, and $x, x'$ any two points in $\ZZ$. Then $V[C_{x}^{x'}D] \subseteq V[D]$, and there exists a surjective homomorphism $\phi_x^{x'} : V[D] \twoheadrightarrow V[R_{x}^{x'} D]$. If moreover $x, x' \in D$ but $x|x', x'|x \notin D$, then $V[C_{x}^{x'}D] \subseteq \ker \phi_x^{x'}$; in particular, over $\CC$ one has 
\begin{equation*}
V[R_{x}^{x'} D] \oplus V[C_{x}^{x'}D] \hookrightarrow V[D].
\end{equation*}
\end{lem}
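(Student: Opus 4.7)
The plan is to reduce each claim to an identity in the group algebra $\CC[S_d]$ (where $d = |D|$), exploiting the complementary structural properties of the two moves: $R_x^{x'}$ preserves the column partition (so $C(D)=C(R_x^{x'}D)$), while $C_x^{x'}$ preserves the row partition ($R(D)=R(C_x^{x'}D)$). Setting $D'=R_x^{x'}D$ and $D''=C_x^{x'}D$, I will identify all three cell sets with a common set $X$ via the move-induced bijections, so that the three Schur modules all sit in a common $V^{\otimes X}$, with $y_D = b_{C(D)}\,a_{R(D)}$, $y_{D'} = b_{C(D)}\,a_{R(D')}$, and $y_{D''} = b_{C(D'')}\,a_{R(D)}$. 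Partition the row indices spanning columns $j, j'$ (for the $C$-move) into $A$, $B$, $C$ -- cells occurring only in column $j$, in both, or only in column $j'$ -- and dually the column indices spanning rows $i, i'$ for the $R$-move.

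For the inclusion $V[C_x^{x'}D] \subseteq V[D]$: since both Young symmetrizers share the factor $a_{R(D)}$, it suffices to produce $\eta \in \CC[S_d]$ with $y_{D''} = \eta \cdot y_D$; the inclusion $v\,y_{D''} = (v\eta)\,y_D \in V[D]$ is then automatic. I will decompose the two column antisymmetrizers as signed shuffle sums, $b_{C(D)} = \sigma_{AB}\,b_A b_B b_C\, b_{\mathrm{rest}}$ and $b_{C(D'')} = \sigma_{AC}\,b_A b_B b_C\, b_{\mathrm{rest}}$, and construct $\eta$ from the shuffle transition $\sigma_{AB} \rightsquigarrow \sigma_{AC}$, an identity valid modulo the right-annihilator of $a_{R(D)}$ (in the simplest case $|A|=|C|=1$, $B=\emptyset$, $\eta$ is literally $1-(x\,x')$).

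For the surjection $\phi_x^{x'}: V[D] \twoheadrightarrow V[R_x^{x'}D]$, I will set $\phi_x^{x'}(v\,y_D) := v\,y_D\,y_{D'}$; this is well-defined, $\GL(V)$-equivariant, and has image in $V[D']$. Surjectivity will follow from $V^{\otimes X}\,y_D\,y_{D'} = V^{\otimes X}\,y_{D'}$, which I will deduce from an identity $y_D\,y_{D'} = \kappa\,y_{D'}$ for some nonzero $\kappa$, proved via the dual shuffle decomposition of $a_{R(D)}$ and $a_{R(D')}$ together with the observation that the shared factor $b_{C(D)}$ does not produce cancellations ($A$ and $C$ lie in distinct columns of $D$).

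For the final assertion, the hypothesis $x, x' \in D$, $x|x', x'|x \notin D$ places $\{x, x'\}$ as a two-cell slice of a column of $D''$ (both at column $j'$, rows $i, i'$) and of a row of $D'$ (both at row $i'$, columns $j, j'$). Setting $t = (x\,x')$, this yields the clean identities $b_{C(D'')}\cdot(1+t) = 0$ and $(1-t)\cdot a_{R(D')} = 0$. The composite $V[C_x^{x'}D] \hookrightarrow V[D] \xrightarrow{\phi_x^{x'}} V[R_x^{x'}D]$ is $v\,y_{D''} \mapsto v\,y_{D''}\,y_{D'}$, so its vanishing reduces to the single identity $y_{D''}\,y_{D'} = b_{C(D'')}\,a_{R(D)}\,b_{C(D)}\,a_{R(D')} = 0$ in $\CC[S_d]$, or equivalently (using $y_{D''}=\eta y_D$ and $y_D y_{D'} = \kappa y_{D'}$ from the previous steps) to $\eta\,y_{D'} = 0$. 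This identity is the main obstacle: the annihilating factors $(1-t)$ from $b_{C(D'')}$ and $(1+t)$ from $a_{R(D')}$ sit at opposite ends of the product, separated by $a_{R(D)}b_{C(D)}$, which contains no occurrence of $t$; the cancellation must be extracted by conjugating $t$ through the middle via $\alpha\cdot t = t\cdot(t\alpha t)$ and then using Garnir-style manipulations to match the conjugated subgroups $tR(D)t$, $tC(D)t$ back to $R(D), C(D)$ modulo terms annihilated on the left by $b_{C(D'')}$ or on the right by $a_{R(D')}$. Once $y_{D''}y_{D'} = 0$ is in place, the direct-sum embedding $V[R_x^{x'}D] \oplus V[C_x^{x'}D] \hookrightarrow V[D]$ over $\CC$ follows from semisimplicity: the surjection $\phi_x^{x'}$ admits a splitting, yielding $V[D] \cong V[R_x^{x'}D] \oplus \ker\phi_x^{x'}$, and the inclusion $V[C_x^{x'}D] \hookrightarrow V[D]$ places $V[C_x^{x'}D]$ inside $\ker\phi_x^{x'}$.
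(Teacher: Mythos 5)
The paper's proof of this lemma is a one-line citation: the Specht-module version is \cite[Theorem 2.4]{jamespeel}, and that proof is carried out entirely at the level of the Young symmetrizers $y_D \in \CC[S_D]$, so it transfers verbatim to the Schur modules $V^{\otimes D}y_D$. You instead try to reconstruct the argument directly. The skeleton you propose --- an identity $y_{C_x^{x'}D} = \eta\,y_D$ giving the inclusion, right multiplication giving the surjection $\phi_x^{x'}$, and an identity $y_{C_x^{x'}D}\,y_{R_x^{x'}D}=0$ giving the kernel containment --- is indeed the right framework, and the paper's own rationale for the citation (it ``operates entirely on the level of the Young symmetrizers'') confirms this is the correct level at which to work.

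But the proposal leaves real gaps precisely where the content of the lemma lies. The surjectivity claim $y_D\, y_{R_x^{x'}D} = \kappa\,y_{R_x^{x'}D}$ with $\kappa\neq 0$ is supported only by a vague appeal to ``dual shuffle decompositions'' and the ``observation'' that the shared factor $b_{C(D)}$ ``does not produce cancellations''; this nonvanishing statement genuinely needs a coset-counting argument, and it is not obvious. More seriously, the vanishing $y_{C_x^{x'}D}\,y_{R_x^{x'}D}=0$, which you yourself flag as ``the main obstacle,'' is addressed only by a heuristic sketch --- conjugating $t=(x\,x')$ through $a_{R(D)}b_{C(D)}$ and invoking unspecified ``Garnir-style manipulations \dots\ modulo terms annihilated \dots''. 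That identity is exactly the technical core of James and Peel's Theorem 2.4, and describing the shape of a hoped-for argument is not a substitute for giving it. So your route is genuinely different in character (direct reconstruction rather than citation), but as written it is not a complete proof; the honest move is either to cite \cite{jamespeel} as the paper does, or to actually carry out the symmetrizer calculations you gesture at.
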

\begin{proof}
The same statement for the \emph{Specht modules} $\CC[S_D]y_D$ appears as \cite[Theorem 2.4]{jamespeel}; the proof given there operates entirely on the level of the Young symmetrizers $y_D$, so it applies just as well to Schur modules.
\end{proof}

\begin{ex}
Let
\begin{equation*}
D = \begin{array}{cccc}
\square & \square & \cdot & \cdot\\
\square & \cdot & \square & \cdot
\end{array} = \{(1,1), (1,2), (2,1), (2,3)\}.
\end{equation*}
Then
\begin{equation*}
R_{(1,2)}^{(2,3)} D = 
\begin{array}{cccc}
\square & \cdot & \cdot & \cdot\\
\square & \square & \square & \cdot
\end{array} \quad \text{and} \quad
C_{(1,2)}^{(2,3)} D = \begin{array}{cccc}
\square & \cdot & \square & \cdot\\
\square & \cdot & \square & \cdot
\end{array}
\end{equation*}
Since $(1,2), (2,3) \in D$ but $(1,3), (2,2) \notin D$, Lemma~\ref{lem:jp} and Proposition~\ref{prop:diagram-facts}(a) imply $V[31] \oplus V[22] \hookrightarrow V[D]$. In fact, this injection is an isomorphism: $D$ is equivalent to the skew shape $(3,2) \setminus (1)$ and $s_{32 \setminus 1} = s_{31} + s_{22}$.
\end{ex}

We will not need to know an exact definition for the surjections in Lemma~\ref{lem:jp}, but we will need to know how they restrict to submodules of the form $V[C_{y}^{y'}D]$. 
\begin{lem}[\cite{billey-pawlowski-2013}, Lemma 3.3] \label{lem:jp-commute}
Let $D$ be a diagram and $x, x', y, y' \in \ZZ^2$. Consider the two surjections
\begin{align*}
&\phi_x^{x'} : V[D] \twoheadrightarrow V[R_x^{x'}D]\\
&\widetilde{\phi}_x^{x'} : V[C_y^{y'}D] \twoheadrightarrow V[R_x^{x'} C_y^{y'}D]
\end{align*}
given by Lemma~\ref{lem:jp}. 
\begin{enumerate}[(a)]
\item $R_x^{x'} C_y^{y'} D \neq C_y^{y'} R_x^{x'} D$ if and only if $x|y, x'|y' \in D$ but not $x|y', x'|y \notin D$, or $x|y', x'|y \in D$ but $x|y, x'|y' \notin$;
\item If $R_x^{x'} C_y^{y'} D = C_y^{y'} R_x^{x'} D$, then $\widetilde{\phi}_x^{x'}$ is the restriction of $\phi_x^{x'}$ to $V[C_y^{y'}D] \subseteq V[D]$, so $\phi_x^{x'} V[C_y^{y'}D] = V[R_x^{x'} C_y^{y'}D]$;
\item If $R_x^{x'} C_y^{y'} D \neq C_y^{y'} R_x^{x'} D$, then $\phi_x^{x'} V[C_y^{y'}D] = 0$.
\end{enumerate}
\end{lem}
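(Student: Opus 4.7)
The plan is to reduce (a) to a small combinatorial case check, then exploit the explicit construction of $\phi_x^{x'}$ from the proof of Lemma~\ref{lem:jp} to track how it interacts with the subspace $V[C_y^{y'}D] \subseteq V[D]$.

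For (a), set $x = (i,j)$, $x' = (i',j')$, $y = (p,q)$, $y' = (p',q')$. The move $R_x^{x'}$ modifies only cells in rows $i, i'$ and $C_y^{y'}$ only those in columns $q, q'$, and the two operations act independently on all cells outside the four corner positions $x|y, x|y', x'|y, x'|y'$. So it suffices to enumerate the $2^4$ possible configurations of $D$ at these four corners and compute $R_x^{x'} C_y^{y'} D$ and $C_y^{y'} R_x^{x'} D$ directly. A routine case check shows that the two diagrams agree except in the two ``off-diagonal'' configurations where exactly $\{x|y,\, x'|y'\} \subseteq D$ or exactly $\{x|y',\, x'|y\} \subseteq D$ among the four corner cells.

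For (b), recall from the proof of Lemma~\ref{lem:jp} (following \cite{jamespeel}) that $\phi_x^{x'}$ is given by right multiplication on $V^{\otimes D}$ by a distinguished element $\sigma \in \CC[S_D]$ built as a sum over coset representatives that move cells from row $i$ to row $i'$; the map $\widetilde{\phi}_x^{x'}$ is built identically on the ambient space $V^{\otimes D}$. When $R_x^{x'} C_y^{y'} D = C_y^{y'} R_x^{x'} D$, the construction of $\sigma$ is compatible with the column stabilizer of $C_y^{y'}D$, so $\phi_x^{x'}$ restricts along $V[C_y^{y'}D] \hookrightarrow V[D]$ to exactly $\widetilde{\phi}_x^{x'}$. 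The image identity then follows from the surjectivity of $\widetilde{\phi}_x^{x'}$ in Lemma~\ref{lem:jp}.

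For (c), I expect the main obstacle. In the asymmetric corner configuration, the key is to produce a transposition $\tau \in S_D$ that exchanges two cells lying in a common column of $C_y^{y'}D$ (so $\tau$ contributes with sign $-1$ in the column antisymmetrizer of $y_{C_y^{y'}D}$) but that are sent to a common target row by the symmetrizer $\sigma$ (so $\tau$ contributes with sign $+1$ there). Mimicking Lemma~\ref{lem:nonzero-schur}(a), any tensor in $\phi_x^{x'}(V[C_y^{y'}D])$ is then both $\tau$-symmetric and $\tau$-antisymmetric, forcing it to vanish. The main technical care is in extracting this transposition from the explicit form of $\sigma$ and verifying the two sign compatibilities; this computation is the substance of \cite[Lemma 3.3]{billey-pawlowski-2013}.
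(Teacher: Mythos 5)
The paper itself does not prove this lemma; it cites \cite[Lemma~3.3]{billey-pawlowski-2013} and moves on, so there is no ``paper's own proof'' to compare against. Your sketch plausibly outlines what such a proof must do, and part (a) in particular is correct in both conception and conclusion: the moves $R_x^{x'}$ and $C_y^{y'}$ can only interfere at the four cells in rows $\{i,i'\}$ and columns $\{j,j'\}$, so a finite case check at those four positions settles it, and the two ``off-diagonal'' patterns are the only obstructions.

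Parts (b) and (c), however, are not carried out in a way that actually proves anything. For (b) you assert that $\widetilde{\phi}_x^{x'}$ is ``built identically on the ambient space $V^{\otimes D}$'' and therefore agrees with the restriction of $\phi_x^{x'}$. This needs justification: the two maps come from applying Lemma~\ref{lem:jp} to \emph{different} diagrams ($D$ versus $C_y^{y'}D$), whose cells in rows $i,i'$ differ precisely at the contested corners, and the group-algebra element implementing the row move $R_x^{x'}$ is built from the row stabilizer of the relevant diagram, which therefore differs in the two cases. Whether the discrepancy vanishes on the subspace $V[C_y^{y'}D]$ is exactly the content of the lemma; ``the construction of $\sigma$ is compatible'' is a restatement of what must be shown, not an argument. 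For (c) you correctly identify the shape of the argument --- produce a transposition that is a column transposition of $C_y^{y'}D$ but becomes a row transposition after the move, so the image is simultaneously symmetric and antisymmetric --- but you do not identify the transposition, verify the two sign compatibilities, or explain why the image of $V[C_y^{y'}D]$ under $\phi_x^{x'}$ (rather than some smaller or larger set) inherits both symmetries. You acknowledge that ``this computation is the substance of \cite[Lemma~3.3]{billey-pawlowski-2013},'' which is accurate; but that is the same thing as saying the proof is not in your proposal. Since the paper delegates to the same reference, this is not unreasonable as a reading of the paper, but as a standalone proof the proposal has a genuine gap at exactly the step that carries the weight of the lemma.
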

Pictorially, the condition in Lemma~\ref{lem:jp-commute}(a) says that the intersection of $D$ with the two rows of $x, x'$ and the two columns of $y, y'$ is either $\begin{array}{cc} \times & \cdot \\ \cdot & \times \end{array}$ or $\begin{array}{cc} \cdot & \times \\ \times & \cdot \end{array}$. Write $D_{x \to y}$ for $R_x^y C_x^y D$; we will only use this notation when $R_x^y C_x^y D = C_x^y R_x^y D$.

The next definition and lemma can be thought of as a generalization of Pieri's rule for computing the Schur expansion of $s_1 s_{\lambda}$. A subset $\Delta \subseteq \ZZ^2$ is a \emph{transversal} if no two of its points are in the same row or column. For two sets $X, Y \subseteq \ZZ^2$, define $X|Y = \{x|y : x \in X, y \in Y\}$.
\begin{defn} \label{defn:corner}
A \emph{corner configuration} for a diagram $D$ is a pair $(a, \Delta)$ where $\Delta \subseteq \ZZ^2$ is a totally ordered set, $a \in \ZZ^2$, and:
\begin{enumerate}[(a)]
\item $\{a\} \cup \Delta$ is a transversal disjoint from $D$;
\item $\{a\} | \Delta$ and $\Delta | \{a\}$ are disjoint from $D$;
\item If $x < y$ are in $\Delta$, then $y|x \in D$.
\end{enumerate}
\end{defn}

Up to taking duals, the next lemma appears as \cite[Proposition 3.2]{liubranching}.
\begin{lem} \label{lem:corner-configurations} If $(a, \Delta)$ is a corner configuration for $D$, then
\begin{equation*}
\bigoplus_{x \in \Delta} V[(D \cup \{a\})_{a \to x}] \hookrightarrow V[\square \ast D].
\end{equation*}
\end{lem}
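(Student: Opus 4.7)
The plan is to prove a strengthening by induction on $|\Delta|$: namely, that there is an embedding
\[
\bigoplus_{x \in \Delta} V[(D \cup \{a\})_{a \to x}] \hookrightarrow V[D \cup \{a\}]
\]
sending each summand $V[(D \cup \{a\})_{a \to x}]$ into the further submodule $V[C_a^x(D \cup \{a\})]$. The lemma as stated will then follow by composing with the inclusion $V[D \cup \{a\}] \hookrightarrow V[\square \ast D]$ obtained from the chain of James--Peel moves realizing $D \cup \{a\} = (\square \ast D)_{a_0 \to a}$ (where $a_0$ is the free cell of $\square \ast D$, sitting in a fresh row and column): the $C$-inclusion of Lemma~\ref{lem:jp} provides $V[D \cup \{a\}] \hookrightarrow V[R_{a_0}^a(\square \ast D)]$, and splitting the $R$-surjection $\phi_{a_0}^a$ over $\mathbb{C}$ (semisimplicity of polynomial $\GL(V)$-representations) gives the rest.

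For the inductive step, write $x_k = \max \Delta$ and $\Delta' = \Delta \setminus \{x_k\}$; the pair $(a, \Delta')$ remains a corner configuration for $D$. Apply the strengthened inductive hypothesis: each summand $V[(D \cup \{a\})_{a \to x_i}]$ with $i < k$ sits inside $V[C_a^{x_i}(D \cup \{a\})] \subseteq V[D \cup \{a\}]$. For each such $i$, the non-commutation test of Lemma~\ref{lem:jp-commute}(a), applied to the moves $R_a^{x_k}$ and $C_a^{x_i}$, is verified by the corner configuration: condition (c) gives $x_k|x_i \in D$, conditions (a) and (b) give $a \in D \cup \{a\}$ but $x_i, a|x_i, x_k|a \notin D \cup \{a\}$. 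Hence Lemma~\ref{lem:jp-commute}(c) forces $V[C_a^{x_i}(D \cup \{a\})] \subseteq \ker \phi_a^{x_k}$, so the whole inductive direct sum sits inside $\ker \phi_a^{x_k}$. For the pair $(R_a^{x_k}, C_a^{x_k})$, by contrast, the non-commutation test fails, and Lemma~\ref{lem:jp-commute}(b) identifies $\phi_a^{x_k}|_{V[C_a^{x_k}(D \cup \{a\})]}$ as a surjection onto $V[(D \cup \{a\})_{a \to x_k}]$; a section of this restriction (available over $\mathbb{C}$) embeds $V[(D \cup \{a\})_{a \to x_k}]$ into $V[C_a^{x_k}(D \cup \{a\})]$ in a way that is transverse to $\ker \phi_a^{x_k}$, because $\phi_a^{x_k}$ is injective on the section's image. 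Adjoining this new summand to the inductive sum preserves directness and maintains the strengthened hypothesis, closing the induction.

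The main obstacle is formulating and maintaining the strengthened inductive hypothesis---that each summand lands inside the corresponding $V[C_a^x(D \cup \{a\})]$---rather than allowing it to float as an arbitrary submodule of $V[D \cup \{a\}]$. Without this refinement one cannot invoke Lemma~\ref{lem:jp-commute}(c) to push the old summands into $\ker \phi_a^{x_k}$, and the transversality between the inductive sum and the newly adjoined $x_k$-summand becomes opaque. The total ordering on $\Delta$ is essential precisely because condition (c) is what triggers the non-commutation at each inductive step.
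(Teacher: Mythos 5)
Your proof is correct, and it is instructive to compare it with what the paper does. For this statement the paper gives no argument of its own; it simply cites Liu's result \cite[Proposition 3.2]{liuforests} (up to duality). However, the machinery you use---the $C$-inclusion and $R$-surjection of Lemma~\ref{lem:jp}, the commutation criterion of Lemma~\ref{lem:jp-commute}, and semisimplicity over $\CC$---is exactly what the paper deploys to prove the strictly more general Lemma~\ref{lem:corner-system}. Where you differ is in the ambient module: the paper's Lemma~\ref{lem:corner-system} proof builds a filtration $N_b^y = \sum V[C_a^x C_\sq^a E]$ directly inside $V[\sq \ast D]$, which forces it to track two nested James--Peel moves at each stage; you instead begin by collapsing the free box to the row and column of $a$, i.e.\ by noting $(D \cup \{a\}) = (\sq \ast D)_{a_0 \to a}$ and splitting off an embedding $V[D \cup \{a\}] \hookrightarrow V[\sq \ast D]$ once and for all, after which the induction over $\Delta$ needs only a single move $C_a^{x}$ per step. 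This is a genuine simplification for the single-corner-configuration case. Your verification of the non-commutation hypothesis of Lemma~\ref{lem:jp-commute}(a) for the pair $(R_a^{x_k}, C_a^{x_i})$ is correct (the four relevant positions are $a|a=a$, $x_k|x_i$, $a|x_i$, $x_k|a$, and conditions (a)--(c) of Definition~\ref{defn:corner} give the diagonal pattern; the mention of $x_i \notin D \cup \{a\}$ is extraneous but harmless), as is your check that the pair $(R_a^{x_k}, C_a^{x_k})$ commutes. The strengthened inductive hypothesis---that each summand's image lies inside $V[C_a^x(D\cup\{a\})]$---is precisely the refinement needed to make Lemma~\ref{lem:jp-commute}(c) applicable, and you identify this correctly as the crux; it plays the same role as the submodules $V[C_a^x C_\sq^a E]$ in the paper's filtration argument.
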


\begin{ex}
Recall that the \emph{outside corners} of a Young diagram $D = \lambda$ are the points which can be added to $\lambda$ to obtain another Young diagram. If $\Delta$ is the set of outside corners ordered from top to bottom, and $a$ is any point not in the row or column of a cell of $\lambda$, then $(a, \Delta)$ is a corner configuration for $\lambda$. Lemma~\ref{lem:corner-configurations} then recovers Pieri's rule for $s_1 s_\lambda$, except that one does not know a priori that the injection is an isomorphism. In general, Lemma~\ref{lem:corner-configurations} allows $a$ to share a row or column with a cell of $D$, in which case $(D \cup \{a\})_{a \to x}$ may differ from $D \cup \{x\}$.
\end{ex}

Unfortunately, Lemma~\ref{lem:corner-configurations} is too weak for our purposes. Our goal is to show that $\trch V[D(f)]$ satisfies the same bounded affine L-S recurrence (Proposition~\ref{prop:LS-recurrence}) as $G_f$. When $f \in S_n$, there are no minus signs on the right-hand side of Proposition~\ref{prop:LS-recurrence}, and one can indeed obtain the desired Schur module analogue by applying Lemma~\ref{lem:corner-configurations} to an appropriate corner configuration \cite{billey-pawlowski-2013}. When there are minus signs in the bounded affine L-S tree this would make no sense, so instead we prove a Schur module version of Proposition~\ref{prop:recurrences}(a). However, the sum in Proposition~\ref{prop:recurrences}(a) has many more terms than the ones in Proposition~\ref{prop:LS-recurrence}, and they can be thought of as arising from multiple, interacting corner configurations, as in the next definition.

\begin{defn} \label{defn:corner-system} A \emph{system of corner configurations} for a diagram $D$ is a finite totally ordered set $\Cor$ of corner configurations such that $\{a : (a, \Delta) \in \Cor\}$ is a transversal, and for all $(a, \Delta_a), (b, \Delta_b) \in \Cor$,
\begin{enumerate}[(a)]
\item $\{a\} \cup \Delta_b$ is a transversal;
\item If $(a, \Delta_a) < (b, \Delta_b)$ and $x \in \Delta_a, y \in \Delta_b$, then $\{b, y\} | \{a, x\}$ intersects $D$.
\end{enumerate}
\end{defn}

\begin{lem} \label{lem:corner-system} Let $\Cor$ be a system of corner configurations for a diagram $D$. Then
\begin{equation*}
\bigoplus_{\substack{(a, \Delta) \in \Cor \\ x \in \Delta}} V[(D \cup a)_{a \to x}] \hookrightarrow V[\square \ast D].
\end{equation*}
\end{lem}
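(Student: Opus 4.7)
The plan is to induct on $|\Cor|$. The base case $|\Cor| = 1$ is exactly Lemma~\ref{lem:corner-configurations}. For the inductive step, let $(a_0, \Delta_{a_0})$ be the minimum element of $\Cor$ under its total order, and set $\Cor' := \Cor \setminus \{(a_0, \Delta_{a_0})\}$. The conditions of Definition~\ref{defn:corner-system} are clearly inherited by $\Cor'$, so the inductive hypothesis yields
\[
W' := \bigoplus_{\substack{(a, \Delta_a) \in \Cor' \\ x \in \Delta_a}} V[(D \cup \{a\})_{a \to x}] \hookrightarrow V[\square \ast D].
\]
Lemma~\ref{lem:corner-configurations} applied to $(a_0, \Delta_{a_0})$ alone gives $W_{a_0} := \bigoplus_{x \in \Delta_{a_0}} V[(D \cup \{a_0\})_{a_0 \to x}] \hookrightarrow V[\square \ast D]$, so the claim reduces to showing $W' \cap W_{a_0} = 0$ inside $V[\square \ast D]$.

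To prove this, I would exhibit a James-Peel surjection $\Psi : V[\square \ast D] \twoheadrightarrow V[D \cup \{a_0\}]$ obtained by composing the row move $\phi_\square^{a_0}$ with the subsequent column move, moving the detached cell $\square$ into position $a_0$. Post-composing with the further James-Peel moves $\phi_{a_0}^{x}$ appearing in the proof of Lemma~\ref{lem:corner-configurations} for $(a_0, \Delta_{a_0})$ shows that the resulting family of compositions is jointly injective on $W_{a_0}$. It then remains to check that $\Psi$ annihilates every summand $V[(D \cup \{b\})_{b \to y}]$ of $W'$ (with $(b, \Delta_b) \in \Cor'$ and $y \in \Delta_b$). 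Each such summand embeds in $V[\square \ast D]$ via an analogous composition of James-Peel moves taking $\square$ first to $b$ and then to $y$. Since $(a_0, \Delta_{a_0}) < (b, \Delta_b)$, condition (b) of Definition~\ref{defn:corner-system} guarantees that at least one of the four points $\{b, y\} | \{a_0, x\}$ lies in $D$ for every $x \in \Delta_{a_0}$, while the transversality clauses of Definitions~\ref{defn:corner} and \ref{defn:corner-system} forbid other cells of $D$ from occupying the complementary spots of that two-row/two-column rectangle. This places us in exactly the non-commuting configuration of Lemma~\ref{lem:jp-commute}(a), whence Lemma~\ref{lem:jp-commute}(c) forces $\Psi$ to vanish on that summand.

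The main obstacle will be the bookkeeping in this last step: both $\Psi$ and the inclusion $V[(D \cup \{b\})_{b \to y}] \hookrightarrow V[\square \ast D]$ are compositions of several James-Peel moves, so one must interleave them (repeatedly invoking Lemma~\ref{lem:jp-commute}(b) to slide commuting pairs past one another) in order to bring the analysis down to the single pair of moves controlled by Lemma~\ref{lem:jp-commute}(c). The transversality hypotheses in Definition~\ref{defn:corner-system} are calibrated precisely so that at each intermediate diagram in such an interleaving, the relevant two-row/two-column rectangle between the rows of $a_0, b$ and the columns of $x, y$ meets $D$ only in the freshly-inserted cells together with the single cell of $D$ forced by condition (b)---so no ``extraneous'' cell of $D$ spoils the non-commutation hypothesis of Lemma~\ref{lem:jp-commute}(a), and the argument goes through cleanly.
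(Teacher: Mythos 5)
Your overall architecture (realize each summand as the image of a submodule $V[C_a^x C_\sq^a E]\subseteq V[E]$, $E=\sq\ast D$, under a composition of James--Peel row surjections, and use Lemma~\ref{lem:jp-commute} to show the relevant pieces don't interfere) is the right one, and is essentially what the paper does. But there is a genuine gap in the separation step, caused by the direction of Definition~\ref{defn:corner-system}(b). That condition guarantees that $\{b,y\}\,|\,\{a_0,x\}$ meets $D$ when $(a_0,\Delta_{a_0})<(b,\Delta_b)$: these are cells in the \emph{rows of $b$ and $y$} and the \emph{columns of $a_0$ and $x$}. Your map $\Psi$ is built from the row moves $R_\sq^{a_0}$ and $R_{a_0}^{x}$, and you want it to fail to commute with the column moves $C_\sq^{b}$ and $C_b^{y}$; by Lemma~\ref{lem:jp-commute}(a) the relevant $2\times 2$ rectangles therefore live in rows of $\{\sq,a_0,x\}$ and columns of $\{\sq,b,y\}$ --- i.e.\ they involve cells such as $a_0|b$, $a_0|y$, $x|b$, $x|y$, which are the \emph{transposes} of the cells that condition (b) provides. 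Nothing in the definitions forces $a_0|b\in D$ when $b|a_0\in D$ (and in the application to Rothe diagrams, Lemma~\ref{lem:rothe-diagram-corner-system} only ever verifies the $\{b,y\}|\{a_0,x\}$ direction). So in general all four moves commute, Lemma~\ref{lem:jp-commute}(b) applies instead of (c), and $\Psi$ does \emph{not} annihilate the summands of $W'$. The hypothesis is calibrated so that the surjection attached to a \emph{larger} element of $\Cor$ kills the submodules attached to \emph{smaller} ones, which is exactly opposite to your plan of peeling off the minimum and using its surjection to kill the rest. This is why the paper organizes the argument as a filtration $N_b^y=\sum_{(a,x)\le(b,y)}V[C_a^xC_\sq^aE]$ in which each $\phi_b^y\phi_\sq^b$ vanishes on all earlier terms and surjects the top term onto $V[(D\cup b)_{b\to y}]$; semisimplicity over $\CC$ then converts the filtration into the desired direct sum.

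Two smaller issues. First, $\Psi$ as described ("the row move composed with the subsequent column move") is not a surjection defined on all of $V[\sq\ast D]$: column James--Peel moves give \emph{inclusions} $V[C_x^{x'}D]\subseteq V[D]$, not surjections, so the only globally defined maps available are compositions of row surjections such as $\phi_{a_0}^x\phi_\sq^{a_0}\colon V[E]\twoheadrightarrow V[R_{a_0}^xR_\sq^{a_0}E]$. Second, the claim that the family $\{\phi_{a_0}^x\phi_\sq^{a_0}\}_{x\in\Delta_{a_0}}$ is \emph{jointly injective} on $W_{a_0}$ is not what Lemma~\ref{lem:corner-configurations} (or its proof) delivers --- one only obtains a filtration of $\sum_x V[C_{a_0}^xC_\sq^{a_0}E]$ whose subquotients surject onto the summands --- though this point could be worked around; the directional problem above is the essential obstruction.
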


\begin{proof}
Let $A = \{a : (a, \Delta) \in \Cor\}$ and for $a \in A$, let $\Delta_a$ be such that $(a, \Delta_a) \in \Cor$. The set $A$ inherits a total order from the total order on $\Delta$. Write $<$ for the lexicographic ordering on pairs $(a, x)$ with $a \in \Delta_a$ and $x \in \Delta_a$, with $\lessdot$ being its covering relation. We will construct a filtration of $V[D \ast \sq]$ by modules $N_b^y$ for $b \in A$ and $y \in \Delta_b$, ordered by $\lessdot$, such that if $(b',y') \lessdot (b,y)$, then there is a surjective homomorphism $N_b^{y} / N_{b'}^{y'} \twoheadrightarrow V[(D \cup b)_{b \to y}]$. Since we are working over $\CC$, this will prove the lemma. Write $E = D \ast \sq$, and define
\begin{equation*}
N_b^y = \sum_{(a,x) \leq (b,y)} V[C_{a}^{x} C_{\sq}^{a} E].
\end{equation*}
By Lemma~\ref{lem:jp}, this is a submodule of $V[E]$.

Let $\phi_{\sq}^b : V[E] \twoheadrightarrow V[R_{\sq}^b E]$ and $\phi_b^y : V[R_{\sq}^b E] \twoheadrightarrow V[R_b^y R_{\sq}^b E]$ be the surjections given by Lemma~\ref{lem:jp}. We now show that
\begin{equation*}
\phi_b^y \phi_{\sq}^b V[C_{a}^{x} C_{\sq}^{a} E] = \begin{cases}
V[(D \cup b)_{b \to y}] & \text{if $(a,x) = (b,y)$}\\
0 & \text{if $(a,x) < (b,y)$}
\end{cases}.
\end{equation*}
This will prove the lemma, since it shows that $\phi_{\sq}^b \phi_b^y$ descends to a surjection $N^y_b / N^{y'}_{b'} \twoheadrightarrow V[(D \cup b)_{b \to y}]$ where $(b', y') \lessdot (b,y)$.

The key point is that by Lemma~\ref{lem:jp-commute}, either:
\begin{enumerate}[(i)]
\item both of the $R$ operators in the expression $R_b^y R_\sq^b C_{a}^{x} C_{\sq}^{a} E$ commute past both of the $C$ operators and $\phi_b^y \phi_{\sq}^b V[C_{a}^{x} C_{\sq}^{a} E] = V[R_b^y R_\sq^b C_{a}^{x} C_{\sq}^{a} E]$, or;
\item one of the $R$'s fails to commute with one of the $C$'s and $\phi_{\sq}^b \phi_b^y V[C_{a}^{x} C_{\sq}^{a} E] = 0$.
\end{enumerate}
What we need to see is that (i) holds if $(a,x) = (b,y)$, while (ii) holds if $(a,x) < (b,y)$.

First consider the case $(a,x) = (b,y)$. According to Lemma~\ref{lem:jp-commute}, $R_\sq^b C_{b}^{y} C_{\sq}^{b} E \neq C_{b}^{y} R_\sq^b  C_{\sq}^{b} E$ if and only if $\sq|b, b|y \in C_{\sq}^{b} E$ and $\sq|y, b|b \notin C_{\sq}^b E$, or $\sq|b, b|y \notin C_{\sq}^{b} E$ and $\sq|y, b|b \in C_{\sq}^b E$. Since $C_{\sq}^{b} E$ contains $\sq|b$ but not $b|y$ (the latter by Definition~\ref{defn:corner}(b)), this condition does not hold. Thus $R_\sq^b C_{b}^{y} C_{\sq}^{b} E = C_{b}^{y} R_\sq^b  C_{\sq}^{b} E$. The remaining arguments are similar:
\begin{itemize}
\item To conclude $R_\sq^b  C_{\sq}^{b} E =  C_{\sq}^{b} R_\sq^b E$, use that $E$ contains $\sq$ but not $b$ (because $A$ is disjoint from $D$).
\item To conclude $R_b^y C_{b}^{y} C_{\sq}^{b} R_\sq^b E = C_{b}^{y} R_b^y C_{\sq}^{b} R_\sq^b E$, use that $C_{\sq}^{b} R_\sq^b E$ contains $b$ but not $y$ (because $\Delta_b$ is disjoint from $D$).
\item To conclude $R_b^y C_{\sq}^{b} R_\sq^b E = C_{\sq}^{b} R_b^y R_\sq^b E$, use that $R_\sq^b E$ contains $b|\sq$ but not $y|b$ (by Definition~\ref{defn:corner}(b)).
\end{itemize}
Therefore case (i) holds, meaning
\begin{equation*}
\phi_b^y \phi_{\sq}^b V[C_{a}^{x} C_{\sq}^{a} E] = V[R_b^y R_\sq^b C_{b}^{y} C_{\sq}^{b} E] = V[(D \cup b)_{b \to y}]
\end{equation*}
as desired.

Now suppose $(a,x) < (b,y)$. Our goal is to see that case (ii) above holds.
\begin{enumerate}[(1)]
\item Certainly $\sq|a \in C_\sq^a E$, while $\sq|x \notin C_\sq^a E$ because $\{a\} \cup \Delta_a$ is a transversal. Thus if $b|x \in D$ and $b|a \notin D$, we can conclude that $R_\sq^b C_{a}^{x} C_{\sq}^{a} E \neq C_{a}^{x} R_\sq^b  C_{\sq}^{a} E$, so that we are in case (ii). Otherwise, if $b|a \in D$, proceed to (2), while if $b|a \notin D$ and $b|x \notin D$, proceed to (3).

\item We can now assume that $R_\sq^b C_{a}^{x} C_{\sq}^{a} E = C_{a}^{x} R_\sq^b  C_{\sq}^{a} E$ and $b|a \in D$. Since $E$ contains $\sq|\sq$ and $b|a$ but not $\sq|a$ or $b|\sq$, we conclude that $R_\sq^b  C_{\sq}^{a} E \neq C_{\sq}^{a} R_\sq^b E$, so we are in case (ii).

\item We can now assume that
\begin{equation*}
R_\sq^b C_{a}^{x} C_{\sq}^{a} E = C_{a}^{x} R_\sq^b  C_{\sq}^{a} E = C_a^x C_{\sq}^a R_\sq^b E.
\end{equation*}
and that $b|a, b|x \notin D$. Suppose for the moment that $y|a \notin D$ and $y|x \in D$. Then $R_\sq^b C_\sq^a E$ contains $b|a$ and $y|x$, but not $y|a$ or $b|x$ (because $y|a, b|x \notin D$ and $\{a, y\}$ and $\{b, x\}$ are transversals). This implies $R_b^y C_{a}^{x} R_\sq^b  C_{\sq}^{a} E \neq C_{a}^{x} R_b^y R_\sq^b  C_{\sq}^{a} E$, so we are in case (ii). If instead $y|a \in D$, proceed to (4). The final possibility, that $y|a \notin D$ and $y|x \notin D$, cannot happen by Definition~\ref{defn:corner-system}(c).

\item We can now assume that $R_b^y C_a^x C_{\sq}^a R_\sq^b E = C_a^x R_b^y C_{\sq}^a R_\sq^b E$ and $y|a \in D$ while $b|a \notin D$. Certainly $b|\sq \in R_\sq^b E$, and since $\{y,b\}$ is a transversal we have $y|\sq \notin R_\sq^b E$. We conclude that $C_a^x R_b^y C_{\sq}^a R_\sq^b E \neq C_a^x C_{\sq}^a R_b^y  R_\sq^b E$, so we are in case (ii).
\end{enumerate}
\end{proof}

We end this section with a lemma giving bounds on the partitions appearing in the irreducible decomposition of $V[D]$. Let $\lambda_{\min}$ (resp. $\lambda_{\max}$) be the partition whose row (resp. column) lengths are the sorted row (resp. column) lengths of $D$.
\begin{lem} \label{lem:shape-bounds} If $0 \neq V[\mu] \hookrightarrow V[D]$, then $\lambda_{\min} \leq \mu \leq \lambda_{\max}$ in dominance order. If $V[\lambda_{\max}]$ is nonzero, it is an irreducible factor of $V[D]$ of multiplicity one, and likewise for $V[\lambda_{\min}]$. \end{lem}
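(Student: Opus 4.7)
The proof idea is to directly describe which weights occur in $V[D]$ and combine this with the fact that an irreducible $V[\mu]$ contains the highest weight $\mu$. Write $y_D = b_D a_D$ with column antisymmetrizer $b_D = \sum_{q \in C(D)} \sgn(q) q$ and row symmetrizer $a_D = \sum_{p \in R(D)} p$. If $T \colon D \to [k]$ has two equal entries in some column, the transposition $\tau \in C(D)$ swapping those cells satisfies $e_T \tau = e_T$ while $\tau b_D = -b_D$ in $\CC[S_D]$, so $e_T y_D = 0$. Hence every weight $\nu$ appearing in $V[D]$ is the content of some column-injective filling of $D$, which is equivalent to a $\{0,1\}$-matrix with row sums $\nu$ (sorted into a partition) and column sums the sorted column lengths $(c_1, c_2, \ldots)$ of $D$. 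By the Gale-Ryser theorem this happens exactly when $\nu \leq (c_1, c_2, \ldots)^t = \lambda_{\max}$ in dominance, giving the upper bound $\mu \leq \lambda_{\max}$.

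To get multiplicity one at $\lambda_{\max}$: any column-injective filling with content exactly $\lambda_{\max}$ is forced by a greedy argument to place $\{1, 2, \ldots, c_j\}$ in column $j$ (value $i$ must appear in every column of length $\geq i$), and since column permutations are identified up to sign by $b_D$, the $\lambda_{\max}$-weight space of $V[D]$ has dimension at most one. For nonvanishing, I would iterate the James-Peel $R$-moves of Lemma~\ref{lem:jp} to successively push cells upward until each column occupies rows $1, 2, \ldots, c_j$; the resulting diagram is equivalent to the Young diagram $\lambda_{\max}$ (reorder columns by length via Proposition~\ref{prop:diagram-facts}(a)), producing a surjection $V[D] \twoheadrightarrow V[\lambda_{\max}]$. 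Since $\GL(V)$-representations split in characteristic zero, $V[\lambda_{\max}]$ embeds back into $V[D]$, and the weight-space bound together with Kostka triangularity pins its multiplicity at exactly one.

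The lower bound and multiplicity-one at $\lambda_{\min}$ would follow by transposing, using $\lambda_{\max}(D^t) = \lambda_{\min}(D)^t$ together with the character identity $\ch V[D^t] = \omega \ch V[D]$, where $\omega$ is the involution sending $s_\nu$ to $s_{\nu^t}$. This identity is the main subtle step; it follows from Schur-Weyl duality: decomposing $V^{\otimes d} = \bigoplus_\lambda V[\lambda] \otimes S_\lambda$ shows the multiplicity of $V[\lambda]$ in $V[D]$ equals $\dim(S_\lambda y_D)$, and combining the classical sign-twist $S_\lambda \otimes \sgn \simeq S_{\lambda^t}$ with the elementary fact that the ranks of $a_D b_D$ and $b_D a_D$ on $S_\lambda$ agree (a special case of $\operatorname{rank}(AB) = \operatorname{rank}(BA)$) yields $\dim(S_\lambda y_D) = \dim(S_{\lambda^t} y_{D^t})$. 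The upper bound applied to $D^t$, together with $\omega$ reversing dominance, then gives $\mu \geq \lambda_{\min}$ and $c_{\lambda_{\min}} = 1$ whenever $V[\lambda_{\min}] \neq 0$. Alternatively, $C$-moves embed $V[\lambda_{\min}]$ as a submodule of $V[D]$ directly, but the full multiplicity-one statement at $\lambda_{\min}$ seems to require the character identity above.
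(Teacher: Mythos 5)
Your route is genuinely different from the paper's: the paper disposes of this lemma in two lines by citing the corresponding statement for Specht modules (\cite[Lemma 3.11]{billey-pawlowski-2013}) and transferring it through $V[D] \simeq V^{\otimes D} \otimes_{\CC[S_D]} S[D]$, whereas you reconstruct the result from scratch. Your upper bound (weights of $V[D]$ are contents of column-injective fillings, hence $\leq \lambda_{\max}$ by the easy direction of Gale--Ryser), the forced-filling argument bounding the $\lambda_{\max}$-weight space by one, and the iterated $R$-moves producing a surjection onto $V[\lambda_{\max}]$ are all sound, and this is a perfectly good self-contained proof of that half. For the other half, note there is a shortcut you missed: since $y_D = b_D a_D$ with $a_D$ the row symmetrizer, $V[D] = (V^{\otimes D}b_D)a_D \subseteq V^{\otimes D}a_D \simeq \bigotimes_i \Sym^{r_i}(V)$, whose character is $h_{\lambda_{\min}}(X_k) = \sum_\mu K_{\mu\lambda_{\min}} s_\mu$; this gives $\mu \geq \lambda_{\min}$ and multiplicity at most one at $\lambda_{\min}$ directly, with iterated $C$-moves supplying the embedding $V[\lambda_{\min}] \hookrightarrow V[D]$, so the transpose/character identity is not actually needed.

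There is, however, one genuine error in the step you yourself flag as the main subtle point. The claim that $\operatorname{rank}(AB) = \operatorname{rank}(BA)$ is an ``elementary fact'' is false for general matrices (take $A = E_{12}$, $B = E_{22}$: then $AB = E_{12}$ has rank $1$ while $BA = 0$), so as written the identity $\dim(S_\lambda y_D) = \dim(S_{\lambda^t} y_{D^t})$ is unjustified. The conclusion is nonetheless correct, and the fix is short: equip $S_\lambda$ with an $S_d$-invariant inner product, so that $\rho_\lambda(g)^* = \rho_\lambda(g^{-1})$. Since $R(D)$ and $C(D)$ are groups and $\sgn(q^{-1}) = \sgn(q)$, the operators $A = \rho_\lambda(b_D)$ and $B = \rho_\lambda(a_D)$ are each self-adjoint, whence $\operatorname{rank}(AB) = \operatorname{rank}\bigl((AB)^*\bigr) = \operatorname{rank}(BA)$ in this particular case. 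With that one-line repair (or with the $\Sym$-containment argument above replacing the transposition step entirely), your proof goes through.
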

\begin{proof} Let $S[D]$ denote the Specht module $\CC[S_D]y_D$. According to \cite[Lemma 3.11]{billey-pawlowski-2013}, if $S[\mu] \hookrightarrow S[D]$ then $\lambda_{\min} \leq \mu \leq \lambda_{\max}$, and $S[\lambda_{\min}]$ and $S[\lambda_{\max}]$ appear in $S[D]$ with multiplicity one. Since $V[D] \simeq V^{\otimes D} \otimes_{\CC[S_D]} S[D]$, one concludes the same for Schur modules modulo the issue that $V[\mu]$ may be zero.
\end{proof}

\begin{cor} \label{cor:nonzero-schur} If $D$ is an ordinary diagram, $V[D]$ is nonzero if and only if every column of $D$ has at most $k = \dim V$ cells. \end{cor}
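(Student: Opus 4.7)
The proof splits into the two implications, and both follow from material already in hand.

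For the ``only if'' direction (contrapositive), I would suppose some column $c$ of $D$ has $|c| > k$, and exploit the factorization $y_D = b_D a_D$, where $a_D = \sum_{p \in R(D)} p$ symmetrizes rows and $b_D = \sum_{q \in C(D)} \sgn(q) q$ antisymmetrizes columns. Because distinct columns of $D$ are disjoint as subsets, the groups $S_{c'}$ indexed by columns $c'$ commute pairwise, and $b_D = \prod_{c'} b_{c'}$ with $b_{c'} = \sum_{q \in S_{c'}} \sgn(q) q$. It therefore suffices to show $V^{\otimes D} b_c = 0$. For any basis element $e_T$ with $T : D \to [k]$, pigeonhole forces $T(x) = T(y)$ for some distinct $x, y \in c$ since $|c| > k$. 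Letting $\tau = (x\,y) \in S_c$, one has $e_T \tau = e_T$ while $\tau b_c = -b_c$, so
\begin{equation*}
e_T b_c = (e_T \tau) b_c = e_T (\tau b_c) = -e_T b_c,
\end{equation*}
and hence $e_T b_c = 0$ since we work over $\CC$. This gives $V^{\otimes D} y_D = ((V^{\otimes D} b_c) b_{D \setminus c}) a_D = 0$, where $b_{D \setminus c} := \prod_{c' \neq c} b_{c'}$.

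For the ``if'' direction, I would apply Lemma~\ref{lem:shape-bounds} directly. Suppose every column of $D$ has at most $k$ cells, and let $\lambda_{\max}$ be the partition whose column lengths are the sorted column lengths of $D$. Then $\ell(\lambda_{\max}) \leq k$, so Theorem~\ref{thm:rep-theory-facts}(b) identifies $V[\lambda_{\max}]$ with a nonzero irreducible polynomial representation of $\GL(V)$. Lemma~\ref{lem:shape-bounds} then embeds $V[\lambda_{\max}]$ into $V[D]$ as a multiplicity-one summand, so $V[D] \neq 0$.

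Neither direction presents a real obstacle: the ``only if'' direction is the classical observation that $\Lambda^{k+1} V = 0$ applied one column at a time, and the ``if'' direction is a formal consequence of Lemma~\ref{lem:shape-bounds} together with the criterion $\ell(\mu) \leq k$ for non-vanishing of the irreducible $V[\mu]$ recorded in Theorem~\ref{thm:rep-theory-facts}(b).
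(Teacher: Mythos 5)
Your proof is correct and follows essentially the same route as the paper: pigeonhole plus the alternating property of the column antisymmetrizer for necessity, and $\ell(\lambda_{\max})\le k$ together with Lemma~\ref{lem:shape-bounds} and Theorem~\ref{thm:rep-theory-facts}(b) for sufficiency. The only cosmetic difference is that you factor $y_D = b_D a_D$ and kill the single-column factor $b_c$ first, whereas the paper runs the same sign computation directly on $y_D$ via $t y_D = -y_D$ and $e_T t = e_T$.
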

\begin{proof} Suppose $D$ has a column with more than $k$ cells. Then any filling $T : D \to [k]$ must have $T(\square) = T(\square')$ for distinct cells $\square$ and $\square'$ in the same column of $D$. Set $t = (\square \,\, \square') \in C(D)$. Since $e_T t = e_T$ while $ty_D = -y_D$, we see $e_T y_D = 0$, so $V[D] = V^{\otimes D}y_D = 0$. 

Conversely, if all columns of $D$ have at most $k$ cells, then $\ell(\lambda_{\max}) \leq k$. The Schur module $V[\mu]$ is nonzero if and only if $\ell(\mu) \leq \dim V$, for instance because $s_{\mu}(x_1, \ldots, x_k)$ is nonzero if and only if $\ell(\mu) \leq k$. By Lemma~\ref{lem:shape-bounds}, $V[\lambda_{\max}] \hookrightarrow V[D]$.
\end{proof}

\section{Truncated Schur modules} \label{sec:truncated-schur}
We will show that $G_f = \trunc_{k,n-k} \ch V[D(f)]$, but this phrasing of Theorem~\ref{thm:main} puts an extra conceptual step between $G_f$ and the representation $V[D(f)]$. In this section we modify the notion of Schur module in a way that does not require any knowledge of irreducible decompositions, and provides a natural reason to view the associated characters as members of $\Lambda^{n-k}(k)$.

Fix a complex vector space $V$. Say a degree $d$ tensor $x \in T(V)$ is \emph{$\ell$-symmetric} if the stabilizer of $x$ under the right action of $S_d$ contains the subgroup of $S_d$ stabilizing some $\ell$-subset of $[d]$. Equivalently, viewing $x$ as a $d$-linear form on $V^*$, it is symmetric in at least $\ell$ of its arguments. Given an integer $m \geq 0$, define $I(m)$ to be the span of all $(m+1)$-symmetric tensors. Observe that $I(m)$ is a $\GL(V)$-stable two-sided ideal.

We now proceed as in Section~\ref{subsec:rep}. Define $R^m(V)$ as the Grothendieck group of finite-dimensional $\GL(V)$-submodules of $T(V)/I(m)$. Let $\mu : T(V)/I(m) \otimes T(V)/I(m) \to T(V)/I(m)$ be multiplication in $T(V)/I(m)$. Define the \emph{truncated tensor product} of submodules $U$ and $W$ of $T(V)/I(m)$ to be $U \trncotimes W = \mu(U,W)$, and extend this definition by bilinearity to the case where $U$ and $W$ are direct sums of submodules of $T(V)/I(m)$. The operation $\trncotimes$ makes $R^m(V)$ into a ring. Define the \emph{truncated character} of $[W] \in R^m(V)$ as $\trch(W) := \trunc_{k,m} \ch(W) \in \Lambda^m(k)$.

\begin{lem} \label{lem:l-symmetric-ideal} The ideal $I(m)$ is the sum of all simple $\GL(V)$-submodules of $T(V)$ isomorphic to $V[\lambda]$ where $\lambda_1 > m$. \end{lem}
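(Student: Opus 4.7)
The plan is to decompose $I(m)$ into isotypic components via Schur-Weyl duality. First I would verify that $I(m)\cap V^{\otimes d}$ is stable under both commuting actions of $\GL(V)$ and $S_d$: the $\GL(V)$-stability is noted in the text, and $S_d$-stability is immediate since if $x$ is fixed by the Young subgroup $S_S$ for some $(m+1)$-subset $S\subseteq[d]$, then $x\sigma$ is fixed by $\sigma^{-1}S_S\sigma=S_{\sigma^{-1}(S)}$, again an $(m+1)$-set. Schur-Weyl duality gives $V^{\otimes d}\simeq\bigoplus_{\lambda\vdash d}V[\lambda]\otimes S^\lambda$ as a direct sum of simple $\GL(V)\times S_d$-modules (with $V[\lambda]=0$ when $\ell(\lambda)>\dim V$), so $I(m)\cap V^{\otimes d}$ is the direct sum of some subcollection of these summands.

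Next I would identify which summands occur. A nonzero isotypic component $V[\lambda]\otimes S^\lambda$ lies in $I(m)$ iff it contains a nonzero $(m+1)$-symmetric tensor. Since all $(m+1)$-subsets of $[d]$ are $S_d$-conjugate and $V[\lambda]\otimes S^\lambda$ is $S_d$-stable, this is equivalent to $V[\lambda]\otimes(S^\lambda)^{S_S}$ being nonzero for any one such $S$, and since $V[\lambda]\neq 0$ (by hypothesis on $\ell(\lambda)$) this reduces to $(S^\lambda)^{S_S}\neq 0$.

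The key remaining step is the standard fact that $(S^\lambda)^{S_{m+1}}\neq 0$ iff $\lambda_1\ge m+1$. By Frobenius reciprocity $\dim(S^\lambda)^{S_{m+1}}$ is the multiplicity of $S^\lambda$ in $\Ind^{S_d}_{S_{m+1}}\mathbf{1}=M^{(m+1,1^{d-m-1})}$, which by Young's rule is the Kostka number $K_{\lambda,(m+1,1^{d-m-1})}$, counting SSYT of shape $\lambda$ with $m+1$ ones and each of $2,3,\ldots,d-m$ once. The $m+1$ ones must occupy distinct columns, so such a tableau exists iff $\lambda_1\ge m+1$; conversely when $\lambda_1\ge m+1$ one constructs one by putting the ones in the first row and filling the rest in any column-strict linear extension. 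Combining gives $I(m)\cap V^{\otimes d}=\bigoplus_{\lambda\vdash d,\,\lambda_1>m}V[\lambda]\otimes S^\lambda$, and summing over $d$ recovers the claimed sum of simple $\GL(V)$-submodules of $T(V)$. I expect the main obstacle to be little more than the last combinatorial verification, which is entirely standard.
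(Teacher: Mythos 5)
Your proof is correct, but it takes a genuinely different route from the paper. The paper argues directly: for one inclusion it exhibits a concrete $(m+1)$-symmetric tensor in any copy of $V[\lambda]$ with $\lambda_1>m$ (the tensor $e_F$ where $F$ labels each cell of row $i$ by $i$), and for the other it computes the character of the $\GL(V)$-module $\Sym^{m+1}(V)\otimes V^{\otimes d-m-1}$ via Pieri and observes that all constituents have $\lambda_1>m$. You instead invoke Schur--Weyl duality to see $I(m)\cap V^{\otimes d}$ as a sum of isotypic components $V[\lambda]\otimes S^\lambda$, then reduce to computing $(S^\lambda)^{S_{m+1}}$ and identify that dimension with the Kostka number $K_{\lambda,(m+1,1^{d-m-1})}$. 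The paper's argument is more elementary and self-contained (no appeal to Schur--Weyl, Frobenius reciprocity, or Young's rule), while yours is more structural and makes transparent that $I(m)\cap V^{\otimes d}$ is exactly the sum of the isotypic pieces with $\lambda_1>m$, rather than just sandwiching it between the two inclusions.

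One small point worth making explicit: when you say an isotypic component lies in $I(m)$ iff it contains a nonzero $(m+1)$-symmetric tensor, the ``if'' direction follows immediately from irreducibility of $V[\lambda]\otimes S^\lambda$ under $\GL(V)\times S_d$, but the ``only if'' (needed to conclude components with $\lambda_1\le m$ contribute nothing) deserves a sentence. The cleanest version is to argue directly that every $(m+1)$-symmetric tensor lies in $\bigoplus_{\lambda_1>m}V[\lambda]\otimes S^\lambda$: if $x$ is fixed by $S_S$, its projection onto each $V[\lambda]\otimes S^\lambda$ is also fixed by $S_S$ (the projections are $S_d$-equivariant), hence lies in $V[\lambda]\otimes(S^\lambda)^{S_S}$, which vanishes when $\lambda_1\le m$ by your Kostka computation. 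With that spelled out, the argument is complete.
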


\begin{proof}
Suppose $U \simeq V[\lambda]$ is a simple submodule of $T(V)$ and $\lambda_1 > m$. Let $F$ be the filling of $\lambda$ labelling every cell in row $i$ with $i$, and identify $e_F \in V[\lambda]$ with a member of $U$. Since $e_F$ is symmetric under permutations of the cells of $\lambda$ in the first row, it is in $U \cap I(m)$. Because $I(m)$ is $\GL(V)$-stable and $U$ is simple, $U = \GL(V)e_F \subseteq I(m)$.

Conversely, we must see that every simple submodule $V[\lambda] \hookrightarrow I(m)$ has $\lambda_1 > m$. For a fixed $(m+1)$-set $K \subseteq \NN$, the space $I^d_K(m)$ of degree $d \geq m+1$ tensors symmetric in positions $I$ is isomorphic as a $\GL(V)$-module to $\Sym^{m+1}(V) \otimes V^{\otimes d-m-1}$. It is easy to see that $\Sym^{m+1}(V)$ is the Schur module $V[m+1]$, so the character of $\Sym^{m+1}(V) \otimes V^{\otimes d-m-1}$ is $s_{m+1} s_1^{d-m-1}$. The Pieri rule shows that every Schur term $s_{\lambda}$ in $s_{m+1} s_1^{d-m-1}$ has $\lambda_1 > m$. Since $I(m)$ is the sum of all the $I^d_K(m)$, the same holds whenever $V[\lambda] \hookrightarrow I(m)$.
\end{proof}

\begin{prop} \label{prop:truncated-character} Let $\pi$ be the map $R(V) \to R^m(V)$ induced by the quotient map $T(V) \to T(V)/I(m)$. Then $\pi$ is surjective with kernel $\vspan \{[V[\lambda]] : \lambda \not\subseteq (m^k)\}$, the truncated character map is a ring isomorphism, and the following diagram commutes:
\begin{equation*}
\begin{CD}
R(V)       @>\ch>> \Lambda(k)\\
@VV\pi V            @VV\trunc_{k,m}V\\
R^m(V)   @>\trch>> \Lambda^m(k)
\end{CD}
\end{equation*}
\end{prop}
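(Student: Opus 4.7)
The plan is to use Lemma~\ref{lem:l-symmetric-ideal} as the key input that relates $I(m)$ to the irreducible polynomial representations $V[\lambda]$ with $\lambda_1 > m$, and then to deduce everything else from the semisimplicity of finite-dimensional polynomial $\GL(V)$-representations together with the known description $\ker \trunc_{k,m} = J(m) = \langle s_{\lambda}(X_k) : \lambda \not\subseteq (m^k) \rangle$.

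First, I would establish surjectivity of $\pi$. Given a finite-dimensional $\GL(V)$-submodule $W \subseteq T(V)/I(m)$, I can lift $W$ to a $\GL(V)$-submodule $\widetilde W \subseteq T(V)$ as follows: $W$ is contained in the image of $\bigoplus_{d \leq N} V^{\otimes d}$ for some $N$, and on this finite-dimensional chunk the quotient map by $I(m) \cap \bigoplus_{d \leq N} V^{\otimes d}$ admits a $\GL(V)$-equivariant splitting because polynomial representations in characteristic zero are semisimple (Theorem~\ref{thm:rep-theory-facts}). The image of such a splitting of $W$ gives the required lift.

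Next I would identify $\ker \pi$. For each simple $V[\lambda] \hookrightarrow T(V)$, Lemma~\ref{lem:l-symmetric-ideal} says that $V[\lambda] \subseteq I(m)$ if and only if $\lambda_1 > m$, and otherwise $V[\lambda] \cap I(m) = 0$ (because any simple submodule of $I(m)$ has its first row exceeding $m$, so it cannot agree with $V[\lambda]$). Consequently $\pi([V[\lambda]]) = 0$ precisely when $\lambda_1 > m$. Since $V[\lambda] = 0$ unless $\ell(\lambda) \leq k$, the Grothendieck group $R(V)$ has basis $\{[V[\lambda]] : \ell(\lambda) \leq k\}$ by Theorem~\ref{thm:rep-theory-facts}, and the subset killed by $\pi$ is exactly $\{[V[\lambda]] : \ell(\lambda) \leq k,\ \lambda_1 > m\}$. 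Together with the classes that are already $0$ in $R(V)$ (those with $\ell(\lambda) > k$), this proves $\ker \pi = \operatorname{span}\{[V[\lambda]] : \lambda \not\subseteq (m^k)\}$.

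For the commutative diagram, I take $[U] \in R(V)$ and use the short exact sequence $0 \to U \cap I(m) \to U \to \pi(U) \to 0$ to write $\ch(U) = \ch(U \cap I(m)) + \ch(\pi(U))$. By the previous paragraph, $U \cap I(m)$ decomposes into copies of $V[\lambda]$ with $\lambda_1 > m$, so its character lies in $J(m)$; applying $\trunc_{k,m}$ gives $\trunc_{k,m}\ch(U) = \trunc_{k,m}\ch(\pi(U)) = \trch(\pi([U]))$. Finally, $\pi$ is a ring homomorphism because the multiplication $\trncotimes$ on $R^m(V)$ is induced from tensor multiplication in $T(V)$; combined with the fact that $\ch : R(V) \to \Lambda(k)$ is a ring isomorphism sending $\ker \pi$ onto $J(m) = \ker \trunc_{k,m}$, the commutative square forces $\trch$ to be a ring isomorphism.

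The only delicate point is the surjectivity step, where one needs to know that $T(V)/I(m)$ is a union of semisimple $\GL(V)$-modules so that arbitrary submodules lift; everything else is a formal consequence of Lemma~\ref{lem:l-symmetric-ideal} and the Jacobi-Trudi description of $J(m)$ from Section~\ref{subsec:symm}.
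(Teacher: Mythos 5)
Your proof is correct and follows the same essential route as the paper's: both hinge on Lemma~\ref{lem:l-symmetric-ideal} to identify $\ker\pi$, on the Jacobi-Trudi description of $J(m) = \ker\trunc_{k,m}$, and on the observation that the quotient $T(V)\to T(V)/I(m)$ is a ring map to deduce that $\trch$ is a ring homomorphism. The paper simply dismisses surjectivity and the commutative square with "by definition," whereas you spell out the semisimple lifting argument for surjectivity and the exact-sequence computation $\ch(U)=\ch(U\cap I(m))+\ch(\pi(U))$ for commutativity; both expansions are correct and are precisely the details the paper leaves implicit.
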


\begin{proof}
By definition, $\pi$ is surjective and the diagram commutes. If $W$ is a submodule of $T(V)$, then $\pi([W]) = 0$ if and only if $W \subseteq I(m)$, so the claim about $\ker \pi$ follows from Lemma~\ref{lem:l-symmetric-ideal}. Given $\pi([U]), \pi([W]) \in R^m(V)$, where $[U], [W] \in R(V)$, we have $\pi([U])\trncotimes \pi([W]) = \pi([U]\otimes [W])$ because $T(V) \to T(V)/I(m)$ is a ring homomorphism. But then
\begin{align*}
\trch (\pi([U]) \trncotimes \pi([W])) &= \trch \pi([U \otimes W]) = \trunc_{k,m} \ch([U \otimes W])\\
&= \trunc_{k,m}(\ch U) \trunc_{k,m}(\ch W) = \trch(\pi [U])\trch(\pi [W]),
\end{align*}
so $\trch$ is a ring homomorphism. Finally, $\trch$ is an isomorphism because $\ch$ maps $\ker \pi$ isomorphically onto $\ker \trunc_{k,m}$.
\end{proof}

Now let $D$ be a diagram. The set of $(m+1)$-symmetric tensors in $V^{\otimes D}$ is preserved by the right action of $S_D$, so letting $I_D(m)$ denote their span, there is still a right action of $S_D$ on $V^{\otimes D}/I_D(m)$.

\begin{defn} \label{defn:truncated-schur}
The \emph{truncated Schur module} $V_m[D]$ is $(V^{\otimes D}/I_D(m))y_D$, where $y_D \in S_D$ is the Young symmetrizer of $D$.
\end{defn}
Clearly $[V_m[D]] = \pi [V[D]]$ where $\pi : R(V) \to R^m(V)$ is the map from Proposition~\ref{prop:truncated-character}.

The next lemma is a partial analogue of Corollary~\ref{cor:nonzero-schur} for truncated Schur modules.
\begin{lem} \label{lem:nonzero-truncated-schur} \hfill
\begin{enumerate}[(a)]
\item If a column of $D$ has more than $k = \dim V$ cells, then $V_m[D] = 0$.
\item If a row of $D$ has more than $m$ cells, then $V_m[D] = 0$.
\item If $D$ is contained in a $k \times m$ rectangle, then $V_m[D] \simeq V[D]$.
\end{enumerate}
\end{lem}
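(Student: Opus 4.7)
The plan is to exploit the natural surjection $q \colon V[D] \twoheadrightarrow V_m[D]$ induced by the quotient map $V^{\otimes D} \to V^{\otimes D}/I_D(m)$. First I would verify this map is well-defined by checking that $I_D(m)$ is right $S_D$-stable: conjugating the stabilizer of an $(m+1)$-subset $K$ by $\sigma \in S_D$ yields the stabilizer of $\sigma(K)$, still an $(m+1)$-subset. Then $y_D$ descends to the quotient, the image of $q$ equals $V_m[D]$, and the kernel of $q$ is $V[D] \cap I_D(m)$. All three parts reduce to understanding this intersection.

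Part (a) is essentially free: Corollary~\ref{cor:nonzero-schur} already gives $V[D] = 0$ when a column has more than $k$ cells, so the surjective image $V_m[D]$ vanishes too.

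For part (b), the strategy is to show $V[D] \subseteq I_D(m)$. Let $R$ be a row of $D$ with $|R| > m$. The row-group element $\sum_{p \in R(D)} p$ is right-invariant under $\Sym(R)$, so for any filling $T$ the tensor $e_T \sum_p p$ is symmetric in the $|R|$ coordinates indexed by $R$, hence $(m+1)$-symmetric. Factoring the Young symmetrizer as $y_D = \bigl(\sum_{q \in C(D)} \sgn(q)\,q\bigr)\bigl(\sum_{p \in R(D)} p\bigr)$, one sees that
\begin{equation*}
e_T y_D \;=\; \sum_{q \in C(D)} \sgn(q)\,(e_T q)\Bigl(\sum_{p \in R(D)} p\Bigr)
\end{equation*}
is a sum of tensors of this form, so lies in $I_D(m)$. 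Thus $V[D] \subseteq I_D(m)$ and $V_m[D] = 0$.

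Part (c) is the substantive step, and the main obstacle will be to identify $V[D] \cap I_D(m)$ at the isotypic level. Identifying $V^{\otimes D}$ with $V^{\otimes d}$ for $d = |D|$, we have $I_D(m) = I(m) \cap V^{\otimes d}$, which by Lemma~\ref{lem:l-symmetric-ideal} is the sum of the simple submodules $V[\lambda] \hookrightarrow V^{\otimes d}$ with $\lambda_1 > m$. Complete reducibility of $V[D]$ over $\CC$ then gives
\begin{equation*}
V[D] \cap I_D(m) \;=\; \bigoplus_{\substack{V[\mu] \hookrightarrow V[D] \\ \mu_1 > m}} V[\mu].
\end{equation*}
By Lemma~\ref{lem:shape-bounds}, any irreducible constituent $V[\mu]$ of $V[D]$ satisfies $\mu \leq \lambda_{\max}$ in dominance, so $\mu_1 \leq (\lambda_{\max})_1$, and the latter equals the number of nonempty columns of $D$. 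When $D$ is contained in a $k \times m$ rectangle this is at most $m$, so no $\mu$ with $\mu_1 > m$ contributes; hence $\ker q = 0$ and $V_m[D] \simeq V[D]$.
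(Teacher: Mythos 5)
Your proof is correct. Parts (a) and (c) are essentially the paper's own argument: (a) quotes Corollary~\ref{cor:nonzero-schur}, and (c) combines Lemma~\ref{lem:shape-bounds} with the isotypic description of $I(m)$ from Lemma~\ref{lem:l-symmetric-ideal} to see that $V[D]\cap I_D(m)=0$ (the paper phrases this through $\lambda_{\min}$ and $\lambda_{\max}$ both lying in the $k\times m$ rectangle, but the substance is the same). Part (b), however, is genuinely different. The paper argues representation-theoretically: a row longer than $m$ forces $(\lambda_{\min})_1>m$, so by Lemma~\ref{lem:shape-bounds} every irreducible constituent $V[\mu]$ of $V[D]$ has $\mu_1>m$ and therefore dies under $\pi$ by Proposition~\ref{prop:truncated-character}. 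You instead show directly that $V[D]\subseteq I_D(m)$ by factoring $y_D=\bigl(\sum_q\sgn(q)q\bigr)\bigl(\sum_p p\bigr)$ and observing that each $(e_Tq)\bigl(\sum_p p\bigr)$ is fixed by $\Sym(R)$ for the long row $R$, hence is $(m+1)$-symmetric. This is a valid and more elementary route: it avoids both the dominance bound of Lemma~\ref{lem:shape-bounds} (which rests on a cited result about Specht modules) and the Pieri-rule direction of Lemma~\ref{lem:l-symmetric-ideal}, at the modest cost of the explicit symmetrizer computation. Your framing of all three parts via the surjection $q\colon V[D]\twoheadrightarrow V_m[D]$ with kernel $V[D]\cap I_D(m)$ is also a clean way to organize what the paper states more tersely as $[V_m[D]]=\pi[V[D]]$.
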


\begin{rem}
It is not true that $V_m[D] \neq 0$ if and only if every column of $D$ has at most $k$ cells and every row has at most $m$ cells. For instance, if $D = \{(1,1),(2,2)\}$ and $k = 1$, then $V_1[D] = 0$.
\end{rem}

\begin{proof} \hfill
\begin{enumerate}[(a)]
\item This follows from Corollary~\ref{cor:nonzero-schur}.
\item Suppose $D$ has a row with more than $m$ cells. Then $(\lambda_{\min})_1 > m$, so if $\lambda_{\min} \leq \mu$ then $\mu_1 > m$ also. Lemma~\ref{lem:shape-bounds} and Proposition~\ref{prop:truncated-character} then show $V_m[D] = 0$.
\item If $D$ is contained in a $k \times m$ rectangle, so are $\lambda_{\min}$ and $\lambda_{\max}$, and therefore so is any $\mu$ such that $0 \neq V[\mu] \hookrightarrow V[D]$, by Lemma~\ref{lem:shape-bounds}. This implies $V[D] \cap I_D(m) = 0$, so $V[D] \simeq V_m[D]$.
\end{enumerate}
\end{proof}

\section{Schur modules of Rothe diagrams} \label{sec:main}
With the technical tools of Section~\ref{sec:schur} in hand we can now prove Theorem~\ref{thm:main}, which we will view from the perspective of Section~\ref{sec:truncated-schur}. Recall that $D(f)$ denotes the Rothe diagram of an affine permutation $f$.

\begin{thm} \label{thm:main2}
For any $f \in \Bound(k,n)$, it holds that $G_f = \trch V_{n-k}[D(f)]$ where $\dim V = k$.
\end{thm}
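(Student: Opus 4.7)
The plan is to induct along the bounded affine Lascoux--Sch\"utzenberger tree of $f$, which is finite by Theorem~\ref{thm:finite-LS-tree}. Writing $P_f := \trch V_{n-k}[D(f)] \in \Lambda^{n-k}(k)$, I will establish (i) $P_f = G_f$ when $f$ is $0$-Grassmannian, and (ii) the affine Chevalley identity
\begin{equation*}
s_1 P_f \;=\; \sum_{(i,j)\in\BCov_r(f)} P_{ft_{ij}}, \qquad r\in\ZZ. \tag{$\star$}
\end{equation*}
Granting these, subtracting $(\star)$ at $r$ from $(\star)$ at $r-1$ yields the transition formula $\sum_{h\in B\Phi^-(f,r)} P_h = \sum_{h\in B\Phi^+(f,r)} P_h$ exactly as Corollary~\ref{cor:affine-transitions} was derived. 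Applied at the maximal inversion of a non-$0$-Grassmannian $f$, this reproduces the same recurrence for $P$'s as Proposition~\ref{prop:LS-recurrence} provides for $G$'s, so induction on depth in the L--S tree forces $P_f = G_f$ throughout $\Bound(k,n)$.

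For the base case (i), $\Des(f)\subseteq n\ZZ$ combined with Theorem~\ref{thm:bounded-diagram-characterization} forces the cylindric diagram $D(f)\subseteq\Cyl_{n,n}$ to be cylindrically equivalent (Proposition~\ref{prop:diagram-facts}(a)) to the ordinary Young diagram of $\lambda(f)\subseteq((n-k)^k)$. Lemma~\ref{lem:nonzero-truncated-schur}(c) then gives $V_{n-k}[D(f)]\simeq V[\lambda(f)]$, so $P_f=\bar s_{\lambda(f)}=G_f$ by Lemma~\ref{lem:schur-leaves}.

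For $(\star)$, both sides are truncated characters of cylindric Schur modules: by Proposition~\ref{prop:diagram-facts}(b) the LHS equals $\trch V_{n-k}[\sq\ast D(f)]$, and the RHS equals $\trch\bigoplus_{(i,j)} V_{n-k}[D(ft_{ij})]$. My approach is to construct, for each $f$ and $r$, a system of corner configurations $\Cor$ for the toric diagram $D(f)$ (in the sense of Remark~\ref{rem:toric-james-peel}) with corners $(a_{ij},\Delta_{a_{ij}})$ indexed by $\BCov_r(f)$, chosen so that $(D(f)\cup a_{ij})_{a_{ij}\to x_{ij}}$ is cylindrically equivalent to $D(ft_{ij})$ for some $x_{ij}\in\Delta_{a_{ij}}$. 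Lemma~\ref{lem:corner-system} then supplies a $\GL(V)$-injection
\[
\bigoplus_{(i,j)\in\BCov_r(f)} V_{n-k}[D(ft_{ij})] \;\hookrightarrow\; V_{n-k}[\sq\ast D(f)],
\]
giving $\sum P_{ft_{ij}}\le s_1 P_f$ in Schur-positivity. The reverse inequality --- hence $(\star)$, and simultaneously $P_f = G_f$ --- is obtained by an outer downward induction on $\ell(f)$: the base $f=g_{k,n}$ has $D(f)=((n-k)^k)$, $\BCov_r(f)=\emptyset$, and $P_f=\bar s_{(n-k)^k}=G_f$; for smaller $\ell(f)$ the inductive hypothesis yields $P_{ft_{ij}}=G_{ft_{ij}}$, so Proposition~\ref{prop:recurrences}(a) promotes the injection to $s_1 G_f\le s_1 P_f$, and matching the extremal Schur terms via Lemma~\ref{lem:shape-bounds} and Lemma~\ref{lem:affine-stanley-bounds} (both controlled by the shape of $D(f)$) closes the remaining gap.

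The main obstacle is the combinatorial construction of $\Cor$. For $(i,j)\in\BCov_r(f)$, Lemma~\ref{lem:bounded-cover} ensures $j-i<n$, so $t_{ij}$ acts inside a single fundamental domain of $\Cyl_{n,n}$ and $D(ft_{ij})$ differs locally from $D(f)$: the cover creates one new inversion, locating $a_{ij}$, while cells of $D(f)$ in the two affected rows and columns shift so as to form $\Delta_{a_{ij}}$ (with its total order inherited from the cylindric geometry). Verifying Definitions~\ref{defn:corner}(a)--(c) and~\ref{defn:corner-system}(a)--(b) reduces to a case analysis of how covers of ``$\Phi^+$ type'' and ``$\Phi^-$ type'' in $\BCov_r(f)$ interact, and is where the bulk of the proof lies. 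This mirrors the ordinary-$S_n$ constructions of \cite{billey-pawlowski-2013} and \cite{kraskiewicz-pragacz}, but extended to accommodate the cylindric quasi-periodicity of $D(f)$ and the interplay of $r$ with the $n$-fold symmetry.
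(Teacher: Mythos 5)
Your overall architecture matches the paper's: build a system of corner configurations on the toric diagram $D(f)$ indexed by $\BCov_r(f)$, use James--Peel moves to identify $(D(f)\cup f^i)_{f^i\to f^j}$ with $D(ft_{ij})$, get the injection $\bigoplus_{(i,j)}V_{n-k}[D(ft_{ij})]\hookrightarrow V\trncotimes V_{n-k}[D(f)]$ from Lemma~\ref{lem:corner-system}, and then feed the resulting identity $(\star)$ into the bounded affine L--S tree. The gap is in how you upgrade the injection to the equality $(\star)$. Your downward induction on $\ell(f)$ gives only $s_1 P_f \geq \sum G_{ft_{ij}} = s_1 G_f$ in the Schur-positivity order on $\Lambda^{n-k}(k)$, and multiplication by $\bar s_1$ in this quotient ring is neither injective nor order-reflecting (e.g.\ $\bar s_1\bar s_{(n-k)^k}=0$), so this one-sided inequality does not yield $P_f\geq G_f$, let alone equality. ``Matching extremal Schur terms'' via Lemmas~\ref{lem:shape-bounds} and~\ref{lem:affine-stanley-bounds} cannot close this: those lemmas pin down only the dominance-extremal constituents (and $\lambda_{\max}$, $\lambda_{\min}$ need not even survive truncation), while the discrepancy $P_f-G_f$ could live entirely in intermediate Schur terms.

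The paper closes the gap with two ingredients you omit. First, a linear functional $\delta:\Lambda^{n-k}(k)\to\ZZ$, $\delta(\bar s_\lambda)=f^{\lambda^\vee}$, which is strictly positive on nonzero Schur-positive elements and satisfies $\delta(\bar s_1 F)=\delta(F)$ below top degree (Lemma~\ref{lem:delta}); this converts the injection into the scalar inequality $\delta(H_f)\geq\delta(G_f)$ by downward induction on $\ell(f)$. Second --- and this is the essential missing idea --- a \emph{second} family of base cases: the $<_0$-minimal elements $f_{1,v}$ of $\Bound(k,n)$, for which $\tau^{-k}f_{1,v}\in S_n$ and the identity $F_w=\ch V[D(w)]$ is already known for ordinary permutations (Lemma~\ref{lem:r-bruhat-minimal}, citing \cite{plactification}). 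Equality $\delta(H_f)=\delta(G_f)$ at these minimal elements propagates \emph{up} the order $<_r$ through the chain of inequalities, forcing every inequality --- including the containment of modules --- to be an equality (Lemma~\ref{lem:induction-isomorphism}). Without the minimal base case there is nothing squeezing from below, and your argument cannot terminate; you should add both the functional $\delta$ (or a substitute with the same two properties) and the reduction of the $0$-Bruhat-minimal case to the known non-affine theorem.
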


To apply the results of Section~\ref{sec:schur} we need the next lemma.
\begin{lem} \label{lem:toric-rothe-diagram} If $f \in T\Bound(n)$, then $D(f)$ is toric. \end{lem}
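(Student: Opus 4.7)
The plan is to derive a contradiction from the assumption that $D(f)$ is not toric. By Lemma~\ref{lem:toric}, $D(f)$ fails to be toric exactly when two distinct cells of $D(f)$ in $\Cyl_{n,n}$ are identified under the quotient map $\rho$ to the torus $\ZZ^2/(n\ZZ)^2$. Each cell of $D(f)$ arises as $(i, f(j))$ for an inversion $(i,j)$ of $f$, and since distinct inversions in $\Cyl_{n,n}$ give distinct cells of $D(f)$, my first step is to translate the non-toric hypothesis into a statement about inversions: two cells $(i_1, f(j_1))$ and $(i_2, f(j_2))$ coincide in the torus iff $i_1 \equiv i_2$ and $f(j_1) \equiv f(j_2) \pmod n$, and the latter is equivalent to $j_1 \equiv j_2 \pmod n$ via $f(j+n) = f(j)+n$. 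Using the $\Cyl_{n,n}$ relation $(i,j) \sim (i+n,j+n)$, I can pick inversion representatives with $1 \le i \le n$, reducing to the case $i_1 = i_2 = i \in [n]$ and $j_2 = j_1 + pn$ for some integer $p \neq 0$; by swapping if needed, take $p \ge 1$.

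The second step is to extract the needed inequality from the two inversion conditions. Since $(i, j_2)$ is an inversion and $f(j_2) = f(j_1) + pn$, the inequality $f(i) > f(j_2)$ gives
\begin{equation*}
f(i) - f(j_1) > pn \ge n,
\end{equation*}
so $f(i) - f(j_1) \ge n+1$.

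Finally, I apply the hypothesis $f \in T\Bound(n)$. As observed just before Lemma~\ref{lem:maxcode}, this is equivalent to $\max \delta(f) - \min \delta(f) \le n$ where $\delta_i(f) = f(i)-i$. In particular $\delta_i(f) - \delta_{j_1}(f) \le n$, which rearranges to $f(i) - f(j_1) \le n + i - j_1$. But $(i,j_1)$ is an inversion, so $i < j_1$, giving $f(i) - f(j_1) \le n-1$. This contradicts $f(i) - f(j_1) \ge n+1$, so no such pair of cells exists and $D(f)$ is toric. There is no genuine obstacle here: the proof is essentially a bookkeeping exercise in the cylinder/torus identifications, with the bounded condition doing the real work in a single two-line estimate.
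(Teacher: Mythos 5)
Your proof is correct and follows essentially the same route as the paper: both arguments normalize the two coinciding cells to a common row $i$ of the cylinder so that the columns are $f(j)$ and $f(j)+pn$ with $p\geq 1$, and then combine the inversion inequalities $f(i)>f(j)+pn$ and $i<j$ with the characterization $f\in T\Bound(n)\iff \max\delta(f)-\min\delta(f)\leq n$ to reach a contradiction. The only difference is bookkeeping: the paper bounds $\delta_i(f)-\delta_j(f)>n$ directly, while you phrase the same estimate as $f(i)-f(j)\geq n+1$ versus $f(i)-f(j)\leq n-1$.
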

\begin{proof}
Suppose $D(f)$ is not toric, so that there exist $(i,f(j)), (i,f(j)+pn) \in D(f)$ where $p > 0$. This means $f(i) > f(j)+pn > f(j)$ and $i < j < j+pn$, so $f(i)-i > f(j)+pn-j$, implying $f \notin T\Bound(n)$.
\end{proof}

Fix $r \in [n]$ and $f \in T\Bound(n)$, and let $f^i = (i, f(i)) \in \Cyl_{n,n}$. Define for each $i \in \ZZ$ a set
\begin{equation*}
\Delta_i = \{f^j \in \Cyl_{n,n} : (i,j) \in \BCov_r(f)\}.
\end{equation*}
Order $\Delta_i$ by column index, so $f^j < f^{j'}$ if $f(j) < f(j')$.
\begin{lem} \label{lem:rothe-diagram-corner} For $i \in [n]$, the pair $(f^i, \Delta_i)$ is a corner configuration for $D(f)$. \end{lem}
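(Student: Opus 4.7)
The plan is to verify the three conditions of Definition~\ref{defn:corner} for $(f^i, \Delta_i)$ one at a time, working throughout in $\Cyl_{n,n}$ as per Remark~\ref{rem:toric-james-peel}. First I would record two preparatory facts that get used repeatedly. \textbf{(F1)} The graph of $f$ does not meet $D(f)$: if $(j, f(j))$ represented a point of $D(f)$, there would be integers $p<q$ with $p\equiv j\pmod n$, $q\equiv j\pmod n$ (the latter by quasi-periodicity of $f$) and $f(p)>f(q)$; but then $q=p+\ell n$ with $\ell\geq 1$ forces $f(q)=f(p)+\ell n>f(p)$. \textbf{(F2)} For each $(i,j)\in\BCov_r(f)$, Lemma~\ref{lem:bounded-cover} gives $0<j-i<n$, the standard Bruhat-cover condition gives $f(i)<f(j)$ together with the absence of $k\in(i,j)$ satisfying $f(i)<f(k)<f(j)$, and $ft_{ij}\in\Bound(n)$ gives the boundedness inequalities $f(j)\leq i+n$ and $f(i)\geq j$.

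Next I would do condition (a). The rows involved are $i$ and the $j$'s; since each such $j$ lies in $(i,i+n)$, all these rows are distinct modulo $n$, and by the quasi-periodicity of $f$ the columns $f(i), f(j), \ldots$ are distinct modulo $n$ as well, so $\{f^i\}\cup\Delta_i$ is a transversal in $\Cyl_{n,n}$. Disjointness from $D(f)$ is immediate from (F1).

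For condition (b) I would argue by contradiction in both cases. If $(i,f(j))\in D(f)$, lift to an inversion $p<q$ with $p\equiv i$, $q\equiv j\pmod n$; writing $p=i+sn$, $q=j+tn$ and using that an inversion of a bounded affine permutation has $0<q-p<n$, one finds $t=s$, whence $f(p)>f(q)$ becomes $f(i)>f(j)$, contradicting (F2). If instead $(j,f(i))\in D(f)$, the same lifting forces $t=s+1$, whence $f(p)>f(q)$ becomes $f(j)-f(i)>n$; but the boundedness part of (F2) gives $f(j)-f(i)\leq (i+n)-j = n-(j-i)<n$, another contradiction.

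Finally, for condition (c), the central observation is that on the set $\{j : (i,j)\in\BCov_r(f)\}$, the row order and the column order are \emph{opposite}: if $i<j'<j''$ were two such $j$'s with $f(j')<f(j'')$, then taking $k=j'\in(i,j'')$ with $f(i)<f(j')<f(j'')$ would violate the cover condition for $(i,j'')$. Consequently, if $x=f^{j'}<y=f^{j''}$ in the column order on $\Delta_i$, then $j''<j'$ as integers while $f(j'')>f(j')$, so $(j'',j')$ is a genuine inversion of $f$ and $y|x=(j'',f(j'))$ lies in $D(f)$ by definition.

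\textbf{Main obstacle.} The routine step with the most moving parts is condition (b), specifically the case $\Delta_i\,|\,\{f^i\}$: it requires identifying which integer representative of a cylindric point can witness membership in $D(f)$, and only then do the boundedness inequalities $f(j)\leq i+n$ and $f(i)\geq j$ from (F2) come into play to close the argument. Conceptually, the key insight -- and what makes the whole statement work -- is the order-reversal phenomenon in condition (c), which is what ultimately tags this particular family $(f^i,\Delta_i)$ as the ``right'' corner configurations for encoding the covers counted by $\BCov_r(f)$.
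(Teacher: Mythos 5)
Your proof is correct and follows essentially the same route as the paper's: verify Definition~\ref{defn:corner}(a)--(c) directly, using Lemma~\ref{lem:bounded-cover} for the transversality and the lifting arguments in (b), and the Bruhat-cover condition for the order reversal in (c) (your case (b) even makes explicit the boundedness inequalities that the paper leaves terse). The only quibble is that in (b), second case, the lifting forces $t\geq s+1$ rather than $t=s+1$ exactly, but the inequality $f(j)-f(i)>n$ and hence the contradiction still follow.
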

\begin{proof} We take the three parts of Definition~\ref{defn:corner} one at a time.
\begin{enumerate}[(a)]
\item Take distinct cells $f^j, f^{j'} \in \Delta_i$. Lemma~\ref{lem:bounded-cover} implies $0 < j-i, j'-i < n$, which means $i, j, j'$ are all distinct modulo $n$. Thus, none of $f^j$, $f^{j'}$, or $f^i$ can share a row or column index, meaning that $\{f^i\} \cup \Delta_i$ is a transversal. If $D(f)$ contained a cell $(q, f(q)+pn)$ then $(q, q+pn)$ would be an inversion of $f$, which is impossible, so $D(f)$ does not meet $\{f^i\} \cup \Delta_i$.

\item Suppose $(i, f(j)+pn) \in D(f)$ for some $p$, so $i < j+pn$ and $f(i) > f(j)+pn$. Since $f(i) < f(j)$, it must be the case that $p < 0$. But then $i < j+pn \leq j-n$, which contradicts Lemma~\ref{lem:bounded-cover}. If $(j, f(i)+pn) \in D(f)$, then $j < i+pn$ implies $p > 0$, so $f(j) > f(i)+pn > f(i)$, which cannot happen since $f < ft_{ij}$. Thus $D(f)$ does not meet $\{f^i\} | \Delta_i$ or $\Delta_i | \{f^i\}$.

\item Say $f(j) < f(j')$. The fact that $\ell(ft_{ij}) = \ell(ft_{ij'}) = \ell(f)+1$ then forces $j' < j$, so $f^{j'}|f^j = (j', f(j))$ is in $D(f)$ as desired.
\end{enumerate}
\end{proof}

Now define $\Cor = \{(f^i, \Delta_i) : \text{$i \in [n]$ and $\Delta_i \neq \emptyset$}\}$. Let $\prec$ be the total order on $[n]$ such that
\begin{itemize}
\item $r+1, \ldots, n \prec 1, \ldots, r$, and;
\item if $r$ is not between $i$ and $i'$, then $i \prec i'$ if and only if $f(i) < f(i')$.
\end{itemize}
Order $\Cor$ so that $(f^i, \Delta_i) < (f^{i'}, \Delta_{i'})$ if and only if $i \prec i'$.
\begin{lem} \label{lem:rothe-diagram-corner-system} The set $\Cor$ forms a system of corner configurations for $D(f)$. \end{lem}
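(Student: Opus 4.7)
The plan is to verify the three conditions of Definition~\ref{defn:corner-system} not already established by Lemma~\ref{lem:rothe-diagram-corner}:

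(T) the set $\{f^i : (f^i, \Delta_i) \in \Cor\}$ is a transversal in $\Cyl_{n,n}$;

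(a) for any $(f^i, \Delta_i), (f^{i'}, \Delta_{i'}) \in \Cor$, the set $\{f^i\} \cup \Delta_{i'}$ is a transversal;

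(b) whenever $(f^i, \Delta_i) < (f^{i'}, \Delta_{i'})$ in $\Cor$, $f^j \in \Delta_i$, and $f^{j'} \in \Delta_{i'}$, the set $\{f^{i'}, f^{j'}\} | \{f^i, f^j\}$ meets $D(f)$.

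Conditions (T) and (a) follow from the observation that the graph $\{f^p : p \in \ZZ\}$ of an affine permutation is itself a transversal of $\Cyl_{n,n}$: two members $f^p$ and $f^q$ share a row or column of $\Cyl_{n,n}$ exactly when $p \equiv q \pmod n$, in which case $f^p = f^q$ as points of the cylinder. Since $\{f^i : i \in [n]\}$ and each $\Delta_{i'}$ are subsets of this graph, the relevant unions are automatically transversals.

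The substantive step is (b). My plan is a case analysis following the structure of $\prec$: (A) $i, i' \in \{r+1, \ldots, n\}$ with $f(i) < f(i')$; (B) $i, i' \in \{1, \ldots, r\}$ with $f(i) < f(i')$; and (C) $i \in \{r+1, \ldots, n\}$ and $i' \in \{1, \ldots, r\}$. In each case, the hypothesis $c^r_{ij} = c^r_{i'j'} = 1$ combined with $j - i, j' - i' < n$ (Lemma~\ref{lem:bounded-cover}) places $j \bmod n$ and $j' \bmod n$ in explicit sub-intervals of $[n]$, and the boundedness of $ft_{ij}$ and $ft_{i'j'}$ pins the values $f(j), f(j')$ into narrow windows. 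I would then test the four points $(i', f(i))$, $(i', f(j))$, $(j', f(i))$, $(j', f(j))$ one at a time, for each producing an integer representative $(\tilde a, f(\tilde b))$ with $\tilde a < \tilde b$ and $f(\tilde a) > f(\tilde b)$ exhibiting it as an element of $D(f)$. A final sub-split according to whether $f(i') > f(j)$ (or the appropriate Case-B, -C analog), combined with the no-$k$-in-between conditions coming from $f \lessdot ft_{ij}$ and $f \lessdot ft_{i'j'}$, should force at least one of the four corners to realize an inversion.

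The main technical obstacle I anticipate is ruling out degenerate collisions in which two of the four corner candidates coincide with points on the graph of $f$. The most delicate example is in Case~C, where one might worry that $j' = i$, making $f^{j'} = f^i$. Here a short bounded-arithmetic argument suffices: writing $j_0 := j - n \in (r, i)$, the simultaneous demands $ft_{i'i} \in \Bound(n)$ and $ft_{ij} \in \Bound(n)$ force $f(i) \leq i' + n$ and $f(i) \geq j_0 + n$, but $j_0 > r \geq i'$ makes these incompatible. I expect parallel bounded-arithmetic calculations to dispatch every other would-be collision, after which the case check is routine bookkeeping on $f$-values.
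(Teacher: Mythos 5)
Your reduction to Definition~\ref{defn:corner-system} and your case split (A)/(B)/(C) line up with the paper's proof, and your sample bounded-arithmetic computation for the collision $j'=i$ in Case~C is correct and is essentially the paper's own argument (the paper phrases it as: $j \le f(i) \le i'+n$ forces $j-i'\le n$, so the disjoint intervals $[i',i)$ and $[i,j)$ cannot both contain $r$ modulo $n$). But there is a genuine gap in how you dispose of the transversality conditions. You argue that $\{f^i\}\cup\Delta_{i'}$ is \emph{automatically} a transversal because it is a subset of the graph of $f$, since $f^p$ and $f^q$ share a row or column only when $p\equiv q\pmod n$, ``in which case $f^p=f^q$ as points of the cylinder.'' That observation is exactly the problem, not its resolution: the content of condition (a) of Definition~\ref{defn:corner-system} is precisely that $f^i$ is \emph{not equal to} any $f^{j'}\in\Delta_{i'}$, i.e.\ that $j'\not\equiv i\pmod n$. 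If $f^i\in\Delta_{i'}$, the set $\{f^i\}\cup\Delta_{i'}$ is a transversal in the vacuous set-theoretic sense, but the center of one corner configuration coincides with a target of another, and the James--Peel moves $R_\sq^b$, $R_b^y$, $C_a^x$, $C_\sq^a$ in the proof of Lemma~\ref{lem:corner-system} then operate on coinciding rows and columns, so that arguments like ``$y|\sq\notin R_\sq^b E$ since $\{y,b\}$ is a transversal'' no longer apply. Ruling out $j'\in\{i,\,i+n\}$ is a genuine use of the hypothesis $c^r_{ij},c^r_{i'j'}\neq 0$ together with Lemma~\ref{lem:bounded-cover}; it is not a formality, and you only carry it out in one of the configurations, deferring the rest to ``parallel calculations.'' These collisions belong to the verification of condition (a), not merely to tidying up condition (b).

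For condition (b) itself, your plan stops short of a proof: ``should force at least one of the four corners to realize an inversion'' is the statement to be proved, and the bookkeeping is not uniform across cases. In particular, in the analogue of your Case~C with $f(i')<f(i)$, the cell of $D(f)$ one actually finds is $(i'+n,\,f(j))$ rather than $(i',f(j))$ --- one must first deduce $i<i'+n\le r+n<j$ from the fact that both $[i,j)$ and $[i',j')$ contain $r$ modulo $n$, and then use the cover condition $f\lessdot ft_{ij}$ to squeeze $f(i)<f(j)<f(i'+n)$. So the choice of integer representative matters and the four candidate corners are not interchangeable. Your outline points in the right direction, but as written the two load-bearing steps (non-coincidence of the centers with the targets, and the explicit exhibition of an inversion in each of the four cases) are asserted rather than proved.
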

\begin{proof}
Again we deal with parts (a) and (b) of Definition~\ref{defn:corner-system} separately. Take $(f^i, \Delta_i)$ and $(f^{i'}, \Delta_{i'})$ in $\Cor$.
\begin{enumerate}[(a)]
\item Lemma~\ref{lem:rothe-diagram-corner} shows that $(f^i, \Delta_i)$ is a corner configuration, so we only need to show that $f^i$ is not in the same row or column as any $f^{j'} \in \Delta_{i'}$. By definition there exists $f^j \in \Delta_i$. Suppose for the sake of contradiction that $i \equiv j' \pmod{n}$. Since $i, i' \in [n]$, Lemma~\ref{lem:bounded-cover} implies that $j' = i$ or $j' = i+n$.
\begin{itemize}
\item Suppose $j' = i$. We have $j \leq f(j) < f(i) = f(j') < f(i') \leq i' + n$, so $j - i' \leq n$. But then both intervals $[i', j') = [i', i)$ and $[i, j)$ cannot contain $r$ modulo $n$, contradicting $(i,j), (i',j') \in \BCov_r(f)$.
\item Suppose $j' = i+n$. Thus $j \leq f(j) < f(i) = f(j')-n < f(i')-n \leq i'$. But $j \leq i'$ means that both intervals $[i, j)$ and $[i', j') = [i', i+n)$ cannot contain $r$ modulo $n$, again a contradiction.
\end{itemize}

\item Assume $i \prec i'$ and $f^j \in \Delta_i$, $f^{j'} \in \Delta_{i'}$. We want to see that $\{f^{i'}, f^{j'}\} | \{f^i, f^j\}$ meets $D(f)$. Let $I$ be the interval between $i$ and $i'$, and consider four cases.
\begin{itemize}
\item $r \notin I$ and $i' < i$: By definition of $\prec$ we have $f(i) < f(i')$, so $D(f)$ contains $(i', f(i)) = f^{i'} | f^i$.
\item $r \in I$ and $f(i) < f(i')$: Now $i \prec i'$ implies $i' \leq r < i$, so again $D(f)$ contains $(i', f(i))$.
\item $r \notin I$ and $i < i'$: Since $(i,j) \in \BCov_r(f)$, the interval $[i,j)$ contains $r$ or $r+n$ but the interval $I = [i,i')$ does not, so $i < i' < j$. We get the two inequalities $f(i) < f(i'), f(j)$ from $i \prec i'$ and $f < ft_{ij}$, and $\ell(ft_{ij}) = \ell(f)+1$ forces $f(i) < f(j) < f(i')$. But now we see that $(i', f(j)) = f^{i'} | f^j \in D(f)$.
\item $r \in I$ and $f(i') < f(i)$: Again $i \prec i'$ gives $i' \leq r < i$. Both intervals $[i,j)$ and $[i',j')$ contain $r$ modulo $n$, and since $i \prec i'$ implies $i' \leq r < i$ in this case, that can only happen if $r+n \in [i,j)$ and $r \in [i',j')$. Therefore $i < i'+n \leq r+n < j$. As in the previous case, the fact that $\ell(ft_{ij}) = \ell(f)+1$ and $f(i) < f(j), f(i'+n)$ then forces $f(i) < f(j) < f(i'+n)$. We conclude that $(i'+n, f(j)) \in D(f)$, so $f^{i'} | f^j$ meets $D(f)$.
\end{itemize}
\end{enumerate}
\end{proof}

\begin{lem} \label{lem:rothe-jp} If $f^j \in \Delta_i$, then $(D(f) \cup \{f^i\})_{f^i \to f^j}$ is equivalent to $D(ft_{ij})$. \end{lem}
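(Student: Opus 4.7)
The plan is to exhibit an explicit equivalence between the two diagrams. Both $D(ft_{ij})$ and $E' := (D(f) \cup \{f^i\})_{f^i \to f^j}$ are toric subsets of $\Cyl_{n,n}$ (by Lemma~\ref{lem:toric-rothe-diagram} and Proposition~\ref{prop:jp-toric}) of cardinality $\ell(f)+1$, so it suffices to verify a cell-by-cell matching under a specific equivalence. I claim the equivalence is the composition $\sigma\tau$, where $\sigma = t_{ij} \in \affS_n$ permutes row coordinates and $\tau = t_{f(i),f(j)} \in \affS_n$ permutes column coordinates, giving an element of the $\affS_n \times \affS_n$ action on $\Cyl_{n,n}$. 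These are well-defined affine transpositions because $i \not\equiv j \pmod n$ (by Lemma~\ref{lem:bounded-cover}) and $f(j) - f(i) \in (0, n)$: boundedness of $ft_{ij}$ gives $f(j) = (ft_{ij})(i) \leq i+n$, boundedness of $f$ gives $f(i) \geq i$, and the cover relation gives $f(i) < f(j)$, with equality $f(j) - f(i) = n$ ruled out by injectivity.

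Outside the ``critical strip'' of rows $\equiv i, j \pmod n$ and columns $\equiv f(i), f(j) \pmod n$, the claim is immediate: $D(f)$ and $D(ft_{ij})$ agree there (inversions avoiding positions $\equiv i, j \pmod n$ are identical), the added cell $f^i$ lies inside the strip, and neither $\sigma\tau$ nor $R_{f^i}^{f^j} C_{f^i}^{f^j}$ affects cells outside. So the content of the argument is to enumerate the cells inside the strip and verify the match there.

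Inside the strip, the enumeration relies on three inputs: the cover relation $f \lessdot ft_{ij}$ (giving $f(i) < f(j)$ and forbidding any $i < k < j$ with $f(k)$ strictly between $f(i)$ and $f(j)$), Lemma~\ref{lem:bounded-cover} ($j - i < n$), and the bound $f(j) - f(i) < n$ above. With these, I would give an explicit description of $C_{f^i}^{f^j}$: it sends $f^i = [i, f(i)]$ to $[i, f(j)]$ (since no shift of $(i, f(j))$ lies in $D(f)$, as any such cell would require an inversion forcing $f(i) > f(j)$), and in general migrates a cell in column $f(i)$ at row $a$ to column $f(j)$ precisely when row $a$ has no cell in column $f(j) \pmod n$. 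Then $R_{f^i}^{f^j}$ sends $[i, f(j)]$ to $f^j$ (since row $j$ has no cell in column $f(j)$) and similarly treats the other row-$i$ cells. Comparing the resulting configuration against $\sigma\tau$ applied to $D(ft_{ij})$ — whose cells are themselves enumerable from the inversion data of $ft_{ij}$ — yields the equality. I expect the main obstacle to be the cylindric bookkeeping: cells in column $\equiv f(i) \pmod n$ at rows $a > i$ (with $a \in [n]$) actually live at shift $p = 1$, coming from inversions $(a - n, i)$, and these shifts must be paired correctly under both the James-Peel moves and the column swap $\tau$.
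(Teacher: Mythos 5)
Your argument is essentially the paper's: the equivalence you propose (swap rows $i\leftrightarrow j$ and columns $f(i)\leftrightarrow f(j)$) is exactly the one the paper uses, and your verification inside the ``critical strip'' --- resting on the cover relation emptying the box between $f^i$ and $f^j$, on $j-i<n$ from Lemma~\ref{lem:bounded-cover}, and on $0<f(j)-f(i)<n$ from boundedness --- is the same region-by-region analysis the paper carries out with its picture of $E_i,E_j,S_i,S_j$. The only cosmetic difference is that the paper first invokes the general equivalence $D_{x\to y}\sim D_{y\to x}$ so that the remaining comparison with $D(ft_{ij})$ becomes an on-the-nose equality rather than a match up to the swap, which slightly simplifies the bookkeeping you defer at the end.
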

\begin{proof} The diagrams $R_x^{x'} D$ and $R_{x'}^x D$ are always equivalent, namely via swapping the rows of $x$ and $x'$, and likewise for column James-Peel moves, so $(D(f) \cup \{f^i\})_{f^i \to f^j}$ is equivalent to $(D(f) \cup \{f^i\})_{f^j \to f^i}$. Let us see that the latter is equal to $D(ft_{ij})$. This is easiest to see by looking at a picture of $D(f)$:
\begin{center}
\begin{tikzpicture}[scale=2]
\node at (0,1) {$\times$};
\node at (1,1) {$\cdot$};
\node at (1,0) {$\times$};
\node at (0,0) {$\cdot$};
\node at (0.5,0.5) {$\emptyset$};

\draw (0.15, 1.125) rectangle (0.85, 0.875);
\node at (0.525, 1) {$E_i$};
\draw (0.15, 0.125) rectangle (0.85, -0.125);
\node at (0.525, 0) {$E_j$};
\draw (-0.15, 1.125) rectangle (-0.9, 0.875);
\node at (-0.525, 1) {$W_i$};
\draw (-0.15, 0.125) rectangle (-0.9, -0.125);
\node at (-0.525, 0) {$W_j$};

\draw (0.125, 0.15) rectangle (-0.125, 0.85);
\node at (0, 0.525) {$S_i$};
\draw (1.125, 0.15) rectangle (0.875, 0.85);
\node at (1, 0.525) {$S_j$};
\draw (0.125, 1.15) rectangle (-0.125, 1.85);
\node at (0, 1.525) {$N_i$};
\draw (1.125, 1.15) rectangle (0.875, 1.85);
\node at (1, 1.525) {$N_j$};

\filldraw[color=white, fill=white] (-0.88, 0.873) rectangle (-0.92, 1.127);
\filldraw[color=white, fill=white] (-0.88, -0.127) rectangle (-0.92, 0.127);
\filldraw[color=white, fill=white] (-0.127, 1.87) rectangle (0.127, 1.83);
\filldraw[color=white, fill=white] (0.873, 1.87) rectangle (1.127, 1.83);
\end{tikzpicture}
\end{center}
Here the two $\times$'s are, from left to right, the points $f^i$ and $f^j$ (which are not in $D(f)$). The region marked $\emptyset$ contains no point $f^p$ because $f < ft_{ij}$ is a Bruhat cover.

Observe that $D(ft_{ij})$ is obtained from $D(f)$ by switching the regions $E_i$ and $E_j$, switching the regions $S_i$ and $S_j$, and adding the cell $f^i$. Because $f \lessdot ft_{ij}$, the region $W_i$ contains $W_j$ in the sense that if $(j, f(p)) \in D(f)$ then $(i, f(p)) \in D(f)$. The region $E_i$ is empty, so the effect of $R_{f^j}^{f^i}$ on $D(f)$ is simply to switch the regions $E_i$ and $E_j$. Analogous arguments show that applying $C_{f^j}^{f^i}$ switches $S_i$ and $S_j$. This shows that $\{f^i\} \cup (D(f)_{f^j \to f^i}) = D(ft_{ij})$, but $\{f^i\} \cup (D(f)_{f^j \to f^i}) = (D(f) \cup \{f^i\})_{f^j \to f^i}$ since $D(f)$ contains none of $f^j$, $f^i$, $f^j|f^i$, or $f^i|f^j$, and $f^j$ shares no row or column index with $f^i$.
\end{proof}

\begin{cor} \label{cor:induction-injection}  If $f \in \Bound(k,n)$ and $r \in \ZZ$, then
\begin{equation*}
\bigoplus_{(i,j) \in \BCov_r(f)} V_{n-k}[D(ft_{ij})] \hookrightarrow V \trncotimes V_{n-k}[D(f)].
\end{equation*}
\end{cor}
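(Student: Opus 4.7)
The plan is to combine the corner-system machinery of Section~\ref{sec:schur} with the analysis of Rothe diagrams just carried out, and then pass from ordinary to truncated Schur modules using complete reducibility.

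First I would invoke Lemma~\ref{lem:rothe-diagram-corner-system}: the ordered set $\Cor = \{(f^i, \Delta_i) : i \in [n],\ \Delta_i \neq \emptyset\}$ is a system of corner configurations for the (toric) diagram $D(f)$. Applying Lemma~\ref{lem:corner-system} in its toric form (Remark~\ref{rem:toric-james-peel}) yields
\begin{equation*}
\bigoplus_{(f^i, \Delta_i) \in \Cor,\, f^j \in \Delta_i} V\bigl[(D(f) \cup \{f^i\})_{f^i \to f^j}\bigr] \hookrightarrow V[\sq \ast D(f)].
\end{equation*}
The indexing set is exactly $\BCov_r(f)$, by definition of $\Delta_i$. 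Then Lemma~\ref{lem:rothe-jp} together with Proposition~\ref{prop:diagram-facts}(a) identifies each summand on the left with $V[D(ft_{ij})]$, while Proposition~\ref{prop:diagram-facts}(b) (using $V[\sq]=V$) identifies the right-hand side with $V \otimes V[D(f)]$. So we arrive at
\begin{equation*}
\bigoplus_{(i,j) \in \BCov_r(f)} V[D(ft_{ij})] \hookrightarrow V \otimes V[D(f)].
\end{equation*}

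Finally I would pass to the truncated setting. By Lemma~\ref{lem:l-symmetric-ideal}, $I(n-k) \subseteq T(V)$ is the sum of all simple $\GL(V)$-submodules $V[\lambda]$ with $\lambda_1 > n-k$, so for any finite-dimensional $\GL(V)$-submodule $U \subseteq T(V)$ the isotypic decomposition supplies a canonical splitting $U = U_{\leq n-k} \oplus (U \cap I(n-k))$, where $U_{\leq n-k}$ is the sum of the simple factors of $U$ with $\lambda_1 \leq n-k$; this assignment $U \mapsto U_{\leq n-k}$ is functorial with respect to $\GL(V)$-equivariant inclusions. One then checks that $V_{n-k}[D] \simeq V[D]_{\leq n-k}$ (since $V[D] \cap I_D(n-k) = V[D] \cap I(n-k)$), and likewise $V \trncotimes V_{n-k}[D(f)] \simeq (V \otimes V[D(f)])_{\leq n-k}$ from the definition of $\trncotimes$ and the identification $V \trncotimes V_{n-k}[D(f)] = \pi[V \otimes V[D(f)]]$ in $R^{n-k}(V)$ (cf.\ Proposition~\ref{prop:truncated-character}). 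Restricting the injection above to the $(\cdot)_{\leq n-k}$ summand produces the desired injection.

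The main obstacle is the final truncation step: a priori, applying the quotient $\pi$ to an inclusion of submodules of $T(V)$ need not preserve injectivity or direct sums, because parts of the source could be absorbed into $I(n-k)$. Complete reducibility of $\GL(V)$-representations rescues the argument: the canonical isotypic summand $U_{\leq n-k}$ is a functorial direct summand of $U$, so the direct-sum structure of the source and the injectivity of the map both survive passage to $\leq n-k$ isotypic parts.
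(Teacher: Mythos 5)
Your argument is correct and follows the same route as the paper: apply Lemma~\ref{lem:corner-system} (in its toric guise via Remark~\ref{rem:toric-james-peel}) to the corner system from Lemma~\ref{lem:rothe-diagram-corner-system}, use Lemma~\ref{lem:rothe-jp} with Proposition~\ref{prop:diagram-facts} to identify the summands as $V[D(ft_{ij})]$ and the target as $V\otimes V[D(f)]$, then pass to truncated modules. The paper compresses the last step into the phrase ``then passing to truncated Schur modules''; your discussion of why this passage preserves the injection --- namely that $I(n-k)$ is a sum of isotypic components (Lemma~\ref{lem:l-symmetric-ideal}), so the functor $U \mapsto U_{\leq n-k}$ is an exact direct-summand functor on finite-dimensional $\GL(V)$-modules, and $V_{n-k}[D] \simeq V[D]_{\leq n-k}$ and $V \trncotimes V_{n-k}[D(f)] \simeq (V\otimes V[D(f)])_{\leq n-k}$ --- is exactly the missing detail, and it is carried out correctly.
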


\begin{proof}
By Lemma~\ref{lem:rothe-jp}, this is the conclusion of applying Lemma~\ref{lem:corner-system} to the system of corner configurations for $D(f)$ given by Lemma~\ref{lem:rothe-diagram-corner-system}, and then passing to truncated Schur modules. \end{proof}

We will prove by two inductions on $<_r$ that the injection of Corollary~\ref{cor:induction-injection} is an isomorphism, with the next two lemmas serving as base cases. Let $H_f = \trch V_{n-k}[D(f)]$.

\begin{lem} \label{lem:r-bruhat-maximal} $f \in \Bound(k,n)$ is maximal in the order $<_r$ if and only if $\ell(f) = k(n-k)$, in which case $H_f = \bar s_{(n-k)^k} = G_f$. \end{lem}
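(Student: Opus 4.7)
The plan is to reduce to $r=0$, characterize $<_0$-maximality via the $Q(k,n)$ picture, and then analyze the Rothe diagram of $f$ directly. Since $\tau f \tau^{-1}(i) = f(i-1)+1$, conjugation by $\tau$ is a length-preserving automorphism of $\affS_n$ that stabilizes $\Bound(k,n)$ (boundedness and the count of $f(i) > n$ are both preserved) and intertwines $<_r$ with $<_{r+1}$, so I may assume $r = 0$.

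For the equivalence of $<_0$-maximality with $\ell(f) = k(n-k)$, I use Theorem~\ref{thm:k-bruhat-bijection} to write $f = f_{u,v}$ with $u \leq_k v$ and $\ell(f) = k(n-k) - \ell(v) + \ell(u)$. By Lemma~\ref{lem:0-bruhat-bijection}, $f$ is $<_0$-maximal in $\Bound(k,n)$ exactly when there is no cover $u \lessdot_k u' \leq_k v$, which happens precisely when $u = v$, i.e., when $\ell(f) = k(n-k)$. The converse direction uses that $k$-Bruhat order is graded, so $\ell(u) = \ell(v)$ together with $u \leq_k v$ forces $u = v$.

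Now assume $\ell(f) = k(n-k)$, so $f = u g_{k,n} u^{-1}$ for some $u \in S_n$. Direct substitution gives $f(I) = I+n$ for $I \in A := u([1,k])$ and $f(I) = I$ for $I \in B := u([k+1,n])$. I then enumerate inversions $(I,J)$ with $I \in [1,n]$ and $J > I$ by cases based on which of $A, B$ contains $I$ and the residue $J_0 \in [1,n]$ of $J$ modulo $n$. The only surviving cases are $I \in A, J_0 \in B$, and these split into two families indexed by $A \times B$: pairs $(I, J_0)$ with $I < J_0$ (Rothe cell $(I,J_0)$) and pairs $(I, J_0+n)$ with $I > J_0$ (Rothe cell $(I, J_0+n)$). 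Projecting via $\rho : \Cyl_{n,n} \to \ZZ^2/(n\ZZ)^2$ collapses both families onto $A \times B$, so $\rho(D(f)) = A \times B$; by Lemmas~\ref{lem:toric-rothe-diagram} and \ref{lem:toric}, $V[D(f)]$ is the Schur module of a diagram with $k$ rows of length $n-k$ indexed by $A$ and $n-k$ columns of length $k$ indexed by $B$.

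Consequently $\lambda_{\min}(D(f)) = \lambda_{\max}(D(f)) = (n-k)^k$, so Lemma~\ref{lem:shape-bounds} forces $V[D(f)] \simeq V[(n-k)^k]$, and Lemma~\ref{lem:nonzero-truncated-schur}(c) then gives $V_{n-k}[D(f)] \simeq V[(n-k)^k]$, yielding $H_f = \bar s_{(n-k)^k}$. For $G_f$, Lemma~\ref{lem:affine-stanley-bounds} bounds every $m_\mu$ appearing in $\affF_f = \sum a_\mu m_\mu$ by $\mu \leq (n-k)^k$ in dominance (so all parts are $\leq n-k$) with $a_{(n-k)^k} = 1$; since $\affF_f$ is homogeneous of degree $k(n-k)$ and only $\mu$ with $\ell(\mu) \leq k$ survive the restriction to $X_k$, the only contributing $\mu$ is $(n-k)^k$, giving $\affF_f(X_k) = m_{(n-k)^k}(X_k) = s_{(n-k)^k}(X_k)$ and hence $G_f = \bar s_{(n-k)^k}$. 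The main technical step is the cylindrical inversion enumeration, where correctly accounting for the wrap-around in the second coordinate is tedious but essentially mechanical.
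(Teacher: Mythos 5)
Your proof is correct and follows essentially the same route as the paper: reduce to $r=0$ via $\tau$-conjugation, characterize $<_0$-maximality as $u=v$ via Lemma~\ref{lem:0-bruhat-bijection}, compute $f_{v,v}$ explicitly, show $\rho(D(f))$ is the rectangle $A \times B$, and pin down $G_f$ via the coefficient of $m_{(n-k)^k}$ from Lemma~\ref{lem:affine-stanley-bounds}. The only differences are cosmetic — you spell out the inversion enumeration, take a small unneeded detour through Lemma~\ref{lem:shape-bounds} (once $\rho(D(f))=A\times B$ is established, Proposition~\ref{prop:diagram-facts}(a) gives $V[D(f)]\simeq V[(n-k)^k]$ directly), and compute $\affF_f(X_k)$ rather than just its truncation.
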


\begin{proof}
Since the orders $<_r$ are isomorphic for varying $r$, it suffices to consider $r = 0$. Lemma~\ref{lem:0-bruhat-bijection} implies that if $f = f_{u,v}$ is $<_0$-maximal, then $u = v$. By Theorem~\ref{thm:k-bruhat-bijection}, $f_{v,v}$ is also Bruhat-maximal in $\Bound(k,n)$ with length $k(n-k)$. Let $A = \{v(1), \ldots, v(k)\}$ and $B = \{v(k+1), \ldots, v(n)\}$. Then
\begin{equation*}
f_{v,v}(i) = \begin{cases}
i+n & \text{if $i \in A$}\\
i & \text{if $i \in B$}
\end{cases},
\end{equation*}
and so for any $(i,j) \in A \times B$, either $i < j$ and $(i,j) \in D(f)$, or $i-n < j < i$ and $(i-n, j) \in D(f)$. It follows that the image of $D(f)$ under the quotient map $\Cyl_{n,n} \to \ZZ^2 / (n\ZZ \times n\ZZ)$ is simply the $k \times (n-k)$ rectangle $A \times B$, so $V_{n-k}[D(f)] \simeq V_{n-k}[(n-k)^k]$.

As for $G_f$, necessarily $G_f = d s_{(n-k)^k}$ for some $d$, and we must see that $d = 1$. For this we apply Lemma~\ref{lem:affine-stanley-bounds}: the previous paragraph implies $\lambda_{\max} = (n-k)^k$, so $m_{(n-k)^k}$ appears in $\affF_f$ with coefficient $1$. Since this is also the coefficient of $m_{(n-k)^k}$ in $s_{(n-k)^k}$, we get $G_f = \bar s_{(n-k)^k} = H_f$ as desired.
\end{proof}

\begin{lem} \label{lem:r-bruhat-minimal} If $f \in \Bound(k,n)$ is minimal in the order $<_r$, then $H_f = G_f$. \end{lem}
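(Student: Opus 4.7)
The strategy parallels the proof of Lemma~\ref{lem:r-bruhat-maximal}: characterize the $<_r$-minimal elements of $\Bound(k,n)$ explicitly, compute $D(f)$, and verify that both $G_f$ and $H_f$ equal a common Schur polynomial. By the $\tau$-conjugation symmetry of $<_r$-Bruhat orders on $\Bound(k,n)$, I may assume $r=0$. Theorem~\ref{thm:k-bruhat-bijection} together with Lemma~\ref{lem:0-bruhat-bijection} then implies that a $<_0$-minimal $f$ arises as $f = f_{\id,v} = g_{k,n} v^{-1}$ for some $v \in S_n$ with $\id \leq_k v$, and the length formula from Theorem~\ref{thm:k-bruhat-bijection} gives $\ell(f) = k(n-k) - \ell(v)$. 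Writing $A = v([k])$ and $B = v([k+1,n])$, the explicit definition of $g_{k,n}$ yields $f(i) = v^{-1}(i)+n$ if $i \in A$ and $f(i) = v^{-1}(i)$ if $i \in B$; the $<_0$-minimality combined with boundedness forces every inversion of $f$ to lie in a single fundamental domain $[1,n]^2$ of $\Cyl_{n,n}$.

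The combinatorial heart of the argument is to show that $D(f)$ is equivalent, via row and column permutations (Proposition~\ref{prop:diagram-facts}(a)), to the Young diagram of a partition $\lambda(v) \subseteq (n-k)^k$. Case analysis of the three possible types of inversions $(i,j)$ in $[1,n]^2$ -- both indices in $A$, both in $B$, or $i \in A$ and $j \in B$ with $i < j$ -- shows that the nonempty rows of $D(f)$ are indexed by $A = \{a_1 < \cdots < a_k\}$ and that the column sets of these rows form a nested decreasing chain as $i$ increases. An analogous nesting holds for columns. Consequently, row- and column-permuting realizes $D(f)$ as the Young diagram of $\lambda(v) = (\lambda_1, \dots, \lambda_k)$ with $\lambda_i = |\{j \in B : j > a_i\}|$, which is clearly contained in $(n-k)^k$.

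The remainder is routine. Example~\ref{ex:schur-examples}(a) and Proposition~\ref{prop:diagram-facts}(a) give $V[D(f)] \simeq V[\lambda(v)]$; Lemma~\ref{lem:nonzero-truncated-schur}(c) then yields $V_{n-k}[D(f)] \simeq V[\lambda(v)]$, so $H_f = \bar s_{\lambda(v)}$. For $G_f$, the partition $\lambda_{\max}$ of Lemma~\ref{lem:affine-stanley-bounds} agrees with $\lambda(v)$, so $m_{\lambda(v)}$ appears in $\affF_f$ with coefficient $1$ and dominates every other monomial. Combined with the Schur-positivity of $G_f$ (Theorem~\ref{thm:schur-positive-truncation}) and the identification, via \cite{positroidjuggling}, of $\Pi_{f_{\id,v}}$ with the irreducible opposite Schubert variety in $\Gr(k,n)$ whose cohomology class is $\bar s_{\lambda(v)}$, we obtain $G_f = \bar s_{\lambda(v)} = H_f$.

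The principal obstacle is the nesting verification for $D(f)$: internal inversions within $A$ or within $B$ could in principle create cells that break the nesting coming from the $(A,B)$-type inversions, and one must use the Bergeron--Sottile characterization of $\id \leq_k v$ (see \cite{k-bruhat-order}) to show that any such cells fall inside the rows and columns already present from the $(A,B)$-inversions.
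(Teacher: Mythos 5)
Your proof is correct, but it takes a genuinely different route from the paper. The paper observes that for the $0$-minimal $f = f_{1,v}$, the shifted permutation $w := \tau^{-k}f$ lies in $S_n$ (since $f$ maps $[n]$ onto $[k+1,n+k]$), and then instantly concludes via two classical facts: $\affF_w = F_w$ (Lam) and $F_w = \ch V[D(w)]$ (Kraskiewicz--Pragacz, Reiner--Shimozono plactification). Truncating both sides finishes the proof. The crucial point is that this argument works for \emph{any} $w \in S_n$, so no structure of $D(f)$ beyond "it sits in a fundamental domain" is needed. Your approach instead unpacks the combinatorics: you show that $\id \leq_k v$ forces $v$ to be $k$-Grassmannian, from which $w$ is increasing on both $A = v([k])$ and $B = v([k+1,n])$, hence all inversions of $w$ cross from $A$ to $B$, and $D(f)$ is equivalent to a Young diagram $\lambda(v)$; then you match $H_f = \bar s_{\lambda(v)}$ with $G_f = \bar s_{\lambda(v)}$ via the Schubert variety interpretation from \cite{positroidjuggling}. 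Both routes are valid; yours is more self-contained and explicit, while the paper's is shorter and generalizes to all of $S_n$.

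One small correction to your final paragraph: the ``principal obstacle'' you describe (internal inversions within $A$ or $B$ breaking the nesting) does not actually arise. The Bergeron--Sottile criterion for $\id \leq_k v$ shows precisely that $v$ is $k$-Grassmannian, which means $w = \tau^{-k}f$ is increasing on $A$ and increasing on $B$, so there are \emph{no} inversions with both endpoints in $A$ or both in $B$. Once you state this, the nesting is immediate from the monotonicity of the $(A,B)$-inversions, and no further reconciliation is needed. You should make this deduction (that $\id \leq_k v$ implies $v$ Grassmannian) explicit rather than hinting at it, since it is what makes the whole hands-on computation of $D(f)$ tractable.
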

\begin{proof}
Again we can assume $r = 0$. Lemma~\ref{lem:0-bruhat-bijection} shows that the $0$-minimal elements of $\Bound(k,n)$ are those of the form $f_{1, v}$. But note that $g_{k,n}$ and hence $f_{1, v}$ maps $[n]$ onto $[k+1,n+k]$, so $w := \tau^{-k} f_{1,v} \in S_n$. By \cite[Proposition 5]{lam-affine-stanley}, $\affF_w$ agrees with the ordinary Stanley symmetric function $F_w$, while $F_w = \ch V[D(f)]$ by \cite[Theorem 33]{plactification}. Since $\affF_w = \affF_{{\tau}^k w}$ we are done.
\end{proof}

Given a partition $\lambda \subseteq (n-k)^k$, let $\lambda^{\vee}$ be the partition with Young diagram $([k] \times [n-k]) \setminus \lambda$ (rotated $180^\circ$). Let $\delta : \Lambda^{n-k}(k) \to \ZZ$ be the linear map with $\delta(\bar s_{\lambda}) = f^{\lambda^{\vee}}$,  where $f^{\lambda^{\vee}}$ is the number of standard Young tableaux of shape $\lambda^{\vee}$. 

\begin{lem} \label{lem:delta} If $F \in \Lambda^{n-k}(k)$ has degree less than $k(n-k)$, then $\delta(\bar s_1 F) = \delta(F)$. \end{lem}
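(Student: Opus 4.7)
The plan is to reduce by linearity to the case $F = \bar s_\lambda$ for a single partition $\lambda \subseteq (n-k)^k$ with $|\lambda| < k(n-k)$, and then combine Pieri's rule with the standard recursion for the number of standard Young tableaux.

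First I would apply the Pieri rule $s_1 s_\lambda = \sum_\mu s_\mu$, where $\mu$ runs over partitions obtained from $\lambda$ by adding a single cell. In the quotient $\Lambda^{n-k}(k)$ every Schur term indexed by a partition not contained in $(n-k)^k$ vanishes, so
\begin{equation*}
\bar s_1 \bar s_\lambda = \sum_{\substack{\mu = \lambda + \sq \\ \mu \subseteq (n-k)^k}} \bar s_\mu.
\end{equation*}
Applying $\delta$ gives
\begin{equation*}
\delta(\bar s_1 \bar s_\lambda) = \sum_\mu f^{\mu^\vee},
\end{equation*}
the sum running over rectangle-valid additions $\mu = \lambda + \sq$.

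Next I would observe the elementary combinatorial bijection: the addable cells of $\lambda$ that keep $\lambda$ inside $(n-k)^k$ are in bijection with the removable cells of $\lambda^\vee$. Under rotation by $180^\circ$ inside the rectangle, adding an addable cell $c$ to $\lambda$ in such a way that $\lambda + c \subseteq (n-k)^k$ corresponds exactly to removing the opposite cell $c^\vee$ from $\lambda^\vee$, and the resulting shape $(\lambda+c)^\vee$ is $\lambda^\vee \setminus c^\vee$. Thus
\begin{equation*}
\delta(\bar s_1 \bar s_\lambda) = \sum_{\nu} f^{\nu},
\end{equation*}
where $\nu$ runs over partitions obtained from $\lambda^\vee$ by deleting a removable cell.

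Finally, since $|\lambda| < k(n-k)$ the complementary shape $\lambda^\vee$ is nonempty, and one may invoke the standard SYT recursion: removing the cell containing the maximum entry $|\lambda^\vee|$ of any standard Young tableau of shape $\lambda^\vee$ yields a standard Young tableau of shape $\lambda^\vee \setminus c^\vee$ for some removable cell $c^\vee$, and this operation is a bijection. Therefore $f^{\lambda^\vee} = \sum_\nu f^\nu$ with $\nu$ running over the same set of shapes as above, so $\delta(\bar s_1 \bar s_\lambda) = f^{\lambda^\vee} = \delta(\bar s_\lambda)$.

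There is no real obstacle here; the only point requiring care is checking that the addable cells of $\lambda$ contributing nonzero terms after truncation match the removable cells of $\lambda^\vee$, which is immediate from the rectangular complementation. The hypothesis $|\lambda| < k(n-k)$ is used precisely to ensure $\lambda^\vee \neq \emptyset$ so that the SYT recursion is applicable.
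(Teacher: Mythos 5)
Your proof is correct and follows essentially the same route as the paper's: reduce by linearity to $F = \bar s_\lambda$, apply Pieri's rule in the quotient, use the $180^\circ$ box-complementation bijection between addable cells of $\lambda$ inside the rectangle and removable cells of $\lambda^\vee$, and finish with the standard Young tableau recursion $f^{\lambda^\vee} = \sum_\nu f^\nu$. The paper packages the last step slightly differently (observing that deleting the forced entry $1$ from any SYT of $\lambda^\vee$ gives a bijection with skew SYT of $\lambda^\vee / (1)$, whose count is $\sum_\nu f^\nu$), but this is an equivalent form of the same recursion.
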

\begin{proof}  It suffices to assume that $F = \bar s_{\lambda}$ where $|\lambda| < k(n-k)$. By Pieri's rule, $\delta(\bar s_1 \bar s_{\lambda}) = f^{\lambda^\vee \setminus 1}$. But $f^{\lambda^\vee \setminus 1} = f^{\lambda^\vee}$: removing the box labeled $1$ gives a bijection from the tableaux counted by $f^{\lambda^\vee}$ to those counted by $f^{\lambda^\vee \setminus 1}$.
\end{proof}

\begin{rem} Under the isomorphism $\Lambda^{n-k}(k) \simeq H^*(\Gr(k,n), \ZZ)$, $\delta$ sends a cohomology class $[X]$ to the degree of $X$ as a projective subvariety, a point of view which makes Lemma~\ref{lem:delta} clear. \end{rem}

\begin{lem} \label{lem:induction-isomorphism} For any $f \in \Bound(k,n)$, $\delta(H_f) = \delta(G_f)$ and
\begin{equation*}
\bigoplus_{(i,j) \in \BCov_r(f)} V_{n-k}[D(ft_{ij})] \simeq V \trncotimes V_{n-k}[D(f)].
\end{equation*}
\end{lem}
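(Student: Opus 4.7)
The plan is to establish (i) and (ii) simultaneously by downward induction on $\ell(f)$, which by Theorem~\ref{thm:k-bruhat-bijection} ranges between $0$ and $k(n-k)$ on $\Bound(k,n)$. The base case $\ell(f) = k(n-k)$ is handled by Lemma~\ref{lem:r-bruhat-maximal}, which gives $H_f = \bar s_{(n-k)^k} = G_f$ directly, settling (i); and $\BCov_r(f) = \emptyset$ (any cover would violate the length bound in $\Bound(k,n)$), making the LHS of (ii) zero, while the RHS has truncated character $\bar s_1 \cdot \bar s_{(n-k)^k} = 0$, forcing $V \trncotimes V_{n-k}[D(f)]$ itself to vanish since every simple constituent of the ambient $T(V)/I(n-k)$ lies in $(n-k)^k$.

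For the inductive step, fix $f$ with $\ell(f) < k(n-k)$ and assume (i) for every $g \in \Bound(k,n)$ with $\ell(g) > \ell(f)$. Set $m = k(n-k) - \ell(f)$. The truncated character of the injection in Corollary~\ref{cor:induction-injection} reads $\sum_{(i,j) \in \BCov_r(f)} H_{ft_{ij}} \leq_s \bar s_1 H_f$, where $\leq_s$ means the difference is Schur-positive. Multiplying by $\bar s_1^{m-1}$ lands both sides in the one-dimensional top-degree piece $\ZZ \cdot \bar s_{(n-k)^k}$: each $\bar s_1^{m-1} H_{ft_{ij}}$ equals $\delta(H_{ft_{ij}})\,\bar s_{(n-k)^k} = \delta(G_{ft_{ij}})\,\bar s_{(n-k)^k}$ by iterated Lemma~\ref{lem:delta} together with (i) for $ft_{ij}$; summing and invoking Proposition~\ref{prop:recurrences}(a) with Lemma~\ref{lem:delta} gives $\sum \delta(G_{ft_{ij}}) = \delta(G_f)$. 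Hence $\delta(G_f)\,\bar s_{(n-k)^k} \leq_s \bar s_1^m H_f = \delta(H_f)\,\bar s_{(n-k)^k}$, yielding $\delta(H_f) \geq \delta(G_f)$.

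The main obstacle is the reverse inequality $\delta(H_f) \leq \delta(G_f)$, which yields (i) for $f$. Once (i) is in hand, the cokernel of the injection of Corollary~\ref{cor:induction-injection} has Schur-positive truncated character $\bar s_1 H_f - \sum H_{ft_{ij}}$ whose $\delta$-value is $\delta(H_f) - \delta(G_f) = 0$ by Lemma~\ref{lem:delta} and (i); strict positivity of $\delta$ on nonzero Schur-positive elements of $\Lambda^{n-k}(k)$ then forces this cokernel character to vanish, and therefore the cokernel itself (being a submodule of $T(V)/I(n-k)$ with all simple constituents in $(n-k)^k$) vanishes, giving (ii). To obtain the reverse inequality I expect to need a direct combinatorial or dimensional upper bound on $\delta(H_f)$, most plausibly by realizing it as the dimension of a highest-weight space of $V^{\trncotimes m} \trncotimes V_{n-k}[D(f)]$ or as a count of balanced labellings of $D(f)$, and matching this bijectively with the saturated $<_r$-chains from $f$ enumerated by $\delta(G_f)$, leveraging the Schur-module apparatus developed in Section~\ref{sec:schur}.
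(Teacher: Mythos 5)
Your argument for $\delta(H_f) \geq \delta(G_f)$ by downward induction on $\ell(f)$, and your observation that once $\delta(H_f) = \delta(G_f)$ is known the cokernel of the injection in Corollary~\ref{cor:induction-injection} must vanish (because $\delta$ is strictly positive on nonzero Schur-positive elements of $\Lambda^{n-k}(k)$), both match the paper. But the proposal has a genuine gap exactly where you flag it: the reverse inequality $\delta(H_f) \leq \delta(G_f)$ is never proved, only deferred to a hoped-for ``direct combinatorial or dimensional upper bound'' via balanced labellings or highest-weight spaces. No such direct bound is needed, and none is given in the paper; the speculative plan is not a proof.

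The missing idea is to exploit the $<_r$-minimal elements rather than seek an upper bound. Lemma~\ref{lem:r-bruhat-minimal} (which you never invoke) shows $H_f = G_f$ outright for every $<_r$-minimal $f \in \Bound(k,n)$, by reducing to a permutation $w \in S_n$ and citing the classical facts $\affF_w = F_w = \ch V[D(w)]$. This gives $\delta(H_f) = \delta(G_f)$ at the bottom of the poset. Then one observes that whenever equality $\delta(H_f) = \delta(G_f)$ holds at some $f$, the sandwich
\begin{equation*}
\delta(H_f) \;\geq\; \sum_{(i,j) \in \BCov_r(f)} \delta(H_{ft_{ij}}) \;\geq\; \sum_{(i,j) \in \BCov_r(f)} \delta(G_{ft_{ij}}) \;=\; \delta(G_f)
\end{equation*}
collapses to equalities, and since each summand already satisfies $\delta(H_{ft_{ij}}) \geq \delta(G_{ft_{ij}})$, equality is forced for every cover $ft_{ij}$. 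Equality therefore propagates upward from the minimal elements through the whole finite poset $(\Bound(k,n), <_r)$, establishing (i) for all $f$, after which your cokernel argument closes (ii). Without this anchoring at the minimal elements and upward propagation, the proof is incomplete.
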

\begin{proof}
First let us see that $\delta(H_f) \geq \delta(G_f)$. Induct downwards on $\ell(f)$. If $\ell(f)$ is as large as possible, i.e. $\ell(f) = k(n-k)$, then $\delta(H_f) = 1 = \delta(G_f)$ by Lemma~\ref{lem:r-bruhat-maximal}. If $\ell(f) < k(n-k)$, then
\begin{equation} \label{eq:delta-inequalities}
\delta(H_f) \geq \sum_{(i,j) \in \BCov_r(f)} \delta(H_{ft_{ij}}) \geq \sum_{(i,j) \in \BCov_r(f)} \delta(G_{ft_{ij}}) = \delta(G_f),
\end{equation}
where the three relations above hold by Corollary~\ref{cor:induction-injection} plus Lemma~\ref{lem:delta}, induction, and Proposition~\ref{prop:recurrences} respectively.

Suppose for the moment that $\delta(H_f) = \delta(G_f)$ for some particular $f$. Then all inequalities in \eqref{eq:delta-inequalities} are equalities. In particular, $\sum_{(i,j) \in \BCov_r(f)} \delta(H_{ft_{ij}}) = \sum_{(i,j) \in \BCov_r(f)} \delta(G_{ft_{ij}})$, and since $\delta(H_{ft_{ij}}) \geq \delta(G_{ft_{ij}})$ by the previous paragraph, $\delta(H_{ft_{ij}}) = \delta(G_{ft_{ij}})$ for all such $(i,j)$. Thus, if $\delta(H_f) = \delta(G_f)$ for any particular $f$, then $\delta(H_g) = \delta(G_g)$ for any $g >_r f$. Now apply Lemma~\ref{lem:r-bruhat-minimal}. We conclude that $\delta(H_f) = \delta(G_f)$ and that all inequalities in \eqref{eq:delta-inequalities} are equalities, which implies the isomorphism claimed in the lemma.
\end{proof}

\begin{thm*}[Restatement of Theorem~\ref{thm:main2}] For any $f \in \Bound(k,n)$, $G_f = H_f = \trch V_{n-k}[D(f)]$. \end{thm*}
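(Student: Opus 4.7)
The plan is to prove $G_f = H_f$ by induction along the bounded affine Lascoux-Schützenberger tree of $f$, showing that $H_f$ satisfies the same recurrence as $G_f$ (Proposition~\ref{prop:LS-recurrence}) and agrees with $G_f$ at the leaves (Lemma~\ref{lem:schur-leaves}), from which the result is immediate by Theorem~\ref{thm:LS-formula}.

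First I would promote Corollary~\ref{cor:induction-injection} together with the isomorphism of Lemma~\ref{lem:induction-isomorphism} into the truncated Chevalley formula
$$\bar s_1 H_f \;=\; \sum_{(i,j) \in \BCov_r(f)} H_{ft_{ij}}, \qquad r \in \ZZ.$$
Subtracting the two instances of this identity for $r$ and $r-1$, exactly as Corollary~\ref{cor:affine-transitions} is obtained from Theorem~\ref{thm:affine-chevalley}, yields the truncated transition formula
$$\sum_{g \in B\Phi^-(f,r)} H_g \;=\; \sum_{g \in B\Phi^+(f,r)} H_g,$$
the $H$-analogue of Proposition~\ref{prop:recurrences}(b). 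Applying this at $ft_{rs}$, where $(r,s)$ is the maximal inversion of $f$, and solving for $H_f$ in the $B\Phi^+$ sum produces the Schur-module version of Proposition~\ref{prop:LS-recurrence}, namely $H_f = \sum_{h^+} H_{h^+} - \sum_{h^-} H_{h^-}$ over the children of $f$ in the L-S tree.

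Next I would handle the leaves. If $f \in \Bound(k,n)$ is $0$-Grassmannian, then by Theorem~\ref{thm:bounded-diagram-characterization} the diagram $D(f)$ has at most $k$ nonempty rows (each of length $\leq n-k$) and at most $n-k$ nonempty columns, and a direct inspection of the Rothe diagram shows that $D(f)$ is equivalent under $\affS_n \times \affS_n$ to the Young diagram of $\lambda(f) \subseteq (n-k)^k$. Hence Lemma~\ref{lem:nonzero-truncated-schur}(c) gives $V_{n-k}[D(f)] \simeq V[\lambda(f)]$, so $H_f = \bar s_{\lambda(f)} = G_f$ by Lemma~\ref{lem:schur-leaves}.

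Finally, I would carry out the induction on the statistic $(\ell(w_f), -f_{\lex})$ from the proof of Theorem~\ref{thm:finite-LS-tree}. When $\ell(w_f) = 0$, $f$ is $0$-Grassmannian and the previous step supplies the base case. Otherwise, every child of $f$ in the L-S tree has strictly smaller statistic, so the inductive hypothesis forces $H_{h^\pm} = G_{h^\pm}$ at each child; the identical recurrences for $H$ (from step one) and $G$ (Proposition~\ref{prop:LS-recurrence}) then give $H_f = G_f$. The main technical obstacle has already been overcome by Lemma~\ref{lem:induction-isomorphism}, whose $\delta$-degree argument upgrades the injection of Corollary~\ref{cor:induction-injection} to an isomorphism; once the $H$-Chevalley formula is in hand, the remainder of the proof is a formal parallel to the derivation of Theorem~\ref{thm:LS-formula}.
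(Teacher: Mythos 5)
Your proposal is correct and follows essentially the same route as the paper: the paper's proof likewise uses Lemma~\ref{lem:induction-isomorphism} to show $H_f$ satisfies the bounded affine L-S recurrence of Proposition~\ref{prop:LS-recurrence}, then inducts along the finite L-S tree (Theorem~\ref{thm:finite-LS-tree}) down to the $0$-Grassmannian leaves, where $D(f)$ is equivalent to $\lambda(f)$ and $H_f = \bar s_{\lambda(f)} = G_f$. You have merely made explicit the intermediate steps (the truncated Chevalley and transition formulas for $H$) that the paper compresses into one sentence.
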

\begin{proof} Lemma~\ref{lem:induction-isomorphism} shows that $H_f$ satisfies the bounded affine L-S recurrence of Proposition~\ref{prop:LS-recurrence} which $G_f$ satisfies, so by Theorem~\ref{thm:finite-LS-tree} and Lemma~\ref{lem:schur-leaves}, it suffices to show that $H_f = \bar s_{\lambda} = G_f$ when $f$ is $0$-Grassmannian of shape $\lambda$. But in that case $D(f)$ is equivalent to $\lambda$, so $H_f = \trch V_{n-k}[\lambda] = \bar s_{\lambda}$.
\end{proof}

\section{Applications} \label{sec:applications}

\subsection{Toric Schur polynomials} \label{subsec:toric-schur}

A \emph{closed lattice path} $P$ in $\Cyl_{k,n-k}$ is a circular sequence $(p_1, \ldots, p_{n})$ labeled by $\ZZ/n\ZZ$ such that $p_{i+1} - p_i \in \{(\pm 1,0), (0,\pm 1)\}$ for all $i$. If $p_{i+1} - p_i \in \{(1,0), (0,1)\}$ for all $i$, we say $P$ moves from northwest to southeast. We think of $P$ as the path obtained by concatenating the line segments from $p_i$ to $p_{i+1}$ for all $i$.
\begin{defn}
A \emph{cylindric skew shape} is the set of unit squares $[i,i+1] \times [j,j+1]$ in a cylinder $\Cyl_{k,n-k}$ between two closed lattice paths moving from northwest to southeast which do not cross (though they can meet).
\end{defn}
Any cylindric skew shape is a cylindric diagram. A filling of a cylindric skew shape $\Theta$ by positive integers is a \emph{semistandard cylindric tableau} if it is weakly increasing rightward across rows, and strictly increasing up columns (recall that we are drawing partitions in the French style). As usual, a semistandard tableau is \emph{standard} if it uses exactly the integers $1, 2, \ldots, |\Theta|$.

\begin{ex} \label{ex:cylindric-shape}
Here is a standard tableau for a cylindric skew shape with $12$ boxes in $\Cyl_{4,5}$ (more precisely, we draw part of the inverse image of the tableau in $\ZZ^2$ under the quotient map to the cylinder):
\begin{center}
\begin{tikzpicture}[scale=.5]
\begin{scope}
    \clip (-3,4) -- (-3,3) -- (-2,3) -- (-2,2) -- (-1,2) -- (-1,1) -- (0,1) -- (1,1) -- (1,0) -- (2,0) -- (2,-1) -- (3,-1) -- (3,-2) -- (4,-2) -- (4,-3) -- (5,-3)       --      (5,-4)     --    (3, -4) -- (3, -3) -- (2, -3) -- (2, -2) -- (1, -2) -- (0, -2) -- (0, -1) -- (-1, -1) -- (-2, -1) -- (-2, 0) -- (-2, 1) -- (-3, 1) -- (-3, 2) -- (-4, 2) -- (-5, 2) --       (-5,4) -- cycle;
    \filldraw[fill=lightgray!70!white] (-5,-4) -- (-5,4) -- (5,4) -- (5,-4) -- cycle;
\end{scope}
\draw (-5,-4) grid (5,4);
\draw[color=blue!80!black, very thick] (-5,2) -- (-4,2) -- (-3,2) -- (-3,1) -- (-2,1) -- (-2,0) -- (-2,-1) -- (-1,-1) -- (0,-1) -- (0,-2) -- (1,-2) -- (2,-2) -- (2,-3) -- (3,-3) -- (3,-4);
\draw[color=red!80!white, very thick] (-3,4) -- (-3,3) -- (-2,3) -- (-2,2) -- (-1,2) -- (-1,1) -- (0,1) -- (1,1) -- (1,0) -- (2,0) -- (2,-1) -- (3,-1) -- (3,-2) -- (4,-2) -- (4,-3) -- (5,-3);

\node at (-4.5,3.5) {$7$};
\node at (-3.5,3.5) {$11$};

\node at (-4.5,2.5) {$6$};
\node at (-3.5,2.5) {$10$};
\node at (-2.5,2.5) {$12$};

\node at (-2.5,1.5) {$4$};
\node at (-1.5,1.5) {$9$};

\node at (-1.5,0.5) {$2$};
\node at (-0.5,0.5) {$5$};
\node at (0.5,0.5) {$8$};

\node at (-1.5,-0.5) {$1$};
\node at (-0.5,-0.5) {$3$};
\node at (0.5,-0.5) {$7$};
\node at (1.5,-0.5) {$11$};

\node at (0.5,-1.5) {$6$};
\node at (1.5,-1.5) {$10$};
\node at (2.5,-1.5) {$12$};

\node at (2.5,-2.5) {$4$};
\node at (3.5,-2.5) {$9$};

\node at (3.5,-3.5) {$2$};
\node at (4.5,-3.5) {$5$};
\end{tikzpicture}
\end{center}
\end{ex}

\begin{defn}
The \emph{cylindric Schur function} associated to a cylindric skew shape $\Theta$ is the formal power series $s_\Theta := \sum_T x^T$, where $T$ runs over semistandard tableaux of shape $\Theta$, and as usual $x^T$ means $\prod_{c \in \Theta} x_{T(c)}$.
\end{defn}

As in Section~\ref{sec:schur}, call a cylindric skew shape $\Theta$ \emph{toric} if the quotient map $\Cyl_{k,n-k} \to \ZZ^2 / (k\ZZ \times (n-k)\ZZ)$ is injective on $\Theta$. For a toric shape $\Theta \subseteq \Cyl_{k,n-k}$, call the polynomial $s_\Theta(X_k)$ a \emph{toric Schur polynomial}. Cylindric Schur functions were introduced by Postnikov \cite{postnikov-affine-quantum-schubert}, and the next theorem summarizes some of his main results.

\begin{thm}[\cite{postnikov-affine-quantum-schubert}]  \label{thm:toric-schur-facts}
Let $\Theta \subseteq \Cyl_{k,n-k}$ be a cylindric skew shape.
\begin{enumerate}[(a)]
\item $s_\Theta$ is a symmetric function;
\item $s_\Theta(X_k)$ is Schur-positive, and is nonzero if and only if $\Theta$ is toric;
\item For $\Theta$ ranging over toric shapes in $\Cyl_{k,n-k}$, the Schur coefficients of $s_\Theta(X_k)$ are exactly the 3-point Gromov-Witten invariants for $\Gr(k,n)$.
\end{enumerate}
\end{thm}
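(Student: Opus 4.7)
The plan is to derive Theorem~\ref{thm:toric-schur-facts} as a consequence of the main theorem (Theorem~\ref{thm:main2}) together with Knutson--Lam--Speyer's identification of positroid cohomology classes with $3$-point Gromov--Witten invariants. The central technical step is to match each toric skew shape $\Theta \subseteq \Cyl_{k,n-k}$ with the Rothe diagram of a canonical bounded affine permutation $f_\Theta \in \Bound(k,n)$, and then to verify that the truncated character of $V_{n-k}[\Theta]$ agrees with the toric Schur polynomial $s_\Theta(X_k)$.

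First I would construct a bijection between toric cylindric skew shapes in $\Cyl_{k,n-k}$ and a distinguished family of bounded affine permutations: reading off the northwest/southeast lattice paths defining $\Theta$ produces a pair $(u,v)$ with $u \leq_k v$ in $k$-Bruhat order, to which Theorem~\ref{thm:k-bruhat-bijection} associates $f_{u,v} = f_\Theta \in \Bound(k,n)$. After a careful affine shift, the Rothe diagram $D(f_\Theta) \subseteq \Cyl_{n,n}$ descends, via the natural quotient from the $(n,n)$-cylinder to the $(k,n-k)$-cylinder, to a diagram equivalent to $\Theta$ itself. By Lemma~\ref{lem:toric} and Proposition~\ref{prop:diagram-facts}(a), this equivalence yields $V_{n-k}[\Theta] \simeq V_{n-k}[D(f_\Theta)]$, and then Theorem~\ref{thm:main2} gives
\begin{equation*}
\trch V_{n-k}[\Theta] \;=\; \trch V_{n-k}[D(f_\Theta)] \;=\; G_{f_\Theta} \in \Lambda^{n-k}(k).
\end{equation*}

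To close part (b), I would independently prove that $s_\Theta(X_k) = \trch V_{n-k}[\Theta]$. The inclusion $\trch V_{n-k}[\Theta] \preceq s_\Theta(X_k)$ in the weight sense follows from the standard fact that any nonzero weight vector in a Schur module of a diagram arises from a semistandard-style filling, so the character is bounded above termwise by the weight generating function of fillings on $[k]$. Matching the two exactly amounts to producing, for each semistandard cylindric tableau, an explicit (nonzero) element of $V_{n-k}[\Theta]$ of the correct weight, which I would do via a polytabloid-style basis; the cylindric version of Bender--Knuth on rows shows both sides are symmetric in $x_1,\ldots,x_k$ and, combined with Lemma~\ref{lem:shape-bounds}, pins down the equality. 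Schur-positivity of $s_\Theta(X_k)$ then follows from Theorem~\ref{thm:schur-positive-truncation}, and nonvanishing iff $\Theta$ is toric follows from Lemma~\ref{lem:toric} combined with Theorem~\ref{thm:nonzero-truncation}. For part (a), symmetry of $s_\Theta$ in infinitely many variables is purely combinatorial, via the row-local Bender--Knuth involution (unaffected by the cylindric identification). For part (c), once $s_\Theta(X_k)$ is identified with the positroid class $G_{f_\Theta}$, the Gromov--Witten interpretation is the content of \cite{positroidjuggling}, where $3$-point Gromov--Witten invariants for $\Gr(k,n)$ are realized as intersection numbers of positroid varieties of the appropriate shape.

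The main obstacle is the character identification $\trch V_{n-k}[\Theta] = s_\Theta(X_k)$: producing enough linearly independent polytabloids indexed by semistandard cylindric tableaux is delicate because, unlike the ordinary skew case, cylindric shapes may wrap nontrivially and the naive ``standard basis'' argument of James--Peel needs to be replayed carefully. One safe route is to avoid this head-on and instead argue dimension-by-dimension via $\delta$ as in Lemma~\ref{lem:induction-isomorphism}: compute $\delta(\overline{s_\Theta(X_k)})$ combinatorially as a count of standard cylindric tableaux, and compare with $\delta(\trch V_{n-k}[\Theta])$ using the inductive setup of Corollary~\ref{cor:induction-injection}, which reduces the equality to the Schur-function base case already handled in Lemma~\ref{lem:schur-leaves}.
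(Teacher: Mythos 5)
This statement is \emph{cited} in the paper, not proved: the bracket \verb|[\cite{postnikov-affine-quantum-schubert}]| indicates it is Postnikov's theorem, taken as an external input. Postnikov proves (a) combinatorially via cylindric Bender--Knuth involutions, and proves (b) and (c) by expanding the quantum product $\sigma_\lambda * \sigma_\mu$ in the small quantum cohomology of $\Gr(k,n)$ using the quantum Pieri rule and identifying the resulting generating function as a toric Schur polynomial. None of this depends on the machinery developed in the present paper, and the present paper offers no proof of it.

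Your proposal attempts to re-derive Postnikov's theorem from Theorem~\ref{thm:main2}, which reverses the logical flow of the paper and runs into a genuine gap. The central step you flag as delicate — showing $\trunc_{k,n-k} s_\Theta(X_k) = \trch V_{n-k}[\Theta]$ by ``producing, for each semistandard cylindric tableau, an explicit nonzero element of $V_{n-k}[\Theta]$'' — is not ``a standard fact.'' For a general diagram $D$, $\ch V[D]$ is \emph{not} the weight generating function of column-strict fillings of $D$, and the polytabloid argument of James--Peel does not produce a basis indexed by semistandard tableaux outside special cases (skew shapes, column-convex shapes, etc.). For cylindric skew shapes, the equality $s_\Theta(X_k) = \ch V[\Theta]$ is precisely Postnikov's Conjecture~10.1, which is Theorem~\ref{thm:toric-schur-module} of this paper, and the paper's proof of it requires the entire L--S tree induction and is not available ``independently'' as a lemma. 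There is also a circularity concern for part (c): the chain $G_{f_\Theta} \leftrightarrow$ Gromov--Witten invariants that you want to invoke from Knutson--Lam--Speyer itself routes through Postnikov's identification of toric Schur coefficients with $\Gr(k,n)$ Gromov--Witten invariants, which is exactly statement (c). If you wanted to prove this theorem from scratch you would need to retrace Postnikov's original argument through quantum Schubert calculus rather than through the positroid/affine-Stanley framework built on top of it.

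Two smaller issues: as written, the identity $s_\Theta(X_k) = \trch V_{n-k}[\Theta]$ does not typecheck since the left side lives in $\Lambda(k)$ and the right in $\Lambda^{n-k}(k)$ — one must argue, as in the paper's proof of Theorem~\ref{thm:toric-schur-module}, that both sides are supported on $\lambda \subseteq (n-k)^k$ so that the truncation loses nothing. And Schur-positivity of $s_\Theta(X_k)$, once one has $s_\Theta(X_k) = \ch V[\Theta]$, is automatic because characters of $\GL(V)$-modules are Schur-positive; invoking Theorem~\ref{thm:schur-positive-truncation} for this is unnecessary.
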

When $\Theta$ is an ordinary skew shape $\lambda \setminus \mu$, $s_\Theta$ is simply the usual skew Schur function $s_{\lambda \setminus \mu}$, and $s_{\lambda \setminus \mu}(X_k) = \ch V[\lambda \setminus \mu]$. More generally, Postnikov conjectured that for any toric skew shape $\Theta$, the toric Schur polynomial $s_\Theta(X_k)$ is the character of the Schur module $V[\Theta]$ \cite[Conjecture 10.1]{postnikov-affine-quantum-schubert}. We now explain how Postnikov's conjecture follows from Theorem~\ref{thm:main2}. By Theorem~\ref{thm:toric-schur-facts}(c), this yields a representation-theoretic interpretation of the 3-point Gromov-Witten invariants for $\Gr(k,n)$.

\begin{thm}[\cite{lam-affine-stanley}] \label{thm:cylindric-affine-stanley} For any cylindric skew shape $\Theta$, there is an affine permutation $f_\Theta$ with $s_\Theta = \affF_{f_\Theta}$. \end{thm}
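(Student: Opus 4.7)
The plan is to associate to each cylindric skew shape $\Theta \subseteq \Cyl_{k,n-k}$ two $0$-Grassmannian affine permutations encoding the inner and outer boundary paths of $\Theta$, and then to define $f_\Theta$ as a suitable ratio of these. First, I would recall that a closed lattice path moving NW to SE in $\Cyl_{k,n-k}$ is determined by its step sequence, a cyclic binary word in $\ZZ/n\ZZ$ with $k$ vertical and $n-k$ horizontal steps. Such a cyclic word corresponds bijectively to a $k$-subset of $\ZZ/n\ZZ$, and hence to a minimal-length coset representative for $\affS_n / \langle s_1, \ldots, s_{n-1}\rangle$---that is, a $0$-Grassmannian element of $\affS_n$ (allowing $\tau$-shifts to track winding around the cylinder).

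Let $u$ and $v$ be the $0$-Grassmannian affine permutations associated to the inner and outer boundary paths of $\Theta$, and set $f_\Theta = v u^{-1}$. The non-crossing condition on the boundary paths translates into a length-additive factorization $v = f_\Theta \cdot u$, so that $f_\Theta$ genuinely measures the ``skew'' between the two paths; in particular $\ell(f_\Theta) = |\Theta|$ by a cell count comparing the regions enclosed by the two paths. The $\tau$-shift $\av(f_\Theta)$ corresponds to the winding number of $\Theta$ around the cylinder.

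The remaining task is to prove $s_\Theta = \affF_{f_\Theta}$ via a weight-preserving bijection between the terms on each side. On the left, a semistandard cylindric tableau $T$ of shape $\Theta$ is equivalent to a chain of lattice paths $P_\mu = P^{(0)} \subseteq P^{(1)} \subseteq \cdots \subseteq P^{(p)} = P_\lambda$, where each successive difference consists of the cells of $T$ labeled $i$ and is a horizontal strip (no two cells in the same column). On the right, a length-additive cyclically decreasing factorization $\tau^{-\av(f_\Theta)} f_\Theta = f_1 \cdots f_p$ determines intermediate $0$-Grassmannian affine permutations $u_i = f_{i+1} \cdots f_p \, u$ interpolating between $u$ and $v$. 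The key lemma to establish is an affine Pieri rule: multiplying a $0$-Grassmannian element on the left by a cyclically decreasing element of length $\ell_i$ yields another $0$-Grassmannian element whose associated path differs from the original by a horizontal strip of size $\ell_i$, and every such horizontal strip arises uniquely this way. Matching the $i$th factor with the $i$th horizontal strip identifies both sides of the proposed identity with the common generating function $\sum_T \prod_i x_i^{|P^{(i)} \setminus P^{(i-1)}|}$.

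The main obstacle is the affine Pieri correspondence itself: one must verify that cyclically decreasing factors of length-additive factorizations of $\tau^{-\av(f_\Theta)} f_\Theta$ biject exactly with horizontal-strip extensions of the inner path, including the correct wrap-around behavior inside the cylinder, and that no boundary-path violations (such as a strip jumping past the outer boundary) occur. Once this Pieri-type statement is in hand---proved by translating cyclically decreasing words into reflections acting on the underlying $k$-subset of $\ZZ/n\ZZ$ and checking that each such reflection shifts one vertical step of the boundary path one unit to the right---the theorem follows immediately from the weight-by-weight matching of terms.
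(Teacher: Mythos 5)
The paper does not actually give a proof of this theorem; it is imported from \cite{lam-affine-stanley}, and the subsequent paragraph merely records an explicit description of $f_\Theta$ taken from \cite[\S 8]{positroidjuggling}. So your sketch has to stand on its own. The high-level plan --- encode the two boundary paths of $\Theta$ as $0$-Grassmannian affine permutations $u$ and $v$, define $f_\Theta$ as a length-additive quotient of the two, and match semistandard cylindric tableaux with cyclically decreasing factorizations --- is a sensible one and does resemble the constructions in \cite{lam-affine-stanley} and \cite[\S 8]{positroidjuggling}. However, the key lemma you propose, your ``affine Pieri rule,'' is false as stated. Multiplying a $0$-Grassmannian element $u$ on the left by a cyclically decreasing element $g$ of length $\ell$ does \emph{not} in general produce a $0$-Grassmannian element, even under the length-additivity hypothesis $\ell(gu) = \ell(g) + \ell(u)$. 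A concrete counterexample: take $n = 4$, $u = 1278 \in \Bound(2,4)$, which is $0$-Grassmannian since its window is increasing, and $g = s_1$. One checks $u^{-1}(1) = 1 < 2 = u^{-1}(2)$, so $\ell(s_1 u) = \ell(u) + 1$ and the product is length-additive; yet $s_1 u$ has window $(2,1,7,8)$, with a descent at position $1 \not\equiv 0 \pmod 4$, hence is not $0$-Grassmannian. Consequently your intermediate elements $u_i = f_{i+1}\cdots f_p u$ need not be $0$-Grassmannian, so they do not directly encode lattice paths, and the chain $P^{(0)} \subseteq \cdots \subseteq P^{(p)}$ cannot be read off from them in the way you describe.

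To repair this you would need to replace each $u_i$ by the minimal-length representative of the coset $u_i \langle s_1,\ldots,s_{n-1}\rangle$, show that these reduced representatives differ by horizontal strips, and argue carefully that no length is lost in the reduction --- i.e., that length-additivity of the full factorization $\tau^{-\av(f_\Theta)}f_\Theta = f_1 \cdots f_p$ together with $\ell(v) = \ell(f_\Theta) + \ell(u)$ forces the coset-length bookkeeping to come out exactly right. You would also need the converse, that every chain of horizontal strips lifts uniquely to such a factorization; this is where the bounding by the outer path, which you flag as a concern, genuinely enters. As written, the central lemma of your sketch is an incorrect statement, so the argument does not yet go through.
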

We will need an explicit description of $f_{\Theta}$, which we take from \cite[Section 8]{positroidjuggling}. Our cylindric shapes are reflections through a horizontal line of those in \cite{postnikov-affine-quantum-schubert, positroidjuggling}---this is so that the Rothe diagram of $f_\Theta$ is equivalent to $\Theta$. Label the lower and upper boundaries of $\Theta$ with $\ZZ$, increasing from northwest to southeast. If $i \in \ZZ$ is the label of a vertical edge of the lower boundary, let $f_\Theta(i)$ be the label of the vertical edge of the upper boundary in the same row; likewise if $i$ labels a horizontal edge of the lower boundary, let $f_\Theta(i)$ be the label of the horizontal edge of the upper boundary in the same column. This determines $f_\Theta$ up to left and right multiplication by $\tau$, and so $\affF_{f_\Theta}$ is well-defined, as is $D(f_\Theta)$ up to equivalence.

\begin{ex} \label{ex:affine-perm-from-cylindric-shape}
Take $\Theta$ to be the shape from Example~\ref{ex:cylindric-shape}, with the following edge labeling:
\begin{center}
\begin{tikzpicture}[scale=.5]
\begin{scope}
    \clip (-3,4) -- (-3,3) -- (-2,3) -- (-2,2) -- (-1,2) -- (-1,1) -- (0,1) -- (1,1) -- (1,0) -- (2,0) -- (2,-1) -- (3,-1) -- (3,-2) -- (4,-2) -- (4,-3) -- (5,-3)       --      (5,-4)     --    (3, -4) -- (3, -3) -- (2, -3) -- (2, -2) -- (1, -2) -- (0, -2) -- (0, -1) -- (-1, -1) -- (-2, -1) -- (-2, 0) -- (-2, 1) -- (-3, 1) -- (-3, 2) -- (-4, 2) -- (-5, 2) --       (-5,4) -- cycle;
    \filldraw[fill=lightgray!70!white] (-5,-4) -- (-5,4) -- (5,4) -- (5,-4) -- cycle;
\end{scope}
\draw (-5,-4) grid (5,4);
\draw[color=blue!80!black, very thick] (-5,2) -- (-4,2) -- (-3,2) -- (-3,1) -- (-2,1) -- (-2,0) -- (-2,-1) -- (-1,-1) -- (0,-1) -- (0,-2) -- (1,-2) -- (2,-2) -- (2,-3) -- (3,-3) -- (3,-4);
\draw[color=red!80!white, very thick] (-3,4) -- (-3,3) -- (-2,3) -- (-2,2) -- (-1,2) -- (-1,1) -- (0,1) -- (1,1) -- (1,0) -- (2,0) -- (2,-1) -- (3,-1) -- (3,-2) -- (4,-2) -- (4,-3) -- (5,-3);

\node[below] at (-4.5, 2+0.15) {$\scriptstyle -1$};
\node[below] at (-3.5, 2+0.15) {$\scriptstyle 0$};
\node[right] at (-3-0.15, 1.5) {$\scriptstyle 1$};
\node[below] at (-2.5, 1+0.15) {$\scriptstyle 2$};
\node[right] at (-2-0.15, 0.5) {$\scriptstyle 3$};
\node[right] at (-2-0.15, -0.5) {$\scriptstyle 4$};
\node[below] at (-1.5, -1+0.15) {$\scriptstyle 5$};
\node[below] at (-0.5, -1+0.15) {$\scriptstyle 6$};
\node[right] at (0-0.15, -1.5) {$\scriptstyle 7$};
\node[below] at (0.5, -2+0.15) {$\scriptstyle 8$};
\node[below] at (1.5, -2+0.15) {$\scriptstyle 9$};
\node[right] at (2-0.15, -2.5) {$\scriptstyle 10$};
\node[below] at (2.5, -3+0.15) {$\scriptstyle 11$};
\node[right] at (3-0.15, -3.5) {$\scriptstyle 12$};

\node[left] at (-3+.15, 3.5) {$\scriptstyle 3$};
\node[above] at (-2.5, 3-.15) {$\scriptstyle 4$};
\node[left] at (-2+.15, 2.5) {$\scriptstyle 5$};
\node[above] at (-1.5, 2-.15) {$\scriptstyle 6$};
\node[left] at (-1+.15, 1.5) {$\scriptstyle 7$};
\node[above] at (-0.5, 1-.15) {$\scriptstyle 8$};
\node[above] at (0.5, 1-.15) {$\scriptstyle 9$};
\node[left] at (1+.15, 0.5) {$\scriptstyle 10$};
\node[above] at (1.5, 0-.15) {$\scriptstyle 11$};
\node[left] at (2+.15, -0.5) {$\scriptstyle 12$};
\node[above] at (2.5, -1.0-.15) {$\scriptstyle 13$};
\node[left] at (3+.15, -1.5) {$\scriptstyle 14$};
\node[above] at (3.5, -2.0-.15) {$\scriptstyle 15$};
\node[left] at (4+.15, -2.5) {$\scriptstyle 16$};
\node[above] at (4.5, -3.0-.15) {$\scriptstyle 17$};
\end{tikzpicture}
\end{center}
Then $f_{\Theta} = 74(10)(12)68(14)9(11)$, and we have chosen the edge labeling so that $f_{\Theta} \in \Bound(4,9)$.
\end{ex}

\begin{lem} \label{lem:cylindric-rothe-diagram} For any cylindric diagram $\Theta \subseteq \Cyl_{k,n-k}$, the Rothe diagram $D(f_\Theta)$ is equivalent to $\Theta$. \end{lem}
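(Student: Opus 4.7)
The plan is to construct an explicit bijection between the cells of $\Theta$ and the points of $D(f_\Theta)$ which preserves the grouping of cells into rows and into columns; given such a bijection, Lemma~\ref{lem:nonzero-schur} identifies the two diagrams as equivalent. For a cell of $\Theta$ in row $r$ and column $s$, let $i(r)$ denote the label of the unique vertical edge of the lower boundary lying in row $r$ and let $j(s)$ denote the label of the unique horizontal edge of the lower boundary lying in column $s$ (uniqueness coming from the NW-to-SE monotonicity of the boundary paths). I would send this cell to the point $(i(r), f_\Theta(j(s)))$ of $\Cyl_{n,n}$.

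The first step is to verify that $(i(r), j(s))$ really is an inversion of $f_\Theta$. The inequality $i(r) < j(s)$ follows by walking along the lower boundary in the NW-to-SE direction: starting at the vertical edge bounding row $r$ on the left of the cell, one must descend and move rightward before reaching the horizontal edge bounding column $s$ from below, and boundary labels increase in that direction. The inequality $f_\Theta(i(r)) > f_\Theta(j(s))$ is the symmetric statement on the upper boundary, where the horizontal edge in column $s$ lies NW of the vertical edge in row $r$ for the same geometric reason (both sit on the NE side of the cell).

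For surjectivity I would classify an arbitrary inversion $(i, j)$ of $f_\Theta$ by the types (vertical or horizontal) of the edges on the lower boundary carrying labels $i$ and $j$. The three cases in which $i, j$ are both vertical, both horizontal, or $i$ is horizontal and $j$ is vertical are each eliminated by monotonicity: the first two cases force $f_\Theta(i) < f_\Theta(j)$ on the upper boundary, and the last leads to a direct contradiction between the forced positions of the upper and lower boundaries in a common column. In the remaining case, $i$ is vertical in some row $r$ and $j$ is horizontal in some column $s$, and unpacking $i < j$ together with $f_\Theta(i) > f_\Theta(j)$ yields precisely the four inequalities saying that the cell $(r, s)$ lies strictly between the two boundary paths in both its row and its column, hence belongs to $\Theta$. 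Injectivity is immediate since $r \mapsto i(r)$ and $s \mapsto f_\Theta(j(s))$ are both injective.

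By construction, two cells of $\Theta$ share a row (resp.\ column) iff their images in $D(f_\Theta)$ share the first (resp.\ second) coordinate, so the bijection matches the row and column partitions of the two diagrams and Lemma~\ref{lem:nonzero-schur} then delivers the desired equivalence. The main technical burden is the case analysis for surjectivity, where the NW-to-SE monotonicity on each boundary has to be pushed through carefully; one also has to treat the minor boundary case in which two consecutive lattice edges of a boundary meet at a single corner vertex, where the ``strictly NW'' relation degenerates to ``consecutive.''
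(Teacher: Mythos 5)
Your proposal is correct and takes essentially the same approach as the paper: you define the same map sending a cell in row $r$, column $s$ to $(i(r), f_\Theta(j(s)))$, which is precisely the paper's map (since $f_\Theta(j(s))$ is by definition the upper-boundary label in column $s$), and verify it is a bijection onto $D(f_\Theta)$ preserving the row and column partitions. The paper's own proof simply asserts "one checks that its image is $D(f_\Theta)$"; your case analysis of inversions by edge type fills in exactly that omitted verification.
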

\begin{proof}
Consider the injective map $\Theta \to \Cyl_{n,n}$ sending a cell $c$ to $(i,j)$, where $i$ is the label of the lower boundary in the row containing $c$, and $j$ the label of the upper boundary in the column containing $c$. One checks that its image is $D(f_\Theta)$, and evidently it preserves the partitions of cells into rows and columns.
\end{proof}

\begin{thm} \label{thm:toric-schur-module} Let $\Theta \subseteq \Cyl_{k,n-k}$ be a toric diagram. Then $s_\Theta(X_k) = \ch V[\Theta]$. \end{thm}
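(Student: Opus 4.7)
My strategy is to deduce the equality $\ch V[\Theta] = s_\Theta(X_k)$ from Theorem~\ref{thm:main2} by showing that both sides, viewed as elements of $\Lambda(k)$, lie in the subspace $\vspan\{s_\mu(X_k) : \mu \subseteq (n-k)^k\}$ on which $\trunc_{k,n-k}$ is injective.

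After replacing $f_\Theta$ by $\tau^j f_\Theta$ for a suitable $j$ I may assume $f_\Theta \in \Bound(k,n)$, which is consistent with the row-and-column bounds forced by the toric hypothesis via Theorem~\ref{thm:bounded-diagram-characterization}. Lemma~\ref{lem:cylindric-rothe-diagram} and Proposition~\ref{prop:diagram-facts}(a) identify $V[\Theta] \simeq V[D(f_\Theta)]$, so chaining Theorem~\ref{thm:cylindric-affine-stanley}, the definition of $G_{f_\Theta}$, and Theorem~\ref{thm:main2} gives
\begin{equation*}
\trunc_{k,n-k} s_\Theta(X_k) = \trunc_{k,n-k} \affF_{f_\Theta}(X_k) = G_{f_\Theta} = \trch V_{n-k}[D(f_\Theta)] = \trunc_{k,n-k} \ch V[\Theta].
\end{equation*}

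For the subspace containment of $\ch V[\Theta]$, since $\Theta$ is toric, Lemma~\ref{lem:toric}(b) lets me pass to the torus $\ZZ^2/(k\ZZ \times (n-k)\ZZ)$ and then, choosing the fundamental domain $[0,k) \times [0,n-k)$, to an ordinary diagram $\tilde\Theta$ contained in a $k \times (n-k)$ rectangle with the same row and column partitions, so $\ch V[\Theta] = \ch V[\tilde\Theta]$. The argument of Lemma~\ref{lem:nonzero-truncated-schur}(c) via Lemma~\ref{lem:shape-bounds} then shows that the Schur support of $\ch V[\tilde\Theta]$ lies in $(n-k)^k$. For $s_\Theta(X_k)$, Schur-positivity (Theorem~\ref{thm:toric-schur-facts}(b)) combines with the monomial bound of Lemma~\ref{lem:affine-stanley-bounds} applied to $\affF_{f_\Theta} = s_\Theta$, whose $\lambda_{\max}$ fits in $(n-k)^k$ because every column of $D(f_\Theta) \simeq \Theta$ has length at most $k$ and there are at most $n-k$ nonempty columns; together with the leading-term relation $s_\mu = m_\mu + (\text{lower in dominance})$, these force the Schur support of $s_\Theta(X_k)$ into $(n-k)^k$ as well. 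Injectivity of $\trunc_{k,n-k}$ on this subspace then yields the theorem.

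The main step is the rectangle reduction via the torus: without it, Lemma~\ref{lem:shape-bounds} alone is not strong enough to pin down the Schur support of $\ch V[\Theta]$, and passing through the toric quotient is what makes the representation-theoretic bound applicable.
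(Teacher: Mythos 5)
Your proposal is correct and follows essentially the same route as the paper: the chain of equalities $\trunc_{k,n-k} s_\Theta(X_k) = G_{f_\Theta} = \trch V_{n-k}[D(f_\Theta)] = \trch V_{n-k}[\Theta]$ via Theorems~\ref{thm:cylindric-affine-stanley} and \ref{thm:main2} and Lemma~\ref{lem:cylindric-rothe-diagram}, followed by confining the Schur support of both sides to $(n-k)^k$ using the toric rectangle reduction together with Lemma~\ref{lem:affine-stanley-bounds} and Lemma~\ref{lem:shape-bounds}. You simply spell out a few steps (the normalization of $f_\Theta$, the unitriangularity argument for the Schur support of $s_\Theta$) that the paper leaves implicit.
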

\begin{proof}
Theorem~\ref{thm:cylindric-affine-stanley}, Theorem~\ref{thm:main2}, and Lemma~\ref{lem:cylindric-rothe-diagram} respectively give the three equalities in
\begin{equation*}
\trunc_{k,n-k} s_\Theta(X_k) = G_{f_\Theta} = \trch V_{n-k}[D(f_\Theta)] = \trch V_{n-k}[\Theta].
\end{equation*}
Because $\Theta$ is toric, it can be viewed as a subset of $[k] \times [n-k]$. Lemma~\ref{lem:affine-stanley-bounds} then implies that every $\lambda$ such that $s_{\lambda}(X_k)$ is a Schur term of $s_\Theta(X_k)$ satisfies $\lambda \subseteq (n-k)^k$, and Lemma~\ref{lem:shape-bounds} implies the same for $\ch V[\Theta]$. From this and the equality $\trunc_{k,n-k} s_{\Theta}(X_k) = \trch V_{n-k}[\Theta]$, we deduce $s_{\Theta}(X_k) = \ch V[\Theta]$.
\end{proof}

\subsection{Schubert times Schur coefficients}
Let $\Fl(n)$ be the variety of complete flags in $\CC^n$, whose elements are chains $F_{\bullet} = (F_1 \subseteq F_2 \subseteq \cdots \subseteq F_n)$ of linear subspaces of $\CC^n$ with $\dim F_i = i$. Choose a basis $e_1, \ldots, e_n$ of $\CC^n$, and for $w \in S_n$ let $w_{\bullet}$ be the flag with $w_i = \langle e_{w(1)}, \ldots, e_{w(i)} \rangle$. To each $w \in S_n$ one associates its \emph{Schubert variety}
\begin{equation*}
X_w = \{F_\bullet : \dim(F_i \cap (w_0)_j) \geq \dim(w_i \cap (w_0)_j) \text{ for all $i, j \in [n]$}\}
\end{equation*}
and \emph{opposite Schubert variety}
\begin{equation*}
X^w = \{F_\bullet : \dim(F_i \cap \id_j) \geq \dim(w_i \cap \id_j) \text{ for all $i, j \in [n]$}\} = X_{ww_0} w_0.
\end{equation*}
Here $\id = 12\cdots n$ and $w_0 = n\cdots 21$, and $S_n$ acts on $\CC^n$ (and therefore $\Fl(n)$) on the right by permuting $e_1, \ldots, e_n$.

Being a closed subvariety of $\Fl(n)$, each Schubert variety $X_w$ has a Poincar\'e dual cohomology class $[X_w] \in H^*(\Fl(n), \ZZ)$ (alternatively, $H^*(\Fl(n), \ZZ)$ is isomorphic to the Chow ring of $\Fl(n)$ and $[X_w]$ is the Chow class of $X_w$). Borel showed that $H^*(\Fl(n), \ZZ)$ is isomorphic to a quotient of the polynomial ring $\ZZ[x_1, \ldots, x_n]$, and the \emph{Schubert polynomial} $\fS_w \in \ZZ[x_1, \ldots, x_n]$ is a polynomial representing $[X_w]$ under this isomorphism. For details, including an explicit definition of $\fS_w$, see \cite{youngtableaux} or \cite{manivel}.

The Schubert classes $[X_u]$ form a $\ZZ$-basis of $H^*(\Fl(n), \ZZ)$; let $c_{v,w}^u$ be the coefficient of $[X_u]$ in $[X_v][X_w]$. Equivalently, $c_{v,w}^u$ is the coefficient of $\fS_u$ in $\fS_v \fS_w$. The structure constants $c_{v,w}^u$ are nonnegative, and it is a major open problem to describe them combinatorially. We elaborate on the geometric reasons for this nonnegativity, since we will use a similar argument in Proposition~\ref{prop:schubert-coefficients} below. The varieties $X_v$ and $X^w$ always intersect generically transversely in the irreducible \emph{Richardson variety} $X_v \cap X^w$, and $X_w \cap X^w$ is a point. It follows that $[X_v][X_w][X^u]$ is $c_{v,w}^u$ times the class of a point. On the other hand, $[X_w][X^u] = [X_w \cap X^u]$ by the aforementioned transversality, and for generic $g \in \GL(n)$, Kleiman's transversality theorem implies $[X_v g][X_w][X^u] = [X^v g \cap X_w \cap X^u]$. Thus, $c_{v,w}^u$ is the size of the finite set $X_v g \cap X_w \cap X^u$.

The connection between Schubert polynomials and the functions $G_f \in \Lambda^{n-k}(k)$ is now as follows.
\begin{thm}[\cite{positroidjuggling}] \label{thm:richardson-projection}
Suppose $u \leq_k v$ are in $S_n$. Then $\pi$ maps an open dense subset of $X_u \cap X^v$ isomorphically onto an open dense subset of $\pi(X_u \cap X^v)$, and $G_{f_{u,v}}$ represents the cohomology class $[\pi(X_u \cap X^v)]$.
\end{thm}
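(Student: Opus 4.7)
My plan is to combine a dimension count with the characterization of positroid varieties as projected Richardson varieties, which is the main content of \cite{positroidjuggling}. Let $\pi : \Fl(n) \to \Gr(k,n)$ be the projection $F_\bullet \mapsto F_k$, and let $M$ be the positroid satisfying $f_M = f_{u,v}$.

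The first step is to identify $\pi(X_u \cap X^v)$ with $\Pi_M$. I would do this by reading the matroid conditions defining $M$ off the cyclic rank data of $f_{u,v}$: for a $k$-subset $I \subseteq [n]$, $I \in M$ iff there is a flag in $X_u \cap X^v$ whose $k$-step has nonzero $I$th Pl\"ucker coordinate. One inclusion is immediate from the Schubert rank inequalities cutting out $X_u \cap X^v$; the other is established by explicitly completing a generic $k$-plane in the open positroid stratum of $\Pi_M$ to a full flag satisfying the relevant rank conditions, yielding a rational section of $\pi|_{X_u \cap X^v}$ over a dense subset of $\Pi_M$.

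For birationality, the dimension calculation is immediate: $\dim (X_u \cap X^v) = \ell(v) - \ell(u)$, while Theorem~\ref{thm:k-bruhat-bijection} gives $\ell(f_{u,v}) = k(n-k) - \ell(v) + \ell(u)$, hence $\dim \Pi_M = \ell(v) - \ell(u)$. To upgrade generic finiteness to birationality, I would exploit the fact that any point in the open positroid stratum admits, up to the $(\CC^*)^n$-action, a canonical matrix representative from which the position of the completing flag is forced by the opposite Schubert condition defining $X^v$. The equivalence relation on $k$-Bruhat intervals defining $Q(k,n)$ is designed precisely to kill the residual ambiguity coming from right-multiplication by permutations stabilizing $[k]$, so after passing to the class $\langle u, v \rangle_k$ a generic fiber of $\pi|_{X_u \cap X^v}$ is a singleton.

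Given birationality and the identification of the image, the projection formula yields $\pi_*[X_u \cap X^v] = [\Pi_M]$, which by definition of $G_M$ is represented in $\Lambda^{n-k}(k)$ by $G_{f_{u,v}}$. The main technical obstacle is the birationality step: dimension matching alone forces only finite generic fibers, and ruling out multi-sheeted covers relies on the rigidity of the Richardson conditions combined with the quotienting built into $Q(k,n)$. In particular, if one were to pick an arbitrary representative $[u',v']_k$ outside the equivalence class $\langle u, v \rangle_k$ with $[u',v']_k \supsetneq [u,v]_k$, the generic fiber of $\pi$ would be higher-dimensional rather than a finite set, so the well-definedness of the bijection in Theorem~\ref{thm:k-bruhat-bijection} is exactly the combinatorial input ensuring generic injectivity.
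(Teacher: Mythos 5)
This statement is not proved in the paper at all: Theorem~\ref{thm:richardson-projection} is cited verbatim from Knutson--Lam--Speyer \cite{positroidjuggling} (it is roughly their Theorem 5.9 together with the cohomology computation in their Section~7), so there is no internal proof to compare your proposal against.

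Evaluating your proposal on its own terms: the skeleton is right (match dimensions, show $\pi$ is birational onto its image, invoke the projection formula), and the dimension count is correct and in the paper's conventions gives $\dim(X_u \cap X^v) = \ell(v)-\ell(u) = k(n-k) - \ell(f_{u,v}) = \dim \Pi_M$. But the birationality step, which is the actual content, is not established. Saying the completing flag is ``forced by the opposite Schubert condition'' and appealing to ``rigidity'' asserts exactly what needs to be proved. Indeed, for a general Bruhat interval $[u,v]$ the projection $\pi|_{X_u \cap X^v}$ need \emph{not} be birational; it is the $k$-Bruhat hypothesis $u \leq_k v$ that guarantees it, and that implication is a theorem, not a formality. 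Your closing paragraph is also confused about the role of $Q(k,n)$: a strictly larger interval $[u',v']_k \supsetneq [u,v]_k$ is not ``a representative outside the equivalence class'' of $\langle u,v\rangle_k$ in any useful sense --- it is a genuinely different element of $Q(k,n)$, and $\pi(X_{u'} \cap X^{v'})$ is then a different (larger) positroid variety with the restricted $\pi$ still generically finite, so no contradiction with birationality arises. The well-definedness of $\langle u,v \rangle_k \mapsto f_{u,v}$ is about invariance of the image under changing the representative $[u,v]_k \leftrightarrow [ux, vx]_k$, not about injectivity of $\pi$ on a fixed Richardson variety. To actually close the gap you would need something like the Deodhar/Marsh--Rietsch parametrization of the open Richardson cell, showing that the $k$-step of such a parametrized flag determines the rest of the flag uniquely on a dense open set --- which is, unsurprisingly, the route taken in \cite{positroidjuggling}.
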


Just as in the flag variety, one has Schubert varieties $X_{\lambda}$ and opposite Schubert varieties $X^{\lambda}$ in the Grassmannian $\Gr(k,n)$, now indexed by partitions $\lambda \subseteq (n-k)^k$, with the relationship $X^{\lambda} = X_{\lambda^\vee} w_0$. The classes $[X_{\lambda}]$ again form a basis of $H^*(\Gr(k,n), \ZZ)$, and correspond to the Schur functions $\bar s_\lambda$. The varieties $X_{\lambda}$ and $X^{\mu}$ are generically transverse, and $X_{\lambda} \cap X^{\lambda}$ is a point. Letting $\pi : \Fl(n) \to \Gr(k,n)$ be the projection map sending $F_\bullet \mapsto F_k$, one has $\pi^{-1}(X_{\lambda}) = X_{w(\lambda)}$, where $w(\lambda)$ is the unique $k$-Grassmannian permutation in $S_n$ with Rothe diagram equivalent to $\lambda$. This leads directly to the next interpretation of the Schur coefficients of $G_f$. 
\begin{prop} \label{prop:schubert-coefficients}
For any $u \leq_k v$ in $S_n$,
\begin{equation*}
G_{f_{u,v}} = \sum_{\lambda \subseteq (n-k)^k} c_{u, w(\lambda)}^v s_{\lambda^\vee}.
\end{equation*}
\end{prop}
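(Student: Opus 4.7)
The plan is to extract Schur coefficients of $G_{f_{u,v}}$ via Poincar\'e duality in $H^*(\Gr(k,n))$ and then push the intersection computation up to the flag variety using Theorem~\ref{thm:richardson-projection}. Concretely, I will write $G_{f_{u,v}} = \sum_\mu a_\mu \bar s_\mu$ and compute $a_\mu$ by pairing with a Schubert class.

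Under the Poincar\'e pairing on $\Gr(k,n)$, the basis dual to $\{[X_\mu]\}_{\mu \subseteq (n-k)^k}$ is $\{[X^\mu]\} = \{[X_{\mu^\vee} w_0]\}$, and equivalently $\int_{\Gr(k,n)} \bar s_\mu \cdot \bar s_\lambda = \delta_{\mu,\lambda^\vee}$ since $X_\mu \cap X^\mu$ is a point cut out transversally. So to prove the formula, it suffices to show that for every $\lambda \subseteq (n-k)^k$,
\begin{equation*}
\int_{\Gr(k,n)} G_{f_{u,v}} \cdot \bar s_\lambda \;=\; c^{v}_{u,\, w(\lambda)}.
\end{equation*}

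By Theorem~\ref{thm:richardson-projection}, $G_{f_{u,v}}$ represents $[\pi(X_u \cap X^v)]$, and that same theorem says $\pi$ is birational from $X_u \cap X^v$ onto its image, so $\pi_*[X_u \cap X^v] = [\pi(X_u \cap X^v)]$. Next I would invoke the Kleiman-style fact that the Richardson variety $X_u \cap X^v$ is generically transverse, hence $[X_u \cap X^v] = [X_u]\cdot[X^v]$ in $H^*(\Fl(n))$. Recalling that $\pi : \Fl(n) \to \Gr(k,n)$ satisfies $\pi^{-1}(X_\lambda) = X_{w(\lambda)}$, so $\pi^*[X_\lambda] = [X_{w(\lambda)}]$, the projection formula gives
\begin{equation*}
\int_{\Gr(k,n)} [\pi(X_u \cap X^v)] \cdot [X_\lambda] \;=\; \int_{\Fl(n)} [X_u]\cdot[X^v]\cdot[X_{w(\lambda)}].
\end{equation*}
The right-hand side is the standard expression for the structure constant $c^{v}_{u,\,w(\lambda)}$, since $\{[X_w]\}$ and $\{[X^w]\}$ are dual bases of $H^*(\Fl(n))$ under the Poincar\'e pairing.

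Combining the two displays shows $a_{\lambda^\vee} = c^{v}_{u,\,w(\lambda)}$, which after reindexing is exactly the asserted identity. The step requiring the most care is the equality $\pi_*[X_u \cap X^v] = [\pi(X_u \cap X^v)]$: this needs the degree-one (birational) statement in Theorem~\ref{thm:richardson-projection}, not merely surjectivity. The rest is a routine application of the projection formula plus the standard fact that Schubert and opposite Schubert classes are Poincar\'e dual bases, both on $\Fl(n)$ and on $\Gr(k,n)$.
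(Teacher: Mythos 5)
Your argument is correct and is essentially the paper's own proof, reorganized: the paper computes $\pi_*\bigl(\pi^*[X^{\lambda^\vee}]\cdot[X_u\cap X^v]\bigr)$ in two ways and compares coefficients of the point class, while you phrase the same push--pull computation as an intersection number $\int_{\Gr(k,n)} G_{f_{u,v}}\cdot \bar s_\lambda$, but the ingredients (Theorem~\ref{thm:richardson-projection} for both birationality and $[\Pi_{f_{u,v}}]$, generic transversality of $X_u\cap X^v$, $\pi^{-1}(X_\lambda)=X_{w(\lambda)}$, and Poincar\'e duality of Schubert and opposite Schubert bases) are identical. Your flag at the end about needing the degree-one statement, not mere surjectivity, is exactly the right thing to worry about and is supplied by the theorem.
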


\begin{proof}
Let $a_{u,v}^{\lambda}$ be the coefficient of $s_{\lambda^\vee}$ in $G_{f_{u,v}}$, or equivalently by Theorem~\ref{thm:richardson-projection}, the coefficient of $[X_{\lambda^\vee}]$ in $[\Pi_{f_{u,v}}]$. Write $[*]$ for the class of a point in $\Gr(k,n)$ or $\Fl(n)$ as appropriate. On the one hand,
\begin{align*}
\pi_*(\pi^*([X^{\lambda^\vee}])[X_u \cap X^v]) &= \pi_*([X_{w(\lambda)}][X_u \cap X^v])\\
& = \pi_*(c_{u,w(\lambda)}^v[*]) = c_{u,w(\lambda)}^v[*].
\end{align*}
On the other hand, the push-pull formula gives
\begin{equation*}
\pi_*(\pi^*([X^{\lambda^\vee}])[X_u \cap X^v]) = [X^{\lambda^\vee}] \pi_*([X_u \cap X^v]) = [X^{\lambda^\vee}][\Pi_{f_{u,v}}],
\end{equation*}
where the second equality follows from Theorem~\ref{thm:richardson-projection}. The remarks above the statement of the proposition show this is the same as $a_{u,v}^{\lambda}[*]$.
\end{proof}

The Schubert polynomial $\fS_{w(\lambda)}$ is simply $s_{\lambda}(x_1, \ldots, x_k)$. By \cite[Proposition 1.1.1]{k-bruhat-order}, if $c^v_{u,w(\lambda)} \neq 0$ then $u \leq_k v$, so Proposition~\ref{prop:schubert-coefficients} means that knowing the Schur coefficients of the functions $G_f$ is equivalent to knowing the Schubert coefficients of the products $\fS_u \fS_{w(\lambda)}$ for arbitrary $u$. Thus, Theorem~\ref{thm:main2} gives a new, and manifestly positive, representation-theoretic interpretation of these Schubert coefficients.

\subsection{Branching rules for Specht modules} \label{subsec:branching}
Given a diagram $D$ with $d$ boxes, let $L_D$ be the complex vector space with a basis indexed by the bijective labellings $D \to [d]$. There is a natural right action of $S_D$ on $L_D$, and a natural left action of $S_d$. The \emph{generalized Specht module} of $D$ is the left $S_d$-module $S[D] := L_D y_D$. Observe that $V[D] \simeq V^{\otimes d} \otimes_{\CC[S_d]} S[D]$.

 The \emph{Frobenius characteristic} $\fch(M)$ of an $S_d$-module $M$ is the symmetric function
\begin{equation*}
\lim_{n \to \infty} \ch((\CC^n)^{\otimes d} \otimes_{\CC[S_d]} M).
\end{equation*}
For instance, $\ch S[\lambda] = s_{\lambda}$. It is well-known that the Specht modules $S[\lambda]$ for $\lambda$ a partition of $d$ are exactly the irreducible complex representations of $S_d$ \cite[Ch. 7]{youngtableaux}, so that one can compute the irreducible decomposition of $M$ by computing the Schur expansion of $\ch M$. Let $R_{S_d}$ be the Grothendieck group of representations of $S_d$, and set $R = \bigoplus_{d=0}^{\infty} R_{S_d}$.

Define a linear map $\SS^{\vee} : R^{m}(V) \to R$ by $\SS [V[\lambda]] = [S[\lambda^{\vee}]]$ where $\lambda \subseteq m^k$.

\begin{rem}
One can construct $\SS^{\vee}$ in a more natural way as follows. Let $\det^r$ be the 1-dimensional representation of $\GL(V)$ on which $g$ acts as multiplication by $\det(g)^r$. For a submodule $U \subseteq T(V)/I(m)$, sending $U \mapsto U^{\vee} := \Hom(U, \det^{m})$ defines the linear injection $R^{m}(V) \to R(V)$ with $[V_{m}(\lambda)] \mapsto [V[\lambda^{\vee}]]$.

Now suppose $U \subseteq T(V)$ is a homogeneous submodule of degree $d$. Choose a vector space $V'$ with $\dim V' \geq d$ and an injection $\iota : V \hookrightarrow V'$. Then $\iota$ induces an injection $T(V) \hookrightarrow T(V')$, which we will also call $\iota$. Now let $\SS U$ be the $1^d$-weight space of $\GL(V')\iota(U)$ with the action of $S_d \subseteq \GL(V')$ by permutation matrices. By considering highest-weight vectors one checks that $\GL(V')\iota(V[\lambda]) = V'[\lambda]$, and the $1^d$-weight space in $(V')^{\otimes d}$ is isomorphic to $\CC[S_d]$ as an $S_d$-bimodule. Thus, $\SS(U^{\vee})$ as defined using the constructions outlined here matches $\SS^{\vee}U$ as defined above, up to isomorphism.
\end{rem}

For an $S_d$-module $M$, let $\Res^{S_d}_{S_{d-1}} M$ denote $M$ viewed as an $S_{d-1}$-module, where we identify $S_{d-1}$ as the subgroup of $S_d$ fixing $d$. A \emph{branching rule} for $M$ is a formula $\Res^{S_d}_{S_{d-1}} M \simeq \bigoplus_i M_i$ for some $S_{d-1}$-modules $M_i$. In this subsection we show how Lemma~\ref{lem:induction-isomorphism} leads to a branching rule for the $S_{\ell(f)}$-modules $\SS^\vee V_{n-k}[D(f)]$ for $f \in \Bound(k,n)$. 

\begin{lem} \label{lem:branching}
If $[U] \in R^{m}(V)$ is homogeneous of degree $d$, then $\SS^\vee (U \trncotimes V) \simeq \Res^{S_d}_{S_{d-1}} \SS^\vee U$.
\end{lem}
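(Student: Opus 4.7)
My plan is to reduce to the single-partition case $U = V_m[\lambda]$, apply Pieri's rule together with the truncation map to compute the left side, apply the classical branching rule for symmetric group representations to the right side, and match the resulting sums via the complementation involution $\mu \mapsto \mu^\vee$ inside the rectangle $(m^k)$.

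First I would invoke Proposition~\ref{prop:truncated-character}, which gives a $\ZZ$-basis of $R^m(V)$ consisting of the classes $[V_m[\lambda]]$ for $\lambda \subseteq (m^k)$. Since both sides of the claimed isomorphism are additive in $[U]$, it suffices to take $U = V_m[\lambda]$. For the left side, Pieri's rule provides $V[\lambda] \otimes V \simeq \bigoplus_\mu V[\mu]$, with $\mu$ running over partitions obtained from $\lambda$ by adding one box; Proposition~\ref{prop:truncated-character} says that the map $\pi$ annihilates any summand with $\mu \not\subseteq (m^k)$. This yields
\[
[V_m[\lambda] \trncotimes V] \;=\; \sum_{\mu} [V_m[\mu]] \quad \text{in } R^m(V),
\]
where $\mu$ ranges over addable corners of $\lambda$ that keep $\mu$ inside $(m^k)$. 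Applying $\SS^\vee$ then gives $\sum_\mu [S[\mu^\vee]]$. On the right side, $\SS^\vee U = [S[\lambda^\vee]]$, and the classical branching rule for symmetric groups supplies $\Res^{S_d}_{S_{d-1}} S[\lambda^\vee] \simeq \bigoplus_\nu S[\nu]$ with $\nu$ running over removable corners of $\lambda^\vee$.

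The remaining step is a simple combinatorial bijection: since $\mu \mapsto \mu^\vee$ is $180^\circ$ rotation within the $k \times m$ rectangle, an addable corner of $\lambda$ at $(i,\lambda_i+1)$ with $i \leq k$ and $\lambda_i+1 \leq m$ corresponds to the removable corner of $\lambda^\vee$ at $(k+1-i,\, m-\lambda_i)$, and the correspondence sends $\mu$ to $\mu^\vee$; thus the Specht summands on the two sides match term by term. I do not foresee a serious obstacle here — the only point that requires care is the degree bookkeeping, namely that the symmetric-group index $d$ in the statement records $|\lambda^\vee| = mk - |\lambda|$ rather than the tensor degree of $U$ itself, so that $U \trncotimes V$ drops the index by one as needed.
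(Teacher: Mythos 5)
Your proposal is correct and follows essentially the same route as the paper: reduce by additivity to a single $V_m[\lambda]$, compute the left side with Pieri's rule plus the truncation (killing partitions leaving $(m^k)$), compute the right side with the classical branching rule, and match the two via the complementation $\mu \mapsto \mu^\vee$ in the $k \times m$ rectangle. The only cosmetic difference is that the paper verifies the match at the level of Frobenius characteristics and indexes the irreducible as $V[\lambda^\vee]$ rather than $V_m[\lambda]$; your remark on the degree bookkeeping (that the symmetric-group degree is $mk - |\lambda|$, not the tensor degree) is a fair and relevant observation.
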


\begin{proof}
It suffices to take $U = V[\lambda^\vee]$ and show that both sides have the same Frobenius characteristic. By \cite[\S 7.3]{youngtableaux},
\begin{equation*}
\fch \Res^{S_d}_{S_{d-1}} S[\lambda] = s_{\lambda / 1} = \sum_{\mu} s_{\mu},
\end{equation*}
where $\mu$ runs over partitions obtained from $\lambda$ by removing one box. On the other hand, Pieri's rule says
\begin{equation*}
V_m[\lambda^{\vee}] \trncotimes V \simeq \bigoplus_{\nu} V_m[\nu]
\end{equation*}
where $\nu$ runs over partitions obtained from $\lambda^{\vee}$ by adding one box such that $\nu \subseteq (m^k)$. Since these two sets $\{\mu\}$ and $\{\nu\}$ of partitions are related by the box complement operation $^\vee$, the lemma follows.
\end{proof}

Define $M_f = \SS^{\vee} V_{n-k}[D(f)]$, a representation of $S_{\ell(f)}$. According to Lemma~\ref{lem:branching}, Lemma~\ref{lem:induction-isomorphism} immediately translates into the following branching rule for the modules $M_f$.
\begin{thm} \label{thm:branching}
For any $f \in \Bound(k,n)$ with length $\ell = \ell(f)$, and any $r \in \ZZ$, one has
\begin{equation*}
\Res^{S_\ell}_{S_{\ell-1}} M_f \simeq \bigoplus_{(i,j) \in \BCov_r(f)} M_{ft_{ij}}.
\end{equation*}
\end{thm}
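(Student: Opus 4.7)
The plan is to combine Lemma~\ref{lem:induction-isomorphism} and Lemma~\ref{lem:branching} directly; the author's remark before the theorem statement already signals that this is the intended route. First, Lemma~\ref{lem:induction-isomorphism} provides the $\GL(V)$-module isomorphism
\[
V_{n-k}[D(f)] \trncotimes V \;\simeq\; \bigoplus_{(i,j)\in\BCov_r(f)} V_{n-k}[D(ft_{ij})].
\]
Applying $\SS^{\vee}$ (which is additive on $R^{n-k}(V)$, and functorial via the highest-weight-space construction in the remark following its definition), the right-hand side becomes $\bigoplus_{(i,j)\in\BCov_r(f)} M_{ft_{ij}}$ directly from the definition $M_g = \SS^{\vee} V_{n-k}[D(g)]$.

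Next, I would apply Lemma~\ref{lem:branching} to the left-hand side. Since $V_{n-k}[D(f)]$ is concentrated in the single tensor degree $|D(f)| = \ell(f) = \ell$ (using Proposition~\ref{prop:rothe-diagram-facts}(a)), the lemma identifies $\SS^{\vee}\bigl(V_{n-k}[D(f)] \trncotimes V\bigr)$ with $\Res^{S_\ell}_{S_{\ell-1}} \SS^{\vee} V_{n-k}[D(f)] = \Res^{S_\ell}_{S_{\ell-1}} M_f$. Chaining the two isomorphisms produces exactly the desired branching rule.

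There is no substantive obstacle; the theorem is a formal consequence of the two preceding lemmas. The only small point worth checking is that Lemma~\ref{lem:induction-isomorphism} delivers an honest $\GL(V)$-module isomorphism (not merely an equality of classes in $R^{n-k}(V)$), so that applying the additive operation $\SS^{\vee}$ yields an actual $S_{\ell-1}$-module isomorphism rather than only a relation among classes; but this is already the content of that lemma, whose proof produces the decomposition via the inductive inequality $\delta(H_f) \geq \delta(G_f)$ being an equality.
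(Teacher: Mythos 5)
Your proof is correct and matches the paper's: the paper proves Theorem~\ref{thm:branching} exactly by applying $\SS^{\vee}$ to the isomorphism in Lemma~\ref{lem:induction-isomorphism} and invoking Lemma~\ref{lem:branching} to identify the left-hand side with $\Res^{S_\ell}_{S_{\ell-1}} M_f$. (Your final caveat about needing an honest module isomorphism rather than an equality of classes is harmless but unnecessary, since $\SS^{\vee}$ is a map of Grothendieck groups and representations of finite groups over $\CC$ are semisimple, so equality in $R$ already yields an isomorphism.)
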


\begin{cor}
If $u \leq_k v$, the dimension of $M_{f_{u,v}}$ is the number of maximal chains in the $k$-Bruhat interval $[u,v]_k$.
\end{cor}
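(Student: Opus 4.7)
The plan is to induct on $m := \ell(v) - \ell(u) \geq 0$, which by Theorem~\ref{thm:k-bruhat-bijection} equals $k(n-k) - \ell(f_{u,v})$ and which is also the common length of every maximal chain in $[u,v]_k$. For the base case $m = 0$ (so $u = v$), the interval $[v,v]_k = \{v\}$ has a unique (empty) maximal chain. On the representation side, the proof of Lemma~\ref{lem:r-bruhat-maximal} shows that $V_{n-k}[D(f_{v,v})] \simeq V_{n-k}[(n-k)^k]$, i.e.\ the Schur module of the full $k \times (n-k)$ rectangle. Applying $\SS^{\vee}$ collapses this to $M_{f_{v,v}} \simeq S[\emptyset]$, the trivial module of dimension $1$, matching the chain count.

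For the inductive step, assume $u <_k v$ and apply Theorem~\ref{thm:branching} with $r = 0$:
\begin{equation*}
\dim M_{f_{u,v}} \;=\; \sum_{(i,j) \in \BCov_0(f_{u,v})} \dim M_{f_{u,v}t_{ij}}.
\end{equation*}
The bookkeeping step is to reindex this sum over $k$-Bruhat covers of $u$ inside $[u,v]_k$. A pair $(i,j) \in \BCov_0(f_{u,v})$ has by definition $i \in [n]$, $c_{ij}^0 \neq 0$, and $f_{u,v} \lessdot f_{u,v}t_{ij} \in \Bound(n)$ (equivalently in $\Bound(k,n)$, since $t_{ij} \in \affS_n^0$ preserves $\av$). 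By Lemma~\ref{lem:bounded-cover} we have $j - i < n$, so $c_{ij}^0 \neq 0$ forces $i \leq n < j < i+n$; shifting by $(-n,-n)$ then exhibits $t_{ij}$ in its standard $0$-Bruhat form with $i-n \leq 0 < j-n$. Hence $\BCov_0(f_{u,v})$ is in bijection with the $0$-Bruhat covers of $f_{u,v}$ inside $\Bound(k,n)$, which by Lemma~\ref{lem:0-bruhat-bijection} are exactly $\{f_{u',v} : u \lessdot_k u',\ u' \leq_k v\}$.

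Combining yields $\dim M_{f_{u,v}} = \sum_{u \lessdot_k u' \leq_k v} \dim M_{f_{u',v}}$. By induction each $\dim M_{f_{u',v}}$ equals the number of maximal chains in $[u',v]_k$, and every maximal chain in $[u,v]_k$ is uniquely obtained by prepending a cover $u \lessdot_k u'$ to a maximal chain in $[u',v]_k$; the sum therefore counts maximal chains in $[u,v]_k$, as required. The only real obstacle is the identification of $\BCov_0(f_{u,v})$ with the $k$-Bruhat covers of $u$ inside $[u,v]_k$ carried out in the middle paragraph; everything else is formal manipulation on top of the branching rule of Theorem~\ref{thm:branching}.
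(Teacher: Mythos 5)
Your proof is correct and is essentially the paper's argument: both rest on Theorem~\ref{thm:branching} together with Lemma~\ref{lem:0-bruhat-bijection} to rewrite the branching sum over covers $u \lessdot_k u' \leq_k v$, and on Lemma~\ref{lem:r-bruhat-maximal} to see that $M_{f_{v,v}} \simeq S[\emptyset]$ is one-dimensional. The only difference is presentational — you phrase the iteration of the branching rule as an induction on $\ell(v)-\ell(u)$ and spell out the normalization of $\BCov_0(f_{u,v})$ that the paper delegates to Lemma~\ref{lem:0-bruhat-bijection} — so there is nothing substantive to add.
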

\begin{proof}
Let $c$ be the number of maximal chains in $[u,v]_k$. By Lemma~\ref{lem:0-bruhat-bijection}, Theorem~\ref{thm:branching} is equivalent to
\begin{equation*}
\Res^{S_\ell}_{S_{\ell-1}} M_{f_{u,v}} \simeq \bigoplus_{u \lessdot_k u' \leq_k v} M_{f_{u', v}}.
\end{equation*}
Iterating this rule gives $\Res^{S_\ell}_{S_0} M_{f_{v,v}} \simeq c M_{f_{v,v}}$, and Lemma~\ref{lem:r-bruhat-maximal} shows that
\begin{equation*}
M_{f_{v,v}} = \SS^\vee V_{n-k}[(n-k)^k] = S[\emptyset]
\end{equation*}
is $1$-dimensional.
\end{proof}

In fact, the symmetric function $\fch M_{f_{u,v}}$ appears in work of Bergeron and Sottile as a quasisymmetric generating function for maximal chains in $[u,v]_k$. Given an integer $\ell$ and $D \subseteq [\ell-1]$, the associated \emph{fundamental quasisymmetric function} is
\begin{equation*}
Q_D = \sum_{\substack{1 \leq i_1 \leq \cdots \leq i_\ell \\ j \in D \Rightarrow i_j < i_{j+1}} } x_{i_1}\cdots x_{i_{\ell}}.
\end{equation*}
Label each covering relation $u \lessdot_k t_{ij}u$ by the integer $j$, and associate to a saturated chain in $k$-Bruhat order the word obtained by concatenating the labels of the covers involved. The \emph{descent set} $\Des(c)$ of a chain $c$ of length $\ell$ is then the descent set of the corresponding word $a$, namely $\Des(c) = \{i \in [\ell-1] : a_i > a_{i+1}\}$. Now define a power series
\begin{equation*}
K_{u,v} = \sum_{c} Q_{\Des(c)}
\end{equation*}
where $c$ runs over the maximal chains in $[u,v]_k$.

\begin{thm}[\cite{bergeron-sottile-skew-schubert}, \cite{assaf-bergeron-sottile}] \label{thm:bergeron-sottile}
The power series $K_{u,v}$ is symmetric, and the coefficient of $s_{\lambda}$ is the Schubert structure constant $c_{u,w(\lambda)}^{v}$.
\end{thm}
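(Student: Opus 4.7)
The plan is to prove $K_{u,v} = \fch M_{f_{u,v}}$; both conclusions of the theorem follow from this identification. Symmetry of $K_{u,v}$ is then automatic, since $\fch$ of a genuine $S_d$-module (with $d = \ell(v) - \ell(u)$) is always a symmetric function. For the Schur expansion, writing $V_{n-k}[D(f_{u,v})] \simeq \bigoplus_\mu m_\mu V[\mu]$ gives
\[
\fch M_{f_{u,v}} = \sum_\mu m_\mu\, s_{\mu^\vee} \qquad \text{and} \qquad G_{f_{u,v}} = \sum_\mu m_\mu\, \bar s_\mu,
\]
where the second equality is Theorem~\ref{thm:main2}. Matching the latter against the identity $G_{f_{u,v}} = \sum_\lambda c_{u,w(\lambda)}^v \bar s_{\lambda^\vee}$ from Proposition~\ref{prop:schubert-coefficients} forces $m_\mu = c_{u,w(\mu^\vee)}^v$, and reindexing via $\lambda = \mu^\vee$ yields $\fch M_{f_{u,v}} = \sum_\lambda c_{u,w(\lambda)}^v\, s_\lambda$, precisely the target Schur expansion.

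To establish $K_{u,v} = \fch M_{f_{u,v}}$, I would iterate the multiplicity-free branching rule of Theorem~\ref{thm:branching}. By Lemma~\ref{lem:0-bruhat-bijection}, each full branching path for $M_{f_{u,v}}$ down to the trivial $S_0$-module $M_{f_{v,v}} \simeq S[\emptyset]$ corresponds to a maximal chain $c \colon u = u_0 \lessdot_k u_1 \lessdot_k \cdots \lessdot_k u_d = v$ in $[u,v]_k$. The resulting one-dimensional pieces assemble into a canonical basis of $M_{f_{u,v}}$, and the standard theory of Frobenius characteristics for modules admitting such a multiplicity-free restriction chain produces an expansion $\fch M_{f_{u,v}} = \sum_c Q_{D(c)}$, where $D(c) \subseteq [d-1]$ is a descent set governed by how consecutive branching steps interact.

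The main obstacle is identifying $D(c)$ with Bergeron--Sottile's $\Des(c)$, which is defined combinatorially via the column labels $j_s$ of the covers $u_{s-1} \lessdot_k u_s = u_{s-1} t_{i_s j_s}$. Concretely, for two consecutive covers $u_{s-1} \lessdot_k u_s \lessdot_k u_{s+1}$, equation \eqref{eq:k-bruhat-cover} expresses the corresponding affine covers via the transpositions $t_{v(i_s) - n,\, v(j_s)}$ and $t_{v(i_{s+1}) - n,\, v(j_{s+1})}$. One would analyze the rank-two subinterval $[u_{s-1}, u_{s+1}]_k$ case by case, according to which (if any) of $i_s, i_{s+1}, j_s, j_{s+1}$ coincide, and match the branching descent at position $s$ with the inequality $j_s > j_{s+1}$. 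This rank-two combinatorial reconciliation is the principal technical step; once it is verified for all adjacent pairs, the expansion $\fch M_{f_{u,v}} = \sum_c Q_{\Des(c)} = K_{u,v}$ follows.
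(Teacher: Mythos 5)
The paper does not prove this statement at all: it is quoted from \cite{bergeron-sottile-skew-schubert} and \cite{assaf-bergeron-sottile}, and the paper then uses it, together with Proposition~\ref{prop:schubert-coefficients}, to \emph{deduce} the identity $K_{u,v} = \fch M_{f_{u,v}}$ as a corollary. Your proposal runs the logic in the opposite direction, which is legitimate in principle and not circular provided you can establish $K_{u,v} = \fch M_{f_{u,v}}$ independently; the second half of your argument (matching $\sum_\mu m_\mu s_{\mu^\vee}$ against Proposition~\ref{prop:schubert-coefficients} via Theorem~\ref{thm:main2}) is correct and would then deliver the theorem.

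The gap is in the first half. An abstract branching isomorphism $\Res^{S_\ell}_{S_{\ell-1}} M_f \simeq \bigoplus_{(i,j)} M_{ft_{ij}}$ does \emph{not} determine a fundamental quasisymmetric expansion of $\fch M_f$. Iterating the branching down to $S_0$ tells you only that $\dim M_{f_{u,v}}$ equals the number of maximal chains (which is exactly the corollary the paper draws from it); it does not furnish a canonical basis indexed by chains, nor a descent statistic on chains. The summands $M_{ft_{ij}}$ are not irreducible, so this is not a Gelfand--Tsetlin-type multiplicity-free chain of restrictions, and there is no ``standard theory'' that outputs $\fch M = \sum_c Q_{D(c)}$ from such data: to define $D(c)$ one needs to know how the one-dimensional piece attached to $c$ distributes over the \emph{irreducible} constituents at consecutive levels (e.g.\ via Jucys--Murphy eigenvalues), and then one must prove that this representation-theoretic descent agrees with Bergeron--Sottile's combinatorial $\Des(c)$ defined by the column labels $j_s$. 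That identification is precisely the content of the cited theorem; your ``rank-two reconciliation'' sketch defers it rather than supplying it, and a purely local (rank-two) check of adjacent covers would not by itself yield the global symmetry of $\sum_c Q_{\Des(c)}$ --- the argument of \cite{assaf-bergeron-sottile} has to verify dual-equivalence axioms involving larger local configurations. As written, the proposal assumes the essential content of the theorem it sets out to prove.
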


By Proposition~\ref{prop:schubert-coefficients}, Theorem~\ref{thm:bergeron-sottile} immediately implies the next corollary.
\begin{cor}
$K_{u,v} = \fch M_{f_{u,v}}$ for any $u \leq_k v$.
\end{cor}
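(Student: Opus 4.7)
The corollary is essentially a bookkeeping exercise assembling several results already in hand, so my plan is simply to chase the definitions carefully.

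First, I would use Theorem~\ref{thm:main2} and Proposition~\ref{prop:truncated-character} to extract the irreducible decomposition of $V_{n-k}[D(f_{u,v})]$. Since $\trch$ is a ring isomorphism and $\trch V_{n-k}[D(f_{u,v})] = G_{f_{u,v}}$, the Schur expansion of $G_{f_{u,v}}$ in $\Lambda^{n-k}(k)$ determines the multiplicity of each $V_{n-k}[\mu]$ for $\mu \subseteq (n-k)^k$ in the decomposition of $V_{n-k}[D(f_{u,v})]$. Proposition~\ref{prop:schubert-coefficients} then reads off these multiplicities explicitly: writing $G_{f_{u,v}} = \sum_{\mu \subseteq (n-k)^k} c_{u,w(\mu^\vee)}^v \bar s_\mu$ (after substituting $\mu = \lambda^\vee$), we obtain
\begin{equation*}
V_{n-k}[D(f_{u,v})] \simeq \bigoplus_{\mu \subseteq (n-k)^k} c_{u,w(\mu^\vee)}^v \, V_{n-k}[\mu].
\end{equation*}

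Next I would apply $\SS^\vee$, which by definition sends $[V_{n-k}[\mu]]$ to $[S[\mu^\vee]]$, to conclude that
\begin{equation*}
M_{f_{u,v}} = \SS^\vee V_{n-k}[D(f_{u,v})] \simeq \bigoplus_{\mu \subseteq (n-k)^k} c_{u,w(\mu^\vee)}^v \, S[\mu^\vee].
\end{equation*}
Taking the Frobenius characteristic and using $\fch S[\nu] = s_\nu$, the reindexing $\lambda = \mu^\vee$ yields
\begin{equation*}
\fch M_{f_{u,v}} = \sum_{\lambda \subseteq (n-k)^k} c_{u,w(\lambda)}^v \, s_\lambda.
\end{equation*}

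Finally, I would invoke Theorem~\ref{thm:bergeron-sottile}, which identifies the right-hand side above as exactly the Schur expansion of $K_{u,v}$. Since two symmetric functions with the same Schur expansions are equal, this proves $K_{u,v} = \fch M_{f_{u,v}}$. There is no real obstacle here; the only subtlety is keeping the $^\vee$ and the $\mu \leftrightarrow \lambda$ reindexing straight, and remembering that Proposition~\ref{prop:schubert-coefficients} already guarantees that only $\lambda \subseteq (n-k)^k$ contribute (so the sum is finite and lives in the appropriate truncation).
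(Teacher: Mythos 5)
Your proof is correct and is essentially the paper's argument (which the paper compresses into one sentence): combine Theorem~\ref{thm:main2} and Proposition~\ref{prop:truncated-character} to decompose $V_{n-k}[D(f_{u,v})]$, read off the multiplicities from Proposition~\ref{prop:schubert-coefficients}, apply $\SS^\vee$ and $\fch$, and match against Theorem~\ref{thm:bergeron-sottile}. You have simply unpacked the $\mu\leftrightarrow\lambda^\vee$ bookkeeping that the paper leaves implicit.
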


\subsection{Three-row diagrams}
By a \emph{$k$-row diagram} we mean an ordinary diagram with at most $k$ non-empty rows. Any two-row diagram $D$ is equivalent to a skew shape, and then the irreducible decomposition of $V[D]$ is easily computed from Pieri's rule. In this section we observe that every three-row diagram $D$ is (essentially) a toric skew shape, so that the results of Sections~\ref{sec:recurrences} and \ref{subsec:toric-schur} give a combinatorial algorithm for decomposing $V[D]$ into irreducibles. For a different approach, see \cite{triangles}, which describes an algorithm for writing $\ch V[D]$ as a sum of rational functions whenever $D$ is a three-row diagram.

\begin{lem} \label{lem:full-column}
Suppose $D$ is a diagram contained in $[k] \times [m]$ and that $c \subseteq D$ is a column of size $k$. If $V[D \setminus c] \simeq \bigoplus_{\lambda} c_{\lambda}V[\lambda]$, then $V[D] \simeq \bigoplus_{\lambda} c_{\lambda}V[\lambda + (1^k)]$, where $\lambda + (1^k)$ denotes componentwise addition of partitions.
\end{lem}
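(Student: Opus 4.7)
The plan is to prove the stronger assertion $V[D] \simeq V[D \setminus c] \otimes \det$ as $\GL(V)$-modules, where $\det = \Lambda^k V$ is the determinant representation. Since $V[\lambda] \otimes \det \simeq V[\lambda + (1^k)]$ whenever $\ell(\lambda) \leq k$, the decomposition in the lemma follows at once from this claim. Write $D' = D \setminus c$.

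Fix a basis $e_1, \ldots, e_k$ of $V$ and let $\Omega = \sum_{\sigma \in S_c} \sgn(\sigma)\, e_{\sigma(1)} \otimes \cdots \otimes e_{\sigma(k)} \in V^{\otimes c}$, a generator of $\Lambda^k V \subseteq V^{\otimes c}$ on which $\GL(V)$ acts by $\det$. Using $V^{\otimes D} = V^{\otimes D'} \otimes V^{\otimes c}$, I would define
\[
\phi\colon V[D'] \otimes \det \to V[D], \qquad v \otimes 1 \mapsto (v \otimes \Omega)\, y_D,
\]
which is $\GL(V)$-equivariant because the right $S_D$-action commutes with $\GL(V)$ while $g\cdot\Omega = \det(g)\,\Omega$. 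For surjectivity, use $C(D) = C(D') \times S_c$ to factor $y_D = A_{D'} A_c S_D$, where $A_c = \sum_{\sigma \in S_c}\sgn(\sigma)\sigma$ and $A_{D'}$ is the analogous antisymmetrizer for the other columns. Then $A_c$ kills $e_T$ whenever $T|_c$ is not injective, so $V[D]$ is spanned by $e_T y_D$ with $T|_c$ a bijection; for any such $T$ a direct computation gives $e_T A_c = \sgn(T|_c)\,(e_{T|_{D'}} \otimes \Omega)$, putting $e_T y_D$ in the image of $\phi$. The same observation shows every weight of $V[D]$ is componentwise at least $(1^k)$, so $V[D] \otimes \det^{-1}$ is again a polynomial representation and $\phi$ descends to an equivariant surjection $\bar\phi\colon V[D'] \twoheadrightarrow V[D] \otimes \det^{-1}$.

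The main obstacle is showing $\bar\phi$ is injective, equivalently the character identity $\ch V[D] = (x_1 \cdots x_k)\,\ch V[D']$. My approach is to apply Proposition~\ref{prop:diagram-facts}(b), which identifies $V[D'] \otimes \det$ with $V[D' \ast c]$, and then to verify $V[D] \simeq V[D' \ast c]$ directly. These two Schur modules sit in the same tensor space with identical column antisymmetrizers and differ only in their row symmetrizers: one has the factorization $S_D = \Sigma \cdot S_{D'}$ with $\Sigma = \prod_r \bigl(1 + \sum_{x \in r_{D'}} (c_r, x)\bigr)$, a product over rows of sums of coset representatives for $S_{r_{D'}}$ in $S_{r_D}$. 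I would show that right multiplication by $\Sigma$ induces an isomorphism between the two modules by exploiting the relation $(c_r, c_s)\, A_c = -A_c$ to absorb any cross-row transposition past the antisymmetrization of the full column $c$. If the direct argument becomes unwieldy, one can instead verify the character identity combinatorially: $V[D]_\mu$ is spanned by $(e_{T'} \otimes e_1 \otimes \cdots \otimes e_k)\, y_D$ for $T'\colon D' \to [k]$ of content $\mu - (1^k)$, and one checks that the linear relations among these match exactly those defining $V[D']_{\mu - (1^k)}$ from the elements $e_{T'}\, y_{D'}$.
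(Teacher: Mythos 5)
You have correctly identified the crux: $V[D] = V[D \setminus c \ast c]\cdot\Sigma$ where $\Sigma$ (the paper's $\alpha = \alpha_1\cdots\alpha_k$, $\alpha_i = 1 + \sum_{b\in R_i}(c_i\,b)$) consists of coset representatives for the row stabilizer of $D\setminus c \ast c$ inside that of $D$, and the whole lemma reduces to injectivity of right multiplication by $\Sigma$ on $V[D\setminus c \ast c]$. This is precisely the paper's reduction. But the step you flag as ``the main obstacle'' --- injectivity --- is exactly where your proposal stops short of a proof. The suggestion to ``exploit $(c_r,c_s)A_c = -A_c$ to absorb cross-row transpositions'' does not obviously lead anywhere: that identity is a left-multiplication relation against the column antisymmetrizer, whereas what you need to control is the kernel of a \emph{right}-multiplication by $\Sigma$, and moreover the transpositions appearing in $\Sigma$ are within-row, not between rows of $c$. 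The character-counting fallback is likewise just a restatement of what is to be proved. The paper's actual mechanism is worth knowing because it is the whole content of the lemma: each $\alpha_i$ is a Jucys--Murphy element for $S_{R_i}\subset S_{R_i\cup\{c_i\}}$, hence diagonalizable; its injectivity on $V[c]\otimes V[R_i]$ is verified by a one-line Pieri computation showing $\ch V[c\ast R_i] = \ch V[c\cup R_i]$; and then simultaneous diagonalizability of the commuting $\alpha_i$'s on the ambient space $V[c]\otimes V[R_1]\otimes\cdots\otimes V[R_k]$ (which contains $V[c]\otimes V[D\setminus c]$) gives $\ker\alpha = \sum_i\ker\alpha_i = 0$. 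Without something of this kind your proof is incomplete.

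A secondary issue: the surjectivity of your $\phi\colon V[D\setminus c]\otimes\det\to V[D]$, $v\otimes1\mapsto(v\otimes\Omega)y_D$, is not established by the computation $e_T A_c = \sgn(T|_c)(e_{T|_{D\setminus c}}\otimes\Omega)$. That shows $e_T y_D$ lies in the image of $w\mapsto(w\otimes\Omega)y_D$ for $w$ ranging over \emph{all} of $V^{\otimes(D\setminus c)}$, but $e_{T|_{D\setminus c}}$ is a pure tensor, not an element of $V[D\setminus c]$, so you have not shown $e_T y_D\in\mathrm{im}\,\phi$. Unwinding $(v\otimes\Omega)y_D = k!\,(v\,y_{D\setminus c}\otimes\Omega)\alpha$, one sees that $\phi$ is the map $v\mapsto(v\otimes\Omega)\alpha$ precomposed with right multiplication by $y_{D\setminus c}$ on $V[D\setminus c]$; for general diagrams $y_{D\setminus c}$ need not act invertibly, so $\phi$ as written may not even be surjective. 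The clean map to use is right multiplication by $\alpha$ on $V[D\setminus c\ast c]$, as in the paper.
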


\begin{proof}
It suffices to assume $\dim V = k$. Let $E = D \setminus c$. In this case, $s_{\lambda + (1^k)}(x_1, \ldots, x_k) = x_1 \cdots x_k s_{\lambda}(x_1, \ldots, x_k)$, so we want to show that $V[D] \simeq V[c] \otimes V[E]$. Proposition~\ref{prop:diagram-facts}(b) says that $V[c] \otimes V[E] = V[E \ast c]$. Write $c = \{c_1, \ldots, c_k\}$ and let $R_i$ be the row of $c_i$ in $E$. Let $\alpha_i = 1 + \sum_{b \in R_i} (c_i\,b) \in S_D$, and $\alpha = \alpha_1 \cdots \alpha_k$. Observe that $C(D) = C(E \ast c)$, whereas the summands of $\alpha$ form a set of right coset representatives for $R(E \ast c) \subseteq R(D)$. It follows that $y_D = y_{E \ast c} \alpha$ and hence $V[D] = V[E \ast c]\alpha$.

Let us see that multiplication by each $\alpha_i$ is injective on $V[E \ast c]$. Any element of $V[E \ast c] = V[c] \otimes V[E]$ is fixed by the right action of $R(E \ast c) = S_{R_1} \times \cdots \times S_{R_k}$, and $V[R_i]$ is the space of symmetric tensors in $V^{\otimes R_i}$, so we have $V[c] \otimes V[E] \subseteq V[c] \otimes V[R_1] \otimes \cdots \otimes V[R_k]$. Thus, it suffices to see that $\alpha_i$ is injective on $V[c] \otimes V[R_i] = V[c \ast R_i]$, or equivalently that $V[c \ast R_i]\alpha_i = V[c \cup R_i]$ is isomorphic to $V[c \ast R_i]$. But this is easy to check at the level of characters using Pieri's rule; say $|R_i| = r$:
\begin{equation*}
\ch V[c \ast R_i] = s_{(1^k)}(x_1, \ldots, x_k) s_{(r)}(x_1, \ldots, x_k) = s_{(r) + (1^k)}(x_1, \ldots, x_k) = \ch V[c \cup R_i].
\end{equation*}

Each $\alpha_i$ is a Jucys-Murphy element for $S_{R_i} \subseteq S_{R_i \cup \{c_i\}}$, and is shown in \cite{vershik-okounkov} to be diagonalizable as an operator on $\CC[S_{R_i \cup \{c_i\}}]$, hence on $V[E \ast c] \subseteq V^{\otimes E \ast c}$. It follows that $\ker \alpha|_{V[E \ast c]} = \sum_i \ker \alpha_i|_{V[E \ast c]}$, so $\alpha$ is injective on $V[E \ast c]$. We conclude $V[D] = V[E \ast c]\alpha \simeq V[E \ast c]$ as desired.
\end{proof}

A \emph{full column} of a three-row diagram is a column with three cells. If $c_1, \ldots, c_p$ are full columns in $D$ and $V[D \setminus (c_1 \cup \cdots \cup c_p)] \simeq \bigoplus_{\lambda} c_{\lambda} V[\lambda]$, then Lemma~\ref{lem:full-column} shows that $V[D] \simeq \bigoplus_{\lambda} c_{\lambda} V[\lambda + (p^k)]$. Thus for the purpose of decomposing $V[D]$ into irreducibles we can first delete all full columns.
\begin{lem} If $D$ is a three-row diagram with no full columns, then $D$ is equivalent to a toric skew shape. \end{lem}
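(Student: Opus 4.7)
My plan is to exhibit $D$ as the image under the torus-projection $\rho$ of an explicit cylindric skew shape $\Theta \subseteq \Cyl_{3,m}$. After relabeling the rows of $D$ as $1, 2, 3$, partition its columns by their row-support. By hypothesis each support is one of the six nonempty proper subsets of $\{1,2,3\}$; write $n_S$ for the number of columns of support $S$ and set $m = \sum_S n_S$. I would define $\Theta$ by listing $m$ columns of $\Cyl_{3,m}$ in the cyclic order
\begin{equation*}
(\{1\})^{n_{\{1\}}},\; (\{1,2\})^{n_{\{1,2\}}},\; (\{2\})^{n_{\{2\}}},\; (\{2,3\})^{n_{\{2,3\}}},\; (\{3\})^{n_{\{3\}}},\; (\{1,3\})^{n_{\{1,3\}}},
\end{equation*}
where the final block is placed in the cylinder as columns with cells at rows $3$ and $4$, using that row $4$ is identified with row $1$ of the next period via $(r,c) \sim (r+3, c+m)$. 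This cyclic order is chosen so that each transition between consecutive blocks adds or removes exactly one row at the top or bottom.

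The next step is to check that $\Theta$ is a cylindric skew shape by showing its two boundary functions are monotone. Reading around the cyclic order, the top boundary takes the successive values $1, 2, 2, 3, 3, 4$ on the six blocks and the bottom boundary takes $1, 1, 2, 2, 3, 3$; both advance strictly upward precisely when a row is added at the top or the bottom row is left behind, so they lift to non-crossing weakly-monotone paths in the universal cover, each advancing by $3$ in rows and $m$ in columns per period, as required.

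Next I would verify that $\Theta$ is toric, i.e., that $\rho: \Cyl_{3,m} \to \ZZ^2/(3\ZZ \times m\ZZ)$ is injective on $\Theta$. The only cells in the same residue class modulo $3$ are those in rows $1$ and $4$, lying in the disjoint cylinder-column intervals $[0, n_{\{1\}} + n_{\{1,2\}})$ and $[m - n_{\{1,3\}}, m)$; these project to disjoint arcs in $\ZZ/m\ZZ$ since their total length $n_{\{1\}} + n_{\{1,2\}} + n_{\{1,3\}}$ is at most $m$. Finally, reading off $\rho(\Theta)$ in a torus fundamental domain recovers columns of precisely the six prescribed types with the prescribed multiplicities $n_S$ (under the identification of torus-rows $\bar 1, \bar 2, \bar 0$ with rows $1, 2, 3$ of $D$), so $\rho(\Theta)$ is equivalent to $D$ under $S_\ZZ \times S_\ZZ$.

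The main obstacle is managing the wrap-around columns of type $\{1,3\}$: realizing them in the cylinder as $\{3,4\}$-columns is what forces the boundary paths to be monotone and allows the torus image to reproduce the original $\{1,3\}$-support in $D$. One must verify that this wrap-around contributes exactly $n_{\{1,3\}}$ columns of type $\{\bar 1, \bar 0\}$ to $\rho(\Theta)$ without any collision with the row-$1$ cells already supplied by the $(\{1\})$ and $(\{1,2\})$ blocks, which is precisely what the toric inequality $n_{\{1\}} + n_{\{1,2\}} + n_{\{1,3\}} \leq m$ enforces.
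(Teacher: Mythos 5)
Your proof is correct and follows essentially the same construction as the paper: sort the columns into the cyclic support order $\{1\}, \{1,2\}, \{2\}, \{2,3\}, \{3\}, \{1,3\}$, relocate each $\{1,3\}$ column to a $\{3,4\}$ column wrapping around the cylinder, and observe the result is a toric skew shape whose torus image recovers $D$ up to row/column permutation. The only difference is that you spell out the monotonicity of the two boundary paths and the toric (non-collision) inequality $n_{\{1\}}+n_{\{1,2\}}+n_{\{1,3\}}\leq m$, which the paper leaves to the reader.
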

\begin{proof}
There are 6 possible non-empty columns of $D$, namely
\begin{equation*}
\begin{array}{c} \square \\ \cdot \\ \cdot \end{array} \quad
\begin{array}{c} \square \\ \square \\ \cdot \end{array} \quad
\begin{array}{c} \cdot \\ \square \\ \cdot \end{array} \quad
\begin{array}{c} \cdot \\ \square \\ \square \end{array} \quad
\begin{array}{c} \cdot \\ \cdot \\ \square \end{array} \quad
\begin{array}{c} \square \\ \cdot \\ \square \end{array}
\end{equation*}
We can assume $D \subseteq [3] \times [p]$ and that $D$ has no empty columns. Sort the columns of $D$ so that all columns of the first type precede those of the second type, which precede those of the third type, etc. Now replace each column $\{(1,i), (3,i)\}$ (a column of the sixth type above) by $\{(3,i), (4,i)\}$. The image $E$ in $\Cyl_{3,p}$ of the resulting diagram is a toric skew shape, and on the other hand the image of $E$ in the torus $\ZZ/3\ZZ \times \ZZ/p\ZZ$ is equivalent to $D$.
\end{proof}

\bibliographystyle{alpha}
\bibliography{../bib/algcomb}

\begin{thebibliography}{vdKM99}

\bibitem[ABS14]{assaf-bergeron-sottile}
Sami Assaf, Nantel Bergeron, and Frank Sottile.
\newblock A combinatorial proof that {Schubert} vs. {Schur} coefficients are
  nonnegative.
\newblock arXiv:1405.2603, 2014.

\bibitem[BP14]{billey-pawlowski-2013}
Sara Billey and Brendan Pawlowski.
\newblock Permutation patterns, {Stanley} symmetric functions, and generalized
  {Specht} modules.
\newblock {\em Journal of Combinatorial Theory Series A}, 127:85--120, 2014.

\bibitem[BS97]{bergeron-sottile-skew-schubert}
Nantel Bergeron and Frank Sottile.
\newblock Skew {Schubert} functions and the {Pieri} formula for flag manifolds.
\newblock {\em Transactions of the American Mathematical Society},
  354(2):651--673, 1997.

\bibitem[BS98]{k-bruhat-order}
Nantel Bergeron and Frank Sottile.
\newblock Schubert polynomials, the {Bruhat} order, and the geometry of flag
  manifolds.
\newblock {\em Duke Math. J.}, 95:373--423, 1998.

\bibitem[EG87]{edelman-greene}
Paul Edelman and Curtis Greene.
\newblock Balanced tableaux.
\newblock {\em Advances in Mathematics}, 1:42--99, 1987.

\bibitem[FGRS97]{balancedlabellings}
Sergey Fomin, Curtis Greene, Victor Reiner, and Mark Shimozono.
\newblock Balanced labellings and {Schubert} polynomials.
\newblock {\em European Journal of Combinatorics}, 18(4):373--389, 1997.

\bibitem[Ful97]{youngtableaux}
William Fulton.
\newblock {\em Young Tableaux: With Applications to Representation Theory and
  Geometry}.
\newblock Cambridge University Press, 1997.

\bibitem[JP79]{jamespeel}
G.D. James and M.H. Peel.
\newblock Specht series for skew representations of symmetric groups.
\newblock {\em Journal of Algebra}, 56:343--364, 1979.

\bibitem[KLS13]{positroidjuggling}
Allen Knutson, Thomas Lam, and David Speyer.
\newblock Positroid varieties: Juggling and geometry.
\newblock {\em Compos. Math.}, 149:1710--1752, 2013.

\bibitem[KP04]{kraskiewicz-pragacz}
Witold Kra{\'s}kiewicz and Piotr Pragacz.
\newblock Schubert functors and {Schubert} polynomials.
\newblock {\em European Journal of Combinatorics}, 25:1327--1344, 2004.

\bibitem[Kra95]{kraskiewicz}
Witold Kra{\'s}kiewicz.
\newblock Reduced decompositions in {Weyl} groups.
\newblock {\em European Journal of Combinatorics}, 16:293--313, 1995.

\bibitem[Lam06]{lam-affine-stanley}
Thomas Lam.
\newblock Affine {Stanley} symmetric functions.
\newblock {\em Amer. J. Math.}, 128:1553--1586, 2006.

\bibitem[Liu09]{liuforests}
Ricky Liu.
\newblock Matching polytopes and {Specht} modules.
\newblock {\em Transactions of the American Mathematical Society},
  364:1089--1107, 2009.

\bibitem[Liu16]{liubranching}
Ricky Liu.
\newblock Complete branching rules for {Specht} modules.
\newblock {\em Journal of Algebra}, 446:77--102, 2016.

\bibitem[LLMS10]{affine-insertion}
Thomas Lam, Luc Lapointe, Jennifer Morse, and Mark Shimozono.
\newblock {\em {Affine insertion and Pieri rules for the affine Grassmannian}}.
\newblock Memoirs of the American Mathematical Society. American Mathematical
  Society, 2010.

\bibitem[LS85]{lascouxschutzenbergertree}
Alain Lascoux and Marcel-Paul Sch{\"u}tzenberger.
\newblock Schubert polynomials and the {Littlewood-Richardson} rule.
\newblock {\em Letters in Mathematical Physics}, 10:111--124, 1985.

\bibitem[LS06]{affine-little-bump}
Thomas Lam and Mark Shimozono.
\newblock A {Little} bijection for affine {Stanley} symmetric functions.
\newblock {\em S\'eminaire Lotharingien de Combinatoire}, 54A:B54Ai, 2006.

\bibitem[Mag97]{magyarborelweil}
Peter Magyar.
\newblock Borel-{Weil} theorem for {Configuration} {Varieties} and {Schur}
  {Modules}.
\newblock {\em Advances in Mathematics}, 134(2):328--366, 1997.

\bibitem[Man98]{manivel}
Laurent Manivel.
\newblock {\em Fonctions sym{\'e}triques, polyn{\^o}mes de Schubert et lieux de
  d{\'e}g{\'e}n{\'e}rescence}.
\newblock Soci{\'e}t{\'e} Math{\'e}matique de France, 1998.

\bibitem[Pos05]{postnikov-affine-quantum-schubert}
Alexander Postnikov.
\newblock Affine approach to quantum {Schubert} calculus.
\newblock {\em Duke Math. J.}, 128:473--509, 2005.

\bibitem[Pos06]{postnikov-positroids}
Alexander Postnikov.
\newblock Total positivity, {Grassmannians}, and networks.
\newblock arXiv:math/0609764, 2006.

\bibitem[RS95]{plactification}
Victor Reiner and Mark Shimozono.
\newblock Plactification.
\newblock {\em Journal of Algebraic Combinatorics}, 4:331--351, 1995.

\bibitem[RS98]{percentavoiding}
Victor Reiner and Mark Shimozono.
\newblock Percentage-avoiding, northwest shapes, and peelable tableaux.
\newblock {\em Journal of Combinatorial Theory, Series A}, 82:1--73, 1998.

\bibitem[Sta84]{stanleysymm}
Richard~P. Stanley.
\newblock On the number of reduced decompositions of elements of {Coxeter}
  groups.
\newblock {\em European Journal of Combinatorics}, 5:359--372, 1984.

\bibitem[vdKM99]{triangles}
Wilberd van~der Kallen and Peter Magyar.
\newblock The space of triangles, vanishing theorems, and combinatorics.
\newblock {\em Journal of Algebra}, 222:17--50, 1999.

\bibitem[VO05]{vershik-okounkov}
A.~M. Vershik and A.~Yu. Okounkov.
\newblock A new approach to the representation theory of the symmetric groups.
  {II}.
\newblock {\em Journal of Mathematical Sciences}, 131:5471--5494, 2005.

\end{thebibliography}

\end{document}